\documentclass[fleqn]{amsart} 

\usepackage{graphics}
\usepackage{graphicx} 
\usepackage{hyperref}
\usepackage{amsmath}
\usepackage{xcolor}
\usepackage{geometry}
\usepackage{amssymb,latexsym}

\usepackage[utf8]{inputenc}
\usepackage{calrsfs}
\usepackage{enumerate}

 \usepackage{amsfonts}
\usepackage{amsthm}

\newcommand{\cL}{{\mathcal L}}
\newcommand{\cM}{{\mathcal M}}
\newcommand{\cZ}{{\mathcal Z}}

\newcommand{\PG}{\mathrm{PG}}
 \newcommand{\FF}{\mathbb{F}}
 \newcommand{\KK}{\mathbb{K}}
 \newcommand{\ZZ}{\mathbb{Z}}
\newcommand{\hee}{\widehat{e}}
\newcommand{\wtV}{\widehat{V}}
\newcommand{\hpi}{\widehat{\pi}}
\newcommand{\ovm}{\overline{m}}
\newcommand{\ove}{\bar{e}}
\newcommand{\ovu}{\bar{u}}
\newcommand{\ovx}{\bar{x}}
\newcommand{\Tr}{\mathrm{Tr}}
\newcommand{\ch}{\mathrm{char}}
\newcommand{\Der}{\mathrm{Der}}
\newcommand{\Aut}{\mathrm{Aut}}
\newcommand{\Ker}{\mathrm{ker}}
\newcommand{\drk}{\mathrm{drank}}

\newcommand{\AnK}{A_{n,\{1,n\}}(\FF)}
\newcommand{\slnK}{\mathfrak{sl}(n+1,\FF)}
\newcommand{\SlnK}{\mathrm{SL}(n+1,\FF)}

\newcommand{\onK}{\mathfrak{o}(2n+1,\FF)}
\newcommand{\OnK}{\Omega(2n+1,\FF)}
\newcommand{\ohK}{\mathfrak{o}(2n,\FF)}
\newcommand{\OhK}{\Omega(2n,\FF)}
\newcommand{\BnK}{B_{n,2}(\FF)} 
\newcommand{\DnK}{D_{n,2}(\FF)}

\newcommand{\adj}{\mathrm{adj}}

\newcommand{\ba}{{\bf a}}
\newcommand{\bfb}{{\bf b}} 
 
\newcommand{\be}{{\bf e}} 

\newtheorem{theorem}{Theorem}[section]
\newtheorem{claim}[theorem]{Claim}

\newtheorem{corollary}[theorem]{Corollary}
\newtheorem{lemma}[theorem]{Lemma}
\newtheorem{result}[theorem]{Result}

\newtheorem{proposition}[theorem]{Proposition}
\newtheorem{remark}{Remark}

\newtheorem{definition}[theorem]{Definition} 

\begin{document}

\title[Absolute embeddings of long-root geometries of orthogonal type]{Absolutely universal projective embeddings of long-root geometries of orthogonal type}
\author{A. Pasini}
\address[A. Pasini]{Dep. Information Engineering and Mathematics, University of Siena, I-53100 Siena, Italy}
\email{pasini@unisi.it}

\keywords{Lie geometries; orthogonal groups; adjoind modules; absolutely universal embeddings}
\subjclass{22E46, 14M15}

\begin{abstract}
Let $G$ be a split Chevalley group of type $B_n$ ($n \geq 3$) or $D_n$ ($n \geq 4$), let $\cL$ be its Lie algebra and $\Gamma$ its long-root geometry, namely $\Gamma$ is the line-grassmannian of the polar space associated to $G$. It is well known that $\cL$ hosts a projective embedding of $\Gamma$, called the adjoint embedding, which realizes $\Gamma$ as a projective variety in the projective geometry of the underlying vector space of $\cL$. It is also known that $\Gamma$ admits the absolutely universal projective embedding, which is such that all projective embeddings of $\Gamma$ can be obtained from it as projections. When the field $\FF$ over which $G$ is defined is finite, the universal embedding of $\Gamma$ is just the same as the adjoint embedding. The same is not necessarily true when $\FF$ is infinite. Indeed, as we shall prove in this paper, the universal projective embedding of $\Gamma$ lives in a direct sum $K\oplus \cL$ for an $\FF$-vector space $K$, the dual of which is isomorphic to the derivation space $\Der(\FF)$ of $\FF$, which in general is non-trivial. Accordingly, the adjoint embedding of $\Gamma$ is universal if and only if $\Der(\FF)$ is trivial, namely $\FF$ is either perfect of positive characteristic or algebraic over its minimum subfield. 
\end{abstract} 

\maketitle   

\section{The objects to be considered in this paper} 

In this paper we shall study some problems concerning projective embeddings of Lie geometries. First of all we must explain what we are talking about, what embeddings and Lie geometries are.  

\subsection{Projective embeddings}

In this subsection we recall some basics on projective embeddings of point-line geometries, referring the reader to Shult \cite{S93} and \cite[Chp. 4]{S} and Ronan \cite{R} for more on this topic. 

Let $\Gamma = (P, L)$ be a point-line geometry, with $P$ a the point-set and  $L$ as the set of lines. We assume that $\Gamma$ is connected, namely its collinearity graph is connected, and no two lines of $\Gamma$ are incident with the same set of points; thus we can regard the lines of $\Gamma$ as subsets of $P$. 

A {\em projective embedding} of $\Gamma$ (also {\em embedding} for short)  is an  injective mapping $e$ from $P$ to the point-set of the projective geometry $\PG(V)$ of a vector space $V$, such that the image $e(P)$ of $P$ spans $\PG(V)$ and every line of $\Gamma$, regarded as a subset of $P$, is mapped by $e$ onto a line of $\PG(V)$. If $\KK$ is the underlying division ring of $V$, we say that $e$ is {\em defined over} $\KK$. 

Henceforth, although $e$ maps $\Gamma$ onto a subgeometry of $\PG(V)$, we write $e:\Gamma\rightarrow V$ to mean that $e$ embeds $\Gamma$ in $\PG(V)$. We will also freely use expressions as ``$e$ embeds $\Gamma$ in $V$", ``$e$ lives in $V$" or ``$V$ hosts $e$" which, although incorrect, streamline the exposure a bit. Accordingly, we call $\dim(V)$ the {\em dimension} of $e$ and we denote it by $\dim(e)$.    

We say that a geometry is {\em embeddable} if it admits an embedding as defined above. In other words, a point-line geometry is embeddable if it can be realized as a projective variety in such a way that its lines are some (possibily not all) of the projective lines contained in that variety. Obviously, a geometry is embeddable only if no two of its lines share more than one point and all of its lines have the same size (which, in the finite case, must be equal to a prime power augmented by 1).   

\subsubsection{Covers and projections of embeddings}

With $\Gamma$ as above, for $i = 1, 2$ let $e_i:\Gamma\rightarrow V_i$ be an embedding of $\Gamma$. We say that $e_1$ {\em covers} $e_2$ (also that $e_2$ is a {\em projection} of $e_1$) if $e_1$ and $e_2$ are defined over the same division ring and there exists a semi-linear mapping $f:V_1\rightarrow V_2$ such that $e_2 =  \mathrm{pg}(f)\circ e_1$, 
where $\circ$ stands for composition and $\mathrm{pg}(f):\PG(V_1)\rightarrow\PG(V_2)$ is the morphism of projective geometries associated to $f$. 

The semi-linear mapping $f$ such that $e_2 =  \mathrm{pg}(f)\circ e_1$, if it exists, is uniquely determined by $e_1$ and $e_2$ up to scalars, namely the morphism $\mathrm{pg}(f)$ is unique. (This uniqueness property follows from the connectedness of $\Gamma$.) Note also that $f$ is surjective, as it follows from the fact that $\mathrm{pg}(f)$ maps $e_1(P)$ bijectively onto $e_2(P)$ and $e_2(P)$ spans $\PG(V_2)$. We call $\mathrm{pg}(f)$ the {\em projection} of $e_1$ onto $e_2$ and the kernel of $f$ is the {\em kernel} of this projection. Clearly, the kernel of $\mathrm{pg}(f)$ is skew with every line of $\PG(V_1)$ joining two distinct points of $e_1(P)$. Conversely, every subspace of $V_1$ with this property is the kernel of suitable projections of $e_1$.    

If $e_1$ covers $e_2$ we write $e_1\geq e_2$. If $f$ (equivalently, $\mathrm{pg}(f)$) is bijective then we say that $e_1$ and $e_2$ are {\em isomorphic} and we write $e_1\cong e_2$. If $e_1 \geq e_2\not\cong e_1$ then we say that $e_1$ {\em properly covers} $e_2$.  Clearly, $e_1\geq e_2\geq e_1$ iff $e_1\cong e_2$.  

\subsubsection{Absolutely and relatively universal embeddings}

An embedding of $\Gamma$ {\em absolutely universal} ({\em absolute} for short) if it covers all embeddings of $\Gamma$. Clearly, the absolute embedding, if it exists, is unique up to isomorphisms. It is also clear that a geometry $\Gamma$ admits the absolute embedding only if all of its embeddings are defined over the same division ring. 

An embedding $\hee:\Gamma\rightarrow\wtV$ is {\em relatively universal} if it admits no proper cover. Equivalently, for every projection $e$ of $\hee$, the embedding $\hee$ also covers all embeddings which cover $e$ (Ronan \cite{R}). If $e$ is an embedding of $\Gamma$ and $\hee \geq e$ is relatively universal then, up to isomorphism, $\hee$ is the unique relatively universal embedding that covers $e$. We call it the {\em universal cover} of $e$.    

The absolute embedding, when it exists, might be regarded as the best one. If no absolute embedding exists for a given embeddable geometry, then we must be happy with relatively universal embeddings; every embeddable geometry admits them, as stated in the next proposition, due to Ronan \cite{R}. The proof amounts to an elementary free contruction; we will recall it in Appendix \ref{AppEmb}.  

\begin{proposition}
Every embedding admits the universal cover.
\end{proposition}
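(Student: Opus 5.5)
The plan is Ronan's free construction. Fix an embedding $e:\Gamma\rightarrow V$ defined over the division ring $\KK$. Let $W$ be the $\KK$-vector space with basis $\{w_p\}_{p\in P}$. Choose for every $p\in P$ a nonzero vector $v_p\in V$ with $e(p)=\langle v_p\rangle$, and let $\phi:W\rightarrow V$ be the linear map $w_p\mapsto v_p$.

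For every line $\ell$ of $\Gamma$, fix two distinct points $a,b\in\ell$. Then for every $p\in\ell\setminus\{a,b\}$ there are unique scalars $\lambda_p,\mu_p\in\KK$ with $v_p=\lambda_pv_a+\mu_pv_b$. Let $R\leq W$ be the subspace spanned by all the vectors $w_p-\lambda_pw_a-\mu_pw_b$, taken over all lines $\ell$ and all such $p$. Then $R\leq\ker\phi$. Put $\wtV=W/R$ and $\hee(p)=\langle w_p+R\rangle$. By construction $\phi$ induces a linear map $\bar\phi:\wtV\rightarrow V$ with $\bar\phi(w_p+R)=v_p$. Hence $w_p\notin R$, and $\hee$ is injective, because $e=\mathrm{pg}(\bar\phi)\circ\hee$ and $e$ is injective.

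The image $\hee(P)$ spans $\PG(\wtV)$ trivially. Each line $\ell$ is mapped into the projective line $\langle w_a+R,w_b+R\rangle$, since every $w_p+R$ lies in that span by the defining relations. This line is $2$-dimensional because its image under $\bar\phi$ is $\langle v_a,v_b\rangle$. The map is onto that projective line: a point $\langle\alpha w_a+\beta w_b+R\rangle$ maps under $\bar\phi$ to a point of $e(\ell)$, hence equals $\hee(q)$ for some $q\in\ell$, since $\bar\phi$ is injective on this $2$-space. Therefore $\hee$ is an embedding with $\hee\geq e$.

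The remaining step, which I expect to be the main one, is relative universality. Suppose $e':\Gamma\rightarrow V'$ and $e'\geq\hee$ via a semilinear $f:V'\rightarrow\wtV$; we must show $f$ is injective. Choose vectors $v'_p$ representing $e'(p)$, normalized so that $f(v'_p)=w_p+R$. This is possible after rescaling, since $f$ maps $\langle v'_p\rangle$ onto $\langle w_p+R\rangle$. If $f$ has companion automorphism $\sigma$, define the $\sigma$-semilinear map $g:W\rightarrow V'$ by $w_p\mapsto v'_p$. We check $R\leq\ker g$ as follows.
\begin{itemize}
\item Since $e'(\ell)$ is a projective line, $v'_p=\lambda'v'_a+\mu'v'_b$ for some scalars $\lambda',\mu'$.
\item Applying $f$ gives $w_p+R=\lambda'^{\sigma}(w_a+R)+\mu'^{\sigma}(w_b+R)$.
\item Since $w_a+R$ and $w_b+R$ are independent, comparing with the relation defining $R$ yields $\lambda'^{\sigma}=\lambda_p$ and $\mu'^{\sigma}=\mu_p$.
\item Consequently $g(w_p-\lambda_pw_a-\mu_pw_b)=v'_p-\lambda'v'_a-\mu'v'_b=0$.
\end{itemize}
So $g$ induces a map $\bar g:\wtV\rightarrow V'$ with $f\circ\bar g=\mathrm{id}$ on the generators $w_p+R$, hence everywhere. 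The map $\bar g$ is surjective since the $v'_p$ span $V'$, so $f$ is bijective and $e'\cong\hee$. Thus $\hee$ admits no proper cover and is the universal cover of $e$.

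The care needed here is in the semilinear bookkeeping: tracking the twist by $\sigma$, and ensuring the normalization of the $v'_p$ is compatible. Connectedness of $\Gamma$ is not even needed for this argument.
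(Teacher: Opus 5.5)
Your proposal is correct and is essentially Ronan's free construction, which the paper recalls in Appendix~\ref{AppEmb}. Your quotient $W/R$ of the free space on the points by the line relations is canonically isomorphic to the paper's $\widehat{V}$ built from the copies ${\bf V}_p$ and ${\bf V}_\ell$, and you also carry out the universality check, which the paper only attributes to Ronan.

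One correction to the semilinear bookkeeping: $g$ must be $\sigma^{-1}$-semilinear, not $\sigma$-semilinear. This is what your identity $g(\lambda_pw_a)=\lambda'v'_a$ with $\lambda'^{\sigma}=\lambda_p$ actually uses. It is also what makes $f\circ\bar g$ linear, so that agreeing with the identity on the generators $w_p+R$ forces $f\circ\bar g=\mathrm{id}$.
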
 

Clearly absolute embeddings are relatively universal. An absolute embedding of a given geometry $\Gamma$ is indeed the universal cover of all embeddings of $\Gamma$. So, $\Gamma$ admits the absolute embedding if and only if all of its embeddings admit the same universal cover. However this characterization is of little use to check if a given embeddable geometry does admit the absolute embedding. Sufficient conditions for the existence of the absolute embedding have been found by Kasikova and Shult \cite{KS}, but I am not going to recall them here. 

\subsubsection{Homogeneity}\label{Hom}  

Given a projective embedding $e:\Gamma\rightarrow V$ and an automorhism $g$ of $\Gamma$, the composite mapping $e\circ g$ is still an embedding of $\Gamma$ in $V$, with the same image as $e$. If $e\circ g \cong e$ then we say that $g$ {\em lifts to} $\PG(V)$ {\em through} $e$ and the unique isomorphism $e(g)$ from $e$ to $e\circ g$ is called the {\em lifting} of $g$. Clearly $e(g)$, as a collineation of $\PG(V)$, stabilizes the image $e(\Gamma)$ of $e$. Also, every automorphism of $\Gamma$ admits at most one lifting through $e$ to $\PG(V)$. If all elements of a subgroup $G$ of $\Aut(\Gamma)$ lift to $\PG(V)$ through $e$ then we say that $e$ is $G$-{\em homogeneous} (also {\em homogeneous} for short when $G = \Aut(\Gamma)$). If $e$ is $G$-homogeneous then the liftings of the elements of $G$ form an isomorphic copy $e(G)$ of $G$ in the group of all collineations of $\PG(V)$. 

Cearly, absolutely universal embeddings are homogeneous and, if an embedding $e:\Gamma\rightarrow V$ is $G$-homogeneous for some subgroup $G$ of $\Aut(\Gamma)$, then its universal cover is $G$-homogeneous as well.   

So far we have assumed that $G$ is a subgroup of $\Aut(\Gamma)$, namely $G$ acts faithfully on $\Gamma$. Suppose now that the action of $G$ on $\Gamma$ is possibly unfaithful. We say that $G$ {\em lifts to} $V$ {\em through} $e$ and that $e$ is {\em linearly} $G$-{\em homogeneous} if $G$ also acts (possibly unfaithfully) on $V$ as a group of linear transformations (hence on $\PG(V)$ too as a group of linear collineations), the kernel of the action of $G$ on $\Gamma$ is equal to the kernel of the action of $G$ on $\PG(V)$ and the group induced by $G$ on $\PG(V)$ is the lifting through $e$ of the subgroup of $\Aut(\Gamma)$ induced by $G$ on $\Gamma$. 

The next proposiion easily follows from Ronan's construction of the universal cover of an embedding (Appendix \ref{AppEmb}).  

\begin{proposition}
Let $\hee:\Gamma\rightarrow\wtV$ be the universal cover of an embedding $e:\Gamma\rightarrow V$ of a geometry $\Gamma$. Suppose that $e$ is linearly $G$-omogeneous for some group $G$ acting on $\Gamma$. Then $\hee$ is also linearly $G$-homogeneous, the lifting of $G$ to $\wtV$ stabilizes the kernel $K = \ker(\pi)$ of the projection $\pi$ of $\hee$ onto $e$ and the action of $G$ on the quotient space $\wtV/K$ is mapped by $\pi$ onto the action of $G$ on $V$.     
\end{proposition}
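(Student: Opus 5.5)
We will work directly with Ronan's free construction of the universal cover (Appendix \ref{AppEmb}). Let $W$ be the $\FF$-vector space with basis $\{w_p\}_{p\in P}$. For every point $p$ fix a vector $v_p\in V$ with $e(p)=\langle v_p\rangle$. For every line $\ell$ of $\Gamma$ and every triple of distinct points $p,q,r\in\ell$ there are unique scalars $\lambda,\mu$ with $v_r=\lambda v_p+\mu v_q$. Let $R$ be the subspace of $W$ spanned by all vectors $w_r-\lambda w_p-\mu w_q$. Then $\wtV=W/R$, $\hee(p)=\langle w_p+R\rangle$, and $\pi$ is induced by the linear map $\phi:W\to V$, $w_p\mapsto v_p$. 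Hence $K=\ker(\phi)/R$. By the uniqueness of universal covers, it suffices to prove the statement for this model.

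The first step is to define the action of $G$ on $W$. For $g\in G$, linear homogeneity gives a linear map $g_V$ of $V$ with $g_V(v_p)=c_{g,p}v_{g(p)}$ for some nonzero scalar $c_{g,p}$. We set $g_W(w_p)=c_{g,p}w_{g(p)}$. Since $g_V$ is linear, applying it to $v_r=\lambda v_p+\mu v_q$ gives $c_{g,r}v_{g(r)}=\lambda c_{g,p}v_{g(p)}+\mu c_{g,q}v_{g(q)}$. The points $g(p),g(q),g(r)$ lie on the line $g(\ell)$, so this is exactly the scalar multiple of a defining relation that $g_W(w_r-\lambda w_p-\mu w_q)$ equals. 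Hence $g_W(R)\subseteq R$, and likewise for $g^{-1}$, so $g_W(R)=R$.

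Next we check that $g\mapsto g_W$ is a group action. Because $g_V$ is an action and the $v_p$ are fixed representatives, the scalars satisfy the cocycle rule $c_{gh,p}=c_{g,h(p)}c_{h,p}$. Therefore $(gh)_W=g_Wh_W$, and $G$ acts linearly on $W$ and on $\wtV$. By construction $\phi\circ g_W=g_V\circ\phi$. It follows that $G$ stabilizes $\ker\phi$, hence $K$, and that $\pi$ intertwines the action of $G$ on $\wtV/K$ with its action on $V$. The induced collineation of $\PG(\wtV)$ maps $\hee(p)$ to $\hee(g(p))$, so the action lifts the action of $G$ on $\Gamma$.

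The main point to verify is the equality of kernels. Suppose $g$ acts trivially on $\Gamma$. Then $g_V$ is a scalar $c$, since the kernels on $\Gamma$ and on $\PG(V)$ agree. So $c_{g,p}=c$ for all $p$, and $g_W$ is the scalar $c$; thus $g$ acts trivially on $\PG(\wtV)$. Conversely, if $g$ acts trivially on $\PG(\wtV)$, it fixes every $\hee(p)$, and since $\hee$ is injective, $g$ fixes every point of $\Gamma$. The only delicate issue is the consistency of the scalars $c_{g,p}$ used to define $g_W$. This is exactly what the linearity of the lifting on $V$ guarantees, and no well-definedness problem arises because each $g_V$ is a genuine linear map and not merely a projective collineation.
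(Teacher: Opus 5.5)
Your proposal is correct and is essentially the argument the paper has in mind. The paper only remarks that the proposition follows easily from Ronan's construction (Appendix \ref{AppEmb}); you carry this out explicitly, and your definition of $g_W$ from the genuinely linear maps $g_V$ is exactly what gives a linear action with $\phi\circ g_W=g_V\circ\phi$. Your free-space-modulo-collinear-triples model differs only cosmetically from the point-space/line-space model of Appendix \ref{AppEmb}. The two are isomorphic because each line space ${\bf V}_\ell$ is spanned, modulo the gluing relations, by two of its point spaces; alternatively, one can check directly that $W/R$ covers every cover of $e$.
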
  

\subsection{Lie geometries} 

Lie geometries arise from spherical buildings. At least two settings are possible for the theory of buildings. In this paper we follow the one originally chosen by Tits in \cite{T}. In this setting, Lie geometries are defined as follows. 

Given a building $\Delta$ of spherical type and rank at least 2, let $J$ be a nonempty subset of the set of types of $\Delta$. Following Tits \cite[Chp. 12]{T}, we  define a point-line geometry $\Delta_J$ as follows: the points of $\Delta_J$ are the flags of $\Delta$ of type $J$ while the lines are the flags of type $j^\sim\cup(J\setminus\{j\})$, where $j\in J$ and $j^\sim$ is the adjacency of $j$ in the Coxeter diagram of $\Delta$, a point $X$ and a line $Y$ of $\Delta_J$ being incident in $\Delta_J$ if they are incident as flags of $\Delta$. The geometries constructed in this way are called {\em geometries of Lie type} in the literature, also {\em Lie geometries} for short. They are connected and no two of their lines have more than one point in common.  

\subsubsection{A few examples}

1. Let $\Delta$ be a building of Coxeter type $C_n$, $n \geq 3$. Take the integers $1,2,..., n$ as types as in the picture below. If we put $J = \{1\}$ then we obtain the polar space $\Delta_1$ associated to $\Delta$ while $\Delta_n$ is a {\em dual polar space}, dual of $\Delta_1$ (here and henceforth, when $J$ is a singleton, say $J = \{j\}$, we write $\Delta_j$ for short instead of $\Delta_{\{j\}}$).  Put $J = \{2\}$; the points of $\Delta_2$ are the lines of the polar space $\Delta_1$ and the lines of $\Delta_2$ are the point-plane flags of $\Delta_1$. The geometry $\Delta_2$ is called the {\em line grassmannian} of the polar space $\Delta_1$. 

\begin{picture}(310,68)(0,0)
\put(0,30){$\bullet$}
\put(3,33){\line(1,0){47}}
\put(50,30){$\bullet$}
\put(53,33){\line(1,0){47}}
\put(100,30){$\bullet$}
\put(103,33){\line(1,0){12}}
\put(118,32){$\dots..$}
\put(146,33){\line(1,0){12}}
\put(158,30){$\bullet$}
\put(161,33){\line(1,0){47}}
\put(208,30){$\bullet$}
\put(211,32){\line(1,0){47}}
\put(211,34){\line(1,0){47}}
\put(258,30){$\bullet$}
\put(0,40){1}
\put(50,40){2}
\put(100,40){3} 
\put(148,40){$n-2$}
\put(198,40){$n-1$}
\put(258,40){$n$}
\put(47,30){\textcolor{red}{$\bigcirc$}}
\put(52,30){\textcolor{blue}{\oval(100, 20)[b]}}
\put(38,53){\textcolor{red}{points}}
\put(40,5){\textcolor{blue}{lines}}
\end{picture}

\noindent
2. Let $\Delta$ be a building of type $A_n$, namely an $n$-dimensional projective geometry. With $1, 2,..., n$ as types as in the following pictures, put $J = \{1,n\}$. The points of $\Delta_J = \Delta_{\{1,n\}}$ are the point-hyperplane flags of $\Delta$ and the lines of $\Delta_{\{1,n\}}$ are the line-hyperplanes flags and the point-subhyperplane flags of $\Delta$. 

\begin{picture}(330,48)(0,0)
\put(0,20){$\bullet$}
\put(3,23){\line(1,0){47}}
\put(50,20){$\bullet$}
\put(53,23){\line(1,0){47}}
\put(100,20){$\bullet$}
\put(103,23){\line(1,0){12}}
\put(118,22){$\dots..$}
\put(146,23){\line(1,0){12}}
\put(158,20){$\bullet$}
\put(161,23){\line(1,0){47}}
\put(208,20){$\bullet$}
\put(211,23){\line(1,0){47}}
\put(258,20){$\bullet$}
\put(0,30){1}
\put(50,30){2}
\put(100,30){3} 
\put(148,30){$n-2$}
\put(198,30){$n-1$}
\put(258,30){$n$}
\put(132,20){\textcolor{red}{\oval(260, 20)[b]}}
\put(300,20){\textcolor{red}{points}}
\end{picture}

\begin{picture}(330,56)(0,0)
\put(0,20){$\bullet$}
\put(3,23){\line(1,0){47}}
\put(50,20){$\bullet$}
\put(53,23){\line(1,0){47}}
\put(100,20){$\bullet$}
\put(103,23){\line(1,0){12}}
\put(118,22){$\dots..$}
\put(146,23){\line(1,0){12}}
\put(158,20){$\bullet$}
\put(161,23){\line(1,0){47}}
\put(208,20){$\bullet$}
\put(211,23){\line(1,0){47}}
\put(258,20){$\bullet$}
\put(0,30){1}
\put(50,30){2}
\put(100,30){3} 
\put(148,30){$n-2$}
\put(198,30){$n-1$}
\put(258,30){$n$}
\put(107,20){\textcolor{blue}{\oval(210, 20)[b]}}
\put(157,40){\textcolor{blue}{\oval(210, 20)[t]}}
\put(300,20){\textcolor{blue}{lines}}
\end{picture}

\noindent
In particular, when $n = 2$ we obtain the {\em flag-geometry} $\Delta_{\{1,2\}}$ of the projective plane $\Delta$, the chambers and the elements of $\Delta$ being respectively the points and the lines of $\Delta_{\{1,2\}}$.   

\begin{picture}(310,46)(0,0)
\put(5,20){$\bullet$}
\put(8,23){\line(1,0){47}}
\put(55,20){$\bullet$}
\put(5,30){1}
\put(55,30){2}
\put(33,24){\textcolor{red}{\oval(70, 24)}}
\put(3,20){\textcolor{blue}{$\bigcirc$}}
\put(53,20){\textcolor{blue}{$\bigcirc$}}
\end{picture}

\noindent
3. With $\Delta$ of type $D_n$ and types $1, 2,..., n$ as in the following picture, $\Delta_1$ is a polar space (of hyperbolic type) while $\Delta_{n-1}$ and $\Delta_n$ are the two {\em half-spin geometries} of $\Delta$. Put $J = \{n-1, n\}$. The points of $\Delta_{n-1,n}$ are the $\{n-1, n\}$-flags while the lines are the flags of type $\{n-2, n-1\}$ and $\{n-2, n\}$.  

\begin{picture}(310,72)(0,0)
\put(0,31){$\bullet$}
\put(3,34){\line(1,0){47}}
\put(50,31){$\bullet$}
\put(53,34){\line(1,0){12}}
\put(68,34){$\dots..$}
\put(96,34){\line(1,0){12}}
\put(108,31){$\bullet$}
\put(111,34){\line(1,0){47}}
\put(158,31){$\bullet$}
\put(160,33){\line(3,1){42}}
\put(160,33){\line(3,-1){42}}
\put(201,46){$\bullet$}
\put(201,18){$\bullet$}
\put(0,41){1}
\put(50,41){2}
\put(98,41){$n-3$}
\put(144,41){$n-2$}
\put(211,18){$n-1$}
\put(211,46){$n$}
\put(203,35){\textcolor{red}{\oval(24,40)}}
\put(240,32){\textcolor{red}{points}}
\put(204,31){\textcolor{blue}{\oval(98,32)[bl]}}
\put(128,14){\textcolor{blue}{lines}}
\put(204,40){\textcolor{blue}{\oval(98,30)[tl]}}
\put(130,57){\textcolor{blue}{lines}}
\end{picture}

\noindent
The points of $\Delta_{\{n-1,n\}}$ can be identified with the sub-generators of the polar space $\Delta_1$ while the lines of $\Delta_{\{n-1,n\}}$ consist of a generator of $\Delta_1$ and a sub-sub-generator contained in it. In short, $\Delta_{\{n-1, n\}} = \Delta'_{n-1}$, where $\Delta'$ is the (non-thick) building the elements of which are the points, the lines and the singular subspaces of $\Delta_1$. 

\subsubsection{Chevalley geometries, long-root geometries and their natural embeddings}\label{Weyl Sec}  

The following addition to the current terminology will be helpful to streamline our exposition. Henceforth, when the building $\Delta$ is associated with a split Chevalley group we say that $\Delta_J$ is a {\em Chevalley geometry}, of {\em simple}  or {\em multiple} type according to whether $J$ is a singleton or not. 

Every Chevalley geometry is embeddable. Explicitly, let $\Delta$ be associated with a Chevalley group $G$ of Dynkin type $X_n$ of rank $n$, the nodes of $X_n$ being numbered by $1, 2,..., n$ as usual. These numbers also serve as types for $\Delta$. Let $\lambda_1, \lambda_2,...., \lambda_n$ be the fundamental dominant weights of the root system associated to the Lie algebra $L(G)$ of $G$. For $\emptyset \neq J \subseteq \{1, 2,..., n\}$, put $\lambda_J := \sum_{j\in J}\lambda_j$ and let $W_J$ be the Weyl module associated with $\lambda_J$ (see Humphreys \cite[Chp. 3]{H-mod}; Humphreys \cite{H-mod} uses the name `Weyl module' only in positive characteristic, but it is convenient for us to extend it to the null characteristic case: in null characteristic the `Weyl module' $W_J$ is just the unique irreducible cyclic $L(G)$-module with maximal vector of weight $\lambda_J$). 

The geometry $\Delta_J$ admits an embedding $e_J:\Delta_J\rightarrow W_J$. Explicitly, given a point $F_0$ of $\Delta_J$, namely a $J$-flag of $\Delta$, let $G_{F_0}$ be the stabilizer of $F_0$ in $G$, with $G$ acting on $\Delta$. In the action of $G$ on $W_J$, the subgroup $G_{F_0}$ stabilizes a unique $1$-dimensional subspace $V_0$ of $W_J$ and every generator of $V_0$ can be chosen as the maximal vector of $W_J$ up to a suitable choice of the Chevalley basis of $L(G)$. Every $J$-flag $F$ of $\Delta$ is the image $F = g(F_0)$ of $F_0$ by an element $g\in G$. We put $e_J(F) = g(V_0)$. If $g_1(F_0) = g_2(F_0)$ then $g_2^{-1}g_1\in G_{F_0}$, hence $g_2^{-1}g_1(V_0) = V_0$, namely $g_1(V_0) = g_2(V_0)$. So, $e_J(F)$ is well defined for every $J$-flag $F$ of $\Delta$. The mapping $e_J$ defined in this way is indeed a projectve embedding of $\Delta_J$ in $\PG(W_J)$. We call it the {\em natural embedding} (also {\em Weyl embedding}) of $\Delta_J$. 

It is clear from the above that $e_J$ is $G$-homogeneous. Note also that if $|J| > 1$ then $W_J$ is a (possibly improper) subspace of the tensor product $\otimes_{j\in J}W_j$. Accordingly, $e_J$ is a `tensor product' of the embeddings $e_j$ for $j\in J$. 

Before going any further, we need to fix some notation. Since here we are dealing with Dynkin diagrams instead of Coxeter diagrms, we distinguish between $B_n$ (parabolic orthogonal case) and $C_n$ (symplectic case). When the diagram is irreducible, we number its nodes in the usual way (see e.g. Humphreys \cite[11.4]{H-Lie}). 

Given a Dynkin diagram $X_n$ of rank $n$ and a field $\FF$ we denote by $X_n(\FF)$ the building of type $X_n$ defined over $\FF$. Given a set $J$ of types of $X_n(\FF)$, we put $X_{n,J}(\FF) := \Delta_J$ where $\Delta = X_n(\FF)$. For instance, let $\Delta = B_n(\FF)$, the building associated with the orthogonal group $G = \mathrm{O}(2n+1,\FF)$. Then $B_{n,1}(\FF)$ is the orthogonal polar space associated to $G$ while $B_{n,2}(\FF)$ is the line-grassmanian of $B_{n,1}(\FF)$. Let $\Delta = A_n(\FF)$ ($= \PG(n,\FF)$). Then $A_{n,\{1, n\}}(\FF)$ is the geometry with the point-hyperplane flags of $\PG(n,\FF)$ as points and the line-hyperplane and point-subhyperplane flags of $\PG(n,\FF)$ as lines.   

In particular $A_{n,\{1,n\}}(\FF)$, $C_{n,1}(\FF)$, $B_{n,2}(\FF)$, $D_{n, 2}(\FF)$, $F_{4,1}(\FF)$, $E_{6,2}(\FF)$, $E_{7,1}(\FF)$, $E_{8,8}(\FF)$ and $G_{2,2}(\FF)$ are the so-called {\em long-root geometries}, this name being motivated by the fact that, for each of these geometries except $C_{n,1}(\FF)$ and $B_{2,2}(\FF)$ ($\cong C_{2,1}(\FF)$), the natural embedding is hosted by the Weyl module associated with the weight which corresponds to the highest positive long root of the appropriate root system (with the convention that when the diagram is simply laced all roots are long). This module is just the {\em adjoint module} of the group associated to the building, namely the Lie algebra of this group. Accordingly, the natural embedding of a long-root geometry other than $C_{n,1}(\FF)$ is also called the {\em adjoint embedding} of that geometry. 

The case of $C_{n,1}(\FF)$ is a bit different. The geometry $C_{n,1}(\FF)$ is a polar space and its natural (projective) embedding as defined in the previous paragraph is just its usual realization in $\PG(2n-1,\FF)$ as a symplectic polar space, but this is not the adjoint embedding of $C_{n,1}(\FF)$. The vector space $V = V(2n,\FF)$ is the same as the Weyl module $W_{\lambda_1}$ associated to first fundamental dominant weight $\lambda_1$. However, in the root system of type $C_n$ the highest positive long root corresponds to $2\lambda_1$. Accordingly, the Lie algebra corresponds to the Weyl module $W_{2\lambda_1}$, which can also be described as a symmetrized tensor product of two copies of $V = W_{\lambda_1}$. An injective mapping of the point-set of $C_{n,1}(\FF)$ into the point-set of  $\PG(W_{2\lambda_1})$ can be defined in the same way as natural embeddings have been defined for Chevalley geometries in a previous paragraph of this subsection. The mapping obtained in this way is called the {\em adjoint embedding} of $C_{n,1}(\FF)$, but it is not a projective embedding. Indeed it maps lines of $C_{n,1}(\FF)$ onto non-degenerate conics of $\PG(W_{2\lambda_1})$. Mappings like these are called {\em veronesean embeddings}. Veronesean embeddings are very interesting but we prefer not to get involved with them in this paper.  

\section{The problem we deal with in this paper} 

It is tempting to believe that nearly all embeddable Lie geometries admit the absolute embedding and, for most of them, that absolute embedding is one of those that we already know. In particular, we might believe that nearly all Chevalley geometries admit the absolute embedding and in most cases this is the same as the natural one. 

The vague phrasing of this belief allows for a few exceptions. Indeed we have known some of them for a long time. For instance, embeddable grids with lines of size at least 5 admit no absolute embedding. Not even the generalized quadrangles described by Tits in \cite[8.6, II(a)]{T} admit absolute embedding (they admit two distinct relatively universal embeddings). The long-root geometry $A_{2,\{1,2\}}(\FF)$ with $\FF$ finite but not prime admits at least as many relatively universal embeddings as the automorphisms of $\FF$, hence no absolute embedding (Thas and Van Maldeghem \cite{TVM}). 

Geometries admitting a natural embedding which however is not universal are also known. For instance, the adjont embedding of $A_{2,\{1,2\}}(\FF)$ is relatively uiversal if and only if $\FF$ is either perfect of positive characteristic or algebraic over its minimum subfield (Smith and V\"{o}lklein \cite{SV}). Dual polar spaces with lines of size 3 are also couterexamples: they admit the absolute embedding, but the latter is a proper cover of the natural one (Li \cite{Li1}, \cite {Li2}). 

Chevalley geometries of type $C_{n,k}(\FF)$ with $\ch(\FF) = 2$ also deserve to be mentioned. When $\ch(\FF) = 2$ and $k < n$ the Weyl embedding of $C_{n,k}(\FF)$ is not relatively universal, ultimately because the Weyl embedding of $\Gamma = C_{n,1}(\FF)$, which embeds $\Gamma$ in $\PG(2n-1,\FF)$ as a symplectic polar space, is a proper projection of an (absolutely universal) embedding which maps $\Gamma$ as a quadric in $\PG(d+2n-1,\FF)$, where $d$ is the degree of $\FF$ over $\FF^2 = \{t^2\}_{t\in \FF}$. If moreover $\FF$ is perfect then $C_n(\FF) = B_n(\FF)$. In this case we might state that  the natural embeddings of the Lie geometries obtained from the building $\Delta = B_n(\FF) = C_n(\FF)$ are the Weyl embeddings as defined according to the $B_n$-setting. However this doesn't remove all incoveniences. Indeed the Weyl embedding of the dual polar space $\Delta_n$ in the $B_n$-setting (namely the so-called {\em spin embedding}) is a proper projection of the Weyl embedding of $\Delta_n$ in the $C_n$-setting (which is absolutely universal, provided that $|\FF| > 2$).  

More counterexamples certainly exist but, according to the above optimistic belief, they cannot be too many. However this is definitely wrong.

For instance, suppose that the field $\FF$ admits a non-trivial automorphism. Then, as proved in \cite{Pas}, for every $n \geq 2$, no absolute embedding exists for the long-root geometry $A_{n,\{1,n\}}(\FF)$. Most likely, the same conclusion holds true for every Chevalley geometry $\Delta_J$ of multiple type defined over $\FF$. It certainly holds true when $J$ contains two types which are not joined by a multiple stroke in the Coxeter diagram (Giuzzi and Pasini \cite{GP}). 

In contrast, it follows from the main result of Kasikova and Shult \cite{KS} that every Chevalley geometry of simple type admits the absolute embedding. This result naturally leads to the next question: is the natural embedding of a Chevalley geometry relatively universal (hence absolutely universal if the absolute embedding exists)? 

This is indeed the case for many Chevalley geometries (see e.g. \cite{CGP1}), but not all of them. For instance, generalizing a result of Smith and V\"{o}lkein \cite{SV} quoted above, Cardinali, Giuzzi and Pasini  \cite{CGP2} have proved that, for any $n \geq 2$, the adjoint embedding of the long-root geometry $A_{n,\{1,n\}}(\FF)$ is relatively universal if and only if the field $\FF$ is either perfect of positive characteristic or algebraic over it minumum subfied. They also provide an explicit description of the universal cover of the adjoint embedding of $A_{n,\{1,n\}}(\FF)$. We shall recall that description in Section  \ref{A}. Here we only mention that, if $\hee$ is the universal cover of the adjoint embedding $e$ of $\AnK$, then  
\begin{equation}\label{univ-agg}
\dim(\hee) ~ = ~ \drk(\FF)+\dim(e)
\end{equation}
where $\drk(\FF)$ is the derivation rank of $\FF$ as defined in Appendix \ref{AppDer}. Hence the adjoint embedding of $\AnK$ is relatively universal if and only $\drk(\FF) = 0$, namely $\FF$ is either perfect of positive characteristic or algebraic over its minumum subfied, as mentioned above.

In this paper we shall prove that a similar result holds for long-root geometries of orthogonal type $B_{n,2}(\FF)$ with $n > 2$ or $D_{n,2}(\FF)$. Explicitly, let $\Gamma$ be the long-root geometry of type $B_{n,2}(\FF)$ with $n > 2$ or $D_{n,2}(\FF)$ and let $\hee$ be the universal cover of the adjoint embedding $e$ of $\Gamma$. Then $\dim(\hee)$ and $\dim(e)$ satisfy equation (\ref{univ-agg}). In particular, $\hee = e$ iff $\drk(\FF) = 0$. 

Recall that, in view of \cite{KS}, both $B_{n,2}(\FF)$ with $n > 2$ and $D_{n,2}(\FF)$ admit the absolutely universal embedding. Hence $\hee$, to be described in Section \ref{univ BD}, is indeed the absolute embedding of $\Gamma$. 

The similarity between the above result and the main result of \cite{CGP2} is very deep; it goes beyond the mere statements and also affects the proofs. Indeed the proofs of these two results follow the same general pattern, to be presented in Section \ref{GenConstr}. 

The rank 2 case $B_{2,2}(\FF)$ is not considered in this paper because $B_{2,2}(\FF) = C_{2,1}(\FF)$ and, as explained at the end of Subsection \ref{Weyl Sec}, the adjoint embedding of $C_{n,1}(\FF)$ is veronesean rather than projective. As was said in that subsection, I prefer to leave veronesean embeddings of $C_{n,1}(\FF)$ out of the scope of this paper. I'll keep them for future work. 

An investigation of the absolute embeddings of the long-root geometries $E_{6,2}(\FF)$, $E_{7,1}(\FF)$ and $E_{8,8}(\FF)$ has been undertaken by H. Van Maldeghem and S. Busch. From the little I know about the progress of their work, equality (\ref{univ-agg}) seems to hold true for these geometries too. Anyway, as implicit in the main result of V\"{o}lkein \cite{V2}, when $\drk(\FF) = 0$ formula (\ref{univ-agg}) (which in this case boils down to $\hee = e$) holds true for every long-root geometry defined over $\FF$ and different from $C_{n,1}(\FF)$, which is not considered in \cite{V2}. 

I don't know if there are Chevalley geometries $\Gamma$ other than long-root geometries where something like (\ref{univ-agg}) holds. On the other hand, a number of Chevalley geometries are known that are not long-root geometries and whose natural embeddings are absolutely universal. This is the case for $A_{n,k}(\FF)$ for any $k$, $B_{n,1}(\FF)$, $B_{n,n}(\FF)$ with $\ch(\FF) \neq 2$, $D_{n,k}(\FF)$ with $k\in \{1, n-1, n\}$, $C_{n, k}(\FF)$ with $k < n$ and $\ch(\FF)\neq 2$,  $C_{n,n}(\FF)$ with $|\FF| > 2$, $E_{6,k}(\FF)$ with $k\in \{1, 6\}$, $E_{7,7}(\FF)$ and $\FF_{4,4}(\FF)$; see e.g. Cooperstein \cite{Coop3} (for a survey updated at 2003, which covers all cases but $C_{n,k}$ with $k < n$ and $F_{4,4}$), Blok \cite{B} (for $C_{n,k}$) and De Schepper, Schillewaert and Van Maldeghem \cite{DSSVM}) (for $F_{4,4}$). In all of these cases the conclusion is obtained by showing that the geometry under consideration can be generated (by iteratively stretching lines through pairs of collinear points) starting from a finite set of points of size equal to the dimension of the natural embedding.      

\section{Derivations, $1$-cocycles and embeddings}\label{GenConstr}  

First of all we recall some notation from Appendix \ref{AppDer}. Given a field $\FF$, the space of the derivations of $\FF$ is called $\Der(\FF)$ and, given a derivation basis $\Omega$ of $\FF$, for every $\omega\in \Omega$ we denote by $d_\omega$ the derivation that maps $\omega$ onto $1$ and $\Omega\setminus\{\omega\}$ onto $\{0\}$. The set $\{d_\omega\}_{\omega\in\Omega}$ is an independent subset of $\Der(\FF)$; the subspace of $\Der(\FF)$ spanned by this set is called $\Der_\Omega(\FF)$. As we know from Appendix \ref{AppDer}, the space $\Der(\FF)$ is naturally isomorphic to the dual $\Der_\Omega(\FF)^*$ of $\Der_\Omega(\FF)$. In view of this isomorphism, every derivation $d$ can we represented as a sum $d = \sum_{\omega\in\Omega}\lambda_\omega d_\omega$ where $\lambda_\omega = d(\omega)$ for every $\omega\in\Omega$; conversely every sum $\sum_{\omega\in\Omega}\lambda_\omega d_\omega$ where $\lambda_\omega\in\FF$ for every $\omega\in\Omega$ represents a unique derivation of $\FF$. Note that a sum as $\sum_{\omega\in\Omega}\lambda_\omega d_\omega$ makes sense even when $\Omega$ is infinite. Indeed for every $t\in\FF$ we have $d_\omega(t) = 0$ for all but at most finitely many choices of $\omega\in\Omega$. Hence $\sum_{\omega\in\Omega}\lambda_\omega d_\omega(t)$ is always a finite sum, for every $t\in\FF$.     

Having recalled the above, we can get to the topic of this section. Throughout this section $\FF$ is a given field, $V$ an $\FF$-vector space, $\Gamma = (P, L)$ a given point-line geometry, $G$ a group acting possibly unfaithfully on $\Gamma$ as a subgroup of $\Aut(\Gamma)$ and $e:\Gamma\rightarrow V$ a linearly $G$-homogeneous projective embedding of $\Gamma$ in $V$. So, $G$ also acts (possibly unfaithfully) on $V$, thus equipping $V$ with a $G$-module structure. To fix ideas, we assume that $G$ acts on $V$ at right. Accordingly, for $v\in V$ and $g\in G$, we denote by $v\cdot g$ the image of $v$ by $g$. So, if $v\neq 0$ and the point $\langle v\rangle$ of $\PG(V)$ spanned by $v$ belongs to $e(\Gamma)$, say $\langle v\rangle = e(p)$ for some point $p \in P$, then $\langle v\cdot g\rangle = e(g(p))$. 
 
As $V$ is a right $G$-module, its dual $V^*$ is a right $G$-module as well. So, we can consider the $\FF$-space $Z^1(V^*,G)$ of the $1$-cocycles of $G$ over $V^*$ and its subspace $B^1(V^*,G)$ formed by the $1$-coboundaries. Suppose there exists an injective $\FF$-linear mapping $\gamma$ from $\Delta(\FF)$ to $Z^1(V^*,G)$ which satisfies both the following: 
\begin{itemize}
\item[$(\gamma1)$] the $\gamma$-image $\gamma(\Der(\FF))$ of $\Der(\FF)$ is a complement of $B^1(V^*,G)$ in $Z^1(V^*,G)$ (hence $\gamma$ induces an isomorphism from $\Der(\FF)$ to the first cohomology group $H^1(V^*,G)$ of $G$ over $V^*$).  
\item[$(\gamma2)$] Given a derivation basis $\Omega$ of $\FF$, for every $\omega\in \Omega$ let $\gamma_\omega$ be the $\gamma$-image of $d_\omega$. Then for every $g\in G$ we have $\gamma_\omega(g) = 0 \in V^*$ for all but possibly finitely many choices of $\omega\in \Omega$.
\end{itemize}
With $V_\Omega := \Der_\Omega(\FF)\oplus V$ we can define an action $\cdot_\gamma$ of $G$ on $V_\Omega$ as follows: 
\begin{equation}\label{Intro1}
g\in G~:~ d\oplus v  ~ \longrightarrow ~ (d\oplus v)\cdot_\gamma g ~ := ~ (d + \sum_{\omega\in\Omega}\langle \gamma_\omega(g^{-1}),  v) d_\omega\rangle\oplus v\cdot g 
\end{equation}
where $v\in V$, $d\in \Der_\Omega(\FF)$,  $v\cdot g$ is the image of $v$ by $g$ in the $G$-module $V$ and by $(\langle\gamma_\omega(g^{-1}), v\rangle$ we mean the value taken by the linear functional $\gamma_\omega(g^{-1})$ at $v$.  Note that the sum $\sum_{\omega\in\Omega}\langle \gamma_\omega(g^{-1}), v\rangle d_\omega$ is well defined even when $\Omega$ is infinite, thanks to property $(\gamma2)$ of $\gamma$. 

For every $g\in G$, the mapping $g:V_\Omega\rightarrow V_\Omega$ that maps every $x\in V_\Omega$ onto $x\cdot_\gamma g$ is obviously linear. Note also  that $\gamma_\omega(g_2^{-1}g_1^{-1}) = \gamma_\omega(g_2^{-1})\cdot g_1^{-1} + \gamma_\omega(g_1^{-1})$ by the cocycle condition on $\gamma_\omega\in Z^1(V^*,G)$. Therefore
\begin{equation}\label{intro3}
\langle \gamma_\omega(g_2^{-1}g_1^{-1}), v\rangle ~=~ \langle \gamma_\omega(g_2^{-1}), v\cdot g_1\rangle +  \langle \gamma_\omega(g_2^{-1}), v\rangle.
\end{equation}  
Hence $(d\oplus v)\cdot_\gamma g_2g_1 = ((d\oplus v)\cdot_\gamma g_2)\cdot_\gamma g_1$. So, the operation $\cdot_\gamma: V_\Omega\times G \rightarrow V_\Omega$ defined by (\ref{Intro1}) yields indeed a right group action of $G$ on $V_\Omega$, thus equipping $V_\Omega$ with a right $G$-module structure. We shall denote this structure by $V_{\Omega,\gamma}$ in order to recall that in general it depends on the particular choice of $\gamma$. 

Note that $G$ acts trivially on the subspace $\Der_\Omega(\FF)$ of $V_\Omega$. So,  $V_{\Omega,\gamma}$ is a central extension of the $G$-module $V$ by the trivial $G$-module $\Der_\Omega(\FF)$. 

\begin{lemma}\label{nuovo1}
The set $X_\gamma := \{\sum_{\omega\in\Omega}\langle \gamma_\omega(g^{-1}), v\rangle d_\omega\}_{g\in G, v\in V}$ spans $\Der_\Omega(\FF)$. Moreover, for every proper subspace $K$ of $\Der_\Omega(\FF)$, the quotient $V_{\Omega, \gamma}/K$ is a central non-split extension of the $G$-module $V$.  
\end{lemma}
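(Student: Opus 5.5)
The plan is to derive both claims from property $(\gamma1)$, namely that no nonzero element of $\gamma(\Der(\FF))$ is a coboundary. For the first claim, suppose $X_\gamma$ spans a proper subspace $S$ of $\Der_\Omega(\FF)$. Since $\Der(\FF)\cong\Der_\Omega(\FF)^*$, we can choose a nonzero functional on $\Der_\Omega(\FF)$ vanishing on $S$. It is given by a derivation $d=\sum_\omega\lambda_\omega d_\omega\neq 0$, with pairing $\langle d, d_\omega\rangle=\lambda_\omega$. Applying it to the element $\sum_\omega\langle\gamma_\omega(g^{-1}),v\rangle d_\omega$ gives $\langle \sum_\omega\lambda_\omega\gamma_\omega(g^{-1}), v\rangle$. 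This sum is finite by $(\gamma2)$, and it equals $\langle\gamma(d)(g^{-1}),v\rangle$ because $\gamma$ is linear.

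At this point one has to check that $\gamma(d)=\sum_\omega\lambda_\omega\gamma_\omega$ holds even for infinite sums. I expect this to be the main technical obstacle, and I would handle it in one of two ways. One option is to read $(\gamma2)$ together with the definition of $\gamma$ on general derivations (``$\gamma$ commutes with these pointwise-finite sums''), which is implicit in how formula (\ref{Intro1}) is set up. The other is to work only with $d\in\Der_\Omega(\FF)$, i.e. finitely supported $d$, which suffices if we pair via $\Der_\Omega(\FF)^*\cong\Der(\FF)$ correctly. I would state the first interpretation explicitly as the intended meaning. Granting it, $\gamma(d)(g)=0$ for all $g$, so $\gamma(d)=0\in B^1$. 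This contradicts the injectivity of $\gamma$ (or $(\gamma1)$), so $X_\gamma$ spans.

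For the second claim, let $K$ be a proper subspace. Then $V_{\Omega,\gamma}/K$ is $(\Der_\Omega(\FF)/K)\oplus V$, with $G$ trivial on the first summand and acting via the induced cocycle. Hence it is a central extension of $V$. Suppose it splits, i.e. there is a $G$-invariant complement to $\Der_\Omega(\FF)/K$. Such a complement is the graph $\{\phi(v)+K\oplus v\}$ of a linear map $\phi:V\to\Der_\Omega(\FF)/K$. Invariance gives
\[
\phi(v\cdot g)=\phi(v)+\sum_\omega\langle\gamma_\omega(g^{-1}),v\rangle d_\omega \pmod K
\]
for all $g\in G$ and $v\in V$.

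Now pick a nonzero derivation $d$ annihilating $K$; it exists since $K$ is proper. Composing with $d$, the functional $f:=d\circ\phi\in V^*$ satisfies $\langle\gamma(d)(g^{-1}),v\rangle=f(v\cdot g)-f(v)$. This says that $\gamma(d)(g^{-1})=f\cdot g^{-1}-f$, up to the sign and side convention of the right dual action. So $\gamma(d)$ is a coboundary. By $(\gamma1)$ this forces $\gamma(d)=0$, and injectivity of $\gamma$ then forces $d=0$, a contradiction. Thus the extension is non-split.

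Note that the first claim is essentially the special case where we ask that the extension not be split with complement containing everything. Note also that the case $K=0$ of the second claim, together with the first claim, shows the extension is ``as non-split as possible.''
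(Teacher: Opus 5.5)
Your proof is correct and follows the paper's argument. In both parts you pick a nonzero derivation $d$ whose functional vanishes on the span of $X_\gamma$ (respectively on $K$), and deduce that $\gamma_d$ would be zero (respectively a coboundary), contradicting $(\gamma1)$ and the injectivity of $\gamma$; the paper phrases this through the non-split extension $V_{d,\gamma}$ of Theorem~\ref{thm link}, whereas you verify the splitting/coboundary criterion directly.

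The identity $\gamma_d=\sum_\omega\lambda_\omega\gamma_\omega$ for infinitely supported $d$ is used implicitly in the paper as well (``$\gamma$ is linear and $d=\sum_{\omega}\lambda_\omega d_\omega$''), so your first interpretation is the intended one. Your alternative of working only with finitely supported $d\in\Der_\Omega(\FF)$ would not suffice, because the annihilator of a proper subspace of $\Der_\Omega(\FF)$ need not contain any nonzero finitely supported functional.
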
 
{\em Proof.} By way of contradiction, suppose that $X_\gamma$ is contained in a hyperplane $H$ of $\Der_\Omega(\FF)$. Let $\lambda$ be a linear functional of $\Der_\Omega(\FF)$ with $\Ker(\lambda)  = H$ and, with $\lambda_\omega := \lambda(d_\omega)$ for every $\omega\in\Omega$, let $d = \sum_{\omega\in\Omega}\lambda_\omega d_\omega$ be the derivation corresponding to $\lambda$ in the natural isomorphism $\Der(\FF)\cong \Der_\Omega(\FF)^*$. In view of hypothesis $(\gamma1)$, the derivation $d$ corresponds to a $1$-cocycle $\gamma_d\not \in B^1(V^*,G)$ and, by Theorem \ref{thm link} of Appendix \ref{teoria}, the 1-cocycle $\gamma_d$ defines on $\FF\times V$ a non-split central extension $V_{d,\gamma}$ of the $G$-module $V$ where $G$ acts as follows:
\begin{equation}\label{Intro4}
g\in G~ ;~ (t, v)\in \FF\times V ~ \rightarrow ~ (t+\langle \gamma_d(g^{-1}), v\rangle, ~ v\cdot g).
\end{equation} 
However $\gamma$ is linear and $d = \sum_{\omega\in\Omega}\lambda_{\omega}d_\omega$. Hence
\[\begin{array}{rcccl}
\langle \gamma_d(g^{-1}), v\rangle &= & \sum_{\omega\in\Omega}\langle\gamma_\omega(g^{-1}), v\rangle\lambda_\omega & = & \\
 & = & \sum_{\omega\in\Omega}\langle\gamma_\omega(g^{-1}), v\rangle\lambda(d_\omega) & = & \lambda(\sum_{\omega\in\Omega}\langle\gamma_\omega(g^{-1}), v\rangle d_\omega) 
\end{array}\]
and $\lambda(\sum_{\omega\in\Omega}\langle\gamma_\omega(g^{-1}, v\rangle d_\omega) = 0$ because $\sum_{\omega\in\Omega}\langle\gamma_\omega(g^{-1}, v\rangle d_\omega$ belongs to $X_\gamma$, which is contained in $H = \Ker(\lambda)$. So, $\{0\}\times V$ is $G$ invariant in $V_{d, \gamma}$ while, as said above, $V_{d, \gamma}$ is a non-split extension of $V$.  We have reached a contradiction. The first part of the lemma is proved.

Turning to the second part, as $G$ centralizes $\Der_\Omega(\FF)$, for every proper subspace $K$ of $\Der_\Omega(\FF)$ the quotient $V_{\Omega, \gamma}/K$ is a $G$-module, indeed a central extension of the $G$-module $V$. By way of contradiction, suppose that the extension $V_{\Omega,\gamma}/K$ splits for some proper subspace $K$ of $ \Der_\Omega(\FF)$. Then the extension $V_{\Omega,\gamma}/H$ splits as well, for every hyperplane $H$ of $\Der_\Omega(\FF)$ containing $W$. This conclusion contradicts Theorem \ref{thm link}, as we can see by an argument similar to the one used in the first part of this proof.  \hfill $\Box$ 

\begin{definition}\label{semi-trans}
\em
We say that $G$ acts {\em semi-flag-transitively} on $\Gamma$ if acts transitively on the point-set $P$ of $\Gamma$ and, for every line $\ell$ of $\Gamma$, the stabilizer $G_\ell$ of $\ell$ in $G$ acts transitively on the set of points of $\ell$. 
\end{definition}

Clearly, $G$ is flag-transitive on $\Gamma$ if and only if it is semi-flag-transitive and admits just one orbit on the line-set $L$ of $\Gamma$. Throughout the rest of this subsection we assume that $G$ is semi-flag-transitive on $\Gamma$ and that $\gamma$ also satisfies the following:

\begin{itemize}
\item[$(\gamma3)$] There is a point $p_0\in\Gamma$ such that, if $g \in G$ fixes $p_0$ and $v_0$ is a representative vector of $e(p_0)$, then $\langle \gamma_\omega(g), v_0\rangle = 0$ for every $\omega\in\Omega$. 
\end{itemize}
With $p_0$ as above, let $e_{\gamma, p_0}$ be the mapping from the point-set $P$ of $\Gamma$ to the set of points of $\PG(V_\Omega)$ defined as follows:  for every $p \in P$, let $g\in G$ map $p_0$ onto $p$ (such an element always exist since, by assumption, $G$ is transitive on $P$); then, by definition, $e_{\gamma, p_0}(p)$ is the point of $\PG(V_\Omega)$ represented by the following vector of $V_\Omega$, where $v_0\in V$ is a representative vector of $e(p_0)$:  
\begin{equation}\label{Intro2}
(\sum_{\omega\in\Omega}\langle \gamma_\omega(g^{-1}), v_0\rangle d_\omega)\oplus (v_0\cdot g) ~ = ~ v_0\cdot_\gamma g. 
\end{equation}

\begin{lemma}\label{nuovo2}
The mapping $e_{\gamma, p_0}$ is well defined and injective. Morever, with $v_0$ as above, the set $X_{\gamma, v_0} := \{\sum_{\omega\in\Omega}\langle \gamma_\omega(g^{-1}), v_0\rangle d_\omega\}_{g\in G}$ spans $\Der_\Omega(\FF)$ and the $e_{\gamma, p_0}$-image $e_{\gamma, p_0}(P)$ of the point-set $P$ of $\Gamma$ spans $\PG(V_\Omega)$. 
\end{lemma}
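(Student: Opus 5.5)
The plan has four steps: well-definedness, injectivity, spanning of $\Der_\Omega(\FF)$ by $X_{\gamma,v_0}$, and spanning of $\PG(V_\Omega)$ by the image.

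\emph{Well-definedness.} Suppose $g,h\in G$ both map $p_0$ onto $p$. Then $k:=hg^{-1}$ fixes $p_0$, so $v_0\cdot k=\mu v_0$ for some $\mu\in\FF^*$, and $h=kg$. Since $\cdot_\gamma$ is a right action, $v_0\cdot_\gamma h=(v_0\cdot_\gamma k)\cdot_\gamma g$. By (\ref{Intro1}),
\[
v_0\cdot_\gamma k \;=\; \Bigl(\sum_{\omega\in\Omega}\langle\gamma_\omega(k^{-1}),v_0\rangle d_\omega\Bigr)\oplus \mu v_0 .
\]
The element $k^{-1}$ also fixes $p_0$, so $(\gamma3)$ makes the first summand vanish. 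Hence $v_0\cdot_\gamma k=\mu v_0$, and therefore $v_0\cdot_\gamma h=\mu(v_0\cdot_\gamma g)$. Both elements give the same projective point. Replacing $v_0$ by a scalar multiple rescales the vector by linearity, so the definition does not depend on that choice either.

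\emph{Injectivity.} The projection $V_\Omega\to V$ onto the second summand sends $v_0\cdot_\gamma g$ to $v_0\cdot g$, which represents $e(p)$. So the composite of $e_{\gamma,p_0}$ with this projection is $e$, which is injective. Hence $e_{\gamma,p_0}$ is injective.

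\emph{Spanning of $\Der_\Omega(\FF)$ by $X_{\gamma,v_0}$.} I expect this to be the main step. The idea is to reduce to Lemma \ref{nuovo1}, which says that $X_\gamma$ spans. Since $e(P)$ spans $\PG(V)$ and $G$ is transitive on $P$, every $v\in V$ is a finite combination $\sum_i \mu_i\, v_0\cdot g_i$. For fixed $g$, the map
\[
v\;\mapsto\;\sum_{\omega\in\Omega}\langle\gamma_\omega(g^{-1}),v\rangle d_\omega
\]
is linear. Therefore $\sum_\omega\langle\gamma_\omega(g^{-1}),v\rangle d_\omega=\sum_i\mu_i\sum_\omega\langle\gamma_\omega(g^{-1}),v_0\cdot g_i\rangle d_\omega$. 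By (\ref{intro3}) applied with $g_1=g_i$, $g_2=g$, each term $\langle\gamma_\omega(g^{-1}),v_0\cdot g_i\rangle$ equals
\[
\langle\gamma_\omega(g^{-1}g_i^{-1}),v_0\rangle-\langle\gamma_\omega(g_i^{-1}),v_0\rangle .
\]
Both pieces have the form $\langle\gamma_\omega(h^{-1}),v_0\rangle$ with $h=g_ig$ and $h=g_i$ respectively. So every element of $X_\gamma$ is a linear combination of elements of $X_{\gamma,v_0}$. The point to check carefully is the exact order and sign conventions in (\ref{intro3}) for a right action. Whichever convention holds, the cocycle identity always writes $\gamma_\omega(a)\cdot b$ as a difference of values of $\gamma_\omega$ at products, so the reduction survives. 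Lemma \ref{nuovo1} then gives that $X_{\gamma,v_0}$ spans.

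\emph{Spanning of $\PG(V_\Omega)$.} Let $S$ be the span of the vectors $v_0\cdot_\gamma g$. Projecting onto $V$, $S$ maps onto the span of $e(P)$, which is all of $V$. It therefore suffices to show $\Der_\Omega(\FF)\subseteq S$. The subspace $S$ is $G$-invariant, being spanned by a $G$-orbit under $\cdot_\gamma$. Take a finite combination $\sum_i\mu_i\,v_0\cdot g_i=0$ in $V$. The corresponding combination $\sum_i\mu_i\,v_0\cdot_\gamma g_i$ lies in $S\cap\Der_\Omega(\FF)$; it equals $\sum_i\mu_i\sum_\omega\langle\gamma_\omega(g_i^{-1}),v_0\rangle d_\omega$. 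It is therefore enough to show that $S\cap\Der_\Omega(\FF)$ is all of $\Der_\Omega(\FF)$. If not, pick a proper subspace $K\supseteq S\cap\Der_\Omega(\FF)$, namely a hyperplane containing it. In $V_{\Omega,\gamma}/K$, the image of $S$ meets $\Der_\Omega(\FF)/K$ trivially and projects onto $V$. So it is a $G$-invariant complement, and the extension $V_{\Omega,\gamma}/K$ splits. This contradicts the second part of Lemma \ref{nuovo1}.
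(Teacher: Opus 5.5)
Your proposal is correct and follows essentially the same route as the paper. Well-definedness comes from $(\gamma3)$ together with the cocycle identity, which you package as the fact that $\cdot_\gamma$ is a right action. Injectivity comes from projecting onto $e$. The spanning of $X_{\gamma,v_0}$ comes from reducing $X_\gamma$ to it via (\ref{intro3}) and Lemma \ref{nuovo1}. The spanning of $\PG(V_\Omega)$ comes from showing that otherwise the span would give a $G$-invariant complement in a hyperplane quotient. The only differences are cosmetic: at the last step you cite the second part of Lemma \ref{nuovo1} directly, whereas the paper unwinds it through $V_{d,\gamma}$ and Theorem \ref{thm link}, and your remark about combinations $\sum_i\mu_i\,v_0\cdot g_i=0$ is not needed.
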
 
{\em Proof.} For a given $p\in P$, let $g_1, g_2\in G$ be such that $g_1(p_0) = g_2(p_0) = p$. Then $g_1 = g_0g_2$ for some $g_0$ in the stabilizer $G_{p_0}$ of $p_0$ in $G$. (Note that, since $G$ acts at right on $V$, the product $g_0g_2$ is the composition $g_2\circ g_0$, where $g_0$ is applied first and $g_2$ second.) Accordingly, $\sum_{\omega\in\Omega}\langle \gamma_\omega(g_1^{-1}), v_0\rangle d_\omega = \sum_{\omega\in\Omega}\langle \gamma_\omega(g_2^{-1}g_0^{-1}), v_0\rangle d_\omega$. However 
\[\langle \gamma_\omega(g_2^{-1}g_0^{-1}), v_0\rangle ~=~ \langle \gamma_\omega(g_2^{-1}), v_0\cdot g_0\rangle +  \langle \gamma_\omega(g_0^{-1}), v_0\rangle\]
by the cocyle condition (compare (\ref{intro3})). Moreover, $v_0\cdot g_0 = tv_0$ for some scalar $t\in \FF$ and $\langle \gamma_\omega(g_0^{-1}), v_0\rangle = 0$ by propery $(\gamma3)$. Hence $\sum_{\omega\in\Omega}\langle \gamma_\omega(g_1^{-1}), v_0\rangle d_\omega = t\cdot \sum_{\omega\in\Omega}\langle \gamma_\omega(g_2^{-1}), v_0\rangle d_\omega$, namely $v_0\cdot_\gamma g_1 =  tv_0\cdot_\gamma g_2$. Thus, we have proved that the mapping $e_{\gamma, p_0}$ is well defined. 

The injectivity of $e_{\gamma, p_0}$ follows from the injectivity of $e$. In order to prove that $X_{\gamma, v_0}$ spans $\Der_\Omega(\FF)$, note that  by (\ref{intro3}) the sum $\sum_{\omega\in\Omega}\langle \gamma_\omega(g_2^{-1}), v_0\cdot g_1\rangle d_\omega$ belongs to the span of $X_{\gamma, v_0}$ for any choice of $g_1, g_2\in G$. The conclusion follows from Lemma \ref{nuovo1}, the transitivity of $G$ on $P$ and the fact that, since $e$ embeds $\Gamma$ in $\PG(V)$, the set $e(P)$ spans $\PG(V)$. It remains to prove that $e_{\gamma, p_0}(P)$ spans $\PG(V_\Omega)$, namely the set 
\[Y_{\gamma, v_0} := \{v_0\cdot_\gamma g\}_{g\in G} ~ = ~ \{(\sum_{\omega\in\Omega}\langle \gamma_\omega(g^{-1}), v_0\rangle d_\omega)\oplus(v_0\cdot g)\}_{g\in G}\] 
spans $V_\Omega$.  Let $V_{\gamma, p_0}$ be the subspace of $V_\Omega$ spanned by $Y_{\gamma, v_0}$. Obviously $G$ stabilizes $V_{\gamma, p_0}$, which thus inherits a $G$-module structure from $V_{\Omega, \gamma}$. 

The intersection $K := \Der_\Omega(\FF)\cap V_{\gamma, p_0}$ is the kernel of the mapping induced on $V_{\gamma, p_0}$ by the projection $\pi$ of $V_\Omega$ onto $V$ along $\Der_\Omega(\FF)$. Clearly, $\pi$ maps $V_{\gamma, p_0}$ onto $V$. We shall prove that $K = \Der_\Omega(\FF)$, namely $\Der_\Omega(\FF) \subseteq V_{\gamma, p_0}$. By way of contradiction, suppose that $\Der_\Omega(\FF) \not\subseteq V_{\gamma, p_0}$ and let $H$ be a hyperlane of $\Der_\Omega(\FF)$ containing $K$. We have $(H+V_{\gamma, p_0})\cap\Der_\Omega(\FF) = H$, hence $W := H+V_{\gamma, p_0}$ is a $G$-invariant proper subspace of $V_\Omega$. Accordingly, $W/H$ is a $G$-invariant subspace of $V_\Omega/H$. The $G$-module $V_{\Omega,\gamma}$ induces a $G$-module structure $V_{\Omega,\gamma}/H$ on $V_\Omega/H$. The latter is a central extension of the $G$-module $(V+H)/H\cong V$ by the $1$-dimensional trivial module $\Der_\Omega(\FF)/H$. The projection of $V_\Omega/H$ onto $(V+H)/H$ along $\Der_\Omega(\FF)/H$ maps the $G$-invariant proper subspace $W/H$ of $V_\Omega/H$ onto $(V+H)/H$. On the other hand, if $d$ is the derivation corresponding to $H$ then, as a $G$-module, $V_\gamma/H$ is isomorhic to the central non-split extension $V_{d,\gamma}$ of $V$ considered in the fist part of the proof of Lemma \ref{nuovo1}. Recall that $V_{d,\gamma}$ is defined over $\FF\oplus V$ via the $1$-cocycle $\gamma_d$ associated to $d$ as explained in (\ref{Intro4}). (Here, for ease of exposition, I prefer a direct sum setting to the direct product setting chosen in (\ref{Intro4}).) 

The isomorphism of $G$-modules from $V_{\Omega,\gamma}/H$ to $V_{d,\gamma}$ maps $\Der_\Omega(\FF)/H$ onto $\FF$ and $(V+H)/H$ onto $V$. In particular, it maps $v_0+H$ onto $v_0\in V$. The quotient $W/H$ is a $G$-invariant proper subspace of $V_{\Omega,\gamma}/H$ and meets $\Der_\Omega(\FF)/H$ trivally. It follows that $v_0$ is contained in a $G$-invariant proper subspace $X$ of $V_{d,\gamma}$, namely in the image of $W/H$ via the isomorphism from $V_{\Omega,\gamma}/H$ to $V_{d,\gamma}$. The set $\{v_0\cdot g\}_{g\in G}$ spans $V$. Therefore the projection of $V_{d,\gamma}$ onto $V$ along $\FF$ maps $X$ onto $V$. So, $X$ is a $G$-invariant complement of $\FF$ in $V_{d, \gamma}$.  Thus the $G$-module $V_{d, \gamma}$ splits as the direct sum of the trivial submodule $\FF$ and the submodule $X\cong V$. This conclusion contradicts the statement of Theorem \ref{thm link}

Therefore $V_{\gamma, p_0}$ contains $\Der_\Omega(\FF)$. Accordingly, the projection of $V_{\Omega}$ onto $V$ along $\Der_\Omega(\FF)$ induces an isomorphism from $V_{\gamma, p_0}/\Der_\Omega(\FF)$ to a subspace $X$ of $V$ containing $v_0$. However, the $G$-orbit $\{v_0\cdot g\}_{g\in G}$ of $v_0$ spans $V$. Consequently $X = V$, namely $V_{\gamma, p_0} = V_\Omega$.    \hfill $\Box$. 

\begin{lemma}\label{nuovo3}
Suppose that $\gamma$ also satisfies the following besides $(\gamma 1)$, $(\gamma 2)$ and $(\gamma 3)$:

\begin{itemize} 
\item[$(\gamma4)$] With $p_0$ and $v_0$ as in $(\gamma 3)$, for every orbit $O$ of $G$ on the line-set $L$ of $\Gamma$ there exists a line $\ell\in O$ through $p_0$ such that, if $g\in G$ stabilizes $\ell$ then $\langle \gamma_\omega(g^{-1}), v_0\rangle = 0$ for every $\omega\in \Omega$.
\end{itemize}
Then $e_{\gamma, p_0}$ is a linearly $G$-homogeneous embedding of $\Gamma$ in $V_\Omega$, the $G$-module defined on $V_\Omega$ by the lifting of $G$ to $V_\Omega$ through $e_{\gamma, p_0}$ being the same as $V_{\Omega,\gamma}$. Moreover $e_{\gamma, p_0}$ covers $e$ and the  projection of $V_\Omega$ onto $V$ along $\Der_\Omega(\FF)$ projects $e_{\gamma, p_0}$ onto $e$.  
\end{lemma}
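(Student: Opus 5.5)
By Lemma \ref{nuovo2}, $e_{\gamma, p_0}$ is already a well-defined injective map whose image spans $\PG(V_\Omega)$. So what remains is to show that lines of $\Gamma$ go onto lines of $\PG(V_\Omega)$, and then to read off homogeneity and the covering relation.

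The compatibility with $G$ comes first, since everything else rests on it. If $p = h(p_0)$, then $e_{\gamma,p_0}(p) = \langle v_0\cdot_\gamma h\rangle$. For $g\in G$ we have $g(p) = (hg)(p_0)$ in the composition convention used in the proof of Lemma \ref{nuovo2}, so $e_{\gamma,p_0}(g(p)) = \langle v_0\cdot_\gamma hg\rangle = \langle (v_0\cdot_\gamma h)\cdot_\gamma g\rangle$, because $\cdot_\gamma$ is a right action. Thus the linear maps $x\mapsto x\cdot_\gamma g$ satisfy $e_{\gamma,p_0}\circ g = \mathrm{pg}(\cdot_\gamma g)\circ e_{\gamma,p_0}$ on points. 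In the same way, the projection $\pi: V_\Omega\to V$ along $\Der_\Omega(\FF)$ sends $v_0\cdot_\gamma h$ to $v_0\cdot h$, so $\mathrm{pg}(\pi)\circ e_{\gamma,p_0} = e$ wherever defined; in particular $\pi$ never kills a point of the image.

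Next, the line condition. Let $m$ be any line and $O$ its $G$-orbit. By $(\gamma4)$, pick $\ell\in O$ through $p_0$ with the stated vanishing property, and write $m = k(\ell)$ for some $k\in G$. Since $e_{\gamma,p_0}\circ k = \mathrm{pg}(\cdot_\gamma k)\circ e_{\gamma,p_0}$ by the previous step, it suffices to treat $\ell$. By semi-flag-transitivity, every point $q\in\ell$ equals $g(p_0)$ for some $g\in G_\ell$. Since $G_\ell$ is a group, $g^{-1}$ also stabilizes $\ell$, so by $(\gamma4)$ applied to $g^{-1}$ we get $\langle\gamma_\omega(g^{-1}),v_0\rangle = 0$ for all $\omega$. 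Hence $e_{\gamma,p_0}(q) = \langle 0\oplus v_0\cdot g\rangle$. It follows that $e_{\gamma,p_0}$ restricted to $\ell$ coincides with $e$ restricted to $\ell$, once $V$ is identified with $0\oplus V\subseteq V_\Omega$. As $e(\ell)$ is a full projective line of $\PG(V)$, the set $e_{\gamma,p_0}(\ell)$ is a projective line of $\PG(V_\Omega)$. This step is where $(\gamma4)$ is used, and the only subtlety is making sure the inverse is handled correctly: $(\gamma4)$ refers to $g^{-1}$ while the formula (\ref{Intro2}) uses $\gamma_\omega(g^{-1})$, and both are consistent precisely because $G_\ell$ is closed under inverses.

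Finally, combining the steps gives all the claims. Together with Lemma \ref{nuovo2}, $e_{\gamma,p_0}$ is a projective embedding. The first step shows that each element of $G$ lifts through $e_{\gamma,p_0}$ via the linear map $\cdot_\gamma g$, so the lifted module structure is exactly $V_{\Omega,\gamma}$. To see that the kernel condition in the definition of linear $G$-homogeneity holds, note that if $g$ acts trivially on $\Gamma$, then $\cdot_\gamma g$ fixes every point of the spanning set $e_{\gamma,p_0}(P)$. Since $\Gamma$ is connected, the scalars involved must agree, so $\cdot_\gamma g$ is a scalar map. Conversely, a scalar map obviously fixes all points of the image, and injectivity of $e_{\gamma,p_0}$ then forces $g$ to act trivially on $\Gamma$. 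Lastly, $\pi$ is surjective, and $\mathrm{pg}(\pi)\circ e_{\gamma,p_0} = e$, so $e_{\gamma,p_0}$ covers $e$ with projection $\pi$, as claimed.
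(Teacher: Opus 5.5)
Your proof is correct and follows essentially the same route as the paper. Both arguments use $(\gamma4)$ to see that $e_{\gamma,p_0}$ coincides with $e$ on a representative line $\ell\ni p_0$ of each $G$-orbit, transport this to all lines via the linear maps $x\mapsto x\cdot_\gamma g$, and take injectivity and spanning from Lemma \ref{nuovo2}; you are also more explicit than the paper about the kernel condition in linear $G$-homogeneity.

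One harmless slip concerns how you invoke $(\gamma4)$. It is already phrased as $\langle\gamma_\omega(g^{-1}),v_0\rangle=0$ for $g\in G_\ell$, which is exactly the coefficient appearing in (\ref{Intro2}), so you should apply it to $g$ itself. Applying it to $g^{-1}$ literally gives $\langle\gamma_\omega(g),v_0\rangle=0$, and closure of $G_\ell$ under inverses is not needed at this point.
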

{\em Proof.} Let $\{O_i\}_{i\in I}$ be the set of orbits of $G$ on $L$. For every  $i\in I$ let $\ell_{i}$ be a line through $p_0$ as required in $(\gamma 4)$. In view of the transitivity hypotheses made on $G$, the stabilizer $G_{\ell_i}$ of $\ell_{i}$ in $G$ acts transitively on $\ell_{i}$. Moreover, the embedding $e$ is linearly $G$-homogeneous by assumption. Consequently, the set $V_{i} := \{tv_0\cdot g\}_{t\in \FF, g\in G_{\ell_i}}$ is a $2$-dimensional subspace of $V$ and corresponds to the line $e(\ell_{i})$ of $\PG(V)$. As $V$ is a subspace of the vector space $V_\Omega$, the set $V_{i}$ is also a $2$-dimensional subspace of $V_\Omega$ and by $(\gamma4)$ it is stabilized by $G_{\ell_i}$ in its action on $V_\Omega$. In view of the definition of $e_{\gamma, p_0}$, the line of $\PG(V_\Omega)$ corresponding to $V_{i}$ is the $e_{\gamma, p_0}$-image $e_{\gamma, p_0}(\ell_{i})$ of $\ell_{i}$. For every $g\in G$, the $e_{\gamma, p_0}$-image of $\ell := g(\ell_{i})$ is the line of $\PG(V_\Omega)$ corresponding to the $g$-image $V_{i}\cdot _\gamma g := \{v\cdot_\gamma g\}_{v\in V_{i}}$ of $V_{i}$ with $g$ acting on $V_\Omega$ according the action $\cdot_\gamma$. So, $e_{\gamma, p_0}$ maps lines of $\Gamma$ onto lines of $\PG(V_\Omega)$. Moreover $e_{\gamma, p_0}$ is injective and $e_{\gamma, p_0}(P)$ spans $\PG(V_\Omega)$, as stated in Lemma \ref{nuovo2}. Therefore $e_{\gamma, p_0}$ is a projective embedding of $\Gamma$ in $V_\Omega$. Clearly, $e_{\gamma, p_0}$ is linearly $G$-homogeneous and the projection of $V_\Omega$ onto $V$ along $\Der_\Omega(\FF)$ maps $e_{\gamma, p_0}$ onto $e$.   \hfill $\Box$ 

\begin{definition}
\em
Under the hypotheses of Lemma \ref{nuovo3}, we call $e_{\gamma, p_0}$ the {\em embedding of the pair $(\Gamma, G)$ in $V_{\Omega, \gamma}$  based on $p_0$}. 
\end{definition}  

\begin{remark}
\em
In order to construct an embedding like $e_{\gamma, p_0}$ we only need a vector $x = d\oplus v \in V_\Omega$ satisfying both the following. \\
(1) We have $\langle v\rangle = e(p)$ for a point $p$ of $\Gamma$ and if $g\in G$ fixes $p$ then $x\cdot_\gamma g$ is proportional to $x$.  \\
(2) For every orbit $O$ of $G$ on $L$, there is a line $X$ of $\PG(V_\Omega)$ containing the point $\langle x\rangle$ and satisfying the following properties: the projection of $V_\Omega$ onto $V$ along $\Der_\Omega(\FF)$ maps $X$ onto the $e$-image $e(\ell)$ of a line $\ell\in O$ and if $g\in G$ stabilizes $\ell$ then $X\cdot_\gamma g = X$. 

Properties $(\gamma3)$ and $(\gamma4)$ provide sufficient conditions for such a vector to exist. Clearly, if $x$ satisfies (1) and (2) and $e_{\gamma, x}$ is the embedding constructed starting from it then, if $y\in V_\Omega$ represents a point of $e_{\gamma, x}(P)$ then $y$ also satisfies (1) and (2) and $e_{\gamma, y} = e_{\gamma, x}$. If $y$ satisfies both  (1) and (2) but $\langle y\rangle \not\in e_{\gamma,x}(P)$ then in general $e_{\gamma, y}\cong e_{\gamma, x}$.
\end{remark}    

\begin{definition}\label{rigid}
\em
We say that the $G$-module $V$ is {\em rigid} if the only linear mappings $f: V\rightarrow V$ such that $f(v\cdot g) = f(v)\cdot g$  for every $v\in V$ and every $g\in G$ are the scalar transformations. 
\end{definition}

We are now ready to state the main result of this section. 

\begin{theorem}\label{theo nuovo}
Given a group $G$ acting semi-flag-transitively on a geometry $\Gamma$ and a linearly $G$-homogeneous projective embedding $e:\Gamma\rightarrow V$ of $\Gamma$ defined over a given field $\FF$, suppose that there exists an injective linear mapping $\gamma:\Der(\FF)\rightarrow Z^1(V^*,G)$ which satisfies properties $(\gamma1)$, $(\gamma2)$, $(\gamma3)$ and $(\gamma4)$. With $p_0$ as in $(\gamma3)$, let $e_{\gamma, p_0}:\Gamma\rightarrow V_\Omega= \Der_\Omega(\FF)\oplus V$ be the embedding of the pair $(\Gamma,G)$ in $V_{\Omega, \gamma}$ based on $p_0$. Suppose moreover that both the followiing hold. 
\begin{itemize}
\item[$(G1)$] The $G$-module $V$ is rigid.
\item[$(G2)$]  If $e':\Gamma\rightarrow V'$ is a linearly $G$-homogeneous cover of $e$ then $G$, acting on $V'$, centralizes the kernel of the projection of $e'$ onto $e$.
\end{itemize}
Then $e_{\gamma, p_0}$ is the universal cover of $e$.  
\end{theorem}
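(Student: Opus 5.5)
Let $\hee:\Gamma\rightarrow\wtV$ be the universal cover of $e$ and let $\pi:\wtV\rightarrow V$ be the projection of $\hee$ onto $e$, with kernel $K$. By Proposition 1.2 (the one following Ronan's construction), $\hee$ is linearly $G$-homogeneous, $G$ stabilizes $K$, and $G$ acts on $\wtV/K\cong V$ as on $V$. By $(G2)$, $G$ centralizes $K$, so $\wtV$ is a central extension of the $G$-module $V$ by the trivial module $K$. Since $e_{\gamma,p_0}$ covers $e$ by Lemma \ref{nuovo3}, the universal property gives $\hee\geq e_{\gamma,p_0}$. It therefore suffices to show $\dim$-wise (in fact, structurally) that $\hee$ cannot be bigger than $e_{\gamma,p_0}$, i.e. that the projection $\wtV\rightarrow V_\Omega$ is injective.

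The plan is to classify central extensions via cohomology. For every linear functional $\kappa\in K^*$, the quotient $\wtV/\Ker(\kappa)$ is a central extension of $V$ by a $1$-dimensional trivial module; choosing a linear section $s:V\rightarrow\wtV$ of $\pi$, the map $g\mapsto (v\mapsto \kappa(s(v)\cdot g^{-1}\cdot\ldots))$ — concretely $g\mapsto \kappa\circ(s(\,\cdot\,)\cdot g - s(\,\cdot\,\cdot g))$ suitably normalized — is a $1$-cocycle $c_\kappa\in Z^1(V^*,G)$, and changing $s$ changes $c_\kappa$ by a coboundary. This gives a linear map $\theta:K^*\rightarrow H^1(V^*,G)$. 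I would show $\theta$ is injective: if $c_\kappa$ is a coboundary then, after correcting the section, $\wtV/\Ker(\kappa)$ splits as $\FF\oplus V'$ with $V'$ a $G$-submodule mapping isomorphically onto $V$. Then $\hee$ composed with the quotient map and the projection onto $V'$ is an embedding isomorphic to $e$ in which the $\FF$-coordinate is a function on points; the key point is to use that $\hee$ maps the $G$-orbit of a point $\hee(p_0)$ and that lines go to lines, together with rigidity $(G1)$, to show this forces the image of $\hee(P)$ to lie in a $G$-invariant hyperplane complementing $\FF$, contradicting that $\hee(P)$ spans $\wtV$. Rigidity enters because the point representatives of $\hee(p_0)$ are determined up to a scalar and any $G$-equivariant comparison between $V'$ and $V$ must be scalar, which pins down the $\FF$-coordinate of $\hee(p)$ as linear in the $V$-coordinate.

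Granting injectivity, $(\gamma1)$ identifies $H^1(V^*,G)$ with $\Der(\FF)\cong\Der_\Omega(\FF)^*$, so $\theta$ yields an injection $K^*\hookrightarrow \Der_\Omega(\FF)^*$; dually the natural map from $\wtV$ to $V_\Omega$ (the projection $\hee\to e_{\gamma,p_0}$) restricted to $K$ should be identified with a map $K\rightarrow\Der_\Omega(\FF)$ whose dual is $\theta$ composed with $(\gamma1)$. Its kernel $K_0$ is then annihilated by every $\kappa$, forcing $K_0=0$ and hence $\hee\cong e_{\gamma,p_0}$. One must check compatibility: the cocycle attached to the extension $V_{\Omega,\gamma}$ via $d_\omega$-coordinates is exactly $\gamma_\omega$, which is immediate from (\ref{Intro1}), and that cocycles of $\wtV/\Ker(\kappa)$ factor through those of $V_\Omega$ because $\wtV\to V_\Omega$ is $G$-equivariant (both embeddings being linearly $G$-homogeneous and liftings unique, with the scalar ambiguity removed by $(G1)$).

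The main obstacle is the injectivity step: showing that a split extension quotient contradicts the spanning property of $\hee$. I would first try to prove that if $\wtV/\Ker(\kappa)=\FF\oplus V'$ splits $G$-equivariantly, then the vector representing $\hee(p_0)$, being fixed up to scalar by $G_{p_0}$, has $\FF$-component determined by a $G_{p_0}$-invariant functional, and use semi-flag-transitivity with the line condition to propagate a linear relation $t=\phi(v)$ to all points, making $\{(\phi(v),v)\}$ a proper $G$-invariant subspace containing $\hee(P)$.
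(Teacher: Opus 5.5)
Your argument is correct, but it reaches the conclusion by a different route from the paper.

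\textbf{The paper's route.} It picks a hyperplane $H$ of $K$ not containing $K'=\Ker(\pi'|_K)$ and shows $\wtV/H$ is non-split. It then matches the class of $\wtV/H$ with some $[\gamma_{d_H}]$ via $(\gamma1)$. This yields an abstract $G$-isomorphism $\wtV/H\cong\wtV/H'$ for a different hyperplane $H'\supseteq K'$. That isomorphism is excluded by Theorem \ref{M* lemma} from \cite{CGP2}, and this is exactly where $(G1)$ is used.

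\textbf{Your route.} You keep all identifications. The linear map $\theta:K^*\to H^1(V^*,G)$ satisfies $\theta(\mu\circ\pi'|_K)=[\gamma_{d_\mu}]$ for every $\mu\in\Der_\Omega(\FF)^*$, where $d_\mu=\sum_\omega\mu(d_\omega)d_\omega$; this uses $\sum_\omega\mu(d_\omega)\gamma_\omega=\gamma_{d_\mu}$, as in the proof of Lemma \ref{nuovo1}. By $(\gamma1)$ these classes exhaust $H^1(V^*,G)$. So injectivity of $\theta$ forces every $\kappa\in K^*$ to have the form $\mu\circ\pi'|_K$, hence to vanish on $K'$, and therefore $K'=0$. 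This is pure linear algebra, with no comparison of abstract isomorphism classes.

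\textbf{What this buys.} Your route never actually needs $(G1)$, despite your two appeals to it.
\begin{itemize}
\item The scalar in the $G$-equivariance of $\pi'$ is fixed by normalizing $\pi\circ\pi'=\hpi$ and composing with the $G$-equivariant projections onto $V$. This gives $c_g=1$ directly.
\item The step you flag as the main obstacle needs only semi-flag-transitivity. If $\wtV/\Ker(\kappa)$ splits, projection onto the $1$-dimensional kernel along a $G$-invariant complement is a $G$-invariant functional $\psi$. For each line $\ell$, $G_\ell$ preserves $\psi$ on the $2$-space of $\ell$ and is transitive on all points of that projective line. So the kernel of $\psi$ there cannot be a single point, and $\psi$ must vanish on it. Thus $\psi$ kills every point vector, contradicting spanning. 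Neither $G_{p_0}$ nor rigidity is involved.
\end{itemize}
The same line argument is what makes rigorous the paper's one-line claim that a splitting of $\wtV/H$ would contradict the spanning of $\wtV$ by $\widehat{X}$.

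In short, your approach is slightly stronger, since it shows $(G1)$ is superfluous for this statement. The paper's approach is shorter on the page because it delegates the decisive step to a lemma proved in \cite{CGP2}.
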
 
{\em Proof.} Let $\hee:\Gamma\rightarrow\wtV$ be the universal cover of $e$, let $\hpi$ be the proiection of $\hee$ onto $e$ and $K$ the kernel of $\hpi$. The vector space $\wtV$ can be regarded as a direct sum $K\oplus V$. Moreover, $\hee$ is linearly $G$-homogeneous, the group $G$ stabilizes $K$ and $\pi$ maps the action $\hat{\cdot}$ of $G$ on $\wtV/K$ onto the action $\cdot$ of $G$ on $V$. By $(G2)$, the group $G$ centralizes $K$. Then the action of $G$ on $\wtV$ can be described as follows:
\begin{equation}\label{Intro5}
g\in G ~ : ~ k\oplus v \in K\oplus V ~ \rightarrow ~ (k\oplus v)\hat{\cdot} g  =  (k+\kappa(g,v))\oplus v\cdot g 
\end{equation}
for a suitable function $\kappa:G\times V\rightarrow K$ such that $\kappa(g,.):V\rightarrow K$ is linear for every $g\in G$ and satisfies the follwing `cocyle like' condition: 
\begin{equation}\label{Intro6}
\kappa(g_1g_2, v) ~ = ~ \kappa(g_1, v) + \kappa(g_2, v\cdot g_1), ~~ \forall v\in V, ~\forall g_1, g_2\in G. 
\end{equation} 
In particular, with $g_1 = g_2 = 1$ condition (\ref{Intro6}) implies that $\kappa(1,v) = 0$ for every $v\in V$. 

Given a point $p_0$ of $\Gamma$ let $v_0$ be a representative vector of $e(p_0)$. So $\hee(p_0)$ is represented by $k_0\oplus v_0$ for some $k_0\in K$. However we can assume that $k_0 = 0$. Indeed in the decomposition $\wtV = K\oplus V$ of the vector space $\wtV$ the subspace called $V$ can be any of the complements of $K$ in $\wtV$. Neither $\hee$ not the action $\hat{\cdot}$ of $G$ on $\wtV$ are affected by changing this complement. The only effect of this change might be to change the description (\ref{Intro5}) of the action $\hat{\cdot}$ by replacing $\kappa(.,.)$ with another function, which however still satisfies (\ref{Intro6}). 

So, we can assume to have chosen the complement $V$ of $K$ in such a way that $\hee(p_0)$ is represented by a vector $v_0\in V$. With this choice of $V$, for every $g\in G$, if $p = g(p_0)$ then $\hee(p)$ is represented by the vector $\kappa(g,v_0)\oplus v_0$. However $\wtV$ is spanned by the representative vectors of the $\hee$-images of the point of $\Gamma$ and, by assumption, $G$ acts point-transitively on $\Gamma$. Therefore $\wtV$ is spanned by the set $\widehat{X} := \{\kappa(g,v_0)\oplus v_0\cdot g\}_{g\in G}$. 

By Lemma \ref{nuovo3}, the mapping $e_{\gamma,p_0}$ is a linearly $G$-homogeneous embedding of $\Gamma$ in $V_\Omega = \Der_\Omega(\FF)\oplus V$ and covers $e$. By the universality of $\hee$ relatively to $e$, the embeding $e_{\gamma,p_0}$ is a projection of $\hee$ and, if $\pi'$ and $\pi$ are the projections of $\hee$ onto $e_{\gamma,p_0}$ and respectively $e_{\gamma, p_0}$ onto $e$, then $\pi\circ\pi' = \hpi$. Accordingly, the kernel $K'$ of $\pi'$ is a subspace of $K$ and the quotient $K/K'$ is the kernel of $\pi$. Hence $\wtV/K \cong V_\Omega$, with $K/K'$ corresponding to $\Der_\Omega(\FF)$ is this isomorphism.

Suppose for a contradiction that $K'\neq\{0\}$ and let $H$ be a hyperplane of $K$ which does not contain $K'$. The action $\hat{\cdot}$ of $G$ on $\wtV$ yields a $G$-module structure on $V/H = K/H\oplus V$, which is thus a central extension of $V$ by the $1$-dimensional trivial module $K/H$. If this extension splits then  $\kappa(g, v_0) \in H$ for every $g\in G$. It follows that $\widehat{X} \subseteq H\oplus V$. However $\widehat{X}$ spans $\wtV$. Consequently $\wtV \subseteq H\oplus V$, which of course cannot be. 

Therefore the extension $V/H$ does not split. By Teorem 2.8 of \cite{CGP2}, this extension is described by a non-zero element $F_H$ of $H^1(V^*,G)$. Explictily, given a representative $f$ of $F_H$ in $Z^1(V^*,G)$, the action of $G$ on $\wtV/H$ admits the following description:  
\begin{equation}\label{Intro7}
g\in G~:~ (t,v)\in \FF\times V ~ \rightarrow ~ (t+\langle f(g^{-1}), v\rangle, ~ v\cdot g).  
\end{equation}
(We have replaced the direct sum setting with a direct product setting as in formula (\ref{Intro4}) and $K/H$ with $\FF$ in order to stay consistent with the notation of Theorem \ref{thm link} of Appendix \ref{teoria}) By $(\gamma1)$, the function $\gamma:\Der(\Omega)\rightarrow Z^1(V^*,G)$ yields an isomorphism between $\Der(\Omega)$ and $H^1(V^*,G)$. We know that $\Der(\FF)$ is canonically isomorphic to $\Der_\Omega(\FF)^*$ and, as we have seen in the proof of Lemma \ref{nuovo1}, if $\lambda_d\in\Der_\Omega(\FF)^*$ corresponds to $d\in \Der(\Omega)$ in this isomorphism, then the action of $G$ on $V_\Omega/\ker(\lambda_d)$ is described by formula (\ref{Intro4}). Consequently, if $d_H\in \Der(\FF)$ corresponds to $F_H$ in the isomorphism $\Der(\FF)\cong H^1(V^*,G)$, then $\gamma_{d_H}$ is a representative of $F_H$ in $Z^1(V^*,G)$ and (\ref{Intro4}) is the same as (\ref{Intro7}), but with $f$ renamed as $\gamma_{d_H}$. Hence the extensions $V_{\Omega,\gamma}/\ker(\lambda_{d_H})$ (described by (\ref{Intro4})) and $V/H$ (described by (\ref{Intro7})) are isomorphic. However, $V_{\Omega,\gamma} = \wtV/K'$ and $\ker(\lambda_{d_H}) = H'/K'$ for a hyperplane $H'$ of $K$ containing $K'$. Accordingly, $V_{\Omega,\gamma}/\ker(\lambda_{d_H}) \cong \wtV/H'$ as $G$-modules. However $V_{\Omega,\gamma}/\ker(\lambda_{d_H}) \cong \wtV/H$. Therefore $\wtV/H'$ and $\wtV/H$ are isomorphic central extension of the $G$-module $V$ obtained from distinct hyperplanes $H$ and $H'$ of $K$. In view of Theorem \ref{M* lemma} of Appendix \ref{teoria}, this can happen only if the $G$-module $V$ is non-rigid. However $V$ is rigid by assumption $(G1)$. We have reached a contradiction, which forces us to conclude that $K' = 0$. Hence $\wtV = V_\Omega$, namely $\hee = e_{\gamma,p_0}$.       \hfill $\Box$

\bigskip

The following is an obvious consequence of Theorem \ref{theo nuovo}. 

\begin{corollary}\label{cor nuovo}
Under the hypotheses of Theorem \ref{theo nuovo}, if $\Gamma$ admits the absolutely universal embedding then $e_{\gamma, p_0}:\Gamma\rightarrow V_\Omega$ is the absolutely universal embedding of $\Gamma$. 
\end{corollary}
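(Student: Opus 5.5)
The plan is to combine Theorem \ref{theo nuovo} with the definitions of absolute and relatively universal embeddings. Let $\tilde{e}:\Gamma\rightarrow\tilde{V}$ be the absolutely universal embedding of $\Gamma$, which exists by hypothesis. By definition $\tilde{e}$ covers every embedding of $\Gamma$; in particular $\tilde{e}\geq e$. Moreover $\tilde{e}$ is relatively universal, since it covers every embedding that covers it. Therefore $\tilde{e}$ is a relatively universal embedding covering $e$.

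By the uniqueness statement recalled in the subsection on absolutely and relatively universal embeddings, the universal cover of $e$ is, up to isomorphism, the unique relatively universal embedding covering $e$. Hence $\tilde{e}$ is isomorphic to the universal cover of $e$. Theorem \ref{theo nuovo} identifies that universal cover as $e_{\gamma,p_0}$. It follows that $e_{\gamma,p_0}\cong\tilde{e}$, so $e_{\gamma,p_0}$ is the absolutely universal embedding of $\Gamma$.

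Alternatively, the uniqueness claim can be checked directly. Since $e_{\gamma,p_0}$ is the universal cover of $e$ and $\tilde{e}\geq e$, the cover $e_{\gamma,p_0}$ also covers $\tilde{e}$, because a relatively universal cover of $e$ covers every embedding that covers $e$. Conversely, $\tilde{e}\geq e_{\gamma,p_0}$ by absoluteness. Mutual covering forces an isomorphism, as noted earlier, so $e_{\gamma,p_0}\cong\tilde{e}$.

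There is no real obstacle in this argument. The only care needed is to use the equivalent characterisation of relative universality due to Ronan (a relatively universal cover of $e$ covers all covers of $e$), and this is stated explicitly in the paper.
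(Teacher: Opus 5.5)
Your argument is correct and matches the reasoning the paper leaves implicit when it calls this an obvious consequence of Theorem \ref{theo nuovo} (it likewise notes that an absolute embedding is the universal cover of every embedding). The absolute embedding is a relatively universal cover of $e$, so by uniqueness of universal covers it is isomorphic to $e_{\gamma,p_0}$.
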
 

\section{The universal cover of the adjoint embedding of the geometry $\AnK$}\label{A} 

In \cite{CGP2} the universal cover of the adjoint embedding of $\AnK$ is described. In this section we shall present that result of \cite{CGP2} in light of Theorem \ref{theo nuovo}. A large part of what we are going to say in this section is contained in \cite{CGP2}. We could just refer the reader to \cite{CGP2} for it, but we prefer to present this material here in quite a bit of detail, also including the proofs of a few statements, since most of it can be reused without major changes to study the absolute embedding of $B_{n,2}(\FF)$ and $D_{n,2}(\FF)$, which is what this paper aims to. 

We firstly describe the adjoint embedding of the geometry $\AnK$ in the Lie algebra $\slnK$ of $\SlnK$. 

\subsection{The adjoint embedding of $\AnK$}

Let $\cM := M_{n+1}(\FF)$ be the set of square matrices of order $n+1$, equipped with a Lie algebra structure with $[x,y] := xy-yx$ as the bracket product, where $xy$ is the usual row-times-column product. The group $G = \SlnK$, regarded a group of matrices, hence a subset of $\cM$, acts on $\cM$ at right by conjugation: $g\in G$ maps every $x\in \cM$ onto $x\cdot g := g^{-1}xg$ (row-times-column product of $g^{-1}$, $x$ and $g$). The group $G$, acting in this way on $\cM$, stabilizes the subalgebra $\cL = \{x\in \cM~|~ \Tr(x) = 0\}$ of the Lie algebra $\cM$. Hence $G$ also acts on $\cL$, thus equipping it with a (right) $G$-module structure. The algebra $\cL$ is the same as the Lie algebra $\slnK$ of $G$ and the action of $G$ on $\cL$ coincides with the action of $G$ on $\slnK$ as defined via exponentiation of nilpotent elements.  

The center of $G$ is the kernel of the action of $G$ on the long-root geometry $\Gamma := \AnK$ as well as on $\cL$. Recall that $G$ acts semi-flag-transitively on $\Gamma$ (in the sense of Definition \ref{semi-trans}), with just two orbits on the set of lines of $\Gamma$, corresponding to line-hyperlane flags of $\PG(n,\FF)$ and their duals. Moreover, $\Gamma$ admits a linearly $G$-homogeneous projective embedding $e:\Gamma\rightarrow\cL$, which we shall now define.    

The points of $\AnK$ are the point-hyperlane flags of $\PG(n, \FF)$. For such a flag $\{a,A\}$, let $v \in V := V(n+1, \FF)$ be a representative vector of the point $a$ and $f \in V^*$ (dual of $V$) a linear functional with the hyperlane $A$ as its kernel. Since $a\in A$, we have $f(v) = 0$. Regarded the vectors of $V$ as rows and the linear functionals as columuns, the condition $f(v) = 0$ can also be rewritten as $0 = vf$ (row-times-column product). The embedding $e$ maps $\{a,A\}$ to the point of $\PG(\cL)$ represented by the matrix $fv$ (note that $Tr(fv) = vf$ and recall that $vf = 0$ by assumption). The mapping $e$ defined in this way is indeed a projective embedding of $\Gamma$ in $\cL$. 

Regarded $G$ as a group of matrices, $G$ acts at right on both $V$ and $V^*$ according to the following rules:
\[g\in G ~ :~ \left\{\begin{array}{rcl}
v\in V & \rightarrow  & vg\\
f\in V^* & \rightarrow & g^{-1}f
\end{array}\right.\]
where $vg$ and $g^{-1}f$ are row-times-column products. Accordingly, $g$ maps the matrix $fv$ onto $g^{-1}fvg$ ($= fv\cdot g$ in the right action of $G$ on $\cL$). Thus, if $v$ represents the point $a\in \PG(V)$ and $\Ker(f) = A$ and moreover $a\in A$, so that $\{a,A\}$ is a point of $\Gamma$, then $g^{-1}fv g$ represents the $e$-image of the point $g(\{a,A\} = \{g(a), g(A)\}$  of $\Gamma$. So, the embeding $e$ is indeed linearly $G$-homogeneous. 

Regarding $\cM$ as a tensor product $\cM = V^*\otimes V$, the $e$-image of the point-set of $\Gamma$ is the set of points of $\PG(\cL)$ represented by {\em singular} pure tensors, namely pure tensors which, as matrices, have null trace. Recall that, as a $G$-module, $\cL$ is the same as the Weyl module $W_\lambda$ associated to the weight $\lambda = \lambda_1+\lambda_n$ which coincides with the highest root of the root system of type $A_n$. For every singular pure tensor $x_0\in\cL$ we can choose the Chevalley basis of the Lie algebra $\slnK = \cL$ in such a way that, with that choice, $x_0$ is a maximal vector of the Weyl module $W_\lambda = \cL$.  

For instance, let $E = (e_1, e_2,..., e_{n+1})$ be the natural basis of $V$ and $E^* = (e^*_1, e^*_2,..., e^*_{n+1})$ the basis of $V^*$ dual of $E$. For $i, j = 1, 2,...., n+1$ put $e_{i,j} = e^*_ie_j$. Choose a Chevalley basis $C$ of $\cL$ as follows:
\[C ~ = ~ \{e_{i,j}\}_{i>j}\cup\{e_{i,j}\}_{i< j}\cup\{e_{i+1,i+1}-e_{i,i}\}_{i=1}^n\]
where for $i\neq j$ the element $e_{i,j}$ spans a root subspace (associated to a positive root if $i > j$ and a negative root if $i < j$), the elements $e_{2,1}, e_{3,2},..., e_{n+1,n}$ correspond to the roots $\alpha_1, \alpha_2,..., \alpha_n$ of a given basis of the root system and $\{e_{i+1,i+1}-e_{i,i}\}_{i=1}^n$ is a basis of the Cartan subalgebra associated to the above system of root subspaces. With this choice of $C$, the matrix $e_{n+1,1}\in \cL$ is a maximal vector of $W_\lambda$. As $G$ maps Chevalley bases onto Chevalley bases and is transitive on the set of $1$-subspaces of $\cL$ spanned by (singular) pure tensors, every singular pure tensor is indeed a maximal vector of the Weyl module $\cL$ for a suitable Chevalley  basis of the Lie algebra $\cL$, as claimed above.   

It is thus clear that the embeding $e$ defined in this subsection is indeed (isomorphic to) the adjoint embedding of $\Gamma$ as defined in Section \ref{Weyl Sec}. 

\subsection{An injective linear mapping $\gamma:\Der(\FF)\rightarrow Z^1(\cL^*,G)$}\label{gamma A} 

With $\cM$ as in the previous subsection, a non-singular symmetric bilinear form $T:\cM\times\cM\rightarrow \FF$ can be defined by putting $T(x,y) = \Tr(xy)$, for every choice of $x, y\in \cM$. If $\perp$ is the orthogonality relation associated to this form, then $\cL^\perp$ is the subspace $\langle I\rangle$ of $\cM$ spanned by the identity matrix $I$. Accordingly, the dual $\cL^*$ of $\cL \subsetneq\cM$ is isomorphic to $\cM/\langle I\rangle$. For every $a\in \cM$ we define the linear functional $f_a\in\cL^*$ as follows:
\[f_a ~:~ x\in \cL ~ \rightarrow ~ T(a,x).\]
Every $f\in \cL^*$ can be obtained as $f = f_a$ for a suitable $a\in\cM$, uniquely determined by $f$ up to multiples of $I$. As $G$ fixes $I$, it also acts on $\cM/\langle I\rangle = \cL^*$ and this action is the same as dual action of $G$ on $\cL^*$. Indeed, for $g\in G$, the $g$-image $f_a\cdot g$ of $f_a$ maps $x\in\cL$ onto $f_a(x\cdot g^{-1}) = f_a(gxg^{-1})$. However $\Tr(agxg^{-1}) = \Tr(g^{-1}agx) = f_{g^{-1}ag}(x)$. The claim follows. 

\begin{remark}\label{killing}
\em
Clearly, $I\in \cL$ if and only if $\ch(\FF)$ divides $n+1$. Hence, if $\tau:\cL\times\cL\rightarrow\FF$ is the form induced by $T$ on $\cL\times\cL$, then $\tau$ is non-singular if and only if $\ch(\FF)$ does not divide $n+1$. In particular, when $\ch(\FF) = 0$ the form $\tau$ is proportional to the Killing form $\kappa$ of $\cL$. Explicitly, $\kappa = (n+1)\tau$.
\end{remark} 

For every derivation $d\in \Der(\FF)$ and every matrix $g = (g_{i,j})_{i,j=1}^{n+1}  \in G$, we put $d(g) := (d(g_{i,j}))_{i,j=1}^{n+1}$. Clearly  $d(I) = O$ (null matrix). Also, $d(g_1g_2)  = d(g_1)g_2 + g_1d(g_2)$ and $d(g^{-1}) = -g^{-1}d(g_1)g^{-1}$.

For every $d\in \Der(\FF)$ let $\gamma_d:G\rightarrow \cL^*$ be the mapping that takes every $g\in G$ to the linear functional $f_{g^{-1}d(g)}$. 

\begin{lemma}\label{A gamma coc}
We have $\gamma_d\in Z^1(\cL^*,G)$ for every $d\in \Der(\FF)$.
\end{lemma}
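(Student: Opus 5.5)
We need to check the cocycle condition for right modules. The paper's convention, read off from (\ref{intro3}), is $\gamma(g_1g_2) = \gamma(g_1)\cdot g_2 + \gamma(g_2)$, where $\cdot$ is the right action of $G$ on $\cL^*$. The plan is to compute $\gamma_d(g_1g_2)$ directly from the Leibniz rule for $d$ applied to matrices and compare it with the right-hand side.

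First, $\gamma_d(g)$ is a well-defined element of $\cL^*$ for each $g$. Indeed $g^{-1}d(g)\in\cM$, and $f_a$ depends on $a$ only modulo $\langle I\rangle$. Linearity of $\gamma_d$ in $d$ is also clear, though it is not needed for this lemma.

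Next, using $d(g_1g_2) = d(g_1)g_2 + g_1d(g_2)$, we get
\[(g_1g_2)^{-1}d(g_1g_2) = g_2^{-1}g_1^{-1}d(g_1)g_2 + g_2^{-1}d(g_2).\]
Since $a\mapsto f_a$ is linear, it follows that
\[\gamma_d(g_1g_2) = f_{g_2^{-1}(g_1^{-1}d(g_1))g_2} + f_{g_2^{-1}d(g_2)}.\]

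Then we invoke the computation done just before the lemma. There it was shown that the dual action satisfies $f_a\cdot g = f_{g^{-1}ag}$: one has $f_a\cdot g(x) = f_a(gxg^{-1}) = \Tr(agxg^{-1}) = \Tr(g^{-1}agx)$. Applying this with $a = g_1^{-1}d(g_1)$ and $g = g_2$ turns the first summand into $\gamma_d(g_1)\cdot g_2$. The second summand is $\gamma_d(g_2)$, so the cocycle identity holds. As a sanity check, $\gamma_d(1) = f_{d(I)} = f_O = 0$, consistent with the cocycle condition.

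The only real obstacle is keeping the left/right conventions consistent. We must check that the identity we verify is exactly the one the paper uses for $Z^1$ with right modules, namely the one appearing in (\ref{intro3}) after substituting $g^{-1}$. If the paper's convention were instead $\gamma(g_1g_2)=\gamma(g_2)\cdot g_1^{-1}+\gamma(g_1)$ or similar, we would redo the same Leibniz computation with the factors reordered. The trace-cyclicity identity $f_a\cdot g=f_{g^{-1}ag}$ is the single ingredient that makes either version go through.
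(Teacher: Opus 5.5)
Your proof is correct and is essentially the paper's own argument: expand $(g_1g_2)^{-1}d(g_1g_2)$ by the Leibniz rule, then use trace cyclicity to identify the first summand with $\gamma_d(g_1)\cdot g_2$. The only difference is that you quote the identity $f_a\cdot g=f_{g^{-1}ag}$, already proved before the lemma, where the paper redoes that trace computation inline; this yields the same convention $\gamma_d(g_1g_2)=\gamma_d(g_1)\cdot g_2+\gamma_d(g_2)$.
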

{\em Proof.} Clearly $\gamma_d(I) = 0$ (null element of $\cL^*$). Moreover, since $d(g_1g_2)  = d(g_1)g_2 + g_1d(g_2)$, for every choice of $g_1, g_2\in G$ and $x\in \cL$ we have
\[\begin{array}{l}
 f_{(g_1g_2)^{-1}d(g_1g_2)}(x)  ~= ~  \Tr(g_2^{-1}g_1^{-1}d(g_1)g_2x) + \Tr(g_2^{-1}g_1^{-1}g_1d(g_2)x) ~ = \\
=   \Tr(g_1^{-1}d(g_1)g_2xg_2^{-1}) + \Tr(g_2^{-1}d(g_2)x)  =  f_{g_1^{-1}d(g_1)}(x\cdot g_2^{-1}) + f_{g_2^{-1}d(g_2)}(x)  = \\
=  (f_{g_1^{-1}d(g_1)}\cdot g_2 + f_{g_2^{-1}d(g_2)})(x).  
\end{array}\]
Thus, $\gamma_d(g_1g_2) = \gamma_d(g_1)\cdot g_2 + \gamma_d(g_2)$.   \hfill $\Box$ \\
  
We denote by $\gamma$ the function from $\Der(\FF)$ to $Z^1(\cL^*, G)$ that maps $d\in \Der(\FF)$ onto $\gamma_d$. 

\begin{remark}\label{g**(-1)d(g)}
\em
Actually $g^{-1}d(g)\in \cL$ for every $d\in \Der(\FF)$ and every $g\in G$. Although we are not going to make any use of this statement in this section, we give its proof here. Indeed having this proof at hand will turn to be useful later in this paper. 

Note firstly that, if $g = g_1g_2$ and both $g_1^{-1}d(g_1)$ and $g_2^{-1}d(g_2)$ belong to $\cL$, then $g^{-1}d(g)$ also belong to $\cL$. Indeed 
\[g^{-1}d(g) = g_2^{-1}g_1^{-1}d(g_1g_2) = g_2^{-1}g_1^{-1}(d(g_1)g_2+g_1d(g_2))  = g_2^{-1}(g_1^{-1}d(g_1))g_2 + g_1^{-1}d(g_1).\] 
As $\cL$ is $G$-invariant, if $g_1^{-1}d(g_1), g_2^{-1}d(g_2) \in \cL$ then $g^{-1}d(g)\in\cL$, as claimed.  

For $i, j\in \{1, 2,..., n+1\}$ with $i\neq j$ and $t\in \FF$, let $u_{i,j} = I+te_{i,j}$. Then $U := \{u_{i,j}(t)\}_{i\neq j, ~f\in \FF}$ is a set of generators of $G$ and, for $g = u_{i,j}(t)$, we have $g^{-1}d(g) = (I-te_{i,j})d(t)e_{i,j} = d(t)e_{i,j}\in \cL$. As every element of $G$ is a product of elements of $U$, the above shows that $g^{-1}d(g)\in \cL$ for every $g\in G$. 
\end{remark}     

The mapping $\gamma$ defined above is obviously linear. 

\begin{lemma}\label{A gamma inj}
The mapping $\gamma$ in injective.
\end{lemma}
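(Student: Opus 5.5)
Since $\gamma$ is linear, injectivity amounts to showing that its kernel is trivial. So I assume $d\in\Der(\FF)$ satisfies $\gamma_d = 0$, namely $f_{g^{-1}d(g)} = 0$ for every $g\in G$, and I aim to deduce $d = 0$. Recall that $f_a = 0$ in $\cL^*$ exactly when $a\in\cL^\perp = \langle I\rangle$. The hypothesis therefore says that $g^{-1}d(g)$ is a scalar multiple of $I$ for every $g\in G$.

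I will test this on the elementary unipotent generators $g = u_{i,j}(t) = I+te_{i,j}$ with $i\neq j$. Remark \ref{g**(-1)d(g)} already gives $g^{-1}d(g) = d(t)e_{i,j}$. For this to be a multiple of $I$, the matrix $d(t)e_{i,j}$, which has a single possibly nonzero entry off the diagonal, must vanish. Hence $d(t) = 0$ for every $t\in\FF$, and so $d = 0$. This shows $\ker(\gamma) = \{0\}$.

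The only point needing care is the identification of the kernel of $a\mapsto f_a$ with $\langle I\rangle$. This follows from the non-degeneracy of the trace form $T$ on $\cM$, together with the fact that $\cL$ is the orthogonal complement of $I$, so that $\cL^\perp = \langle I\rangle$. Both facts are stated in Subsection \ref{gamma A}.

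With that in hand there is no real obstacle. One only checks that $e_{i,j}$ with $i\neq j$ is not a multiple of $I$, which is immediate since $n+1\geq 2$. The argument is the same in every characteristic, including when $I\in\cL$, because only the kernel of $a\mapsto f_a$ is used.
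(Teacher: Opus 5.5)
Your argument is correct and is essentially the paper's: both test $\gamma_d$ on the root elements $u_{i,j}(t)$, where $g^{-1}d(g)=d(t)e_{i,j}$. The only difference is that the paper pairs directly with the test vector $e_{j,i}\in\cL$ to get $\Tr(d(t)e_{i,j}e_{j,i})=d(t)=0$, whereas you pass through $\cL^\perp=\langle I\rangle$; this is equally valid but additionally relies on the non-degeneracy of $T$ on $\cM$.
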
  
{\em Proof.}  This statement is proved in \cite{CGP2}, but we recall its proof here. Suppose that $\gamma_d = 0$ for some $d\in \Der(\FF)$. Then $\Tr(g^{-1}d(g)x) = 0$ for every $g\in G$ and every $x\in\cL$. In particular, this holds for $g = u_{i, j}(t)$ as defined in Remark \ref{g**(-1)d(g)}. With $g = u_{i,j}(t)$ we have $g^{-1}d(g) = d(t)e_{i,j}$. So, with $x = e_{j,i}$ we get $\Tr(g^{-1}d(g)x) = d(t)$. As $\Tr(g^{-1}d(g)x) = 0$ by assumption, we obtain that $d(t) = 0$. This  holds for every $t\in \FF$. Hence $d = 0$.  \hfill $\Box$ 

\begin{lemma}\label{A gamma1234}
The mapping $\gamma$ satisfies all properties $(\gamma1), (\gamma2), (\gamma3)$ and $(\gamma4)$ of Section \ref{GenConstr}.  
\end{lemma}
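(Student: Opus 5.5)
I would verify the four properties separately, treating $(\gamma2)$, $(\gamma3)$ and $(\gamma4)$ as direct computations and $(\gamma1)$ as the substantial part.

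For $(\gamma2)$, fix $g\in G$. Only finitely many field elements occur as entries of $g$, and each such $t$ satisfies $d_\omega(t)=0$ for all but finitely many $\omega\in\Omega$. Hence $d_\omega(g)=O$, and so $\gamma_\omega(g)=f_{g^{-1}d_\omega(g)}=0$, for all but finitely many $\omega$.

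For $(\gamma3)$, take $p_0=\{a_0,A_0\}$ with $a_0=\langle e_{n+1}\rangle$ and $A_0=\Ker(e_1^*)$, so that $v_0=e_{1,n+1}=e_1^*e_{n+1}$, with its only nonzero entry in position $(1,n+1)$. The stabilizer $G_{p_0}$ consists of the matrices $g$ with last row proportional to $e_{n+1}$ and first column proportional to $e_1^*$. Then $d(g)$ has the same zero pattern. Using $d(g^{-1})=-g^{-1}d(g)g^{-1}$, the value $\langle\gamma_d(g),v_0\rangle=\Tr(g^{-1}d(g)e_1^*e_{n+1})=e_{n+1}g^{-1}d(g)e_1^*$ reduces to $\lambda^{-1}d(\lambda)\mu$-type terms. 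These should cancel because $\det g=1$ forces the product of the two corner diagonal entries and the determinant of the middle block to be related. I expect this cancellation to need care. If it is awkward, I will instead use the Levi decomposition of $G_{p_0}$ (unipotent radical, torus, $\mathrm{SL}_{n-1}$ middle block) together with the cocycle identity (\ref{intro3}): it suffices to check the condition on generators of $G_{p_0}$, because $v_0\cdot g_0\in\langle v_0\rangle$ for $g_0\in G_{p_0}$.

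For $(\gamma4)$, the same generator-based argument applies to the two line stabilizers through $p_0$: one for the flag (line through $a_0$, $A_0$) and one for ($a_0$, subhyperplane in $A_0$). Again I will check on root elements $u_{i,j}(t)$ and torus elements lying in the stabilizer. For $g=u_{i,j}(t)$ the value is $d(t)\Tr(e_{i,j}v_0)$, which vanishes unless $e_{i,j}$ pairs with $e_{1,n+1}$, and that root element is not in the stabilizer. Torus elements $h$ give $h^{-1}d(h)$ diagonal, whose pairing with the off-diagonal $v_0$ is $0$.

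For $(\gamma1)$, I will show two things: that $\gamma(\Der(\FF))\cap B^1(\cL^*,G)=0$, and that $Z^1=\gamma(\Der(\FF))+B^1$.

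For the intersection, suppose $\gamma_d(g)=f_a\cdot g-f_a$ for some $a$. Applying this to the root groups $u_{i,j}(t)$ and comparing with $\gamma_d(u_{i,j}(t))=f_{d(t)e_{i,j}}$ gives a function additive in $t$ that is both $\FF$-linear (from the coboundary) and equal to $d(t)\cdot$(constant). For $t=1$ this forces the constant to vanish, so $d(t)=td(1)=0$.

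For the spanning, I will take an arbitrary cocycle and subtract a coboundary so that it vanishes on the diagonal torus $T$ and is adapted on the root groups. Then each restriction to $U_{i,j}=\{u_{i,j}(t)\}$ is an additive map $\FF\to\cL^*$, equivariant under $T$ and under the torus weights. Commutator relations $[u_{i,j}(s),u_{j,k}(t)]=u_{i,k}(st)$ then force it to be $t\mapsto f_{\delta(t)e_{i,j}}$, with a single additive $\delta$ independent of the root that satisfies the Leibniz rule, so $\delta\in\Der(\FF)$. This is the main obstacle; I would import it from \cite{CGP2}, where $H^1(\cL^*,G)\cong\Der(\FF)$ is established.
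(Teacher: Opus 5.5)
Your overall plan is the paper's: $(\gamma1)$ is taken from \cite{CGP2}, $(\gamma2)$ follows from the finiteness of the entries of $g$, and $(\gamma3)$, $(\gamma4)$ are checked for an explicit point and two lines. Your $v_0=e_{1,n+1}$ is simply the opposite choice to the paper's $m_0=e_{n+1,1}$. However, several steps as you wrote them do not work.

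In $(\gamma3)$ there is nothing to cancel, and $\det g=1$ plays no role. The value $\langle\gamma_d(g),v_0\rangle=e_{n+1}g^{-1}d(g)e_1^*$ is the $(n+1,1)$-entry of $g^{-1}d(g)$.
\begin{itemize}
\item If $g$ fixes $a_0$, then $e_{n+1}g^{-1}=\mu^{-1}e_{n+1}$, so this entry equals $\mu^{-1}d(g_{n+1,1})=\mu^{-1}d(0)=0$.
\item If $g$ fixes $A_0$, then $g^{-1}d(g)e_1^*=\lambda^{-1}d(\lambda)e_1^*$, whose $(n+1)$-th coordinate is zero.
\end{itemize}
An element stabilizing a line $\{b,A_0\}$ fixes $A_0$, and an element stabilizing a line $\{a_0,S\}$ fixes $a_0$. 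So the same one-line computation gives $(\gamma4)$ for every line through $p_0$; this is the kind of entrywise check behind the paper's choice of $m_0,m_1,m_2$ (via \cite[Lemma 3.10]{CGP2}).

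Your generator fallback is valid for $(\gamma3)$, but the reason you give does not carry over to $(\gamma4)$. An element of $G_\ell$ moves $v_0$ inside the $2$-space $V_\ell$ representing $e(\ell)$. The induction through (\ref{intro3}) therefore needs $\langle\gamma_d(s),w\rangle=0$ for each generator $s$ and every $w\in V_\ell$, not only for $w=v_0$. This does hold, e.g.\ for $V_\ell=\langle e_{1,n},e_{1,n+1}\rangle$ or $\langle e_{1,n+1},e_{2,n+1}\rangle$, but it must be checked.

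In $(\gamma1)$, your intersection argument rests on a false claim: a coboundary is not $\FF$-linear on a root group. For $g=u_{i,j}(t)$ one has $f_a\cdot g-f_a=f_{t[a,e_{i,j}]-t^2a_{j,i}e_{i,j}}$. In characteristic $2$ the quadratic term is even additive, so additivity cannot remove it. The repair is to compare whole matrices modulo $\langle I\rangle=\cL^\perp$.
\begin{itemize}
\item The diagonal of $d(t)e_{i,j}-t[a,e_{i,j}]+t^2a_{j,i}e_{i,j}$ is $-ta_{j,i}(e_{j,j}-e_{i,i})$. It must vanish because $n+1\geq 3$ gives a third diagonal entry equal to $0$, so $a_{j,i}=0$.
\item The $(i,j)$-entry then gives $d(t)=t(a_{i,i}-a_{j,j})$, and $t=1$ yields $d=0$.
\end{itemize}

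More importantly, what you import from \cite{CGP2} must be the precise statement that $\gamma(\Der(\FF))$ is a complement of $B^1(\cL^*,G)$ in $Z^1(\cL^*,G)$. This is what \cite[Appendix C]{CGP2} provides and what the paper cites. An abstract isomorphism $H^1(\cL^*,G)\cong\Der(\FF)$, together with injectivity of $\Der(\FF)\to H^1(\cL^*,G)$, gives surjectivity only when $\dim\Der(\FF)<\infty$. That is exactly the obstacle the paper has to work around in Section \ref{Cohomology Th odd} (compare Corollary \ref{Cohom odd Lem 2}). Once the precise statement is imported, your separate intersection argument is unnecessary.

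Finally, the first normalization in your spanning sketch is impossible in general. For $d\neq 0$, the restriction of $\gamma_d$ to the diagonal torus $T$ is a nonzero homomorphism into the zero-weight part of $\cL^*$, on which $T$ acts trivially. Coboundaries restricted to $T$ have no component there, so no coboundary can cancel it.
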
 
{\em Proof.} Property $(\gamma1)$ is the most difficult to prove. We refer to \cite[Appendix C]{CGP2} for its proof. Property $(\gamma2)$ immediately follows from the fact that, given a derivation basis $\Omega$ of $\FF$, for every $t\in \FF$ we have $d_\omega(t) = 0$ for all but possibly finitely many choices of $\omega\in\Omega$. Lemma 3.10 of \cite{CGP2} states that $\gamma$ satisfies both $(\gamma3)$ and $(\gamma4)$. In order to prove this claim we only need to choose three points $p_0, p_1, p_2 \in \Gamma$ such that the pairs $\{p_0,p_1\}$ and $\{p_0,p_2\}$ span lines $\ell_1$ and $\ell_2$ of $\Gamma$ in distinct orbits of $G$ and, if $g\in G$ fixes $p_0$ or stablizes $\ell_1$ or $\ell_2$ and $m_0\in \cL$ represents $e(p_0)$, then $\Tr(gd(g^{-1})m_0) = 0$ for every $d\in \Der(\FF)$. Of course, we shall choose $p_0, p_1, p_2$ in such a way that the matrices $m_0, m_1, m_2$ which represent their $e$-images in $\PG(\cL)$ admit as many null entries as possible. The matrices $m_0 = e_{n+1,1}$, $m_1 = e_{n+1, 2}$ and $m_2 = e_{n,1}$ indeed do the job.  \hfill $\Box$ 

\subsection{The main result of \cite{CGP2}}

As above, $\cL = \slnK$ and $G = \SlnK$ acting on $\cL$ by conjugation. 

\begin{lemma}\label{rigid A}
The $G$-module $\cL$ is rigid.
\end{lemma}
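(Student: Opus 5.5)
We must show: if $f:\cL\to\cL$ is linear and $f(g^{-1}xg) = g^{-1}f(x)g$ for all $g\in G$ and $x\in\cL$, then $f$ is scalar. The plan is to exploit the root subspaces $\langle e_{i,j}\rangle$, $i\neq j$, and the torus. First I will use the diagonal subgroup $D$ of $G$. For $h=\mathrm{diag}(t_1,\dots,t_{n+1})\in D$ we have $h^{-1}e_{i,j}h = t_i^{-1}t_j e_{i,j}$. So $f$ preserves the weight-space decomposition of $\cL$ under $D$, wherever the characters $h\mapsto t_i^{-1}t_j$ can be told apart. When $|\FF|$ is large enough (say $|\FF|>3$) these characters are pairwise distinct and distinct from the trivial character. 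Then $f(e_{i,j}) = c_{i,j}e_{i,j}$ for $i\neq j$, and $f$ maps the diagonal part $\cL\cap\{\text{diagonal}\}$ into itself.

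To avoid a case split over tiny fields, I would instead work with the unipotent elements $u_{k,l}(t)=I+te_{k,l}$ and with permutation-like monomial matrices in $G$. The key fact is that $\langle e_{i,j}\rangle$, for $i\neq j$, is the unique $1$-space fixed by a suitable subgroup. Concretely, the elements $x\in\cL$ centralized by all $u_{i,l}(t)$ and $u_{k,j}(t)$ form a small subspace. For instance, the fixed space of $U_i:=\{u_{i,l}(t)\}_{l\neq i}$ combined with $U^j:=\{u_{k,j}(t)\}_{k\neq j}$ is $\langle e_{i,j}\rangle$, possibly enlarged by $\langle I\rangle\cap\cL$. I would verify this by a direct computation of $[e_{k,l},x]=0$ conditions, keeping in mind that conjugation by $u_{k,l}(t)$ fixes $x$ iff $e_{k,l}x=xe_{k,l}$ and $e_{k,l}xe_{k,l}=0$, i.e.\ iff $[e_{k,l},x]=0$. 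Since $f$ commutes with $G$, it preserves these fixed spaces. This gives $f(e_{i,j})\in\langle e_{i,j}\rangle + (\langle I\rangle\cap\cL)$.

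Next I would pin down the scalars. Monomial matrices in $G$ (signed permutation matrices) permute the $e_{i,j}$ up to sign, transitively on pairs $i\neq j$. Conjugating therefore shows that all $c_{i,j}$ equal a common $c$, up to the central correction. Replacing $f$ by $f-c\cdot\mathrm{id}$, we may assume $f(e_{i,j})\in\langle I\rangle\cap\cL$ for all $i\neq j$. Now apply $u_{k,l}(1)$ to $e_{i,j}$ in cases where the conjugate equals $e_{i,j}$ plus a different root vector, for example $u_{j,l}(1)$ with $l\neq i$. Since $f$ commutes with conjugation and fixes (up to the central term) all root vectors, comparing both sides forces the central contributions to vanish. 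Thus $f$ kills every $e_{i,j}$.

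Finally, the $e_{i,j}$ for $i\neq j$ generate $\cL$ as a $G$-module; indeed their span, together with the brackets $[e_{i,j},e_{j,i}]$ obtained via the $G$-action, covers the diagonal trace-zero part. Concretely, $u_{j,i}(1)^{-1}e_{i,j}u_{j,i}(1) = e_{i,j}+(e_{i,i}-e_{j,j}) - e_{j,i}$, so the elements $e_{i,i}-e_{j,j}$ lie in the module generated. Hence $f=0$, and the original $f$ was scalar. The main obstacle I expect is the first step, namely determining the joint fixed spaces precisely in all characteristics and for $n=1$. The case where $\ch(\FF)$ divides $n+1$, so that $I\in\cL$, is exactly where the extra central term appears and must be handled by the comparison argument of the third step.
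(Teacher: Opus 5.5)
Your argument is correct, and it takes a genuinely different route from the paper's.

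\textbf{The paper's route.} For $|\FF|>2$, the paper expands
$u_{k,h}(t)^{-1}xu_{k,h}(t)=x+t[x,e_{k,h}]-t^2e_{k,h}xe_{k,h}$
and separates the coefficients of $t$ and $t^2$. This shows that the intertwiner commutes with $\mathrm{ad}(e_{k,h})$, and the scalar is then extracted by bracket computations. The case $\FF=\FF_2$, where this separation is impossible, is only asserted to follow from ``more laborious'' computations. The possible component along $I$ when $\ch(\FF)$ divides $n+1$ is left inside the ``routine computations''.

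\textbf{Your route.} You use fixed-point spaces of root subgroups, which need only a single nonzero value of $t$:
\begin{itemize}
\item the joint centralizer of $\{u_{i,l}(1)\}_{l\neq i}\cup\{u_{k,j}(1)\}_{k\neq j}$ in $\cL$ is indeed $\langle e_{i,j}\rangle+(\langle I\rangle\cap\cL)$, in every characteristic;
\item signed permutation matrices in $G$ permute the lines $\langle e_{i,j}\rangle$ transitively, which equalizes the scalars;
\item the identity $u_{j,l}(1)^{-1}e_{i,j}u_{j,l}(1)=e_{i,j}+e_{i,l}$ for $l\notin\{i,j\}$ kills the central components explicitly;
\item your final identity recovers the diagonal part.
\end{itemize}

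\textbf{What this buys.} You get one proof valid over every field, $\FF_2$ included. It also covers every linear $f$ commuting with $G$: the paper assumes $\alpha$ invertible, whereas Definition~\ref{rigid} quantifies over all linear maps. The opening torus paragraph is superfluous and can be dropped.

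\textbf{On your stated worry.} The fixed-space computation is characteristic-free, so that step is not an obstacle. The hypothesis $n\geq 2$ enters only through the third index $l$ used to eliminate the central terms, and there it is genuinely needed. For $n=1$ and $\FF=\FF_2$ one has $I\in\cL$, and $x\mapsto\phi(x)I$ with $\phi(ae_{1,2}+be_{2,1}+cI)=a+b+c$ is a non-scalar $G$-endomorphism of $\cL$.
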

{\em Proof.} This lemma is Claim 2.11 of  \cite{CGP2}. We recall its proof. Let $\alpha:\cL\rightarrow\cL$ be an invertible linear mapping such that $g\circ\alpha = \alpha\circ g$ for every $g\in G$. Suppose firstly that $|\FF| > 2$. Then, with $g = u_{k,h}(t)$ as in Remark \ref{g**(-1)d(g)}, the previous condition implies that $\alpha([e_{k,h}, x]) =[e_{k,h}, \alpha(x)]$ for every $x\in \cL$. From the latter condition, with $x = e_{i,j}$ and letting the pair $(k,h)$ range in $\{1, 2,..., n+1\}\times\{1,2,.., n+1\}\setminus\{(i,i)\}_{i=1}^{n+1}$, by routine computations we obtain that there a scalar $t_\alpha\in\FF$ such that $\alpha(e_{i,j}) = t_\alpha e_{i,j}$ for every choice of $i\neq j$ in $\{1, 2,..., n+1\}$. Therefore, with $h_i = [e_{i+1,i},e_{i,i+1}] = -e_{i,i}+e_{i+1,i+1}$, we have $\alpha(h_i) =  [e_{i+1,i},\alpha(e_{i,i+1})] =  [e_{i+1,i},t_\alpha e_{i,i+1}] = t_\alpha h_i$. As $\{e_{i\neq j}\}_{i\neq j}\cup\{h_i\}_{=1}^{n+1}$ is a basis of $\cL$, we obtain that $\alpha(x) = t_\alpha x$ for every $x\in \cL$, as claimed. When $\FF = \FF_2$ the same came conclusion can be obtained by a similar method, but the details of the computations are a bit more laborious.  \hfill $\Box$   \\

The next lemma is Lemma 3.1 of \cite{CGP2}. We also include its proof as given in \cite{CGP2}, but for some improvements aimed to clarify a few steps of it.  Indeed, as we will explain below (Remark \ref{central rem}), that proof works as well if we replace $\SlnK$ with $\Omega(2n+1,\FF)$ or $\Omega^+(2n,\FF)$ and $\AnK$ with $B_{n,2}(\FF)$ or $D_{n,2}(\FF)$. 

\begin{lemma}\label{central A} 
The group $G$ and the adjoint embedding $e$ of $\Gamma = \AnK$ satisfy hypothesis $(G2)$ of Theorem \ref{theo nuovo}.
\end{lemma}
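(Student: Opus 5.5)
The plan is to show that every long root element of $G = \SlnK$ acts trivially on the kernel. Since these elements generate $G$, this proves the lemma. Let $e':\Gamma\rightarrow V'$ be a linearly $G$-homogeneous cover of $e$, let $\pi:V'\rightarrow\cL$ be the projection of $e'$ onto $e$ and $K = \ker(\pi)$. By the argument of the proposition on universal covers (the same argument applies to any linearly $G$-homogeneous cover), $K$ is $G$-invariant and $\pi$ is $G$-equivariant. $G$ is generated by the root elements $u_{i,j}(t)$ of Remark \ref{g**(-1)d(g)}, which are transvections of $V(n+1,\FF)$. So it suffices to fix a transvection $u\neq 1$, with center $a$ and axis $A$, and prove that $u$ centralizes $K$.

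Let $Q$ be the set of points $\{b,B\}$ of $\Gamma$ with $b\in A$ or $a\in B$. Equivalently, $Q$ consists of the points whose $e$-images $fv$ satisfy $\Tr(m_0\cdot fv)=0$, where $m_0$ is the matrix representing the flag $\{a,A\}$ (up to a choice of representatives). Thus $e(Q) = e(P)\cap H$ for the hyperplane $H := m_0^{\perp}\cap\cL$ of $\cL$ with respect to $T$. Consequently $Q$ is a geometric hyperplane of $\Gamma$: every line either lies in $Q$ or meets $Q$ in exactly one point. Moreover $u$ fixes every point of $Q$. This is a direct check: $u$ fixes every point of $A$ and every hyperplane through $a$. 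For a flag $\{b,B\}$ with $b\in A$ one also needs $u(B)=B$; this is the one place where I have to verify that the points fixed by $u$ are exactly those in $Q$. If that fails, I will replace $Q$ by the geometric hyperplane $\{x : \Tr(m_0 m_x) = 0\}$ intersected with the fixed-point set and adjust the argument accordingly.

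The key step is to show that $\langle e'(Q)\rangle = \pi^{-1}(H)$, so in particular $K\subseteq\langle e'(Q)\rangle$. Let $S = \langle e'(Q)\rangle$. Clearly $S\subseteq\pi^{-1}(H)$, and $\pi^{-1}(H)$ is a hyperplane of $V'$. For the reverse inclusion I will use the standard fact that the complement $P\setminus Q$ of a geometric hyperplane is connected, which is easily checked here because the points of $\Gamma$ opposite to $\{a,A\}$ form a connected set. Take $x\notin Q$ and let $y\notin Q$ be collinear with $x$. The line $xy$ meets $Q$ in a point $q$, so $e'(y)\in\langle S, e'(x)\rangle$. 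By connectedness, $\langle S, e'(x)\rangle$ contains all of $e'(P)$, hence equals $V'$. Therefore $S$ has codimension at most $1$ in $V'$. Since $S\subseteq\pi^{-1}(H)\neq V'$, it follows that $S=\pi^{-1}(H)\supseteq K$.

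It remains to show that $u$ acts as the identity on $S$. For $q\in Q$, the vector representing $e'(q)$ is an eigenvector of $u$, say with eigenvalue $\lambda_q$. If $q,q'\in Q$ are collinear and the line $qq'$ lies in $Q$, then all points of $e'(qq')$ are eigenvectors, which forces $\lambda_q=\lambda_{q'}$. Hence $\lambda$ is constant on each connected component of the subgeometry induced on $Q$. I expect the main obstacle to be checking that this subgeometry is connected, and I will do it by hand using the flags near $\{a,A\}$. Granting connectedness, $u$ acts on $S$ as a scalar $\lambda$. Applying $\pi$, $u$ acts on $H$ as the scalar $\lambda$. But $u$ fixes the vector $m_0\in H$, because $u$ acts on $m_0$ by conjugation by a transvection with this center and axis, so $\lambda = 1$. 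Therefore $u$ is the identity on $S\supseteq K$. Since such $u$ generate $G$, the group $G$ centralizes $K$, which is $(G2)$.
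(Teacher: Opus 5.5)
There is a genuine gap. The claim that $u$ fixes every point of $Q$ is false, and it cannot be patched within this scheme.

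Write $u=I+tm_0$ with $m_0=fv$. Then $u$ fixes exactly the points of $A$ and the hyperplanes through $a$. So its fixed points on $\Gamma$ are the flags $\{b,B\}$ with $b\in A$ \emph{and} $a\in B$. A flag with $b\in A$, $a\notin B$ lies in $Q$ but is moved by $u$.

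More decisively, your two key claims contradict each other. If $u$ acted as a scalar on $S=\pi^{-1}(H)$, then by equivariance it would act as a scalar on $\pi(S)=H$. But for $x\in H$ we have $m_0xm_0=\Tr(m_0x)m_0=0$, hence $u^{-1}xu=x+t[x,m_0]$. Take $m_0=e_{n+1,1}$. The elements $e_{1,2}$ and $e_{2,n+1}$ of $H$ are sent to $e_{1,2}-te_{n+1,2}$ and $e_{2,n+1}+te_{2,1}$. So $u-1$ has rank at least $2$ on $\cL$. Since $u$ is unipotent on $\cL$, any scalar by which it acted on a hyperplane would have to be $1$. Hence $u$ is a scalar on no hyperplane of $\cL$. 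Consequently, no choice of $Q$ lets a single root element act as a scalar on a subspace $S\supseteq K$ with $\pi(S)$ a hyperplane.

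Your fallback does not rescue the argument. The fixed-point set $F=\{\{b,B\}: b\in A,\ a\in B\}$ is not a geometric hyperplane: a point-subhyperplane line $\{b,S\}$ with $b\notin A$ misses it entirely. So the codimension-one spanning argument that gave $K\subseteq\langle e'(Q)\rangle$ is lost. For $n=2$ the fallback fails outright. There $F$ is the union of the two lines of $\Gamma$ through $\{a,A\}$, so $\langle e'(F)\rangle$ is $3$-dimensional. The map $\pi$ sends it isomorphically onto the plane $\langle e(F)\rangle$, so $\langle e'(F)\rangle\cap K=0$ and you learn nothing about $K$.

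What is missing is a global control of $K$. The paper obtains it from generation results, not from fixed points. For a subfield $\KK$, set $K[\KK]:=K\cap\langle e'(\Gamma(\KK))\rangle$.
\begin{itemize}
\item Cooperstein's generating set of $\dim(\cL)$ points for $\Gamma(\FF_0)$ gives $K[\FF_0]=0$.
\item For a simple extension $\FF_2=\FF_1(c)$, Blok--Pasini show that $\Gamma(\FF_2)$ is generated by $\Gamma(\FF_1)$ plus one point. Hence $K[\FF_1]$ has codimension at most $1$ in $K[\FF_2]$.
\item Perfectness of $G(\FF_0)$ then forces $G(\FF_0)$ to act trivially on $K[\FF_2]$ whenever it does so on $K[\FF_1]$.
\item Zorn's lemma yields that $G(\FF_0)$ centralizes $K$.
\item Finally, $G/Z(G)$ is simple and $G$ is perfect, so the normal subgroup of $G$ centralizing $K$ is all of $G$.
\end{itemize}
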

{\em Proof.} Let $\hee:\Gamma\rightarrow \widehat{\cL}$ be a cover of the adjoint embedding $e:\Gamma\rightarrow\cL$ of $\Gamma = \AnK$ and let $W$ the kernel of the projection of $\hee$ onto $e$. 

Throughout this proof we adopt the following notation. Given a subfield $\KK$ of $\FF$ and a set $X$ of vectors of an $\FF$-vector space $V$, we denote by $\langle X\rangle_\KK$ the $\KK$-span of $X$ in $V$, namely the set of all $\KK$-linear combinations of a finite
  number of vectors of $X$. The dimension of $\langle X\rangle_\KK$ is its dimension as a $\KK$-vector space. Chosen a basis $B$ of $V$,  we put $V(\KK) := \langle B\rangle_\KK$ and denote by $P(V(\KK))$ the projective geometry of $V(\KK)$, regarded as a subgeometry of $\PG(V)$, every $1$-dimensional subspace of $\langle B\rangle_\KK$ being identified with the $1$-dimensional subspace of $V$ containing it.  

With $V = V(n+1, \FF)$, we denote by $\Gamma(\KK)$ the point-hyperplane geometry of $\PG(V(\KK)) \cong \PG(n,\KK)$, regarded as a subgeometry of $\Gamma = A_{n,\{1,n\}}(\FF)$ and, given a Chevalley basis $C$ of $\cL$ (for instance, $C = \{e_{i,j}\}_{i\neq j}\cup\{h_i\}_{i=1}^n$ where $h_i = [e_{i+1,i}e_{i,i+1}]$), we put $\cL(\KK) := \langle C\rangle_\KK$. So, $\cL(\KK)$ is the adjoint module for $G(\KK) := \mathrm{SL}(n+1, \KK)$. We also denote by $\widehat{\cL}[\KK]$ the $\FF$-span in $\widehat{\cL}$ of the set of vectors which represent the points of the  image $\hee(\Gamma(\KK))$ of $\Gamma(\KK)$ in $\PG(\widehat{\cL})$ via the embedding $\hee$ and we put  $W[\KK]:=\widehat{\cL}[\KK]\cap W$. 

Clearly, if $\KK\subseteq \KK'$ then $\cL(\KK')$ ($= \langle \cL(\KK)\rangle_{\KK'}$) is a subalgebra of $\cL(\KK')$ and $G(\KK)$ is a subgroup of $G(\KK')$. Moreover $\widehat{\cL}[\KK]\subseteq \widehat{\cL}[\KK']$ and $W[\KK]\subseteq W[\KK']$. Note also that, for every subfield $\KK$ of $\FF$, the projection of $\widehat{\cL}$ onto $\cL$ along $W$ maps $\widehat{\cL}[\KK]$ onto $\cL$. Indeed the image of $\widehat{\cL}[\KK]$ by this projection contains $\cL(\KK)$ and $\langle \cL(\KK)\rangle_\FF = \cL$.   

Let $\FF_0$ be the minimal subfield of $\FF$. Put $G_0:= G(\FF_0)$ for short. By Cooperstein \cite{Coop1} the geometry $\Gamma(\FF_0)$ admits a generating set X of size $n^2+2n = \dim(A)$. (Cooperstein mentions only finite prime fields in \cite{Coop1}, but the results he obtains in that paper hold as well when $\FF_0$ is the field of rational numbers.) Therefore, if we choose a representative vector for every point of $\hee(X)$ we obtain a set $B_X$ of generators of $\widehat{\cL}(\FF_0)$. However $\dim(\widehat{\cL}[\FF_0]) \geq \dim(\cL) = n^2+2n$. Accordingly, $B_X$ is a basis of $\widehat{\cL}[\FF_0]$ and $\dim(\widehat{\cL}[\FF_0]) = \dim(\cL) =  n^2+2n$. As the projection of $\widehat{\cL}$ onto $\cL$ along $W$ maps $\widehat{\cL}[\FF_0]$ onto $\cL$, we obtain that $\widehat{\cL}[\FF_0]$ is a complement of $W$ in $\widehat{\cL}$; with no loss, we can also assume that $\widehat{\cL}[\FF_0] = \cL$. We have proved that $W[\FF_0] = \{0\}$. 

For every subfield $\KK$ of $\FF$ the group $G_0$, being a subgroup of $G(\KK)$, stabilizes $W[\KK]$. Let $\Phi$ be the set of all subfields $\KK$ of $\FF$ such that the action of $G_0$ on the module $W[\KK]$ is trivial. The action of $G_0$ on $W[\FF_0]=\{0\}$ is obviously trivial. Hence $\Phi\neq\emptyset$. Let now $(\{\KK_i\}_{i\in I}, \subseteq)$ be a chain of subfields of $\FF$. For any $x\in W[\bigcup_{i\in I}\KK_i]$ there exists $j\in I$ such that $x\in W[\KK_j]$. It follows that $W[\bigcup_{i\in I}\KK_i] = \bigcup_{i\in I} W[\KK_i]$. Therefore, if $G_0$ acts trivially on $W[\KK_i]$ for every $i\in I$ then it also acts trivially on $W[\bigcup_{i\in I}\KK_i]$. Accordingly, every chain in $\Phi$ admits an upper bound in $\Phi$.  By Zorn's lemma $\Phi$ admits a maximal element, say $\FF_1$.

Proving the lemma amounts to prove that $\FF_1=\FF$. Suppose by contradiction that $\FF_1\subsetneq \FF$. Pick an element $c\in\FF\setminus\FF_1$ and let $\FF_2:=\FF_1(c)$ be the simple extension of $\FF_1$ by means of $c$. In view of Blok and Pasini \cite[Corollary 4.8]{BP1} the geometry $\Gamma(\FF_2)$ is generated by $\Gamma(\FF_1)$ and possibly one extra point of $\Gamma(\FF_2)\setminus\Gamma(\FF_1)$. Therefore $\widehat{\cL}[\FF_1]$ has codimension at most $1$ in $\widehat{\cL}[\FF _2]$. However $\widehat{\cL}[\FF_2]$ and $\widehat{\cL}[\FF_1]$, as $G_0$-modules, are extensions of $\cL$ by $W[\FF_2]$ and $W[\FF_1]$ respectively.  It follows that $W[\FF_1]$ has codimension at most $1$ in $W[\FF_2]$. The group $G_0$ acts trivially on $W[\FF_1]$ by the choice of $\FF_1$. Therefore $W[\FF_1]\subsetneq W[\FF_2]$, since $\FF_1$ is maximal in $\Phi$. So, $W[\FF_1]$ has codimension $1$ in $W[\FF_2]$ and, chosen a vector $v\in W[\FF_2]\setminus W[\FF_1]$, the $G_0$-module $W[\FF_2]$ is an extension of the $1$-space $\langle v\rangle_\FF$ by the $G_0$-module $W[\FF_1]$, which is trivial by the choice of $\FF_1$. However $G_0$ is perfect. Hence it also acts trivially on the $1$-space $\langle v\rangle_\FF$. Therefore the action of $G_0$ on $W[\FF_2]$ can only be as follows: 
   \begin{equation} \label{trac}
 (w, tv)\cdot g ~=~  (t\phi(g)+w, t\lambda(g)v), \qquad  g\in G_0,~ t\in \FF , ~ w\in W[\FF_1]. 
 \end{equation}
where $\phi$ and $\lambda$ are mappings from $G_0$ to $W[\FF_1]$ and the multiplicative group of $\FF$ respectively. 
Clearly, $\lambda$ is a homomorhism. However $G_0$ is perfect. Therefore $\lambda(g) = 1$ for every $g\in G_0$ and (\ref{trac}) boils down to the followng:
   \begin{equation} \label{trac bis}
 (w, tv)\cdot g ~=~  (t\phi(g)+w, t)v).
 \end{equation} 
By (\ref{trac bis}) we get $(w, tv)\cdot g_1g_2 =  (t\phi(g_1g_2) + w, tv)$ as well as
\[(w, tv)\cdot g_1g_2 ~ = ~ (t\phi(g_1)+w, tv)\cdot g_2 ~ = ~ (t\phi(g_2) + t\phi(g_1) + w, tv). \]
Therefore $\phi(g_1g_2) = \phi(g_1)+\phi(g_2)$, namely $\phi$ is a homomorphism from $G_0$ to the additive group of $W[\FF_1]$. Since $G_0$ is perfect, $\phi(g) = 0$ for every $g\in G_0$ and (\ref{trac bis}) now shows that $G_0$ acts trivially on $W[\FF_2]$. Consequently, $\FF_2\in \Phi$. This conclusion contradicts the maximality of $\FF_1$ in $\Phi$. 
 
Therefore $\FF =\FF_1\in\Phi$ and the action of $G_0$ on $W = W[\FF]$ is trivial. To conclude, observe that the group
$G:=\mathrm{SL}(n+1,\KK)$ does not admit any proper normal subgroup containing $G_0$. Indeed $G/Z(G)$ is simple.  Hence the action of $G$ on $W$ must also be trivial. \hfill $\Box$ 

\begin{remark}\label{central rem}
\em
The previous proof relies on the fact that $G_0$ is perfect and $G/Z(G)$ is simple (as it is the case also when $G =\Omega(2n+1,\FF)$ or $G = \Omega^+(2n,\FF)$) and the following two results: $\Gamma(\FF_0)$ can be generated by a set of points of size equal to $\dim(\cL)$ (Cooperstein \cite{Coop1}) and, if $\FF_2$ is a simple extension of $\FF_1$, then $\Gamma(\FF_2)$ can be generated by $\Gamma(\FF_1)\cup\{p\}$ for a suitable point $p\in \Gamma(\FF_2)$ (Blok and Pasini \cite{BP2}). Both these results also hold for $\Gamma = B_{n,2}(\FF)$ and $\Gamma = D_{n,2}(\FF)$ (and $\cL$ the appropriate Lie algebra, of course).   
\end{remark} 

By Lemmas \ref{A gamma1234}, \ref{rigid A} and \ref{central A} and Theorem \ref{theo nuovo}, with $\gamma$ as in the previous subsection and a point $p_0$ of $\Gamma$ satisfying property $(\gamma3)$ (say $p_0$ such that $e(p_0)$ is represented by the matrix $e_{n+1,1}$), the mapping $e_{\gamma, p_0}:\Gamma\rightarrow \cL_\Omega = \Der_\Omega(\FF)\oplus\cL$ defined according to formula (\ref{Intro2}) is the universal cover of $e$. This is indeed the main result of \cite{CGP2}. 

This result can be given a more explicit formulation. Recall that, according to the way $\gamma$ has been defined, for  $d\in \Der(\FF)$, $g\in G$ ad $x\in \cL$, we have $\langle \gamma_d(g^{-1}), x\rangle =  \Tr(gd(g^{-1})x)$. In particular, with $x = e_{n+1,1} = e^*_{n+1}e_1$ we have $\langle \gamma_d(g^{-1}), e_{n+1,1}\rangle = \Tr(gd(g^{-1})e^*_{n+1}e_1) = \Tr(d(g^{-1})e^*_{n+1}e_1g)$. Put $f = g^{-1}e^*_{n+1}$. Then $d(f) = d(g^{-1})e^*_{n+1} + g^{-1}d(e^*_{n+1}) = d(g^{-1})e^*_{n+1}$ (because $d(e^*_{n+1}) = 0$, since the unique non-zero entry of $e^*_{n+1}$ is equal to $1$ and $d(1) = 0$). Therefore, with $ v = e_1g$, we have $\langle \gamma_d(g^{-1}), e_{n+1,1}\rangle = \Tr(d(f)v) = vd(f)$.  So, we can state the above result as follows (see \cite[Theorem 1.2]{CGP2}). 

\begin{theorem}\label{mainA}
If the point $\{a,A\}$ of $\Gamma$ is mapped by $e$ onto the point of $\PG(\cL)$ represented by $fv = g^{-1}e_{n+1,1}g$ (for some $g\in G$, with $f = g^{-1}e^*_{n+1}$ and $v = e_1g$ representing $A$ and $a$ respectively), then the universal cover of $e$ maps $\{a,A\}$ onto the point of $\PG(\cL_\Omega)$ represented by the following vector of $\cL_\Omega = \Der_\Omega(\FF)\oplus \cL$:
\[(\sum_{\omega\in\Omega}vd_\omega(f)d_\omega) \oplus fv\]
\end{theorem}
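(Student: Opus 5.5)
The plan is to read Theorem \ref{mainA} as an explicit form of the conclusion of Theorem \ref{theo nuovo}, applied to $\Gamma = \AnK$, $G = \SlnK$, the adjoint embedding $e:\Gamma\rightarrow\cL$ and the map $\gamma$ of Subsection \ref{gamma A}. Lemma \ref{A gamma coc} and Lemma \ref{A gamma inj} show that $\gamma$ is an injective linear map into $Z^1(\cL^*,G)$. Lemma \ref{A gamma1234} gives $(\gamma1)$--$(\gamma4)$, with $p_0$ the point whose $e$-image is $\langle e_{n+1,1}\rangle$. Lemma \ref{rigid A} gives $(G1)$ and Lemma \ref{central A} gives $(G2)$. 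We also need that $G$ is semi-flag-transitive on $\Gamma$ and that $e$ is linearly $G$-homogeneous, which was checked in the description of the adjoint embedding. Theorem \ref{theo nuovo} then says that $e_{\gamma,p_0}$ is the universal cover of $e$. What remains is to compute the vector in (\ref{Intro2}) for an arbitrary point $\{a,A\}$ with $v_0 = e_{n+1,1}$.

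Fix a point $\{a,A\}$ and choose $g\in G$ with $g(p_0) = \{a,A\}$. Then $e(\{a,A\})$ is represented by $e_{n+1,1}\cdot g = g^{-1}e^*_{n+1}e_1g = fv$, where $f = g^{-1}e^*_{n+1}$ represents $A$ and $v = e_1g$ represents $a$. By (\ref{Intro2}), $e_{\gamma,p_0}(\{a,A\})$ is represented by
\[\Bigl(\sum_{\omega\in\Omega}\langle\gamma_\omega(g^{-1}), e_{n+1,1}\rangle d_\omega\Bigr)\oplus fv.\]
So it suffices to show $\langle\gamma_d(g^{-1}),e_{n+1,1}\rangle = vd(f)$ for every $d\in\Der(\FF)$, and then take $d = d_\omega$. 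By definition, $\gamma_d(g^{-1}) = f_{gd(g^{-1})}$, so this pairing equals $\Tr(gd(g^{-1})e^*_{n+1}e_1)$. Cyclicity of the trace turns this into $\Tr(d(g^{-1})e^*_{n+1}e_1g)$. Since the entries of $e^*_{n+1}$ lie in $\{0,1\}$ and $d(1)=0$, the Leibniz rule for matrices gives $d(f) = d(g^{-1})e^*_{n+1}$. Hence the pairing is $\Tr(d(f)v) = vd(f)$, a row-times-column product and so a scalar.

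One more point must be checked: the formula should not depend on the choice of $g$, nor on the choice of representatives $f$ and $v$. Independence of $g$ for fixed $p_0$ is Lemma \ref{nuovo2}. If instead $f$ and $v$ are rescaled to $sf$ and $rv$, with $s,r\in\FF^*$ not necessarily arising from a single $g$, the coordinate $v\,d_\omega(f)$ becomes $rs\,v\,d_\omega(f) + r\,d_\omega(s)\,vf$. Since $vf = 0$, this is $rs\,v\,d_\omega(f)$, so the whole vector is multiplied by $rs$ and the projective point is unchanged. The main obstacle is not in this computation. It lies in the hypotheses imported from earlier results, chiefly $(\gamma1)$, which is only cited from \cite{CGP2}, and Lemma \ref{central A}. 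At this stage the work is essentially bookkeeping with the right-action conventions, for example $g^{-1}$ versus $g$ in $\gamma_d$ and $f\cdot g = g^{-1}f$.
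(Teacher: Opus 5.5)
Your proposal is correct and follows the paper's argument. The paper likewise obtains $e_{\gamma,p_0}$ as the universal cover from Lemmas \ref{A gamma1234}, \ref{rigid A}, \ref{central A} and Theorem \ref{theo nuovo}, and then computes $\langle\gamma_d(g^{-1}),e_{n+1,1}\rangle=\Tr(d(g^{-1})e^*_{n+1}e_1g)=vd(f)$ exactly as you do; your extra check that rescaling $f$ and $v$ only rescales the vector (using $vf=0$) is not needed for the statement as phrased, but it is correct and mirrors what the paper does in the proof of Theorem \ref{main}.
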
    

\section{The absolutely universal embedding of the geometries $B_{n,2}(\FF)$ and $D_{n,2}(\FF)$}\label{univ BD} 

\subsection{The adjoint embedding of $B_{n,2}(\FF)$, $n > 2$}\label{adj B sec} 

Throughout this subsection $G$ is the derived subgroup $\Omega(2n+1,\FF)$ of the orthogonal group $\mathrm{O}(2n+1,\FF)$ and $\cL = \onK$ is its Lie algebra. We assume that $n > 2$ and suppose, as we can, that $G$, in its natural action on $V := V(2n+1,\FF)$, preserves the quadratic form 
\begin{equation}\label{form B}
q(x_1,..., x_{2n+1}) ~ = ~  \sum_{i=1}^nx_ix_{n+i} + x_{2n+1}^2.
\end{equation}
Moreover, we assume that $G$ acts at right on $V$, namely the vectors of $V$ are regarded as rows. With this convention,  $G$ acts on $\cM :=  M_{2n+1}(\FF) = V^*\otimes V$ at right by congiugation ($g\in G$ maps $m\in\cM$ onto $g^{-1}mg$) and stabilizes the subspace $\cM_\cL$ of $\cM$ formed by the matrices of the following form:
\begin{equation}\label{matrix B}
\left(\begin{array}{rrr}
X & Y & u^t \\
Z & -X^t & v^t \\
-2v & -2u & 0 
\end{array}\right)
\end{equation} 
where the exponent $t$ stands for transposition, $X, Y$ and $Z$ are square matrices of order $n$, the matrices $Y$ an $Z$ are anti-symmetric and $u$ and $v$ are vectors of $V(n,\FF)$. These are precisely the matrices $m \in \cM$ such that the row-times-column product $mJ$ is anti-symmetric, where 
\begin{equation}\label{J B}
J ~ = ~ \left(\begin{array}{ccc}
O_n & I_n & {\bf 0}^t \\
I_n & O_n & {\bf 0}^t \\
{\bf 0} & {\bf 0} & 2 
\end{array}\right) 
\end{equation} 
is the representative matrix of the bilinearization of the quadratic form described in (\ref{form B}). (Here $O_n$ and $I_n$ are the null and identity matrix of $M_n(\FF)$ and $\bf 0$ is the null vector of $V(n,\FF)$.) Regarded $\cM$ as a Lie algebra with bracket $[x,y] = xy-yx$, the subspace $\cM_\cL$ is a subalgebra of $\cM$. When $\ch(\FF) \neq 2$ the Lie algebra $\cM_\cL$ is an isomorphic copy of $\cL$. We shall discuss this isomorphism quite in detail. This will be helpful later, when we will describe the characteristic $2$ case. 

\subsubsection{The case where $\ch(\FF) \neq 2$} 

Let $\ch(\FF) \neq 2$. In order to show that $\cM_\cL\cong \cL$ we just need to show that $\cM_\cL$ admits a Chevalley basis of type $B_n$. 
First of all, we set the notation we shall use for roots. Given a basis $\Pi = \{\alpha_1, \alpha_2,..., \alpha_n\}$ of the root system $\Phi$ of type $B_n$, we put
\begin{equation}\label{Root B} 
\left\{\begin{array}{rl} 
\alpha^+_{(k,h)} := & \sum_{i=k}^{h-1}\alpha_i, ~ 1\leq k < h \leq n; \\
\alpha^+_{[k,n]} := & \alpha^+_{(k,n)} + 2\alpha_n, ~ 1\leq k < n; \\
\alpha^+_{[k,h]} := & \alpha^+_{(k,n)} + \alpha^+_{[h,n]}, ~ 1\leq k < h < n; \\
\alpha^+_{\{k\}} := & \alpha^+_{(k,n)} + \alpha_n, ~ 1\leq k \leq n, ~ (\alpha^+_{(n,n)} = 0 \mbox{ by convention}).
\end{array}\right.
\end{equation}
The above are the positive roots, $\alpha^+_{[1,2]}$ being the highest one. We denote their opposites by $\alpha^-_{(k,h)}, \alpha^-_{[k,n]}, \alpha^-_{[k,h]}$ and $\alpha^-_{\{k\}}$, respectively. Note also that the roots $\alpha^\pm_{(k,h)}$ and $\alpha^\pm_{[k,h]}$ are long while $\alpha^\pm_{\{k\}}$ is short. Clearly, $\alpha^+_{(k,k+1)} = \alpha_k$ and $\alpha^+_{\{n\}} =\alpha_n$. 

As in Section \ref{A}, for $i, j \in \{1, 2,..., 2n+1\}$ we denote by $e_{i,j}$ the matrix of $\cM$ with all null entries but the $(i,j)$-entry, which is equal to $1$; equivalently, if $(e_1, e_2,..., e_{2n+1})$ is the natural basis of $V$, then $e_{i,j} = e_i^te_j$. With this notation we put:
\begin{equation}\label{rootElm B}
\left\{\begin{array}{rcl}
x_{\alpha^+_{(k,h)}} & := & e_{h,k}-e_{k+n,h+n}; \\
x_{\alpha^-_{(k,h)}} &  := &  e_{k,h}-e_{h+n,k+n} \mbox{ (transposed of the above});\\
x_{\alpha^+_{[k,h]}} & : = & e_{h+n,k}-e_{k+n,h}; \\
x_{\alpha^-_{[k,h]}} &  :=  & e_{k,h+n}-e_{h,k+n} \mbox{ (transposed of the above});\\
x_{\alpha^+_{\{k\}}} &  := &  e_{n+k,2n+1}-2e_{2n+1,k}; \\
x_{\alpha^-_{\{k\}}}  & := &  e_{k,2n+1}- 2e_{2n+1,n+k};\\
h_k  & :=  &  -e_{k,k}+e_{k+1,k+1} + e_{n+k,n+k} - e_{n+k+1, n+k+1} = \\
 & & = [x_{\alpha_k}, x_{-\alpha_k}],  \mbox{ (for } k < n); \\ 
h_n & := &  2e_{n,n} - 2e_{2n,2n} = [x_{\alpha_n}, x_{-\alpha_n}].  
\end{array}\right.
\end{equation} 
It is straightforward to check that the above matrices form a Chevalley basis of $\cM_\cL$, where the elements $x_{\alpha^\pm_{(k,h)}}$, $x_{\alpha^\pm_{[k,h]}}$ and $x_{\alpha^\pm_{\{k\}}}$ span the root subspaces and $\{h_1,..., h_{n-1}, h_n\}$ is a basis of the Cartan subalgebra. Therefore $\cM_\cL \cong \cL$, as claimed. Accordingly, we can regard $\cM_\cL$ as the same as $\cL$, thus freely writing $\cL$ when in fact we are referring to $\cM_\cL$.  

The following formulas describe the matrices that represent the root elements of $G$ in their action on the natural module $V$. Of course, in these formulas $I$ is the identity matrix of order $2n+1$ and $t\in \FF$. An abbreviated notation (the 'nicknames') is introduced to replace the complete symbols (here called `full names'), which are a bit too cumbursome.   
 \begin{equation}\label{rootSbgr B}
\left\{\begin{array}{rcl}
\mbox{nickname} & \mbox{full name} & \mbox{representative matrix}\\
\hline\
u_{h,k}(t) & :=  u_{\alpha^+_{(k,h)}}(t)   = & I+tx_{\alpha^+_{(k,h)}}; \\
u_{k,h}(t)  & :=  u_{\alpha^-_{(k,h)}}(t)   = & I+tx_{\alpha^-_{(k,h)}}; \\ 
u^+_{k,h}(t) & : =   u_{\alpha^+_{[k,h]}}(t)   = & I +tx_{\alpha^+_{[k,h]}}; \\
u^-_{k,h}(t) & :=  u_{\alpha^-_{[k,h]}}(t) = &  I+tx_{\alpha^-_{[k,h]}}; \\
u^+_k(t) &  :=  u_{\alpha^+_{\{k\}}}(t)  = & I+tx_{\alpha^+_{\{k\}}} - t^2e_{n+k, k};\\
u^-_k(t) &  :=  u_{\alpha^-_{\{k\}}}(t)  = & I + tx_{\alpha^-_{\{k\}}} - t^2e_{k,n+k}.  
\end{array}\right.
\end{equation} 
It is straightforward to check that for every root $\alpha\in \Phi$ the action of the matrix $u_\alpha(t)$ by conjugation on $\cL$ is the same as if the element $u_\alpha(t) \in G$ was defined by exponentiation:    
\[u_\alpha(t)~ :  ~ x\in \cL ~\rightarrow ~ \exp(t\adj(x_\alpha)(x)) ~=~ \sum_{k=0}^{n(\alpha)-1}\frac{t^k\adj(x_\alpha)^k(x)}{k!}\]
where $n(\alpha)$ is the nilpotent exponent of the (nilpotent) adjoint operator $\adj(x_\alpha) : x\in \cL \rightarrow [x_\alpha, x]$.  

Let $e\colon\BnK\rightarrow \cL$ be the projective embedding that maps a generic totally singular $2$-subspace $\langle x, y\rangle$ of $V$ (i.e. a point of $B_{n,2}(\FF)$) onto the point of $\PG(\cL)$ represented by the matrix
\begin{equation}\label{adj B}
J\cdot(x^ty - y^tx) ~ = ~ (xJ)^ty - (yJ)^tx
\end{equation}
with $J$ as in (\ref{J B}). Clearly, $x^ty-y^tx = O$ (null  matrix) if and only if  $\langle x\rangle = \langle y\rangle$. It is straightforward to check that the vectors defined as in (\ref{adj B}) span $\cL$. Moreover, given a totally $q$-singular $3$-subspace $\langle u, v, w\rangle$ of $V$, the set 
\[\{J\cdot(au^t(vb+wc) - (bv^t+cw^t)ua) ~|~ a,b,c \in \FF\}\]
is a 2-subspace of $\cL$. Hence $e$ is indeed a projective embedding of $\BnK$ in $\cL$. Note also that, since $gJg^t = J$ (namely $Jg^t = g^{-1}J$) for every $g\in G$, for every $g\in G$ and every totally $q$-singular $2$-subspace $\langle x,y\rangle$ of $V$ we have
\[(xgJ)^tyg - (ygJ)^txg ~ = ~ g^{-1}((xJ)^ty - (yJ)^tx)g.\]
Hence $e$ is linearly $G$-homogeneous. Finally, let $(e_1,..., e_{2n+1})$ be the natural basis of $V$. Then $e(\langle e_1, e_2\rangle)$ is represented by 
\[J(e_2^te_1-e_1^te_2) ~= ~ e_{n+2,1}-e_{n+1,2} ~= ~ x_{\alpha^+_{[1,2]}}.\]
It is straightforward to check that the stabilizer of $\langle e_{n+1,2}-e_{n+2,1}\rangle = \langle x_{\alpha^+_{[1,2]}}\rangle$ in $G$ acting on $\cL$ is just the same as the stabilizer of the subspace $\langle e_1, e_2\rangle$ of $V$ in $G$ acting on $V$. Moreover $\langle x_{\alpha^+_{[1,2]}}\rangle$ is the unique $1$-dimensional subspace of $\cL$ stabilized by the stabilizer of $\langle e_1, e_2\rangle$ in $G$. As $\alpha^+_{[1,2]}$ is the highest root of the root system $\Phi$, the matrix $x_{\alpha^+_{[1,2]}}$ is a maximal vector of weight $\lambda_2 = \alpha^+_{[1,2]}$ for the  Weyl module $\cL = W_{\lambda_2}$. The following is now clear.  

\begin{proposition}\label{prop adj B} 
The embedding $e$ as defined in {\rm (\ref{adj B})} is just the adjoint embedding of $\BnK$.  
\end{proposition}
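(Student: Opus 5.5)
To prove Proposition \ref{prop adj B} I would use the definition of the natural (Weyl) embedding in Subsection \ref{Weyl Sec}. The adjoint embedding of $\BnK$ is defined as follows. Fix a point $F_0$ of $\Delta_2$ and let $V_0$ be the unique $1$-subspace of $W_{\lambda_2}$ stabilized by $G_{F_0}$; here $W_{\lambda_2}=\cL$, since the highest root of $B_n$ is $\lambda_2$. Then $F=g(F_0)$ is mapped to $g(V_0)$. So it suffices to exhibit one point $p_0$ such that three things hold: $e(p_0)$ is the $1$-space $V_0$ fixed by $G_{p_0}$, $e$ is $G$-equivariant, and $\cM_\cL$ is $G$-isomorphic to $\cL$. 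Two linearly $G$-homogeneous maps that agree on $p_0$ then agree everywhere, because $G$ is transitive on totally singular $2$-spaces (Witt's theorem, and $\Omega(2n+1,\FF)$ is still transitive on them).

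The steps, in order, are as follows.

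(1) Identify the $G$-module $\cM_\cL$ with $\cL$. For $\ch(\FF)\neq 2$ this is done by the Chevalley basis (\ref{rootElm B}), together with the check that the matrices $u_\alpha(t)$ in (\ref{rootSbgr B}) act on $\cM_\cL$ by conjugation exactly as $\exp(t\,\adj(x_\alpha))$. Since the root elements generate $G$, conjugation on $\cM_\cL$ is then the adjoint action of the Chevalley group. In characteristic $2$ I would instead take the $\ZZ$-form spanned by the same basis and reduce it modulo $2$.

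(2) Verify that $e$ is well defined and linearly $G$-homogeneous. This comes from the identity $Jg^t=g^{-1}J$, which gives $e(\langle x,y\rangle\cdot g)=g^{-1}e(\langle x,y\rangle)g$. One also checks that $J(x^ty-y^tx)$ does not depend on the basis of $\langle x,y\rangle$ up to scalars, since it is an alternating bilinear expression in $x,y$.

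(3) Take $p_0=\langle e_1,e_2\rangle$. Then $e(p_0)=\langle x_{\alpha^+_{[1,2]}}\rangle$. The stabilizer $G_{p_0}$ is the parabolic $P_2=B\,\langle U_{-\alpha_1},U_{-\alpha_3},\dots,U_{-\alpha_n}\rangle$. The highest root space is fixed by the Borel subgroup $B$: the torus scales it, and positive root groups kill it because $\alpha^+_{[1,2]}+\beta$ is never a root. It is also fixed by $U_{-\alpha_i}$ for $i\neq 2$, since $\langle\lambda_2,\alpha_i^\vee\rangle=0$ and so $\alpha^+_{[1,2]}-\alpha_i$ is not a root.

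(4) Prove uniqueness of this fixed $1$-space. Any $1$-space fixed by $P_2$ is fixed by the unipotent radical of $B$. So it lies in the space of $U^+$-invariants of $\cL$, which is the highest root space (this is routine from the root-space decomposition). This shows $e(p_0)=V_0$, and the proposition follows.

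The main obstacle is step (1) in characteristic $2$, and to a lesser extent the uniqueness in step (4) there. In characteristic $2$ the Lie algebra is not simple, and $U^+$-invariants could a priori be larger; the short root elements also involve the $-2$ entries, which vanish mod $2$. I would handle this by working with the Chevalley $\ZZ$-form: check the fixed-vector computation in the matrices over $\ZZ$ directly, and invoke the fact that the Weyl module is by definition the reduction of that lattice. The remaining steps are straightforward computations.
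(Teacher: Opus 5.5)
For $\ch(\FF)\neq 2$ your argument is correct and follows the paper's own route. You identify $\cM_\cL$ with $\cL$ through the Chevalley basis (\ref{rootElm B}) and the exponential description of the $u_\alpha(t)$. You get linear $G$-homogeneity from $Jg^t=g^{-1}J$. You then observe that $e(\langle e_1,e_2\rangle)=\langle x_{\alpha^+_{[1,2]}}\rangle$ is the unique $1$-space fixed by the stabilizer of $\langle e_1,e_2\rangle$. Your step (3) is automatic from step (2), since equivariance already shows that $G_{p_0}$ stabilizes $e(p_0)$. Your step (4) fills in what the paper calls straightforward, and it is exactly there that $\ch(\FF)\neq 2$ is used: the Cartan matrix of $B_n$ has determinant $2$, and some structure constants are $\pm 2$.

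The part that fails is your plan for characteristic $2$, and it cannot be repaired. The proposition is stated only in the subsection for $\ch(\FF)\neq 2$, and it is false otherwise.

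\begin{itemize}
\item If $\langle x,y\rangle$ is totally singular, the upper-left block of $J(x^ty-y^tx)$ has trace $\sum_{i=1}^n(x_{n+i}y_i-x_iy_{n+i})$. In characteristic $2$ this equals $xJy^t=0$.
\item Hence $e(P)$ spans only the $G$-invariant hyperplane $\cL_0$ of $\cM_\cL$, which has dimension $\dim\cL-1$, whereas $\cL$ has no $G$-invariant hyperplane.
\item So in characteristic $2$ the map (\ref{adj B}) is the Pl\"{u}cker embedding $e_0$. This is a proper projection of the adjoint embedding, with kernel $Z(\cL)=\langle h_n\rangle$.
\item Reducing the $\ZZ$-form does give $\cL$ abstractly, but not as a space of matrices. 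The basis matrix $h_n=2e_{n,n}-2e_{2n,2n}$ reduces to the zero matrix, so the matrices only realize $\cL/Z(\cL)\cong\cL_0$.
\item Your step (4) breaks as well. The central element $h_n$ is fixed by all of $G$, so the $1$-space fixed by $G_{p_0}$ is no longer unique.
\end{itemize}

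In characteristic $2$ the adjoint embedding has to be described via the seed cocycle, as in Proposition \ref{prop adj B car 2}. You should simply restrict your proof to $\ch(\FF)\neq 2$.
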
 

\begin{remark}\label{plucker B}
\em
When $\ch(\FF) \neq 2$ the adjoint embedding of $B_{n,2}(\FF)$ is isomorphic to the Pl\"{u}cker embedding of $\BnK$ in $V\wedge V$, which maps a totally $q$-singular line $\langle x, y\rangle$ of $\PG(V)$ onto the point of $\PG(V\wedge V)$ represented by the vector $x\wedge y$ with coordinates $(x\wedge y)_{i,j} = x_iy_j-x_jy_i$ for $i< j$, where $x = (x_i)_{i=1}^{2n+1}$ and $y = (y_i)_{i=1}^{2n+1}$. 

Indeed let $m_{x,y} := (xJ)^ty-(yJ)^tx$ (see (\ref{adj B})) and let $m_{i,j}$ the $(i,j)$-entry of $m_{x,y}$. Then
\[\begin{array}{l}
m_{i,j} = x_{i+n}y_j - y_{i+n}x_j = -(x\wedge y)_{j,i+n} ~ \mbox{ for } i, j \leq n;\\
m_{i,j} = x_iy_{j+n}-y_ix_{j+n} = (x\wedge y)_{i,j+n} ~ \mbox{ for } n < i, j \leq 2n;\\
m_{i,j} = x_{i+n}y_j - y_{i+n}x_j = \left\{\begin{array}{l} (x\wedge y)_{i+n,j} ~ \mbox{ for } i \leq n < j \leq 2n, ~ i+n < j; \\
 0 ~ \mbox{ for } i \leq n < j = i+n; \\
-(x\wedge y)_{j, i+n} ~ \mbox{ for } i \leq n < j < i+n;
\end{array}\right.  \\
m_{i,j} = x_{i-n}y_j - y_{i-n}x_j = \left\{\begin{array}{l} (x\wedge y)_{i-n,j} ~ \mbox{ for } i > n \geq j > i-n; \\
 0 ~ \mbox{ for } 2n \geq i > n \geq j = i-n; \\
-(x\wedge y)_{j, i-n} ~ \mbox{ for } 2n \geq i > n, ~ i-n > j;
\end{array}\right.  \\
m_{i,2n+1} = (x_{i+n}y_{2n+1}-y_{i+n}x_{2n+1}) = (x\wedge y)_{i+n,2n+1} ~ \mbox{ for } i \leq n;\\
m_{i,2n+1} = x_{i-n}y_{2n+1}-y_{i-n}x_{2n+1} = (x\wedge y)_{i-n,2n+1} ~ \mbox{ for } n < i \leq 2n;\\
m_{2n+1,j} = 2(x_{2n+1}y_j-y_{2n+1}x_j)= -2(x\wedge y)_{j, 2n+1} ~ \mbox{ for } j\leq 2n; \\
m_{2n+1,2n+1} = 0.
\end{array}\]
Up to some repetitions and multiplications by $-1$ or $2$, the non-zero entries of $m_{x,y}$ are just the coordinates of $x\wedge y$. 
\end{remark} 
 
\subsubsection{The case where $\ch(\FF) = 2$}\label{L0 B}

Let now $\ch(\FF) = 2$. In this case the last row of the matrices of $\cM_\cL$ (see (\ref{matrix B})) is null as well as the last row and last column of $J$ (see (\ref{J B})). The matrix $J$ indeed represents the (degenerate) alternating form associated to the quadratic form $q$. But this is not so important here. The important issue is that $\cM_\cL$ is not isomorphic to the Lie algebra $\cL$ of $G$. 
Indeed the $G$-orbit in $\cM_\cL$ of the matrix $e_{n+1,2}-e_{n+2,1} = J\cdot (e_1^te_2 - e_2^te_1)$ spans a (necessarily $G$-invariant) hyperplane $\cL_0$ of $\cM_\cL$, formed by the matrices as in (\ref{matrix B}) with $\Tr(X) = 0$. In contrast, $\cL$ admits no $G$-invariant hyperplane. 

Formula (\ref{adj B}) still defines a linearly $G$-homogeneous projective embedding $e_0:\BnK\rightarrow\cL_0$ but, as we can see from Remark \ref{plucker B}, this is the same as the Pl\"{u}cker embedding of $\BnK$ in $\cL_0 \cong V\wedge V$, which is now a proper projection of the adjoint embedding $e:\BnK\rightarrow \cL$. The kernel of the projection of $e$ onto $e_0$ is the center $Z(\cL)$ of $\cL$. Indeed, while when $\ch(\FF)\neq 2$ the algebra $\cL$ is simple, when $\ch(\FF) = 2$ the Lie algebra $\cL$  admits a $1$-dimensional center wich, for every Chevalley basis $C = \{x_\alpha\}_{\alpha\in\Phi}\cup\{h_i\}_{i=1}^n$ of $\cL$, is equal to the $1$-dimensional subspace spanned by $h_n$.

We can regard $\cL_0$ as a complement of $Z(\cL)$ in the vector space $\cL$. In this setting, we can describe the projection $\pi_0$ of $\cL$ onto $\cL_0$ along $Z(\cL)$ by rewriting table (\ref{rootElm B}) as follows: 

\begin{equation}\label{rootElm B char 2}
\pi_0 : \left\{ \begin{array}{rcl}
x_{\alpha^+_{(k,h)}} & \rightarrow & x_{\alpha^+_{(k,h)}} = e_{h,k}-e_{k+n,h+n}; \\
x_{\alpha^-_{(k,h)}} & \rightarrow &  x_{\alpha^-_{(k,h)}} = e_{k,h}-e_{h+n,k+n};\\
x_{\alpha^+_{[k,h]}} & \rightarrow & x_{\alpha^+_{[k,h]}} = e_{h+n,k}-e_{k+n,h}; \\
x_{\alpha^-_{[k,h]}} & \rightarrow  & x_{\alpha^-_{[k,h]}} = e_{k,h+n}-e_{h,k+n};\\
x_{\alpha^+_{\{k\}}} & \rightarrow & x_{\alpha^+_{\{k\}}} = e_{n+k,2n+1}-2e_{2n+1,k} = e_{n+k, 2n+1};\\
x_{\alpha^-_{\{k\}}}  & \rightarrow & x_{\alpha^-_{\{k\}}} =  e_{k,2n+1}- 2e_{2n+1,n+k} = e_{k,2n+1}; \\
h_k  & \rightarrow  & -e_{k,k}+e_{k+1,k+1} + e_{n+k,n+k} - e_{n+k+1, n+k+1} = \\
 & & = ~ h_k, \mbox{ (for } k = 1, 2,..., n-1);\\ 
h_n & \rightarrow &  2e_{n,n} - 2e_{2n,2n} = O ~ (\mbox{the null matrix }).   
\end{array}\right.
\end{equation} 

\begin{remark}
\em
In general $[x_{\alpha^+_{\{k\}}}, x_{\alpha^-_{\{k\}}}] = h_n - 2\sum_{i=k}^{n-1}h_i$ for every $k = 1, 2,..., n$. When $\ch(\FF) = 2$, as we assuming now, the above equality boils down to $[x_{\alpha^+_{\{k\}}}, x_{\alpha^-_{\{k\}}}] = h_n$ for every $k = 1, 2,..., n$.
\end{remark}

Thus, $\cL_0$ is the hyperplane of $\cL$ spanned by the subset 
\[C_0 ~ := ~ \{x_\alpha\}_{\alpha\in\Phi}\cup\{h_i\}_{i=1}^{n-1}\]
of $C$ (but it is neither a subalgebra nor a $G$-submodule of $\cL$, although it is itself both a Lie algebra and a $G$-module). Table (\ref{rootSbgr B}) still describes the matrices that yield the root elements of $G$ in its right action on $V$. Conjugation by these matrices yields the right action on $\cL_0$ of the root elements of $G$.  

The projection $\pi_0:\cL\rightarrow\cL_0$ maps the adjont embedding $e:\BnK\rightarrow \cL$ onto the Pl\"{u}cker embedding $e_0:\BnK\rightarrow\cL_0$. Clearly $G$ stabilizes $Z(\cL)$ in its action on $\cL$. Therefore $\pi_0$ maps the action of $G$ on $\cL$ onto the action of $G$ on $\cL_0$. Moreover $G$, being perfect, centralizes $Z(\cL)$, which is $1$-dimensional. So, as a $G$-module, $\cL$ is a non-split central extension $\cL = K\cdot\cL_0$ of the $G$-module $\cL_0$ by the $1$-dimensional trivial $G$-module $K := Z(\cL) = \langle h_n\rangle$. In view of Theorem \ref{thm link} this extension can be described by means of a cocycle $F_0\in Z^1(\cL_0^*,G)$ of $G$ over the dual $\cL^*_0$ of $\cL$. Explicitly, $F_0$ is not a coboundary and $G$ acts as follows on $\cL = K\cdot \cL_0$: 
\begin{equation}\label{F0 B 0}
g ~:~ th_n\oplus x \in K\oplus\cL_0  ~ \longrightarrow ~ (t+\langle F_0(g^{-1}), x\rangle)h_n\oplus g^{-1}xg 
\end{equation}
for every $g\in G$, $t\in \FF$ and $x\in \cL_0$, where $g^{-1}xg$ is the $g$-image of $x$ in the action of $G$ on $\cL_0$ and the symbol $\langle . , . \rangle$ means the same as in Section \ref{GenConstr}. Moreover, if $\pi_K:\cL\rightarrow K = Z(\cL)$ is the projection of $\cL$ onto $K$ along $\cL_0$ then 
\begin{equation}\label{F0 B 1} 
\langle F_0(g^{-1}), x\rangle h_n ~ = ~ \pi_K(x\cdot g), \hspace{5 mm} \forall g\in G, ~\forall x\in \cL_0.  
\end{equation}
 for every $g\in G$ and $x\in \cL_0$, where $x\cdot g$ is the $g$-image of $x\in \cL_0$ in the $G$-module $\cL$.   

\begin{definition}\label{def F0} 
{\rm We call $F_0$ the} seed cocycle {\rm of $G$ over $\cL_0^*$.}
\end{definition}
 
Formula (\ref{F0 B 1}) explains what $\langle F_0(g^{-1}), x\rangle$ is but not how to compute it. Indeed (\ref{F0 B 1}) defines $F_0$ by means of the action of $g$ on $\cL$ while in (\ref{F0 B 0}) this action is defined via $F_0$. The next lemma offers us a way out from this vicious circle. 

Recall that $G$ is generated by the set $U = \{u_\alpha(t)\}_{\alpha \in \Phi, ~t\in \FF}$ of root elements described in table (\ref{rootSbgr B}). Therefore, by the cocycle condition on $F_0$ and linearity, if we know $\langle F_0(g^{-1}), x\rangle$ for every $g\in U$ and every $x\in C_0$ then, at least in principle, we also know $\langle F_0(g^{-1}), x\rangle$ for every $g\in G$ and every $x\in\cL_0$. 

Let $\Phi_0 := \cup_{1 \leq k < h \leq n+1}\{\alpha_{k,h}^+, \alpha_{k,h}^-\}$ be the subsystem of $\Phi$ of type $A_{n-1}$ generated by the roots $\alpha_1, \alpha_2,..., \alpha_{n-1}$.

\begin{lemma}\label{lemma F0 B}
With $\Phi_0$ as above, the following hold for every $t\in\FF$.  
\begin{itemize}
\item[$(1)$] If $\alpha\in \Phi_0$ then $\langle F_0(u_\alpha(t)), x\rangle = 0$ for every $x\in \cL_0$;
\item[$(2)$] Let $\alpha\in\Phi\setminus\Phi_0$. Then $\langle F_0(u_\alpha(t)), x\rangle = 0$ for every $x\in C_0\setminus\{x_{-\alpha}\}$ while $\langle F_0(u_\alpha(t)), x_{-\alpha} \rangle = t$. 
\end{itemize}
\end{lemma}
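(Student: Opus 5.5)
We compute $F_0$ on root elements directly from (\ref{F0 B 1}). Taking $g=u_\alpha(t)^{-1}=u_\alpha(-t)$ there, we get, for every $x\in C_0$,
\[\langle F_0(u_\alpha(t)),x\rangle h_n ~=~ \pi_K\bigl(x\cdot u_\alpha(-t)\bigr).\]
Here $x\cdot u_\alpha(-t)$ is computed in the $G$-module $\cL$, that is, in the Chevalley $\ZZ$-form reduced mod $2$. In this form the action of $u_\alpha(s)$ is given by the divided-power exponential
\[x ~\mapsto~ x+s[x_\alpha,x]+s^2\tfrac{1}{2}[x_\alpha,[x_\alpha,x]]+\cdots,\]
where each divided term has integral coefficients on $C$. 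By definition, $\pi_K$ picks out the coefficient of $h_n$ when an element of $\cL$ is written in the basis $C$, since $\cL_0$ is spanned by $C_0=C\setminus\{h_n\}$. So the whole proof reduces to finding the $h_n$-coefficient of $\exp(s\,\adj x_\alpha)(x)$ for each $x\in C_0$. Since $\ch(\FF)=2$, the sign of $s=-t$ plays no role.

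First let $x\in C_0$ with $x\neq x_{-\alpha}$. If $x=h_i$ with $i<n$, then $[x_\alpha,h_i]$ is a multiple of $x_\alpha$ and all higher terms vanish, so no $h_n$ appears. If $x=x_\beta$ with $\beta\neq-\alpha$, then each term $\adj(x_\alpha)^k(x_\beta)/k!$ is a multiple of $x_{\beta+k\alpha}$ whenever $\beta+k\alpha$ is a root, and is zero otherwise. It never lies in the Cartan subalgebra, since $\beta+k\alpha=0$ would force $\beta=-\alpha$ (note $\beta=-2\alpha$ is impossible in a reduced root system). Hence $\pi_K(x\cdot u_\alpha(-t))=0$ in all these cases, and this holds for every $\alpha\in\Phi$.

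Next let $x=x_{-\alpha}$. We get $x_{-\alpha}+s\,h_\alpha+c\,s^2x_\alpha$ for some integer $c$ (up to the usual sign conventions), where $h_\alpha=[x_\alpha,x_{-\alpha}]$ is the coroot element. So the $h_n$-coefficient is $s$ times the coefficient of $h_n$ in the expansion of $h_\alpha$ in $h_1,\dots,h_n$, taken mod $2$. We read this coefficient off the coroot system, writing $h_i=\alpha_i^\vee$, $\alpha_i^\vee=\varepsilon_i-\varepsilon_{i+1}$ for $i<n$, and $\alpha_n^\vee=2\varepsilon_n$:
\begin{itemize}
\item[(a)] For $\alpha=\alpha^\pm_{(k,h)}\in\Phi_0$ we have $h_\alpha=\pm\sum_{i=k}^{h-1}h_i$. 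The $h_n$-coefficient is $0$, which together with the previous paragraph gives $(1)$.
\item[(b)] For the short roots $\alpha^\pm_{\{k\}}$, the Remark preceding the lemma gives $h_\alpha=\pm(h_n-2\sum_{i=k}^{n-1}h_i)$. The $h_n$-coefficient is odd.
\item[(c)] For the long roots $\alpha^\pm_{[k,h]}$, the coroot is $\varepsilon_k+\varepsilon_h=\sum_{i=k}^{n-1}h_i+\sum_{i=h}^{n-1}h_i+h_n$ (up to sign). The $h_n$-coefficient is $\pm1$, again odd.
\end{itemize}
In cases (b) and (c) we therefore obtain $\langle F_0(u_\alpha(t)),x_{-\alpha}\rangle=\pm t=t$ in characteristic $2$, which together with the first case gives $(2)$.

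The main obstacle is making sure the computation is legitimately done in the $\ZZ$-form. The divided square must not produce a stray Cartan contribution. For short $\alpha$ one has $\adj(x_\alpha)^2(x_{-\alpha})=\pm2x_\alpha$, so $\adj(x_\alpha)^2/2$ stays integral and lands on a root vector. The coroot expansions must also be checked against the explicit matrices in (\ref{rootElm B}), which are normalized so that $h_n=2e_{n,n}-2e_{2n,2n}$. To settle signs and normalizations, I would verify one case of each type, say $\alpha^+_{[k,h]}$ and $\alpha^+_{\{k\}}$, by computing $[x_\alpha,x_{-\alpha}]$ with these matrices over $\ZZ$ and expanding the result in $h_1,\dots,h_n$.
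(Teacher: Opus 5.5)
Your proposal is correct and follows the paper's own strategy: work in the integral form spanned by $C$, read off the $h_n$-coefficient of $x\cdot u_\alpha(-t)$, and reduce mod $2$. Where the paper leaves the computations to the reader, you carry them out uniformly, using the divided-power exponential (only $x_{-\alpha}$ reaches the Cartan part, via $s\,h_\alpha$) and the parity of the $h_n$-coefficient of $h_\alpha$. The explicit matrices in (\ref{rootElm B}) confirm those parities: $[x_\alpha,x_{-\alpha}]=\sum_{i=k}^{n-1}h_i+\sum_{i=h}^{n-1}h_i-h_n$ for $\alpha=\alpha^+_{[k,h]}$, and $[x_\alpha,x_{-\alpha}]=h_n-2\sum_{i=k}^{n-1}h_i$ for $\alpha=\alpha^+_{\{k\}}$.
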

{\em Proof.}  We switch from $\FF$ to the ring $\cZ = \ZZ[t_1,t_2,..., t_n,...]$ of polynomials over the ring $\ZZ$ of integers. Let $M(\cZ)$ be the $\cZ$-module of all square matrices of order $2n+1$ with entries in $\cZ$, equipped with the usual row-times-column product and the natural bracket product. By tables (\ref{rootElm B}) and (\ref{rootSbgr B}) all elements of $C$ as well as those of $U$ (but with $t$ regarded as an unknown) belong to $M(\cZ)$. Moreover, if $g = u_\alpha(t)\in U$ then $g^{-1} = u_\alpha(-t)$. So, $g^{-1}\in M(\cZ)$ for every $g\in U$ and $g^{-1}zg\in M(\cZ)$ for every $x\in C$ and every $g\in U$.  

Let $L(\cZ)$ be the substructure of $M(\cZ)$ generated by $C$. Then $L(\cZ) = K(\cZ)\oplus L_0(\cZ)$ where $L(\cZ)$ and $K(\cZ)$ are the $\cZ$-spans of $C_0$ and respectively $h_n = 2(e_{n,n}-e_{2n,2n})$. Given $x\in C_0$ and $g = u_\alpha(t)\in U$, the product $y =  g^{-1}x g$ belongs to $L(\cZ)$. Hence we can decompose $y$ in a sum $y = y_K+y_0$ with $y_K \in K(\cZ)$ and $y_0\in L_0(\cZ)$. So, $y_K = f_{\alpha,x}(t)h_n$ for a polynomial $f_{\alpha,x}(t)\in \cZ$. Let $f_{\alpha,x}(t)_{|2} \in \FF_2[t]$ be the polynomial obtained from $f_{\alpha,x}(t)$ by computing its coefficients modulo 2. Then $f_{\alpha,x}(t)_{|2}$ also belongs to $\FF[t]$, since $\FF_2$ is the minimum subfield of $\FF$. Regarded now $t$ as a generic element of $\FF$, we have $f_{\alpha,x}(t)_{|2} = \langle F_0(u_\alpha(t)^{-1}), x\rangle$.   

The results mentioned in the lemma can be obtained by the above method. The (straightforward) computations are left to the reader.  \hfill $\Box$  \\

The Pl\"{u}cker embedding $e_0:\BnK\rightarrow \cL_0$ maps the point $\langle e_1, e_2\rangle$ of $\BnK$ onto the point of $\PG(\cL_0)$ represented by $x_{\alpha^+_{[1,2]}} = e_{n+2,1}-e_{n+1,2} = J\cdot(e_2^te_1-e_1^te_2)$. In view of the above and the flag-transitivity of $G$ on $\BnK$, the adjoint embedding $e:\BnK\rightarrow\cL$ can be described as follows

\begin{proposition}\label{prop adj B car 2} 
For every $g\in G$, the adjoint embedding maps the point $\langle e_1, e_2\rangle\cdot g$ of $\BnK$ onto he point of $\PG(\cL)$ represented by the vector
\[\langle F_0(g^{-1}), x_{\alpha^+_{[1,2]}}\rangle h_n\oplus g^{-1}x_{\alpha^+_{[1,2]}}g.\]
\end{proposition}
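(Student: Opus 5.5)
The plan is to combine three facts already available: the description (\ref{F0 B 0})--(\ref{F0 B 1}) of the $G$-module $\cL = K\oplus\cL_0$ via the seed cocycle $F_0$; the identification of $e$ with the natural (Weyl) embedding of Section \ref{Weyl Sec}; and the fact that $\pi_0$ projects $e$ onto the Pl\"{u}cker embedding $e_0$.

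First I fix the base point. Let $p_0 = \langle e_1,e_2\rangle$. The vector $x_{\alpha^+_{[1,2]}}$ is a root vector of the highest root of $\Phi$ with respect to the Chevalley basis $C$, so it is a maximal vector of the adjoint module $\cL = W_{\lambda_2}$. Its span is stabilized by the stabilizer $G_{p_0}$, and it is the unique $1$-subspace of $\cL$ with this property. Hence, by the definition of the natural embedding, $e(p_0) = \langle x_{\alpha^+_{[1,2]}}\rangle$.

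In the decomposition $\cL = K\oplus\cL_0$ we have $x_{\alpha^+_{[1,2]}}\in C_0\subseteq\cL_0$. So this vector is written as $0\cdot h_n\oplus x_{\alpha^+_{[1,2]}}$, and its image under $\pi_0$ is the vector representing $e_0(p_0)$. This is consistent with $\pi_0\circ e = e_0$.

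Next I transport by $G$. Since $e$ is $G$-homogeneous, $e(p_0\cdot g) = e(p_0)\cdot g$ for every $g\in G$. Applying (\ref{F0 B 0}) with $t=0$ and $x = x_{\alpha^+_{[1,2]}}$, the image of $x_{\alpha^+_{[1,2]}}$ under $g$ in the $G$-module $\cL$ is
\[\langle F_0(g^{-1}), x_{\alpha^+_{[1,2]}}\rangle h_n\oplus g^{-1}x_{\alpha^+_{[1,2]}}g.\]
This is exactly the vector in the statement. Point-transitivity of $G$ on $\BnK$ then covers all points.

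The main obstacle is to justify that the abstract action of $G$ on $\cL$ (defined through exponentials and Chevalley bases) really is the action (\ref{F0 B 0}) with respect to the chosen splitting $\cL = K\oplus\cL_0$. In other words, we must check that $\pi_0$ intertwines the action on $\cL$ with conjugation on $\cL_0$, and that the $K$-component is given by $F_0$ as in (\ref{F0 B 1}). This is the content of the discussion preceding Definition \ref{def F0}: $\pi_0$ is $G$-equivariant because $Z(\cL)$ is $G$-stable, and $G$ centralizes $Z(\cL)$ because $G$ is perfect. Granting that discussion, the proposition is immediate.

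A concrete check is also available through Lemma \ref{lemma F0 B}. On root elements $u_\alpha(t)$ one computes the $K$-component by the integral-model reduction mod $2$, which shows that the formula agrees with the exponential action on generators. The cocycle identity then extends the agreement to all of $G$.
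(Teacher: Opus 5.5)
Your argument is the paper's own: you identify $e(\langle e_1,e_2\rangle)$ with $\langle x_{\alpha^+_{[1,2]}}\rangle$, note that $x_{\alpha^+_{[1,2]}}\in\cL_0$ has zero $K$-component, and transport by $g$ using $G$-homogeneity together with formula (\ref{F0 B 0}) at $t=0$. One small correction: in characteristic $2$ the line $\langle x_{\alpha^+_{[1,2]}}\rangle$ is not the unique $1$-space fixed by the stabilizer of $\langle e_1,e_2\rangle$, because the center $\langle h_n\rangle$ is fixed by all of $G$; drop the uniqueness claim and justify the base point by the maximal-vector property alone, which you also invoke and which suffices.
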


\subsection{The adjoint embedding of $D_{n,2}(\FF)$}\label{adj D sec}   

All we have said for $\BnK$ in Subsection \ref{adj B sec} holds with obvious minor changes for $\DnK$ as well. Explicitly, let $G = \Omega^+(2n,\FF)$ (the derived subgroup of the orthogonal group $\mathrm{O}^+(2n,\FF)$) and let $\cL = \ohK$ be its Lie algebra. According to a well established convention, $n \geq 4$. Suppose as we can that $G$, in its natural action on $V := V(2n,\FF)$, preserves the form
\begin{equation}\label{form D}
q(x_1,..., x_{2n+1}) ~ = ~  \sum_{i=1}^nx_ix_{n+i}.
\end{equation}
As in Subsection \ref{adj B sec}, we assume that $G$ acts at right on $V$, hence at right by conjugation on $\cM :=  M_{2n}(\FF) = V^*\otimes V$, thus stabilizing the subspace $\cM_\cL$ of $\cM$ formed by the matrices of the following form:
\begin{equation}\label{matrix D}
\left(\begin{array}{rr}
X & Y \\
Z & -X^t  
\end{array}\right)
\end{equation} 
where $X, Y$ and $Z$ are square matrices of order $n$, with $Y$ and $Z$ anti-symmetric. These are precisely the matrices $m \in \cM$ such that $mJ$ is anti-symmetric,  where 
\begin{equation}\label{J D}
J ~ = ~ \left(\begin{array}{cc}
O_n & I_n  \\
I_n & O_n
\end{array}\right) 
\end{equation} 
is the representative matrix of the bilinearization of  (\ref{form D}). Regarded $\cM$ as a Lie algebra with bracket $[x,y] = xy-yx$, the subspace $\cM_\cL$ is a subalgebra of $\cM$. 

\subsubsection{The case where $\ch(\FF) \neq 2$} 

Let $\ch(\FF) \neq 2$. Then $\cM_\cL\cong \cL$. As in Subsection \ref{adj B sec}, we can prove this claim by showing that $\cM_\cL$ admits a Chevalley basis of the right type, namely $D_n$. As in Subsection \ref{adj B sec}, given a basis $\Pi = \{\alpha_1, \alpha_2,..., \alpha_n\}$ of the root system $\Phi$ of type $D_n$, we put
\begin{equation}\label{Root D} 
\left\{\begin{array}{rl} 
\alpha^+_{(k,h)} := & \sum_{i=k}^{h-1}\alpha_i, ~ 1\leq k < h \leq n; \\
\alpha^+_{[n-1,n]} := & \alpha_n;\\
\alpha^+_{[k,n]} := & \alpha^+_{(k,n-1)} + \alpha_n, ~ 1\leq k < n-1;\\
\alpha^+_{[k,h]} := & \alpha^+_{(k,n)} + \alpha^+_{[h,n]}, ~ 1\leq k < h < n. 
\end{array}\right.
\end{equation}
These are the positive roots, $\alpha^+_{[1,2]}$ being the highest one. We denote their opposite by $\alpha^-_{(k,h)}$ and $\alpha^+_{[k,h]}$. As before, $e_{i,j}$ is the matrix of $\cM$ with all null entries but the $(i,j)$-entry, which is equal to $1$. With this notation we put:
\begin{equation}\label{rootElm D}
\left\{\begin{array}{rcl}
x_{\alpha^+_{(k,h)}} & := & e_{h,k}-e_{k+n,h+n}; \\
x_{\alpha^-_{(k,h)}} &  := &  e_{k,h}-e_{h+n,k+n} \mbox{ (transposed of the above});\\
x_{\alpha^+_{[k,h]}} & : = & e_{h+n,k}-e_{k+n,h}; \\
x_{\alpha^-_{[k,h]}} &  :=  & e_{k,h+n}-e_{h,k+n} \mbox{ (transposed of the above});\\
h_k  & :=  &  -e_{k,k}+e_{k+1,k+1} + e_{n+k,n+k} - e_{n+k+1, n+k+1} = \\
 & & = [x_{\alpha_k}, x_{-\alpha_k}],  \mbox{ (for } k < n); \\ 
h_n & := & -e_{n-1,n-1}-e_{n,n} + e_{2n-1, 2n-1}+ e_{2n,2n} = \\
 & & =  [x_{\alpha_n}, x_{-\alpha_n}].  
\end{array}\right.
\end{equation} 
These matrices form a Chevalley basis of $\cM_\cL$, where the elements $x_{\alpha^\pm_{(k,h)}}$ and $x_{\alpha^\pm_{[k,h]}}$ span the root subspaces and $\{h_1,..., h_{n-1}, h_n\}$ is a basis of the Cartan subalgebra. Therefore $\cM_\cL =  \cL$, as claimed. 

The following formulas describe the matrices that represent the root elements of $G$ in their action on the natural module $V$.   
 \begin{equation}\label{rootSbgr D}
\left\{\begin{array}{rcl}
\mbox{nickname} & \mbox{full name} & \mbox{representative matrix}\\
\hline\
u_{h,k}(t) & :=  u_{\alpha^+_{(k,h)}}(t)   = & I+tx_{\alpha^+_{(k,h)}}; \\
u_{k,h}(t)  & :=  u_{\alpha^-_{(k,h)}}(t)   = & I+tx_{\alpha^-_{(k,h)}}; \\ 
u^+_{k,h}(t) & : =   u_{\alpha^+_{[k,h]}}(t)   = & I +tx_{\alpha^+_{[k,h]}}; \\
u^-_{k,h}(t) & :=  u_{\alpha^-_{[k,h]}}(t) = &  I+tx_{\alpha^-_{[k,h]}}.  
\end{array}\right.
\end{equation} 
Let $e\colon\DnK\rightarrow \cL$ be the projective embedding that maps a generic totally singular $2$-subspace $\langle x, y\rangle$ of $V$ onto the point of $\PG(\cL)$ represented by the matrix
\begin{equation}\label{adj D}
J\cdot(x^ty - y^tx) ~ = ~ (xJ)^ty - (yJ)^tx
\end{equation}
with $J$ as in (\ref{J D}). Clearly, $x^ty-y^tx = O$ if and only if  $\langle x\rangle = \langle y\rangle$. The vectors defined as in (\ref{adj D}) span $\cL$ and $e$ is a linearly $G$-homogeneous projective embedding of $\DnK$ in $\cL$. As in Subsection \ref{adj B sec}, the following holds.

\begin{proposition}\label{prop adj D} 
The embedding $e$ as defined in {\rm (\ref{adj D})} is indeed the adjoint embedding of $\DnK$.  
\end{proposition}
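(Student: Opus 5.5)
We will mirror the argument given for Proposition \ref{prop adj B} in the $B_n$ case, with characteristic not $2$ as in the current subsubsection. There are three things to check: $e$ is a projective embedding; $e$ is linearly $G$-homogeneous; and the image of a base point is a maximal vector of the Weyl module $\cL = W_{\lambda_2}$ whose stabilizer coincides with the stabilizer of the corresponding point of $\DnK$.

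First we verify that (\ref{adj D}) defines an embedding. The matrix $m_{x,y} := (xJ)^ty-(yJ)^tx$ satisfies $m_{x,y}J = J(x^ty-y^tx)J$, which is anti-symmetric, so $m_{x,y}\in\cM_\cL=\cL$. The matrix depends on the basis $\{x,y\}$ only up to the determinant of the change of basis, and it vanishes exactly when $\langle x\rangle=\langle y\rangle$. Injectivity follows from the analogue of Remark \ref{plucker B}: the entries of $m_{x,y}$ are, up to sign, the Plücker coordinates of $x\wedge y$, so distinct lines of $\PG(V)$ give non-proportional matrices. For a totally singular plane $\langle u,v,w\rangle$ and a fixed $u$, the map $(b,c)\mapsto m_{u,bv+cw}$ is linear in $(b,c)$. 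Hence the point-plane flags, which are the lines of $\DnK$, go to projective lines of $\PG(\cL)$. Spanning is seen on root vectors: $m_{e_i,e_j}$ and its analogues for pairs in $\{e_1,\dots,e_{2n}\}$ with $\langle e_i,e_j\rangle$ totally singular give all $x_\alpha$. The $h_k$ are obtained as differences of images of suitable non-basis lines, or alternatively via $G$-invariance together with the irreducibility of $\cL$ in characteristic $\neq 2$.

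Linear $G$-homogeneity follows from $gJg^t=J$, exactly as in the $B_n$ case. This identity gives $(xgJ)^tyg-(ygJ)^txg = g^{-1}m_{x,y}g$, so conjugation on $\cL$ lifts the action of $G$ on $\DnK$. The kernel of the action of $G$ on $\DnK$ and on $\PG(\cL)$ is in both cases $Z(G)=\{\pm I\}\cap G$.

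Finally, $e(\langle e_1,e_2\rangle)$ is represented by $e_{n+2,1}-e_{n+1,2}=x_{\alpha^+_{[1,2]}}$, which spans the root space of the highest root $\alpha^+_{[1,2]}=\lambda_2$; so this vector is a maximal vector of $W_{\lambda_2}=\cL$. We must check that the stabilizer of $\langle x_{\alpha^+_{[1,2]}}\rangle$ equals the stabilizer of $\langle e_1,e_2\rangle$, and that the latter fixes no other $1$-space of $\cL$. Since $G$ is flag-transitive, $e$ then agrees with the natural embedding of Subsection \ref{Weyl Sec}. The main obstacle is this stabilizer identification. We would settle it via the Plücker correspondence: the $1$-space $\langle m_{x,y}\rangle$ determines the line $\langle x,y\rangle$, so its stabilizer is exactly $G_{\langle e_1,e_2\rangle}$. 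Uniqueness of the fixed $1$-space then follows because this parabolic contains the Borel subgroup, whose only fixed $1$-space in the irreducible module $\cL$ is the highest-weight line.
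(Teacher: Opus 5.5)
Your proposal is correct and follows the paper's route. The paper proves this proposition simply by transferring the $B_n$ argument to $D_n$: embedding property, linear $G$-homogeneity via $gJg^t=J$, the base point $x_{\alpha^+_{[1,2]}}$ spanning the highest-root space, coincidence of the two stabilizers, and uniqueness of the fixed $1$-space. You supply the details the paper leaves as ``straightforward''.

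The one step worth making explicit is the Borel fixed-line fact over an arbitrary field, and here it can be checked directly. A $B$-stable line is fixed pointwise by the unipotent root elements $u_\alpha(t)$. Taking $t=\pm 1$ and using $\ch(\FF)\neq 2$, the vector is then killed by every $\adj(x_\alpha)$ with $\alpha>0$. The common kernel of these operators is $\langle x_{\alpha^+_{[1,2]}}\rangle$, because the structure constants $N_{\alpha,\beta}$ are $\pm 1$ and the Cartan matrix of $D_n$ has determinant $4$.
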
 

\begin{remark}\label{plucker D}
\em
Likewise for $\BnK$, when $\ch(\FF) \neq 2$ the adjoint embedding of $D_{n,2}(\FF)$ is isomorphic to the Pl\"{u}cker embedding of $\DnK$ in $V\wedge V$.  
\end{remark} 
 
\subsubsection{The case where $\ch(\FF) = 2$}\label{L0 D}

Let $\ch(\FF) = 2$. In this case  $\cM_\cL\not\cong \cL$. Indeed the $G$-orbit n $\cM_\cL$ of the matrix $e_{n+1,2}-e_{n+2,1} = J\cdot(e_1^te_2 - e_2^te_1)$ spans a hyperplane $\cL_0$ of $\cM_\cL$, formed by the matrices as in (\ref{matrix D}) with $\Tr(X) = 0$, while $\cL$ admits no $G$-invariant hyperplane. Formula (\ref{adj D}) still defines a linearly $G$-homogeneous projective embedding $e_0:\DnK\rightarrow \cL_0$ but this embedding is the Pl\"{u}cker embedding of $\DnK$ in $\cL_0 \cong V\wedge V$, which is now a proper projection of the adjoint embedding $e:\DnK\rightarrow \cL$. The kernel $K(\cL)$ of the projection of $e$ onto $e_0$ is $1$-dimensional and it is contained in the center $Z(\cL)$ of $\cL$. If $n$ is odd then $K(\cL) = Z(\cL)$ and $Z(\cL_0)$ is trivial while, when $n$ is even, then $I \in \cL_0$ and $Z(\cL)/K(\cL) = Z(\cL_0) = \langle I\rangle$. In any case, $G$ centralizes $Z(\cL)$ (because $G$ is perfect and stabilizes both $K(\cL)$ and $Z(\cL)$). 

It is convenient to give $K(\cL)$ a name to distinguish it from $Z(\cL)$ when $n$ is even. We call $K(\cL)$ the {\em deep center} of $\cL$, keeping this name even when $n$ is odd. With $C = \{x_\alpha\}_{\alpha\in\Phi}\cup\{h_1, h_2,..., h_n\}$ a Chevalley basis of $\cL$, the deep center $K(\cL)$ of $\cL$ is spanned by the element $k_n := h_{n-1} - h_n$. 

\begin{remark}
\em 
With $C$ as above and $i\in \{2, 3,..., n\}$, put 
\[k_i ~ := ~  h_{i-1} - [x_{\alpha^+_{[i-1,i]}}, x_{\alpha^-_{[i-1, i]}}].\]
Then for $i < j$ we have $k_j -k_i = 2[x_{\alpha^+_{(i,j)}}, x_{\alpha^-_{(i,j)}}]$. Hence, when $\ch(\FF) = 2$ as we are assuming here, $k_i = k_n$ for every choice of $i, j \in \{2, 3,..., n\}$. If moreover $n$ is even, then $I = \sum_{i=1}^{n/2}h_{2i-1}$. 
\end{remark} 

We can safely regard $\cL_0$ as the subspace of $\cL$ spanned by the subset $C_0 = \{x_\alpha\}_{\alpha\in\Phi}\cup\{h_i\}_{i=1}^{n-1}$ of the Chevalley basis $C = \{x_\alpha\}_{\alpha\in\Phi}\cup\{h_i\}_{i=1}^n$ of $\cL$. Thus $\cL_0$ is a complement of $K(\cL)$ and the projection $\pi_0$ of $\cL$ onto $\cL_0$ along $K(\cL)$ acts on $C$ as follows: 
\begin{equation}\label{rootElm D char 2}
\pi_0 : \left\{ \begin{array}{rcl}
x_{\alpha^+_{(k,h)}} & \rightarrow & x_{\alpha^+_{(k,h)}} = e_{h,k}-e_{k+n,h+n}; \\
x_{\alpha^-_{(k,h)}} & \rightarrow &  x_{\alpha^-_{(k,h)}} = e_{k,h}-e_{h+n,k+n};\\
x_{\alpha^+_{[k,h]}} & \rightarrow & x_{\alpha^+_{[k,h]}} = e_{h+n,k}-e_{k+n,h}; \\
x_{\alpha^-_{[k,h]}} & \rightarrow  & x_{\alpha^-_{[k,h]}} = e_{k,h+n}-e_{h,k+n};\\
h_k  & \rightarrow  & -e_{k,k}+e_{k+1,k+1} + e_{n+k,n+k} - e_{n+k+1, n+k+1} = \\
 & & = ~ h_k, \mbox{ (for } k = 1, 2,..., n-1);\\ 
h_n & \rightarrow &   -e_{n-1,n-1}-e_{n,n} + e_{2n-1,2n-1} + e_{2n, 2n} = \\ 
 & & =  h_{n-1}. 
\end{array}\right.
\end{equation} 
Table (\ref{rootSbgr D}) still describes the matrices that correspond to the root elements of $G$ in its right action on $V$ and conjugation by these matrices yields the action of the root elements of $G$ on $\cL_0$.  

As a $G$-module, $\cL$ is a non-split central extension $\cL = K\cdot\cL_0$ of the $G$-module $\cL_0$ by the $1$-dimensional trivial $G$-module $K := K(\cL) = \langle k_n\rangle$. In view of Theorem \ref{thm link} this extension is described by a cocycle $F_0\in Z^1(\cL_0^*,G)\setminus B^1(\cL_0^*,G)$. The group $G$ acts as follows on $\cL = K\cdot \cL_0$: 
\[g ~:~ tk_n\oplus x \in K\oplus\cL_0 ~ \longrightarrow ~ (t+\langle F_0(g^{-1}), x\rangle)k_n\oplus g^{-1}xg\]
for every $g\in G$, $t\in \FF$ and $x\in \cL_0$. Moreover, 
\[\langle F_0(g^{-1}), x\rangle k_n ~ = ~ \pi_K(x\cdot g), \hspace{5 mm} \forall g\in G, ~\forall x\in \cL_0\]
for every $g\in G$ and $x\in \cL_0$, where $x\cdot g$ is the $g$-image of $x$ in the $G$-module $\cL$ and $\pi_K$ is the projection of $\cL$ onto $K = Z(\cL)$ along $\cL_0$.  

We call $F_0$ the {\em seed cocycle} of $G$ over $\cL_0^*$. The statements of Lemma \ref{lemma F0 B} and Proposition \ref{prop adj D car 2} hold true word by word for $\DnK$ too. We recall them here, so we can refer to them later.  

\begin{lemma}\label{lemma F0 D}
Let $\Phi_0 := \cup_{1 \leq k < h \leq n+1}\{\alpha_{k,h}^+, \alpha_{k,h}^-\}$ be the subsystem of $\Phi$ of type $A_{n-1}$ generated by the roots $\alpha_1, \alpha_2,..., \alpha_{n-1}$. Then the following holds for every $t\in \FF$.   
\begin{itemize}
\item[$(1)$] If $\alpha\in \Phi_0$ then $\langle F_0(u_\alpha(t)), x\rangle = 0$ for every $x\in \cL_0$;
\item[$(2)$] Let $\alpha\in\Phi\setminus\Phi_0$. Then $\langle F_0(u_\alpha(t)), x\rangle = 0$ for every $x\in C_0\setminus\{x_{-\alpha}\}$ while $\langle F_0(u_\alpha(t)), x_{-\alpha} \rangle = t$. 
\end{itemize}
\end{lemma}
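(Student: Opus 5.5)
The plan mirrors the proof of Lemma \ref{lemma F0 B}. We lift to characteristic zero, compute there, and reduce mod $2$.

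Work over $\cZ=\ZZ[t]$ (or the polynomial ring used in Lemma \ref{lemma F0 B}) and consider the $\cZ$-span $L(\cZ)$ of the Chevalley basis $C$ of table (\ref{rootElm D}) inside $M_{2n}(\cZ)$. Replace the basis $C$ by $C_0\cup\{k_n\}$, where $k_n=h_{n-1}-h_n$. This is again a $\cZ$-basis of $L(\cZ)$, since the change of basis is unimodular. For $g=u_\alpha(t)$ with $\alpha\in\Phi$, the entries of $g^{\pm1}=I\pm tx_\alpha$ lie in $\cZ$, because the matrices in table (\ref{rootSbgr D}) are of the form $I+tx_\alpha$ with $x_\alpha^2=0$. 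For each $x\in C_0$ we have
\[g^{-1}xg=x+t[x,x_\alpha]-t^2x_\alpha xx_\alpha .\]
We expand this in the basis $C_0\cup\{k_n\}$ and let $f_{\alpha,x}(t)\in\cZ$ be the coefficient of $k_n$. Reducing mod $2$, the coefficient $f_{\alpha,x}(t)_{|2}$ equals $\langle F_0(u_\alpha(t)^{-1}),x\rangle$, by the characterization $\langle F_0(g^{-1}),x\rangle k_n=\pi_K(x\cdot g)$. Since $u_\alpha(t)^{-1}=u_\alpha(-t)$ and $-t=t$ in characteristic $2$, this gives $\langle F_0(u_\alpha(t)),x\rangle$.

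The computation then proceeds as follows. The term $t^2x_\alpha xx_\alpha$ is a multiple of $x_\alpha$ or zero, so it never contributes to $k_n$. The term $t[x,x_\alpha]$ lies in a root space unless $x=x_{-\alpha}$; in that case it equals $\pm t\,h_\alpha$, where $h_\alpha=[x_\alpha,x_{-\alpha}]$ is the coroot. Also, if $x=h_i$ then $[h_i,x_\alpha]$ is a multiple of $x_\alpha$. Hence the only possibly nonzero value is at $x=x_{-\alpha}$, and it equals $\pm t$ times the $k_n$-coordinate of $h_\alpha$ written in the basis $h_1,\dots,h_{n-1},k_n$. Writing $h_\alpha=\sum_i c_ih_i$ with integers $c_i$, and using $h_n=h_{n-1}-k_n$, this coordinate is $-c_n$. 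In $D_n$ the coefficient of $h_n$ in the coroot $h_\alpha$ equals the coefficient of $\alpha_n$ in $\alpha$, because $\Phi$ is simply laced and coroots behave like roots.

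For $\alpha\in\Phi_0$ the coefficient of $\alpha_n$ in $\alpha$ is $0$, which gives (1). For $\alpha\in\Phi\setminus\Phi_0$, the roots $\alpha^\pm_{[k,h]}$ have $\alpha_n$-coefficient $\pm1$ by (\ref{Root D}), so the value is $\pm t=t$ mod $2$, which gives (2).

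The main obstacle is bookkeeping. One must check that $[x_{-\alpha},x_\alpha]$ computed from the explicit matrices really equals $\pm h_\alpha$ with the stated $\alpha_n$-coefficient. This can be verified directly: for $x_{\alpha^+_{[k,h]}}=e_{h+n,k}-e_{k+n,h}$ the bracket is a diagonal matrix $\pm(e_{k,k}+e_{h,h}-e_{k+n,k+n}-e_{h+n,h+n})$. Its $h_n$-coefficient is odd, as one sees by comparing with the expression for $h_n$ in (\ref{rootElm D}), or equivalently because its trace on the upper-left block is nonzero mod $2$. Here we use that $\cL_0$ consists of the matrices with $\Tr(X)=0$, while $k_n$ has upper-left block of trace $2$. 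Viewed over $\ZZ$ via $k_n$, the projection $\pi_K$ is $\tfrac12\Tr(X)$ up to sign. This gives the cleanest check: the $k_n$-coordinate of a diagonal element is $\pm\frac12\Tr(X)$, and $\Tr(X)=0$ for $\alpha\in\Phi_0$, while $\Tr(X)=\pm2$ for $\alpha\notin\Phi_0$.
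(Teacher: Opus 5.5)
Your proposal is correct and follows the paper's route: as in the proof of Lemma \ref{lemma F0 B}, you lift to $\ZZ[t]$, decompose $g^{-1}xg$ along $\langle k_n\rangle$ and the span of $C_0$, and reduce mod $2$, actually carrying out the computation the paper leaves to the reader; your observation that the $k_n$-coordinate of an element of the $\ZZ$-form is $\tfrac12\Tr(X)$ makes that computation especially transparent. One small slip: for $\alpha=\alpha^\pm_{[k,h]}$ the upper-left block of $[x_{\alpha},x_{-\alpha}]$ has trace $\pm2$, which is zero mod $2$, so the correct criterion is that half the trace is odd, which is what your final sentence correctly uses.
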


\begin{proposition}\label{prop adj D car 2} 
For every $g\in G$, the adjoint embedding maps the point $\langle e_1, e_2\rangle\cdot g$ of $\DnK$ onto the point of $\PG(\cL)$ represented by the vector
\[\langle F_0(g^{-1}), x_{\alpha^+_{[1,2]}}\rangle k_n\oplus g^{-1}x_{\alpha^+_{[1,2]}}g.\]
\end{proposition}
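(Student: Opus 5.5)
This is the $D_n$ analogue of Proposition \ref{prop adj B car 2}, and I will prove it the same way. It rests on three facts already at hand: the description of the $G$-module $\cL$ as the central extension $K\cdot\cL_0$ via the seed cocycle $F_0$, the linear $G$-homogeneity of $e$, and the transitivity of $G$ on the points of $\DnK$. Since $G$ is transitive on the totally singular $2$-subspaces of $V$, every point of $\DnK$ has the form $\langle e_1,e_2\rangle\cdot g$. Linear homogeneity of $e$ then gives that $e(\langle e_1,e_2\rangle\cdot g)$ is represented by $m_0\cdot g$. Here $m_0\in\cL$ is a representative of $e(\langle e_1,e_2\rangle)$, and $\cdot$ denotes the action of $G$ on the module $\cL$, not on $\cL_0$. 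So the work reduces to two things: identifying $m_0$, and computing $m_0\cdot g$ in the decomposition $\cL = K\oplus\cL_0$.

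First I will show that $m_0$ can be taken to be $x_{\alpha^+_{[1,2]}}$, regarded as an element of $\cL_0\subseteq\cL$, with no component in $K=\langle k_n\rangle$. The point $e(\langle e_1,e_2\rangle)$ is the unique $1$-space of $\cL$ fixed by the stabilizer $G_{\langle e_1,e_2\rangle}$, namely the highest-weight space for the highest root $\alpha^+_{[1,2]}$. By the definition of the Chevalley basis this space is spanned by the root element $x_{\alpha^+_{[1,2]}}$. This element lies in $C_0$, hence in $\cL_0$, and $\pi_0$ fixes it by table (\ref{rootElm D char 2}). Under $\pi_0$ it corresponds to $J\cdot(e_2^te_1-e_1^te_2)$, the image of $\langle e_1,e_2\rangle$ under the Pl\"ucker embedding $e_0$, which is consistent with the fact that $\pi_0$ projects $e$ onto $e_0$.

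Next I will apply the formula for the action of $G$ on $K\cdot\cL_0$ with $t=0$ and $x=x_{\alpha^+_{[1,2]}}$. This gives
\[x_{\alpha^+_{[1,2]}}\cdot g ~=~ \langle F_0(g^{-1}), x_{\alpha^+_{[1,2]}}\rangle k_n\oplus g^{-1}x_{\alpha^+_{[1,2]}}g,\]
where the second summand is the image of $x_{\alpha^+_{[1,2]}}$ under the action of $g$ on $\cL_0$ by matrix conjugation. This is exactly the vector in the statement.

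The only delicate step is the justification that the representative of $e(\langle e_1,e_2\rangle)$ in $\cL$ has zero $K$-component. Equivalently, the chosen complement $\cL_0$, the span of $C_0$, must contain the highest root vector itself and not merely its image under $\pi_0$. This holds because $\cL_0$ is defined as the span of $C_0$ inside $\cL$ and $x_{\alpha^+_{[1,2]}}\in C_0$. Apart from this, the argument is purely formal.
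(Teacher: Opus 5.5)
Your argument is the paper's own. It uses linear $G$-homogeneity of $e$, the fact that the base point $\langle e_1,e_2\rangle$ goes to $\langle x_{\alpha^+_{[1,2]}}\rangle$ with $x_{\alpha^+_{[1,2]}}\in C_0\subseteq\cL_0$, and the action formula on $K\cdot\cL_0$ evaluated at $t=0$.

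One justification needs correcting. In characteristic $2$ the line $e(\langle e_1,e_2\rangle)$ is not the unique $1$-space fixed by $G_{\langle e_1,e_2\rangle}$. The reason is that $G$ centralizes $Z(\cL)\supseteq K=\langle k_n\rangle$, so $\langle k_n\rangle$ is fixed as well. What actually forces the zero $K$-component is that the natural embedding, by definition, sends the base point to the line spanned by the maximal (highest-weight) vector, and that vector is $x_{\alpha^+_{[1,2]}}$ itself.
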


\subsection{The mapping $\gamma:\Der(\FF)\rightarrow Z^1(\cL^*,G)$}\label{gamma BD} 

In the sequel we shall discuss the $B_n$-case and $D_n$-case at the same time, to the extent we can. So, henceforth $X_n, X_{n,2}(\FF), G, \cL$ and $\cM$ stand for $B_n$ $(n> 2)$, $\BnK$, $\OnK$, $\onK$ and $M_{2n+1}(\FF)$ or $D_n$, $\DnK$, $\OhK$, $\ohK$ and $M_{2n}(\FF)$ respectively, $\cM_\cL$ being defined as in Subsection \ref{adj B sec} or \ref{adj D sec} respectively.   

We shall define a function $\gamma:\Der(\FF)\rightarrow Z^1(\cL^*,G)$ in such a way that $\gamma$ satisfies properties $(\gamma1)$, $(\gamma2)$, $(\gamma3)$ and $(\gamma4)$ of Section \ref{GenConstr}. By imitating the construction made in Subsection \ref{gamma A}, we shall define a suitable bilinear mapping $\tau:\cM_\cL\times\cL\rightarrow\FF$ and for every $d\in\Der(\FF)$ we state that, by definition, the $\gamma$-image $\gamma_d$ of $d$ is the $1$-cocycle that maps $g\in G$ onto the linear functional 
\begin{equation}\label{gamma BD eq} 
\gamma_d(g) ~ : ~ x\in\cL ~ \longrightarrow ~ \langle \gamma_d(g), c) ~: = ~ \tau(g^{-1}d(g), x). 
\end{equation}
This definition makes sense since, by an argument quite similar to the one used in Remark \ref{g**(-1)d(g)}, we can prove that $g^{-1}d(g) \in \cM_\cL$ for every $g\in G$.

\begin{remark}\label{g**(-1)d(g) BD} 
\em
The same argument as in Remark \ref{g**(-1)d(g)} also shows that if $\ch(\FF) = 2$ then $g^{-1}d(g)\in \cL_0$ for every $g\in G$ and every $d\in \Der(\FF)$.
\end{remark}  

We shall prove that $\tau$ can be chosen in such a way that the function $\gamma$ defined as above is indeed an injective linear mapping from $\Der(\FF)$ to $Z^1(\cL^*,G)$ and satisfises properties $(\gamma1)$, $(\gamma2)$, $(\gamma3)$ and $(\gamma4)$ of Section \ref{GenConstr}. 

\subsubsection{The form $\tau:\cM_\cL\times\cL\rightarrow\FF$}\label{tau 1}

Suppose firstly that $\ch(\FF) \neq 2$. Then $\cL = \cM_\cL \subset \cM$. As in Section \ref{gamma A}, let $T:\cM\times \cM\rightarrow \FF$ be the symmetric bilinear form which maps every pair$(x,y)\in\cM\times\cM$ onto the scalar $T(x,y) := \Tr(xy)$. This form is non-degenerate and induces a symmetric bilinear form on $\cL\times \cL$. It is not difficult to check that the subspace of $\cM$ orthogonal to $\cL$ with respect to $T$ meets $\cL$ trivially. In other words, the bilinear form induced by $T$ on $\cL$ is non-degenerate. We define $\tau$ as the restriction of $T$ to $\cL\times \cL$. 

\begin{remark}
\em
When $\ch(\FF) = 0$ the form $\tau$ defined in this way is proportional to the Killing form $\kappa$ of $\cL$. In particular, if $X_n = B_n$ then $\kappa = (2n+1)\tau$ while if $X_n = D_n$ then $\kappa = 2n\tau$. (Compare Remark \ref{killing}.) 
\end{remark} 

Let now $\ch(\FF) = 2$. We know that in this case $\cL$ is not a subalgebra of $\cM$ but the worst is that $T$ induces the null form on $\cM_\cL\times \cM_\cL$. This makes it clear that we cannot hope to find the right definition of $\tau$ only through small tweaks to the definition given when $\ch(\FF) \neq 2$. However there is a way out, which we are about to show now.   

Dropping for a while the hypothesis that $\ch(\FF) = 2$, suppose that $\ch(\FF) \neq 2$ and compute $\frac{1}{2}T(m, x)$ for $b, x \in\cM_\cL$, say 
\[\begin{array}{ccl}
m ~ = ~ \left(\begin{array}{cc}
A & B \\
C & -A^t 
\end{array}\right), & 
x ~ = ~ \left(\begin{array}{cc}
X & Y  \\
Z & -X^t 
\end{array}\right), & \mbox{ if } X_n = D_n;\\
 & & \\
m ~ = ~ \left(\begin{array}{ccl}
A & B & a^t\\
C & -A^t & b^t\\
-2b & -2a & 0 
\end{array}\right), &
x ~ = ~ \left(\begin{array}{ccl}
X & Y & x^t \\
Z & -X^t  & y^t \\
-2y & -2x & 0
\end{array}\right), & \mbox{ if } X_n = B_n.
\end{array}\]
Then, if $b_{i,j}, c_{i,j}, y_{i,j}$ and $z_{i,j}$ are the $(i,j)$-entries of the matrices $B,C, Y$ and $Z$ respectively, for $X = D_n$ we have
\[\frac{1}{2}T(m,x) ~ = ~ \Tr(AX) + \sum_{i<j}b_{i,j}z_{j,i} + \sum_{i<j}c_{i,j}y_{j,i},\]
while when $X = B_n$ we have 
\[\frac{1}{2}T(m,x) ~= ~ \Tr(AX) + \sum_{i<j}b_{i,j}z_{j,i} + \sum_{i<j}c_{i,j}y_{j,i} - 2ay^t - 2bx^t.\]
Turning back to characteristic $2$, when $\ch(\FF) = 2$ the above equations boil down to the same. We take them as definitions, namely we define a symmetric bilinear form $\tau_\cM$ on $\cM_\cL$ by setting  
\begin{equation}\label{demitrace0}
\tau_\cM(m, x)  ~ := ~ \Tr(AX) + \sum_{i<j}b_{i,j}z_{j,i} + \sum_{i<j}c_{i,j}y_{j,i}.
\end{equation}
This form is still degenerate.  Indeed if $X_n = D_n$ then $\mathrm{Rad}(\tau_\cM) = \langle I\rangle$; if $X_n = B_n$ then $\mathrm{Rad}(\tau_\cM)$ is spanned by $I$ together with the matrices $e_{i,2n+1}$ for $i = 1, 2,..., 2n$. However $\mathrm{Rad}(\tau_\cM)$ is small enough for we can hope that a form $\tau:\cM_\cL\times \cL\rightarrow \FF$ defined by means of $\tau_\cM$ will do the job we need. We define $\tau$ as follows: for every $m\in \cM_\cL$ and $x\in \cL$ we put 
\begin{equation}\label{tau car 2}
\tau(m, x) ~ := ~ \tau_\cM(m, \pi_0(x))
\end{equation}
where $\pi_0$ is the projection of $\cL$ onto $\cL_0$ along the center $Z(\cL)$ of $\cL$ (if $X_n = B_n$) or along the deep center $K(\cL)$ of $\cL$ (if $X_n = D_n$).  

\begin{lemma}\label{GT} 
The form $\tau_\cM$ is preserved by $G$ in its action on $\cM_\cL$. 
\end{lemma}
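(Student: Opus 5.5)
The form $\tau_\cM$ is the "demi-trace" form, $\tfrac12\Tr(mx)$ rewritten without denominators. The plan is to prove the statement first over $\ZZ$, where this identity holds on the nose, and then specialize. The statement is only new when $\ch(\FF)=2$; when $\ch(\FF)\neq 2$, formula (\ref{demitrace0}) coincides with $\tfrac12 T(m,x)=\tfrac12\Tr(mx)$ (as computed just before (\ref{demitrace0})). The trace form is conjugation invariant, since $\Tr(g^{-1}mg\,g^{-1}xg)=\Tr(mx)$. So in that case the claim is immediate.

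For $\ch(\FF)=2$ we argue with the polynomial ring $\cZ=\ZZ[t_1,t_2,\dots]$, as in the proof of Lemma \ref{lemma F0 B}. Since $G$ is generated by the root elements $U=\{u_\alpha(t)\}$ of tables (\ref{rootSbgr B}) and (\ref{rootSbgr D}), it suffices to show that $\tau_\cM(g^{-1}mg,g^{-1}xg)=\tau_\cM(m,x)$ for every $g=u_\alpha(t)\in U$ and all $m,x\in\cM_\cL$.

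Let $\cM_\cL(\cZ)$ be the $\cZ$-module of matrices of shape (\ref{matrix B}) or (\ref{matrix D}) with entries in $\cZ$. Its elements are determined by the free parameters $A,B,C,a,b$, namely the entries of $A$, the strictly upper entries of $B$ and $C$, and the vectors $a,b$. The form $\tau_\cM$ is then the $\cZ$-bilinear form given by the integral expression (\ref{demitrace0}) in these parameters.

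Over $\mathbb{Q}[t_1,\dots]\supset\cZ$ we have the identity $\tau_\cM(m,x)=\tfrac12\Tr(mx)$ for $D_n$. For $B_n$ the identity is $\tau_\cM(m,x)=\tfrac12\Tr(mx)+2ay^t+2bx^t$; equivalently, one redefines the integral form to include the term $-2ay^t-2bx^t$, which vanishes mod $2$. Since $u_\alpha(t)^{\pm1}$ has entries in $\cZ$ and stabilizes $\cM_\cL(\cZ)$, conjugation invariance of $\Tr$ gives invariance of $\tfrac12\Tr(mx)$ over $\mathbb{Q}[t]$.

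For $D_n$, the integral form equals $\tfrac12\Tr$ exactly and is therefore invariant over $\cZ$. For $B_n$ we use the integral form $\tau'_\cM:=\tfrac12\Tr$, which has integer coefficients in the parameters. Invariance is then a polynomial identity with integer coefficients, and reducing mod $2$ and specializing $t\mapsto$ elements of $\FF$ gives invariance of $\tau_\cM$ over $\FF$. This works because $\tau'_\cM\equiv\tau_\cM \pmod 2$, and because the matrix entries over $\FF$ are the reductions of generic integral entries. In particular the bottom row $-2v,-2u$ of (\ref{matrix B}) becomes $0$, consistent with the description of $\cM_\cL$ in characteristic $2$.

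The main obstacle is justifying that the char-$2$ module $\cM_\cL(\FF)$, and the action of $G$ on it, are indeed the reductions of the integral objects. For the group this is clear from (\ref{rootSbgr B}) and (\ref{rootSbgr D}), which are integral. For the module, a generic element of $\cM_\cL(\FF)$ is the image of an integral parametrized matrix, and conjugation by $u_\alpha(t)$ acts on the parameters through integral polynomials. Reduction mod $2$ of these polynomials therefore describes the char-$2$ action. If this bookkeeping turns out to be delicate, the fallback is a direct check on the basis $C_0\cup\{\text{radical}\}$ for each root-element type, which is routine but lengthy.
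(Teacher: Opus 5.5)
Your argument is essentially the paper's: lift to integral matrices, use conjugation invariance of $\tfrac12\Tr$ under the integral generators $u_\alpha(t)$, and reduce modulo $2$. Your explicit handling of the $B_n$ correction term $-2ay^t-2bx^t$ makes precise the paper's remark that $\tau_\cM$ is the reduction of $\tfrac12 T$.

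One small slip: your opening claim that for $\ch(\FF)\neq 2$ formula (\ref{demitrace0}) coincides with $\tfrac12 T$ holds only for $D_n$. For $B_n$ the two differ by $2(ay^t+bx^t)$, as you note yourself later. This is harmless, since $\tau_\cM$ is only defined and used in characteristic $2$.
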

\noindent
{\em Proof.} Suppose firstly that $\ch(\FF)\neq 2$. The group $G$ preserves the form $T$ (the trace of a matrix is invariant under conjugation). Hence $G$ also preserves $\frac{1}{2}T$. The scalar $\frac{1}{2}T(m,x)$ can be regarded as a polynomial with integral coefficients and the entries of $m$ and $x$ as unknowns. The entries of the matrices that represent the generators $u_\alpha(t)$ of $G$ are also polynomial with integral coefficients and $t$ as the unknown. Therefore, since when $\ch(\FF)\neq 2$ these generators preserve $\frac{1}{2}T$ and since $\tau_\cM(m,x)$ is nothing but the reduction modulo $2$ of the polynomial $\frac{1}{2}T(m,x)$, when $\ch(\FF) =2$ the generators of $G$ preserve $\tau_\cM$. The group $G$ does the same.   \hfill $\Box$

\begin{corollary}\label{G tau}
The form $\tau$ defined as in {\rm (\ref{tau car 2})} is preserved by $G$ in its action on $\cM_\cL\times\cL$. In particular
\begin{equation}\label{tau coc}
\tau(g^{-1}mg, x) ~ = ~ \tau(m, x\cdot g^{-1}), \hspace{5 mm} \forall m\in \cM_\cL,~ \forall x\in \cL, ~\forall g\in G.
\end{equation}
\end{corollary}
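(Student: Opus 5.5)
The plan is to reduce the statement to Lemma \ref{GT} and to the $G$-equivariance of the projection $\pi_0$. First I would dispose of the case $\ch(\FF)\neq 2$. There $\cL=\cM_\cL$ and $\tau$ is the restriction of $T$. The identity (\ref{tau coc}) then reads $\Tr(g^{-1}mgx)=\Tr(mgxg^{-1})$, which is invariance of the trace under cyclic permutation, since $x\cdot g^{-1}=gxg^{-1}$.

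Now let $\ch(\FF)=2$. By definition (\ref{tau car 2}),
\[\tau(g^{-1}mg,x)=\tau_\cM(g^{-1}mg,\pi_0(x)).\]
By Lemma \ref{GT}, $\tau_\cM(g^{-1}mg,y)=\tau_\cM(m,gyg^{-1})$ for every $y\in\cM_\cL$. This is invariance applied with $g^{-1}$ to the pair $(g^{-1}mg,y)$. Taking $y=\pi_0(x)$, it remains to show
\[\tau_\cM\bigl(m,g\,\pi_0(x)\,g^{-1}\bigr)=\tau_\cM\bigl(m,\pi_0(x\cdot g^{-1})\bigr).\]

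I would prove this from the fact that $\pi_0$ intertwines the actions of $G$ on $\cL$ and on $\cL_0$ (conjugation). Write $K$ for $Z(\cL)$ in the $B_n$ case and for $K(\cL)$ in the $D_n$ case. Then $G$ stabilizes $K$, and, as recorded in Subsections \ref{L0 B} and \ref{L0 D}, $\pi_0$ maps the action of $G$ on $\cL$ onto the action of $G$ on $\cL_0$. Formula (\ref{F0 B 0}) and its $D_n$ analogue make this explicit: for $x=th\oplus x_0$, with $h$ the generator $h_n$ or $k_n$ of $K$, we have
\[x\cdot g^{-1}=(t+\langle F_0(g),x_0\rangle)h\oplus g x_0 g^{-1}.\]
Hence $\pi_0(x\cdot g^{-1})=g\,\pi_0(x)\,g^{-1}$, and the two sides above coincide, which proves (\ref{tau coc}). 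The general statement that $\tau$ is preserved follows in the same way: $\tau(g^{-1}mg,x\cdot g)=\tau_\cM(g^{-1}mg,g^{-1}\pi_0(x)g)=\tau_\cM(m,\pi_0(x))=\tau(m,x)$.

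The only delicate point is that $\pi_0$ really is $G$-equivariant. It is a projection along a chosen complement, and complements are not $G$-invariant in general. What saves us is that $\pi_0$ is the projection onto $\cL_0$ along the $G$-invariant subspace $K$, so it coincides with the quotient map $\cL\to\cL/K$ composed with the identification $\cL/K\cong\cL_0$. That identification is $G$-equivariant because $\pi_0$ sends the Chevalley generators of $\cL$ to the matrices in (\ref{rootElm B char 2}) and (\ref{rootElm D char 2}), on which $G$ acts by conjugation.
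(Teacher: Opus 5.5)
Your proposal is correct and is essentially the paper's proof. The paper likewise combines Lemma~\ref{GT} with the equivariance $\pi_0(x\cdot g)=g^{-1}\pi_0(x)g$, but it proves invariance of $\tau$ first and deduces (\ref{tau coc}) from it, while you verify (\ref{tau coc}) directly, also treat $\ch(\FF)\neq 2$ via cyclicity of the trace, and justify the equivariance of $\pi_0$, which the paper simply takes from Subsections~\ref{L0 B} and~\ref{L0 D}.
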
 
{\em Proof.} For $(m,x)\in \cM_\cL\times\cL$ and $g\in G$ we have $(m, x)\cdot g = (g^{-1}mg, x\cdot g)$ and $\pi_0(x\cdot g) = g^{-1}\pi_0(x)g$. Also $\tau_\cM(g^{-1}mg, g^{-1}\pi_0(x)g) = \tau_\cM(m, \pi_0(x))$ by Lemma \ref{GT}. Hence $\tau(g^{-1}mg, x\cdot g) = \tau(m, x)$, as claimed. Formula (\ref{tau coc}) is a trivial consequence of the fact that $G$ preserves $\tau$.  \hfill $\Box$  

\subsubsection{Properties of the mapping $\gamma$}

For every $d\in \Der(\FF)$ let $\gamma_d$ be defined as in (\ref{gamma BD eq}) with $\tau$ as in Subsection \ref{tau 1}. As in the proof of Lemma \ref{A gamma coc} (and with the help of (\ref{tau coc}) when $\ch(\FF) = 2$) we can prove that $\gamma_d$ indeed belongs to $Z^1(\cL^*,G)$. By an argument similar to the one used in the proof of Lemma \ref{A gamma inj} we can also prove that the linear mapping 
\[\begin{array}{rcccl}
\gamma& : & \Der(\FF) & \longrightarrow  & Z^1(\cL^*,G) \\
 & & d & \longrightarrow & \gamma_d
\end{array}\]
is injective. Since for every derivation basis $\Omega$ of $\FF$ every $t\in \FF$ belongs to the subfield of $\FF$ generated by $N_{\mathrm{der}}(\FF)\cup \Omega_t$ for a finite subset $\Omega_t\subseteq \Omega$, the mapping $\gamma$ satisfies property $(\gamma2)$ of Section \ref{GenConstr}. Turning to properties $(\gamma3)$ and $(\gamma4)$, recall that $G$ acts flag-transitively on $X_{n,2}(\FF)$. So, we only need to consider a point $p_0$ of $X_{n,2}(\FF)$ and a line $\ell$ of $X_{n,2}(\FF)$ through it and check if properties $(\gamma3)$ and $(\gamma4)$ hold for that choice of $p_0$ and $\ell$. The line $p_0 = \langle e_1, e_2\rangle$ of the polar space $X_{n,1}(\FF)$ and the point-plane flag $\ell = \{\langle e_1\rangle, \langle e_1, e_2, e_3\rangle\}$ of $X_{n,1}(\FF)$ do the job, as the reader can check. 

As for property $(\gamma1)$, the following will be proved in Sections \ref{Cohomology Th odd} and \ref{Cohomology Th even}.  

\begin{theorem}\label{Cohomology}
If $|\FF| > 3$ then $\gamma$ also satisfies property $(\gamma1)$.
\end{theorem}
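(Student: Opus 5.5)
We have to show that every $1$-cocycle $F\in Z^1(\cL^*,G)$ can be written uniquely as $\gamma_d+\beta$ with $d\in\Der(\FF)$ and $\beta\in B^1(\cL^*,G)$. Uniqueness is the easy half. If $\gamma_d$ is a coboundary, say $\gamma_d(g)=f\cdot g-f$, then we evaluate at the generators $u_\alpha(t)$. In (\ref{gamma BD eq}) with $g=u_\alpha(t)$ we get $g^{-1}d(g)=d(t)x_\alpha$ for long roots, and a similar expression with an extra $d(t^2)=2td(t)$ term for short roots. Meanwhile $f\cdot u_\alpha(t)-f$ is a polynomial in $t$ with constant term $0$ and coefficients in $\cL^*$. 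For a suitable root $\alpha$ and test vector $x=x_{-\alpha}$, the value $\langle \gamma_d(u_\alpha(t)),x\rangle$ is $\pm d(t)$ (up to the factor coming from $\tau$), whereas $\langle f\cdot u_\alpha(t)-f,x\rangle=c\,t+c'\,t^2$ is polynomial in $t$. A derivation that is a polynomial function of $t$ must vanish: from $d(t^2)=2td(t)$ and additivity we get $c'=0$, and from $d(1)=0$ we get $c=0$. Hence $d=0$, so $\gamma(\Der(\FF))\cap B^1=0$. The condition $|\FF|>3$ is used to make this polynomial comparison legitimate.

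For existence, I would follow the strategy of \cite[Appendix C]{CGP2} for $\SlnK$, replacing it by the Steinberg presentation of $G$ (valid for $|\FF|>3$, rank $\ge 3$). Start with $F\in Z^1(\cL^*,G)$ and first normalize it on a torus/Borel. Restrict $F$ to the diagonal torus $H$. Using $|\FF|>3$, pick $h\in H$ acting on $\cL$ with no eigenvalue $1$ on $\bigoplus_\alpha\cL_\alpha$, together with standard averaging on the Cartan part. This lets us subtract a coboundary so that $F(h)=0$ for such $h$, and then, by the cocycle identity with commuting elements, $F|_H$ takes values that are $H$-fixed. Next, for each root $\alpha$, consider $\phi_\alpha(t):=F(u_\alpha(t))$. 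The cocycle condition on the abelian root group $U_\alpha$ gives additivity twisted by the action. Conjugation by $H$ (i.e. $h^{-1}u_\alpha(t)h=u_\alpha(\alpha(h)t)$) then forces $\phi_\alpha(t)$ to be concentrated in weight spaces, namely $\langle\phi_\alpha(t),x\rangle$ is nonzero only for $x$ of weight $-\alpha$ or $-2\alpha$, or in the Cartan part. It also forces the function $t\mapsto\langle\phi_\alpha(t),x_{-\alpha}\rangle=:\delta_\alpha(t)$ to satisfy $\delta_\alpha(\lambda^2 t)=\lambda^2\delta_\alpha(t)$ up to a coboundary correction.

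The key step is to show $\delta_\alpha$ is a derivation, independent of $\alpha$ up to the normalization of $\tau$. Independence of the long root $\alpha$ follows from Weyl group conjugation via the elements $n_\alpha(1)$. The Leibniz rule comes from the Steinberg commutator relations $[u_\alpha(s),u_\beta(t)]=u_{\alpha+\beta}(\pm st)\cdots$. Applying $F$ to both sides and expanding with the cocycle identity gives $\delta(st)=s\delta(t)+t\delta(s)$ plus terms absorbed by the previous normalization. In characteristic $2$ for $B_n$ the short roots must be treated via $\pi_0$ and the seed cocycle $F_0$ (Lemma \ref{lemma F0 B}). There I would use only long-root commutators inside the $D_n$- or $A_{n-1}$-type subsystems, which is possible since $n>2$. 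Then $F-\gamma_\delta$ vanishes on all root subgroups after a further coboundary adjustment, and since the $u_\alpha(t)$ generate $G$, it is a coboundary.

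The main obstacle is this middle computation: showing that, after the normalization by $H$, the cocycle on root groups has exactly the shape $t\mapsto\delta(t)\tau(x_\alpha,\cdot)$ plus a coboundary. The difficulty is sharpest for short roots and for characteristic $2$ and $3$ configurations, where $\tau$ is only the ``demitrace'' form and the $t^2$ terms in $u^\pm_k(t)$ interfere. It is exactly here that the hypothesis $|\FF|>3$ is needed, so that the torus has enough elements to separate weights. This is why I expect the odd and even cases to be handled in separate sections.
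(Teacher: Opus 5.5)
\emph{Verdict:} the uniqueness half is correct, but the existence half, which is the actual content of $(\gamma1)$, has a genuine gap.

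Your uniqueness half is sound and is essentially the paper's argument (Lemmas \ref{Cohom odd Lem 1} and \ref{Gamma2}): evaluate at $u_\alpha(t)$, $x_{-\alpha}$ for a long root, get $d(t)=ct+c't^2$, and kill it with $d(1)=0$ and the Leibniz rule on $t^2$. The gap is existence. You would have to show that every $F\in Z^1(\cL^*,G)$ lies in $\gamma(\Der(\FF))+B^1(\cL^*,G)$. You explicitly leave as ``the main obstacle'' the step showing that, after normalisation, $F$ on the root groups equals $\gamma_\delta$ plus a coboundary for a single derivation $\delta$, so that inclusion is never proved.

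Moreover, the normalisation you propose fails as stated. It needs an $h$ in the split torus with $\alpha(h)\neq 1$ for every root. For $D_n$ this means $\lambda_i\neq\lambda_j^{\pm 1}$ for $i\neq j$. That is impossible over $\FF_4$ for every $n\geq 3$, and over $\FF_5$ for $n\geq 4$, and both fields are covered by the theorem. In addition, ``averaging'' over an infinite torus is not defined. Without such an $h$ you cannot make $F|_H$ take $H$-fixed values, so neither the weight concentration of $F(u_\alpha(t))$ nor the homogeneity of $\delta_\alpha$ follows.

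The remaining steps are also only asserted.
\begin{itemize}
\item Leibniz rule: deriving it from the relation $[u_\alpha(s),u_\beta(t)]=u_{\alpha+\beta}(\pm st)\cdots$ produces cross terms from the Cartan and other weight components of $F(u_\alpha(s))$ and $F(u_\beta(t))$. Showing that these are absorbed by a coboundary is exactly the work to be done.
\item Characteristic $2$, type $B_n$: long-root commutators only control $F$ on $G_D=\Omega^+(2n,\FF)$. You still have to make $F$ vanish on the short root groups $u^\pm_k(t)$. There $\cL_0^*\not\cong\cL_0$ and $G$ does not act on $\cL_0^*$ by conjugation; the paper needs the modified form $\widehat{\tau}_\cM$ and an explicit correction term $\Delta(m,g)$ at this point.
\item Seed cocycle: in characteristic $2$ you must also account for the extra direction $\langle F_0\rangle$, which corresponds to part of $B^1(\cL^*,G)$.
\end{itemize}

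For comparison, the paper does not normalise on a torus at all. In characteristic $2$ it restricts $F$ to the Levi subgroup $G^0\cong\mathrm{SL}(n,\FF)$ and imports the $\mathrm{SL}_n$ result \cite[Theorem C.1]{CGP2} to obtain $d_F$. It then kills the remaining parts on $G^\pm$, and for $B_n$ on $R^\pm$, by explicit matrix computations with involutions and commuting root elements. In characteristic $\neq 2$ it combines V\"{o}lklein's theorem $H^1(\cL,G)\cong\Der(\FF)$ \cite{V1} with a reduction to subfields of finite derivation rank, using the Taussky--Zassenhaus result \cite{TZ} for $\mathrm{SL}(n,\FF)$.
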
     
\begin{remark}
\em
Maybe $H^1(\cL^*,G)$ is trivial when $|\FF| \in \{2,3\}$, so that $\gamma$ satisfies $(\gamma1)$ in this case too, but I am unable to prove this claim; anyway, as we shall explain in the next subsection, we don't need it. 
\end{remark}   

\subsection{The main theorem of this paper} 

As in Subsection \ref{gamma BD}, throughout this subsection $X_n, X_{n,2}(\FF), G, \cL$ and $\cM$ stand for $B_n$ $(n> 2)$, $\BnK$, $\OnK$, $\onK$ and $M_{2n+1}(\FF)$ or $D_n$, $\DnK$, $\OhK$, $\ohK$ and $M_{2n}(\FF)$ respectively and $\cM_\cL$ is defined as in Subsection \ref{adj B sec} or \ref{adj D sec} respectively. The linear mapping $\gamma:\Der(\FF)\rightarrow Z^1(\cL^*,G)$ defined in Subsection \ref{gamma BD} is injective and satisfies properties $(\gamma2)$, $(\gamma3)$ and $(\gamma4)$. 

Suppose that $|\FF| > 3$. Then $\gamma$ also satisfies property $(\gamma1)$.  Moreover, the $G$-module $\cL$ is rigid (this can be proved by an argument similar to the one used to prove Lemma \ref{rigid A}) and, as noticed in Remark \ref{central rem}, the adjoint embedding $e:X_{n,2}(\FF)\rightarrow \cL$ of $X_{n,2}(\FF)$ satisfies property $(G2)$ of Theorem \ref{theo nuovo}. Therefore, by Theorem \ref{theo nuovo}, when $|\FF| > 3$ the universal cover of $e$ is the embedding $e_{\gamma, p_0}:X_{n,2}(\FF)\rightarrow\Der_\Omega(\FF)\oplus \cL$ constructed as in Section \ref{GenConstr}. Explicitly, recall that $G$ acts as follows on $\Der_\Omega(\FF)\oplus \cL$
\[g\in G ~ : ~ d\oplus x ~ \longrightarrow (d\oplus x)\cdot_\gamma ~ = ~  (d+ \sum_{\omega\in\Omega}\tau(gd_\omega(g^{-1}), x)d_\omega)\oplus x\cdot g\]
and, given $p_0 = \langle e_1, e_2\rangle$ in $X_{n,2}(\FF)$, so that $e(p_0)$ is represented by the vector 
\[J\cdot(e_1^te_2-e_2^te_1) ~= ~e_{n+2,1}-e_{n+1,2} ~= ~  x_{\alpha^+_{[1,2]}},\] 
for $g\in G$ the point $e(p_0\cdot g)$ of $X_{n,2}(\FF)$ is mapped by $e_{\gamma, p_0}$ onto the point of $\PG(\Der_\Omega(\FF)\oplus \cL)$ represented by the $g$-image $(x_{\alpha^+_{[1,2]}})\cdot_\gamma g$ of $x_{\alpha^+_{[1,2]}}$ in the $G$-module $\Der_\Omega(\FF)\oplus \cL$: 
\begin{equation}\label{image BD}
(x_{\alpha^+_{[1,2]}})\cdot_\gamma g ~ = ~ (\sum_{\omega\in\Omega}\tau(gd_\omega(g^{-1}), x_{\alpha^+_{[1,2]}})d_\omega)\oplus (x_{\alpha^+_{[1,2]}}\cdot g).
\end{equation}
We have stated the above for $|\FF| > 3$. However, by Cooperstein \cite{Coop1} (also V\"{o}lklein \cite{V2}), when $\FF$ is prime the adjoint embedding of $X_{n,2}(\FF)$ is relatively universal. On the other hand, $\Der(\FF) = \{0\}$ when $\FF$ is prime. Therefore all the above trivially holds true  when $|\FF| \in \{2, 3\}$ too. Moreover, the geometry $X_{n,2}(\FF)$ admits the absolutly universal embedding (Kasikova and Shult \cite{KS}); consequently, the universal cover $e_{\gamma,p_0}$ of the adjont embedding of $X_{n,2}(\FF)$ is just the absolutely universal embedding of $X_{n,2}(\FF)$. So, recalling that the points of $X_{n,2}(\FF)$ are the totally singular $2$-subspaces of $V$, all the above can be rephrased as follows.

\begin{theorem}\label{main}
The absolute embedding of $X_{n,2}(\FF)$ lives in $\Der_\Omega(\FF)\oplus\cL$ and is isomorphic to the embedding that maps every totally singular $2$-subspace $\langle v, w\rangle$ of $V$ onto the point of $\PG(\Der_\Omega(\FF)\oplus\cL)$ represented by the following vector:
\begin{equation}\label{main eq}
\sum_{\omega\in\Omega}d_\omega(v)Jw^td_\omega \oplus e(v,w)
\end{equation}
where $e(v,w)\in \cL$ is defined as follows.
\begin{itemize}
\item[(1)] If $\ch(\FF) \neq 2$ then $e(v,w) := J(v^tw-w^tv)$. 
\item[(2)] Let $\ch(\FF) = 2$, let $e:X_{n,2}(\FF)\rightarrow\cL$ be the adjont embedding of $X_{n,2}(\FF)$ and $\pi_0$ the projection of $\cL$ onto $\cL_0$ along the center or the deep center of $\cL$ (according to whether $X_n = B_n$ or $X_n = D_n$). Then $e(v,w)$ is the unique representative vector of the point $e(\langle v,w\rangle)$ of $\PG(\cL)$ that is mapped onto $J(v^tw-w^tv)$ by $\pi_0$. 
\end{itemize} 
\end{theorem}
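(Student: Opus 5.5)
The plan is to derive Theorem \ref{main} from Theorem \ref{theo nuovo} together with Corollary \ref{cor nuovo}, and then to turn the abstract formula (\ref{image BD}) into the explicit vector (\ref{main eq}).

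\emph{Step 1: the abstract statement.} For $|\FF| > 3$ the hypotheses of Theorem \ref{theo nuovo} hold.
\begin{itemize}
\item[(a)] Property $(\gamma1)$ is Theorem \ref{Cohomology}; properties $(\gamma2)$, $(\gamma3)$ and $(\gamma4)$ were checked in Subsection \ref{gamma BD}, with $p_0 = \langle e_1,e_2\rangle$.
\item[(b)] Rigidity $(G1)$ follows by repeating the commutation argument of Lemma \ref{rigid A} with the root elements of tables (\ref{rootElm B}) and (\ref{rootElm D}). In characteristic $2$ one also has to check that the central (or deep central) element is forced to scale by the same factor.
\item[(c)] Property $(G2)$ holds by Remark \ref{central rem}.
\end{itemize}
Hence $e_{\gamma,p_0}$ is the universal cover of $e$. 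Since $X_{n,2}(\FF)$ has an absolute embedding by \cite{KS}, Corollary \ref{cor nuovo} shows that $e_{\gamma,p_0}$ is that absolute embedding. When $|\FF|\in\{2,3\}$ we have $\Der(\FF)=0$, so $V_\Omega = \cL$. The adjoint embedding is then universal by \cite{Coop1}, and (\ref{main eq}) reduces to $e(v,w)$.

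\emph{Step 2: identify the $\cL$-component.} Take $g\in G$ with $p_0\cdot g = \langle v,w\rangle$. We may arrange $v = e_1 g$ and $w = e_2 g$, since rescaling the representative only rescales the whole vector consistently. The $\cL$-component of (\ref{image BD}) is $x_{\alpha^+_{[1,2]}}\cdot g$. If $\ch(\FF)\neq 2$, the identity $Jg^t = g^{-1}J$ shows that this equals $J(v^tw - w^tv)$ up to the fixed sign convention, exactly as in the verification of linear homogeneity. If $\ch(\FF) = 2$, Propositions \ref{prop adj B car 2} and \ref{prop adj D car 2} show that it is the representative of $e(\langle v,w\rangle)$ whose $\pi_0$-image is $g^{-1}x_{\alpha^+_{[1,2]}}g = J(v^tw-w^tv)$. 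This gives clause (2).

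\emph{Step 3: the derivation coefficients.} This is the main computational point. We need
\[\tau\bigl(g\,d_\omega(g^{-1}), x_{\alpha^+_{[1,2]}}\bigr) = d_\omega(v)Jw^t\]
(up to the same global sign). Write $x_{\alpha^+_{[1,2]}} = J(e_2^te_1 - e_1^te_2)$.

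First take $\ch(\FF)\neq 2$, where $\tau$ is the trace form. Then
\[\Tr\bigl(g\,d(g^{-1})J(e_2^te_1 - e_1^te_2)\bigr) = \Tr\bigl(d(g^{-1})Je_2^t\,e_1g\bigr) - \Tr\bigl(d(g^{-1})Je_1^t\,e_2g\bigr).\]
Use $g^{-1}J = Jg^t$ and apply $d$. Since $d(J)=0$, this gives $d(g^{-1})J = J\,d(g)^t$, hence $d(g^{-1})Je_2^t = J\,d(e_2 g)^t = J\,d(w)^t$. The first trace therefore becomes $v J d(w)^t$, and similarly the second becomes $wJd(v)^t$. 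Their difference is $vJd(w)^t - wJd(v)^t$. Differentiating $vJw^t = 0$ (total singularity) gives $d(v)Jw^t + vJd(w)^t = 0$. Using this and the symmetry of $J$, the difference collapses to a single term of the form $\pm 2\,d(v)Jw^t$. The factor $2$ (or its absence) depends on the normalisation of $\tau$ relative to $T$. Any constant multiple can be absorbed by rescaling the basis $\{d_\omega\}$ of $\Der_\Omega(\FF)$, which gives an isomorphic embedding.

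In characteristic $2$, $\tau$ is the demi-trace form $\tau_\cM$ composed with $\pi_0$. I would carry out the same computation over $\cZ$ with $\frac{1}{2}T$, as in Lemma \ref{GT}. The factor $2$ produced by the two terms cancels against the $\frac12$, so the reduction modulo $2$ yields exactly $d(v)Jw^t$. This is the delicate part. One must check that the $\pi_0$-correction and the radical of $\tau_\cM$ do not contribute, using Remark \ref{g**(-1)d(g) BD} and the fact that $x_{\alpha^+_{[1,2]}}\in\cL_0$. One must also make sure the generic polynomial identity specialises correctly for $g$ a product of root elements. I would do this by verifying the cocycle compatibility of both sides and checking them on the generators $u_\alpha(t)$.
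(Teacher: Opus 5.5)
Your proposal follows the paper's proof essentially step for step: Theorem \ref{theo nuovo} with Corollary \ref{cor nuovo}, the reduction to $v=e_1g$, $w=e_2g$, the trace computation via $d(g^{-1})J=J\,d(g)^t$ and $d(vJw^t)=0$ giving $\pm 2\,d_\omega(v)Jw^t$, and a rescaling of $\Der_\Omega(\FF)$, with the same $\frac12\Tr$ device in characteristic $2$ that the paper only hints at. That last step is easier than you fear and needs neither a lift to $\cZ$ nor a check on generators: $\tau_\cM(m,x_{\alpha^+_{[1,2]}})$ is just the $(1,n+2)$-entry of $m$, and since $Je_2^t=e_{n+2}^t$ the $(1,n+2)$-entry of $g\,d(g^{-1})$ is $v\,d(g^{-1})Je_2^t=vJd(w)^t=-d(v)Jw^t$ for every $g\in G$, which in characteristic $2$ is exactly $d(v)Jw^t$.
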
   
{\em Proof.} Let $v, w\in V$ be non-proportional singular vectors such that $vJw^t = 0$. If we replace $v$ and $w$ with $rv$ and $sw$ for scalars $r,s\in \FF$ then $J(v^tw-w^tv)$ is replaced by $rs\cdot J(v^tw-w^tv)$ and, since $d_\omega(rv)J(sw)^t = d_\omega(r)s\cdot vJw^t + rs\cdot d_\omega(v)Jw^t = rs\cdot d_\omega(v)Jw^t$, the vector $\sum_{\omega\in\Omega}d_\omega(v)Jw^td_\omega$ is replaced by
$rs(\sum_{\omega\in\Omega}d_\omega(v)Jw^td_\omega)$. Thus, replacing $v$ and $w$ with $rv$ and $sw$ amounts to multiplying the vector described in (\ref{main eq}) by $rs$. So, up to rescaling $v$ and $w$, we can assume that $v = e_1\cdot g$ and $w = e_2\cdot g$ for some $g\in G$. Under this hypothesis and according to (\ref{image BD}), the absolute embedding of $X_{n,2}(\FF)$ maps the $2$-subspace $\langle v,  w\rangle$ of $V$ onto the point of $\PG(\Der_\Omega(\FF)\oplus\cL)$ represented by the following vector: 
\begin{equation}\label{main eq 1}
(\sum_{\omega\in\Omega}\tau(gd_\omega(g^{-1}), J(e_1^te_2-e_2^te_1))d_\omega)\oplus e(v,w). 
\end{equation}
Suppose firstly that $\ch(\FF) \neq 2$. Then (\ref{main eq 1}) can be written as follows:
\begin{equation}\label{main eq 2}
(\sum_{\omega\in\Omega}\Tr(gd_\omega(g^{-1})J(e_1^te_2-e_2^te_1))d_\omega)\oplus J(v^tw-w^tv). 
\end{equation}
Recall that $gJg^t = J$. Therefore $g^{-1}J = Jg^t$. Hence $d_\omega(g^{-1})J = d_\omega(g^{-1}J) = d_\omega(Jg^t) = Jd_\omega(g^t)$. Accordingly, 
\[\begin{array}{l}
\Tr(gd_\omega(g^{-1})J(e_1^te_2-e_2^te_1)) =  \Tr(gJ(d_\omega(g^t)e_1^te_2- d_\omega(g^t)e_2^te_1)) =\\
=  \Tr(gJd_\omega(e_1\cdot g)^te_2- d_\omega(e_2\cdot g)^te_1)) = \Tr(gJ(d_\omega(v^t)e_2 - d_\omega(w^t)e_1)) = \\
= \Tr(J(d_\omega(v^t)e_2g - d_\omega(w^t)e_1g)) =  \Tr(J(d_\omega(v^t)w - d_\omega(w^t)v).
\end{array}\]
So, (\ref{main eq 2}) can be rewriten as follows, where no mention of $g$ is made. 
\begin{equation}\label{main eq 3}
(\sum_{\omega\in\Omega}\Tr(J(d_\omega(v)^tw - d_\omega(w)^tv))d_\omega)\oplus J(v^tw-w^tv). 
\end{equation}
Note that $\Tr(Jx^ty) = xJy^t$ for any choice of vectors $x, y \in V$. Hence 
\[\begin{array}{l}
\Tr(J(d_\omega(v)^tw - d_\omega(w)^tv) = d_\omega(v)Jw^t - d_\omega(w)Jv^t = \\
=  d_\omega(v)Jw^t + vJd_\omega(w)^t - vJd_\omega(w)^t - d_\omega(w)Jv^t = d_\omega(vJw^t) - 2 vJd_\omega(w)^t.
\end{array}\]
However $vJw^t = 0$ by assumption. Therefore $\Tr(J(d_\omega(v)^tw - d_\omega(w)^tv)  = -2vJd_\omega(w)^t$. Moreover $-vJd_\omega(w)^t = d_\omega(v)Jw^t$, since $d_\omega(v)Jw^t + vJd_\omega(w)^t = d_\omega(vJw^t) = d_\omega(0) = 0$. So  
(\ref{main eq 3}) boils down to the following:
\begin{equation}\label{main eq 4}
(2\cdot \sum_{\omega\in\Omega}d_\omega(v)Jw^td_\omega)\oplus J(v^tw-w^tv).
\end{equation}
The linear mapping that maps every vector $d\oplus x \in \Der_\Omega(\FF)\oplus\cL$ onto $(\frac{1}{2}d)\oplus x$ transforms (\ref{main eq 4}) into (\ref{main eq}). When $\ch(\FF) \neq 2$ the statement is proven. 

Let now $\ch(\FF) = 2$. Then (\ref{main eq 1}) must be read as follows:
\[(\sum_{\omega\in\Omega}\tau_{\cM}(gd_\omega(g^{-1}), \pi_0(J(e_1^te_2-e_2^te_1)))d_\omega)\oplus e(v,w),\]
hence also as follows, since $\pi_0(J(e_1^te_2-e_2^te_1)) = J(e_1^te_2-e_2^te_1)$ because $J(e_1^te_2-e_2^te_1) = x_{\alpha^+_{[1,2]}} \in \cL_0$. 
\begin{equation}\label{main eq 5}
(\sum_{\omega\in\Omega}\tau_{\cM}(gd_\omega(g^{-1}), J(e_1^te_2-e_2^te_1))d_\omega)\oplus e(v,w) 
\end{equation}
Here $e(v,w) = F_0(g^{-1})(x_{\alpha^+_{[1,2]}})\chi_n+ x_{\alpha^+_{[1,2]}}$, where $F_0$ is the seed cocycle of $G$ over $\cL_0^*$ and $\chi_n$ stands for $h_n$ (if $X_n = B_n$) or $h_{n-1}-h_n$ (if $X_n = D_n$). By arguments similar to those exploited when $\ch(\FF) \neq 2$ we can show that (\ref{main eq 5}) boils down to (\ref{main eq}). We leave the details to the reader. (Hint: replace $\Tr$ with $\frac{1}{2}\Tr$ in the proof we have given for the case $\ch(\FF)\neq 2$.)  \hfill $\Box$ 

\section{Proof of Theorem \ref{Cohomology} when $\ch(\FF) \neq 2$}\label{Cohomology Th odd}

As in Subsection \ref{gamma BD}, throughout this section $X_n, X_{n,2}(\FF), G, \cL$ and $\cM$ stand for $B_n$ $(n> 2)$, $\BnK$, $\OnK$, $\onK$ and $M_{2n+1}(\FF)$ or $D_n$, $\DnK$, $\OhK$, $\ohK$ and $M_{2n}(\FF)$ respectively and $\cM_\cL$ is defined as in Subsection \ref{adj B sec} or \ref{adj D sec} respectively. Moreover we assume that $\ch(\FF) \neq 2$. Hence $\cL = \cM_\cL$ and the mapping 
\[ m\in \cL ~  \longrightarrow ~ m^* ~ : ~ \left(\begin{array}{c}
x\in \cL \\
\downarrow \\
m^*(x) := \Tr(mx) 
\end{array}\right)\]
is an isomorphism of $G$-modules from $\cL$ to $\cL^*$. So, we can replace $\cL^*$ with $\cL$, thus regarding $\gamma$ as an injective  linear mapping from $\Der(\FF)$ to $Z^1(\cL, G)$ rather than to $Z^1(\cL, G)$.  Explicitly, 
\[\gamma ~ : ~  d\in \Der(\FF) ~  \longrightarrow ~ \gamma_d  ~:~ 
\left(\begin{array}{c}
g\in G \\
\downarrow\\
g^{-1}d(g) \in \cL
\end{array}\right).\]
So, when $\ch(\FF)\neq 2$ the statemente of Theorem \ref{Cohomology} is equivalent to the following. 

\begin{theorem}\label{Cohom odd}
Let $\ch(\FF)\neq 2$ and $|\FF|\neq 3$. Then $\gamma$, regarded as an injective linear transformation from $\Der(\FF)$ to $Z^1(\cL,G)$, maps $\Der(\FF)$ onto a complement of $B^1(\cL, G)$ in $Z^1(\cL,G)$.
\end{theorem}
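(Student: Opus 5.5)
We must show two things: no nonzero $\gamma_d$ is a coboundary, and every cocycle $f\in Z^1(\cL,G)$ is cohomologous to some $\gamma_d$. The plan is to normalise an arbitrary cocycle step by step using the torus and the root subgroups, in the way \cite[Appendix C]{CGP2} does for $\SlnK$.

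Injectivity modulo coboundaries is the easy half. Suppose $\gamma_d(g)=g^{-1}mg-m$ for some $m\in\cL$ and all $g$. Evaluate this on diagonal elements $h$ of the torus $T$ of $G$. There $h^{-1}d(h)$ lies in the Cartan subalgebra $\mathfrak h$, while $h^{-1}mh-m$ has no $\mathfrak h$-component. Since $|\FF|>3$, we can choose $h$ with $\alpha(h)\neq 1$ for every root $\alpha$, and the two facts together force $m\in\mathfrak h$. Now evaluate on $u_\alpha(t)$. We have $\gamma_d(u_\alpha(t))=d(t)x_\alpha$, while $u_\alpha(-t)mu_\alpha(t)-m=t\,\alpha(m)x_\alpha$ for $m\in\mathfrak h$; for short roots in type $B_n$ the conjugation formula should be checked, but it gives the same $x_\alpha$-term. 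Hence $d(t)=ct$ for all $t$ and some fixed $c$. Applying this to $t^2$ gives $2ct^2=ct^2$, so $c=0$ and $d=0$.

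Surjectivity modulo coboundaries is the main step. Let $f\in Z^1(\cL,G)$.

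First we normalise on the torus. Restricted to $T$, the module $\cL$ splits as $\mathfrak h\oplus\bigoplus_\alpha\cL_\alpha$. Fix $h_0\in T$ as above and subtract the coboundary of $m=(h_0-1)^{-1}$ applied to the non-$\mathfrak h$ part of $f(h_0)$. Since $T$ is abelian, the cocycle identity $f(hh_0)=f(h_0h)$ then forces $f(T)\subseteq\mathfrak h$. On $\mathfrak h$ the torus acts trivially, so $f|_T$ is a homomorphism $T\to\mathfrak h$.

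Next we normalise on the root groups. The group $T$ normalises each $U_\alpha$, and $f(h^{-1}u_\alpha(t)h)$ is determined by $f(u_\alpha(t))$ and $f(h)$. Using this, and $|\FF|>3$ to choose $h$ with $\alpha(h)^2\neq\alpha(h)$ where needed, one shows that $f(u_\alpha(t))\in\cL_\alpha$. In type $B_n$, for short $\alpha$, the conclusion should instead be $f(u_\alpha(t))\in\cL_\alpha\oplus\cL_{2\alpha}$-type terms, which here means only $\cL_\alpha$ since $2\alpha$ is not a root. So $f(u_\alpha(t))=\phi_\alpha(t)x_\alpha$, and the cocycle identity on $U_\alpha$ makes $\phi_\alpha$ additive. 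Conjugating by torus elements gives $\phi_\alpha(s^2t)=s^2\phi_\alpha(t)+(\text{linear in }t\text{ term from }f|_T)$. Together with the commutator relations $[u_\alpha(s),u_\beta(t)]=\prod u_{i\alpha+j\beta}(c_{ij}s^it^j)$ (Chevalley formulas) and the Weyl-group elements $w_\alpha(t)=u_\alpha(t)u_{-\alpha}(-t^{-1})u_\alpha(t)$, this should yield $\phi_\alpha(st)=s\phi_\alpha(t)+t\phi_\alpha(s)$ up to a linear correction. After subtracting a coboundary from $\mathfrak h$ that kills the linear part, every $\phi_\alpha$ becomes a derivation, and by the Weyl group action and the rank $\geq 3$ connectivity all the $\phi_\alpha$ are the same derivation $d$. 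Then $f-\gamma_d$ vanishes on all root elements, and since the root elements generate $G$, $f-\gamma_d$ is zero.

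I expect the main obstacle to be the derivation of the Leibniz rule for $\phi_\alpha$, which comes from evaluating the cocycle on $w_\alpha(t)$ and on commutators. This is where $|\FF|>3$ is used seriously, and where the short roots of $B_n$ require separate Chevalley-formula computations. I would first carry it out inside an $A_1$ or $A_2$ subsystem, mirroring \cite{CGP2}, and then propagate it to the whole root system via the Weyl group.
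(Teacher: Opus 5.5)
Your injectivity half is fine and is essentially the paper's Lemma~\ref{Cohom odd Lem 1}: evaluate the coboundary relation on torus and root elements. You should, however, pick a torus element $h$ with $\alpha(h)\neq 1$ separately for each root $\alpha$. A single $h$ that is regular for all roots need not exist. For $\FF=\FF_5$ and $B_3$, the element $\mathrm{diag}(t_1,t_2,t_3,t_1^{-1},t_2^{-1},t_3^{-1},1)$ is regular only if $t_1,t_2,t_3$ are pairwise distinct, pairwise non-inverse elements of $\{2,3,4\}$, which is impossible since $2\cdot 3=1$.

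For surjectivity you take a genuinely different route. The paper does no such computation. It quotes V\"{o}lklein's isomorphism $H^1(\cL,G)\cong\Der(\FF)$ and concludes by a dimension count when $\drk(\FF)<\infty$. When $\drk(\FF)=\infty$, an injection between abstractly isomorphic spaces need not be onto, so it restricts cocycles to $G(\KK)$ and projects their values onto $\cL(\KK)$, for subfields $\KK$ with finite derivation basis. It then glues the pieces using Taussky--Zassenhaus on the Levi subgroup $\mathrm{SL}(n,\FF)$. A completed direct normalization would treat all derivation ranks at once and avoid both black boxes. But it amounts to reproving V\"{o}lklein's theorem for $B_n$ and $D_n$, and your text does not yet do that.

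There are three concrete problems.
\begin{enumerate}
\item Your torus normalization uses a regular $h_0$, which fails for the small fields above. It must be done one root line at a time. The $\cL_\alpha$-component of $f|_T$ is a cocycle of the abelian group $T$ into a line on which $T$ acts by the nontrivial character $\chi_\alpha$, hence it is a coboundary.
\item To kill the $\cL_{-\alpha}$- and $\mathfrak h$-components of $f(u_\alpha(t))$, the elements $h_\alpha(s)$ are not enough. That is a rank-one argument, and $H^1(\mathrm{SL}_2(5),\mathfrak{sl}_2)\neq 0$; indeed $\phi(t)=ct$ satisfies $\phi(s^2t)=s^{-2}\phi(t)$ over $\FF_5$. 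You need additivity in $t$ together with elements $h_\gamma(s)$ with $\langle\alpha,\gamma^\vee\rangle=\pm1$, which give $\chi_\alpha(T)=\FF^*$. With these, the Leibniz rule you single out as the main obstacle is immediate. Since $\alpha\circ f|_T$ vanishes on $\ker\chi_\alpha$, one gets $\phi_\alpha(ct)=c\,\phi_\alpha(t)+ct\,\lambda_\alpha(c)$ for all $c\in\FF^*$, where $\lambda_\alpha$ is a homomorphism $\FF^*\to(\FF,+)$. Hence $\phi_\alpha-\phi_\alpha(1)\,\mathrm{id}$ is a derivation.
\item The step that carries the real content is only asserted: the comparison across roots. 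From the Chevalley commutator relations you must show two things. First, the derivations $d_\alpha$ all coincide. Second, $\alpha\mapsto\phi_\alpha(1)$ is additive on $\Phi$, hence equals $\alpha(m)$ for some $m\in\mathfrak h$. The second uses that the Cartan matrix, of determinant $2$ for $B_n$ and $4$ for $D_n$, is invertible in odd characteristic. In type $B_n$ you also need, for $\alpha$ long and $\beta$ short, the two-factor relation $[u_\alpha(s),u_\beta(t)]=u_{\alpha+\beta}(\pm st)\,u_{\alpha+2\beta}(\pm st^2)$, and the short--short relation $[u_\beta(s),u_{\beta'}(t)]=u_{\beta+\beta'}(\pm 2st)$.
\end{enumerate}
Until the third step is written out, the proposal does not show that every cocycle is cohomologous to some $\gamma_d$.
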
   

The present section is devoted to the proof of the above. The proof we shall give in Section \ref{Cohomology Th even} for the characteristc 2 case of Theorem \ref{Cohomology} can be rearranged so that to obtain a proof of Theorem \ref{Cohom odd}, but we prefer to offer a shorter proof of this theorem, which relies on what literature offers us. We firstly prove the following.

\begin{lemma}\label{Cohom odd Lem 1} 
Let $\ch(\FF)\neq 2$. Then the image $\gamma(\Der(\FF))$ of the mapping $\gamma:\Der(\FF)\rightarrow Z^1(\cL,G)$  meets $B^1(\cL, G)$ trivially. 
\end{lemma}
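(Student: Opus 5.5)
Suppose $\gamma_d\in B^1(\cL,G)$ for some $d\in\Der(\FF)$. Then there is $m\in\cL$ with $\gamma_d(g) = m\cdot g - m = g^{-1}mg - m$ for every $g\in G$ (up to the sign convention for coboundaries, which is immaterial). The plan is to test this identity on the root elements $u_\alpha(t)$ of tables (\ref{rootSbgr B}) and (\ref{rootSbgr D}) and conclude that $d=0$. By the injectivity of $\gamma$ this gives $\gamma(\Der(\FF))\cap B^1(\cL,G)=\{0\}$.

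For a long root $\alpha$, the element $g=u_\alpha(t)=I+tx_\alpha$ satisfies $x_\alpha^2=0$. Indeed the long-root matrices $e_{h,k}-e_{k+n,h+n}$ and $e_{h+n,k}-e_{k+n,h}$ square to zero. Hence $g^{-1}d(g) = (I-tx_\alpha)d(t)x_\alpha = d(t)x_\alpha$. On the other side we have
\[g^{-1}mg - m = t[m,x_\alpha] - t^2x_\alpha m x_\alpha .\]
The identity therefore reads
\[d(t)x_\alpha = t[m,x_\alpha] - t^2 x_\alpha m x_\alpha \quad \text{for all } t\in\FF.\]

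We use $t=1$ to get $d(1)=0=[m,x_\alpha]-x_\alpha m x_\alpha$. Substituting this back gives $d(t)x_\alpha = (t-t^2)[m,x_\alpha]$ for every $t$. We also have $d(t)x_\alpha = t[m,x_\alpha]-t^2x_\alpha m x_\alpha$ directly. Comparing the coefficients of the polynomial identities in $t$ is not legitimate, since $d$ is not polynomial. The argument therefore works through the $\FF$-dimension of the matrix space. The matrices $[m,x_\alpha]$ and $x_\alpha m x_\alpha$ are fixed, while $d(t)x_\alpha$ is always a multiple of $x_\alpha$.

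One then looks at a matrix entry outside the support of $x_\alpha$ to force $[m,x_\alpha]\in\FF x_\alpha$, say $[m,x_\alpha]=c_\alpha x_\alpha$. Writing $x_\alpha m x_\alpha=b_\alpha x_\alpha$, we obtain $d(t)=c_\alpha t - b_\alpha t^2$ for all $t$. Applying the Leibniz rule to $d(t^2)=2td(t)$ yields
\[c_\alpha t^2 - b_\alpha t^4 = 2c_\alpha t^2 - 2b_\alpha t^3 .\]
This polynomial identity holds for every $t\in\FF$ and has degree at most $4$. When $\FF$ is infinite, or has more than $4$ elements, it forces $c_\alpha=b_\alpha=0$, hence $d=0$. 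If $\FF$ is finite, then $\Der(\FF)=0$ and there is nothing to prove, so the small-field cases are vacuous.

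The main obstacle is the step showing that $[m,x_\alpha]$ and $x_\alpha m x_\alpha$ are multiples of $x_\alpha$. The plan is to avoid it by a cleaner route: the map $t\mapsto d(t)x_\alpha$ is additive, so
\[(t+s)[m,x_\alpha]-(t+s)^2x_\alpha m x_\alpha = t[m,x_\alpha]-t^2x_\alpha m x_\alpha + s[m,x_\alpha]-s^2x_\alpha m x_\alpha .\]
This gives $2ts\,x_\alpha m x_\alpha=0$, hence $x_\alpha m x_\alpha=0$ since $\ch(\FF)\neq2$. Then $d(t)x_\alpha=t[m,x_\alpha]$, and setting $t=1$ gives $[m,x_\alpha]=0$. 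Consequently $d(t)=0$ for every $t$, so $d=0$.
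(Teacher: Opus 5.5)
Your final argument, the additivity route in the last paragraph, is correct. It follows a different logical path from the paper's.

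The paper also substitutes $g=u_\alpha(t)$ into the coboundary identity $g^{-1}d(g)=x-g^{-1}xg$. It lets $\alpha$ range over the roots to force $x=O$, deduces $\gamma_d=0$, and only then gets $d=0$ from the injectivity of $\gamma$; the computation itself is left to the reader.

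You never try to determine $m$. Working inside a single long-root subgroup, you play the additivity of $t\mapsto d(t)x_\alpha$ against the quadratic term $t^2x_\alpha m x_\alpha$. This gives $2x_\alpha m x_\alpha=0$, then $[m,x_\alpha]=d(1)x_\alpha=0$, hence $d(t)x_\alpha=0$ for all $t$, so $d=0$ outright.

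What your route buys:
\begin{itemize}
\item It is fully explicit and uses one root subgroup.
\item You do not need that the root elements have no common nonzero fixed vector in $\cL$, which the paper's ``$x=O$'' step implicitly requires.
\item You do not need the injectivity of $\gamma$. Once $d=0$, you have $\gamma_d=0$ by linearity, so you can drop your appeal to injectivity.
\end{itemize}

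The middle part of your proposal should be deleted, since the last paragraph supersedes it. That part is the coefficient comparison, the degree-$4$ identity and the separate discussion of small fields. The ``main obstacle'' you flag there is not really one. From $d(t)x_\alpha=(t-t^2)[m,x_\alpha]$ and any $t\notin\{0,1\}$ you get $[m,x_\alpha]\in\FF x_\alpha$ at once.
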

{\em Proof.} Supose that for some $d\in \Der(\FF)$ there exists $x\in \cL = \cM_\cL$ such that $g^{-1}d(g) = x-g^{-1}xg$ for every $g\in G$. Put $g = u_\alpha(t)$ for $\alpha$ any root of the root system of $\cL$ and $t$ any scalar in $\FF$. With $g$ chosen in this way, it is straightfoward to check that the equality $g^{-1}d(g) = x-g^{-1}xg$ forces $x = O$. (The details of this check are left to the reader.) Hence $\gamma_d = 0$ and, threrefore, $d = 0$. \hfill $\Box$  \\

According to V\"{o}lklein \cite{V1}, if $\ch(\FF) \neq 2$ and $|\FF| \neq 3$ then $\Der(\FF) \cong H^1(\cL,G)$. By combining this result with Lemma \ref{Cohom odd Lem 1} we immediately obtain the following.

\begin{corollary}\label{Cohom odd Lem 2} 
Let $\ch(\FF)\neq 2$ and $|\FF|\neq 3$ and suppose that $\dim(\Der(\FF)) < \infty$. Then $\gamma(\Der(\FF))$ is a complement of  $B^1(\cL, G)$ in $Z^1(\cL, G)$. 
\end{corollary}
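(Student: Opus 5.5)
The plan is a dimension count. By Lemma \ref{Cohom odd Lem 1}, $\gamma(\Der(\FF))\cap B^1(\cL,G)=\{0\}$. So the composite
\[\bar\gamma:\Der(\FF)\rightarrow Z^1(\cL,G)\rightarrow H^1(\cL,G)=Z^1(\cL,G)/B^1(\cL,G)\]
is injective. Indeed, its kernel consists of the $d$ with $\gamma_d\in B^1(\cL,G)$; by the lemma this forces $\gamma_d=0$, and then $d=0$ by the injectivity of $\gamma$, which was established in Subsection \ref{gamma BD} by the argument of Lemma \ref{A gamma inj}.

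Next I invoke V\"{o}lklein \cite{V1}. Under the hypotheses $\ch(\FF)\neq 2$ and $|\FF|\neq 3$, it gives an isomorphism $\Der(\FF)\cong H^1(\cL,G)$ of $\FF$-vector spaces. In particular $\dim H^1(\cL,G)=\dim\Der(\FF)$, and this dimension is finite by assumption.

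The injective linear map $\bar\gamma$ then goes between two $\FF$-spaces of the same finite dimension, so it is bijective. Surjectivity of $\bar\gamma$ says $\gamma(\Der(\FF))+B^1(\cL,G)=Z^1(\cL,G)$. Combined with the trivial intersection, this gives $Z^1(\cL,G)=\gamma(\Der(\FF))\oplus B^1(\cL,G)$, which is the claim.

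The only delicate point is that the isomorphism of \cite{V1} must be one of $\FF$-vector spaces, so that dimensions can be compared. Here I rely on V\"{o}lklein's result being stated $\FF$-linearly; the cocycles there are built from derivations exactly as our $\gamma_d$, so $\FF$-linearity is natural. Any abstract $\FF$-isomorphism suffices: finite dimensionality is the essential hypothesis, since in infinite dimension an injection between spaces of equal dimension need not be onto. That is why the infinite-dimensional case must be handled separately, presumably by a direct limit over finitely generated subfields.
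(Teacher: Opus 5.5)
Your proposal is correct and follows the paper's argument: Lemma \ref{Cohom odd Lem 1} (together with the injectivity of $\gamma$) makes the induced map $\Der(\FF)\to H^1(\cL,G)$ injective, and V\"{o}lklein's isomorphism $\Der(\FF)\cong H^1(\cL,G)$ combined with $\dim(\Der(\FF))<\infty$ forces it to be onto. The paper does this in one line, and it implicitly relies on the same $\FF$-linearity of V\"{o}lklein's isomorphism that you flag.
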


Henceforth, keeping the hypotheses that $\ch(\FF) \neq 2$, we assume that $\dim(\Der(\FF))$ is infinite. We shall only consider the case where $X_n = B_n$, but all we are going to say on this case can be repeated nearly word for word for the case that $X_n = D_n$.  

So henceforth  $G = \Omega(2n+1, \FF)$ with $\ch(\FF) \neq 2$ and $\cL = \mathfrak{o}(2n+1,\FF)$. In view of Lemma \ref{Cohom odd Lem 1}, the following is all we need to prove.

\begin{claim}\label{main claim odd}
For every cocycle $F\in Z^1(\cL,G)$ there exists $d\in\Der(\FF)$ and $m\in \cL$ such that
\[F(g)-g^{-1}d(g) ~ = ~ m-g^{-1}mg, \hspace{5 mm} \forall g\in G.\]
\end{claim}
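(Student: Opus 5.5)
We reduce the infinite-dimensional case to the finite-dimensional one (Corollary \ref{Cohom odd Lem 2}) by passing to finitely generated subfields and gluing the resulting derivations. Fix $F\in Z^1(\cL,G)$. For a subfield $\KK$ of $\FF$ put $G(\KK):=\Omega(2n+1,\KK)$, realized by the same matrices $u_\alpha(t)$, $t\in\KK$, and $\cL(\KK)$, the $\KK$-span of the Chevalley basis $C$. Then $\cL=\cL(\KK)\otimes_\KK\FF$ as a $G(\KK)$-module, and the restriction $F_\KK:=F|_{G(\KK)}$ is a cocycle of $G(\KK)$ with values in $\cL$.

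\textbf{Step 1: a coboundary correction independent of $\KK$.} The plan is first to subtract from $F$ a coboundary chosen once and for all, so that $F(u_\alpha(t))$ has a normalized shape. Take the torus elements $h_\alpha(s)$ with $s\in\FF_0$, or in a fixed finite extension if $\FF_0=\FF_3$; this is where $|\FF|\neq3$ enters. Using the cocycle identity on these elements and on the root subgroups, we normalize $F$ on the Borel subgroup. The aim is that $F(h)=0$ for these fixed torus elements $h$, and that $F(u_\alpha(t))$ lies in the sum of the weight spaces of $\cL$ of weight $\alpha$ for every root $\alpha$ and every $t\in\FF$. The relation $h\,u_\alpha(t)\,h^{-1}=u_\alpha(\alpha(h)t)$ then forces this shape, because weights distinct from $\alpha$ are separated by the fixed $h$ when $\ch(\FF)\neq2$ and the field is large enough. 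After this correction, $F(u_\alpha(t))=\phi_\alpha(t)x_\alpha$ for additive maps $\phi_\alpha:\FF\to\FF$.

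\textbf{Step 2: identify $\phi_\alpha$ with a derivation.} We apply the corrected cocycle to the identity $h_\alpha(s)u_\alpha(t)h_\alpha(s)^{-1}=u_\alpha(s^2t)$, now with arbitrary $s\in\FF^*$, and to the Steinberg commutator relations. The expected outcome is $\phi_\alpha(t)=d(t)$ for a single additive map $d$ independent of $\alpha$, satisfying $d(st)=sd(t)+td(s)$; this is precisely the computation of $g^{-1}d(g)=d(t)x_\alpha$ for $g=u_\alpha(t)$. This step can also be organized through finitely generated subfields $\KK$ as follows. For each such $\KK$, apply V\"{o}lklein's result, i.e.\ Corollary \ref{Cohom odd Lem 2} over $\KK$; here $\Der(\KK)$ is finite-dimensional and $\Der(\KK,\FF)=\Der(\KK)\otimes\FF$ handles the $\FF$-coefficients. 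This produces $d_\KK$ and $m_\KK$. The already normalized form of $F$ makes $m_\KK$ centralized by the root elements of $G(\KK)$, hence $m_\KK=0$. Therefore $d_\KK$ is uniquely determined, the $d_\KK$ are compatible, and they glue to a derivation $d$ of $\FF$.

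\textbf{Step 3: conclude.} Set $F'(g):=F(g)-g^{-1}d(g)$ after the correction of Step 1. Then $F'$ vanishes on all root elements $u_\alpha(t)$, which generate $G$, so the cocycle identity gives $F'=0$. Undoing the fixed coboundary of Step 1 yields $m$ with $F(g)-g^{-1}d(g)=m-g^{-1}mg$ for every $g\in G$, which is the claim.

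The main obstacle is Step 1. The single coboundary must work simultaneously for all subfields, which requires verifying that the normalizing torus elements separate the weights adequately in small characteristic, including characteristic $3$ with $|\FF|>3$. It also requires controlling the short-root subgroups $u^\pm_k$, whose matrices contain $t^2$ terms.
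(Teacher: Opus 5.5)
Your Step 1 cannot be carried out in general, and where it can be, it does not give the normal form you use afterwards.

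\textbf{Step 1 in characteristic $3$.} You need torus elements with parameters in $\FF_0$, or in a finite extension of $\FF_3$ inside $\FF$, but no such extension need exist. Take $\FF=\FF_3(x_1,x_2,\dots)$, which has $|\FF|\neq3$ and infinite derivation rank, so it is within the scope of the claim. There the only available parameters are $\pm1$, so the resulting torus elements have order at most $2$. Hence every root $\alpha$ and its opposite $-\alpha$ take the same value on all of them, and the weight spaces $\cL_\alpha$ and $\cL_{-\alpha}$ cannot be separated.

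\textbf{The normal form is not what you claim.} Coboundaries $\delta m\colon g\mapsto m-g^{-1}mg$ with $m$ in the Cartan subalgebra $\mathfrak{h}$ survive every normalization in Step 1. They vanish on the whole torus. Moreover $\delta m(u_\alpha(t))=-t\,\alpha(m)\,x_\alpha$, where $[m,x_\alpha]=\alpha(m)x_\alpha$, and this lies in the $\alpha$-weight space. For $F=\delta m$ with $0\neq m\in\mathfrak{h}$ you get $\phi_\alpha(t)=-t\,\alpha(m)$, which is not a derivation, and $(d_\KK,m_\KK)=(0,m)$. So both ``$\phi_\alpha=d$'' and ``$m_\KK$ is centralized by the root elements, hence $m_\KK=0$'' are false.

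\textbf{What does work: the subfield argument, with two fixes.} Once these are made, Step 1 can be dropped entirely.

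First, the remark $\Der(\KK,\FF)=\Der(\KK)\otimes\FF$ misses the real issue. Corollary \ref{Cohom odd Lem 2} over $\KK$ describes cocycles with values in $\cL(\KK)$. But $F|_{G(\KK)}$ takes values in $\cL=\bigoplus_{b\in B}b\,\cL(\KK)$ for a $\KK$-basis $B$ of $\FF$, and $G(\KK)$ need not be finitely generated. So a priori infinitely many coordinate cocycles $p_b\circ F|_{G(\KK)}$ could be nonzero, where $p_b$ is the $b$-th coordinate applied entrywise. Then $\sum_b b\,m_b$ would not be an element of $\cL$. This is settled by noting that $Z^1(\cL(\KK),G(\KK))=\gamma(\Der(\KK))\oplus B^1(\cL(\KK),G(\KK))$ is finite-dimensional. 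Hence its elements are determined by their values at finitely many $g_1,\dots,g_r\in G(\KK)$, and only finitely many $b$ have $p_b(F(g_i))\neq0$ for some $i$.

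Second, compatibility comes from uniqueness, not from a normalization. Note that $p_b\circ d$ is a derivation of $\KK$ for $d\in\Der(\KK,\FF)$. Apply Lemma \ref{Cohom odd Lem 1} over $\KK$ to each coordinate; the computation in its proof also rules out nonzero $G(\KK)$-invariants. This shows the pair $(d_\KK,m_\KK)$ is unique, so $d_{\KK'}|_\KK=d_\KK$ and $m_{\KK'}=m_\KK$ whenever $\KK\subseteq\KK'$. The finitely generated subfields with $|\KK|\neq3$ form a directed family whose groups $G(\KK)$ exhaust $G$. This gives $d\in\Der(\FF)$ and a single $m$ with the required property.

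\textbf{Comparison with the paper.} The resulting proof is shorter than the paper's. The paper instead projects with $p_{\KK,B}$ onto the single coordinate $\KK$, so its local data $d_{\KK,B}$, $m_{\KK,B}$ depend on $B$ and do not glue directly. It then obtains a global derivation $d_F$ from Taussky--Zassenhaus applied to $G_1\cong\mathrm{SL}(n,\FF)$, and builds $\ovm_F$ from commutator relations. Finally it shows that the remaining discrepancy is $\lambda I_\pm$ with $\lambda$ independent of $\KK$.
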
 

\subsection{A bit of notation}\label{conv} 

We firstly fix some notation. Given $F\in Z^1(\cL,G)$, we put $\delta_F(g) = gF(g)$. With this convention we have
\begin{equation}\label{delta}
\delta_F(g_1g_2) ~ = ~ \delta_F(g_1)g_2 + g_1\delta_F(g_2), ~~~ \forall g_1, g_2\in G.
\end{equation}
Clearly, $\delta_F(1) = O$ and $\delta_F(g^{-1}) = -g^{-1}\delta_F(g)g^{-1}$. 

Given a subfield $\KK$ of $\FF$, we denote by $\cL(\KK)$ (respectively $G(\KK)$) the $\KK$-subalgebra of $\cL$ (the subgroup of $G$) formed by the matrices of $\cL$ (of $G$) with all entries in $\KK$.  Given a basis $B$ of $\FF$ over $\KK$ such that $1\in B$, let $p_{\KK, B}:\FF\rightarrow \KK$ be the projection of $\FF$ onto $\KK$ along the subspace of $\FF$ spanned by $B\setminus\{1\}$. As $p_{\KK,B}$ is $\KK$-linear, if $x\in \KK$ and $y\in \FF$ then $p_{\KK,B}(xy) = xp_{\KK,B}(y)$ ($= p_{\KK,B}(yx)$). Consequently 
\[p_{\KK,B}(gF(g)) ~ = ~ gp_{\KK,B}(F(g)), ~~~ \forall g\in G(\KK).\] 
Therefore, with $F\in Z^1(\cL,G)$ and $g_1, g_2\in G(\KK)$, we have
\begin{equation}\label{G(K)bis}
\begin{array}{rcl}
p_{\KK,B}(F(g_1g_2)) & = & g_2^{-1}p_{\KK,B}(F(g_1))g_ 2+ p_{\KK,B}(F(g_2));\\
p_{\KK,B}(\delta_F(g_1g_2)) & =& p_{\KK,B}(\delta_F(g_1))g_2 + g_1p_{\KK,B}(\delta_F(g_2)).
\end{array}
\end{equation} 
It follows that the function which maps every $g\in G(\KK)$ onto $p_{\KK,B}(F(g))\in \cL(\KK)$ is a cocycle of $G(\KK)$ over $\cL(\KK)$. Note also that the two equations of  (\ref{G(K)bis}) are mutually equivalent. 

\subsection{Down to subfields with finite derivation bases}\label{sec 121}

We firstly state a few definitions. Let $\ch(\FF) = p >  0$. Then we put $N(\FF) := \cap_{n=0}^\infty\FF^{p^n}$ (of course, $x^{p^0} = x$, by convention). In other words, $N(\FF)$ the largest perfect subfield of $\FF$. If  $x\in \FF^p$ we denote by $x^{p^{-1}}$ the unique element $y\in \FF$ such that $x = y^p$ and when $x\in \FF\setminus\FF^p$ we write $x^{p^{-1}} = \infty$, by convention. So, the elements of $\FF\setminus N(\FF)$ are precisely the $x\in \FF$ such that $x^{p^{-n}} = \infty$ for some positive integer $n$. Given a subset $X$ of $\FF\setminus N(\FF)$, we put 
\[X^{p^{-1}} ~ :=  ~ \{x^{p^{-1}}~|~ x\in X, ~ x^{p^{-1}} \neq \infty\}.\]
Clearly, $X\cap \FF^{p^n} = \emptyset$ if and only if $X^{p^{-n}} = \emptyset$. Still assuming that $X\cap N(P) = \emptyset$, we put $\overline{X} := \cup_{n = 0}^\infty X^{p^{-n}}$. It is clear that if $X$ is finite then $\overline{X}$ is finite. Indeed when $X$ is finite there exists a nonnegative integer $n$ such $X^{p^{-n}} = \emptyset$, hence $\overline{X} = \cup_{k = 0}^{n-1}X^{p^{-k}}$. It is also clear that, if $\KK_X$ is the subfield of $\FF$ generated by $N(\FF)\cup\overline{X}$, then
\begin{equation}\label{N(F)} 
\KK_X^p ~ = ~ \KK_X\cap\FF^p.
\end{equation}
Let $\ch(\FF) = 0$. Then $N(\FF) := N_{\mathrm{der}}(\FF)$ is the algebraic closure in $\FF$ of the minimum subfield of $\FF$. For uniformity of notation, for every $X\subseteq \FF\setminus N(\FF)$ we put $\overline{X} = X$.  

Let $\mathfrak{F}(\FF)$ be the family of all subfields $\KK$ of $\FF$ such that $\KK$ is generated by $N(\FF)\cup\overline{X}$ for a finite subset $X$ of $\FF\setminus N(\FF)$ (and $|\KK| > 4$ if $|N(\FF)| \leq 4$). This family is nonempty (in fact, since $\dim(\Der(\FF)) = \infty$, it contains infinitely many members) and $\dim(\Der(\KK)) < \infty$ for every $\KK\in \mathfrak{F}(\FF)$. Moreover, for a derivation $d\in \Der(\FF)$, if $d(x)\in \KK$ for every $x\in\KK$ then the restriction of $d$ to $\KK$ belongs to $\Der(\KK)$. Conversely, every derivation $d\in \Der(\KK)$ can be extended to a derivation of $\FF$. 

Let $\KK\in \mathfrak{F}(\FF)$. By Corollary \ref{Cohom odd Lem 2} there exists a derivation $d_{\KK,B}$ of $\KK$ and a matrix $m_{\KK,B}\in \cL(\KK)$ such that 
\begin{equation}\label{G(K)ter}
p_{\KK,B}(F(g)) ~ = ~ g^{-1}d_{\KK,B}(g) + m_{\KK,N,F}-g^{-1}m_{\KK,B}g, ~~~ \forall g\in G(\KK).
\end{equation}
Equivalently,
\begin{equation}\label{G(K)ter'}
p_{\KK,B}(\delta_F(g)) ~ = ~ d_{\KK,B}(g) + gm_{\KK,B}-m_{\KK,B}g, ~~~ \forall g\in G(\KK).
\end{equation}

\subsection{The matrix $F(g)$ for $g$ belonging to the subgroup of $G$ generated by $\{u_{k,h}(t)~|~ t\in \FF, ~ k, h \in \{1, 2,..., n\},~ k\neq h\}$}\label{sec 2.3}  

Let $G_1\cong \mathrm{SL}(n,\FF)$ be the subgroup of $G$ generated by the elements $u_{k,h}(t)$ with $k, h \leq n$ (notation as in table (\\ref{rootSbgr B})) and let $\rho:\cM\rightarrow M_n(\FF)$ be the function which maps every matrix of $\cM = M_{2n+1}(\FF)$ onto its $\{1,..., n\}\times\{1,...,n\}$-submatrix. The restriction of $\rho$ to $G_1$ yields the isomorphim $G_1\cong \mathrm{SL}(n,\FF)$ while the function wich maps every $F\in Z^1(\cL,G)$ onto the $1$-cochain of $G_1$ over $M_n(\FF)$ which maps every $g\in G_1$ onto $\rho(F(g))$ is a homomorphism from $Z^1(\cL,G)$ to $Z^1(M_n(\FF),G_1)$. By \cite[Theorem 1]{TZ}, there exists a matrix $m_{1,F}\in M_n(\FF)$ and a derivation $d_F\in \Der(\FF)$ such that
\[\rho(F(g)) ~ = ~  \rho(g^{-1}d_F(g)) + m_{1,F}-\rho(g)^{-1}m_{1,F}\rho(g)\]
for every $g\in G_1$. We can replace $F(g)$ with $F(g) - (m_F-g^{-1}m_Fg)$ where 
\[m_F ~ =~ \left(\begin{array}{ccc}
m_{1,F} & O_n & {\bf 0}^t \\
O_n & -m_{1,F}^t & {\bf 0}^t \\
{\bf 0} & {\bf 0} & 0
\end{array}\right)\]
With this replacement, we have $\rho(F(g)) =  \rho(g^{-1}d_F(g))$ for every $g\in G_1$. In particular, with $g = u_{k,h}(t)$ we have 
\[\begin{array}{rcl}
\rho(F(u_{k,h}(t))) & = &   \rho(u_{k,h}(-t)d_F(u_{k,h}(t))) ~ =\\
= ~ d_F(t)\rho(e_{k,h} - e_{n+h,n+k}) & = & \rho(\delta_F(u_{k,h}(t)).
\end{array}\]
Accordingly,  
\[\begin{array}{l}
\delta_F(u_{k,h}(t))) ~ = ~ d_F(t)(e_{k,h}-e_{n+h,n+k}) + u_{k,h}(t)M_{k,h}-M_{k,h}u_{k,h}(t) ~ = \\
=  d_F(t)(e_{k,h}-e_{n+h,n+k}) + t((e_{k,h}-e_{n+h,n+k})M_{k,h}-M_{k,h}(e_{k,h}-e_{n+h,n+k}))
\end{array}\]
where 
\[M_{k,h} ~ = ~ \left(\begin{array}{ccc}
O_n & U_{k,h} & u_{k,h}^t \\
V_{k,h} & O_n & v_{k,h}^t\\
-2v_{k,h} & -2u_{k,h} & 0 
\end{array}\right)\]
with $u_{k,h}, v_{k,h} \in V(n,\FF)$ and $U_{k,h}$, $V_{k,h}$ anti-symmetric matrices of order $n$. It will be useful to have a name for matrices of this form. We shall call them {\em block -anti-diagonal} matrices.  

Put $A_{k,h} := (e_{k,h}-e_{n+h,n+k})M_{k,h}-M_{k,h}(e_{k,h}-e_{n+h,n+k})$.  So,
\begin{equation}\label{ukh 0}
\delta_F(u_{k,h}(t))) ~ = ~   d_F(t)(e_{k,h}-e_{n+h,n+k}) + tA_{k.h}. 
\end{equation} 
As the reader can check, the matrix $A_{k,h}$ belongs to $\cL$ and is block-anti-diagonal. Moreover, if $a_{i,j}^{(k,h)}$ is the $(i,j)$-entry of $A_{k,h}$, then $a^{(k,h)}_{i,j} \neq 0$ only if $(i,j)$ satisfies one of the following: 
\begin{equation}\label{Akh}
\left\{\begin{array}{l}
i = k, ~ j > n, ~  j\not\in\{n+k, n+h\}; \\
j = h, ~ i > n, ~ i \not\in\{n+k,n+h\}; \\
i = n+h, ~  j \leq n, ~ j\not\in\{k, h\}; \\
j = n+k, ~ i \leq n, ~ i\not\in\{k,h\}; \\
i = n+h,~  j = 2n+1;\\
j = n+k, ~ i = 2n+1. 
\end{array}\right.
\end{equation} 

\begin{result}\label{uniform der1}
Let $\KK \in \mathfrak{F}(\FF)$. With $d_F$ as above and $d_{\KK,B}$ and $p_{\KK,B}$ as in Subsection \ref{sec 121}, we have $d_{\KK, B, F}(t) = p_{\KK,B}(d_F(t))$ for every $t\in \KK$.  
\end{result} 
\noindent
{\em Proof.} As $\KK\in \mathfrak{F}(\FF)$, the values that a derivation $d$ of $\FF$ takes on $\KK$ are $\KK$-linear combinations of the values that $d$ takes on a given derivation basis of $\KK$. (When $\ch(\FF) = 0$ this fact follows from the inclusion $N(\FF)\subseteq \KK$; when $\ch(\FF) = p > 0$ we obtain it from (\ref{N(F)}).) All derivation bases of $\KK$ are finite (of the same size). Therefore $\KK$ admits an extension $\KK'\in\mathfrak{F}(\FF)$ whih contains all values $d_F(t)$ as $t$ ranges in $\KK$. Let $B'$ be a basis of $\FF$ over $\KK'$ with $1\in B'$. With no loss, we can assume that $B \supseteq B'$.  

If $g = u_{k,h}(t)$ with $t\in \KK$, the summand $gm_{\KK',B'} - m_{\KK',B'}g$ in the right side of  (\ref{G(K)ter'}) (but with $\KK$ and $B$ replaced by $\KK'$ and $B'$) is equal to $tm$ for a suitable matrix $m \in L(\KK')$. Moreover, $\delta_F(u_{k,h}(t)) \in L(\KK')$, by the choice of $\KK'$. We can also assume to have chosen $\KK'$ as large as to contain all entries of $A_{k,h}$. So, up to replacing $\KK$ with $\KK'$ in (\ref{G(K)ter'}), we can compare (\ref{ukh 0}) and (\ref{G(K)ter'}), with $g = u_{k,h}(t)$ in (\ref{G(K)ter'}). We obtain that there is a scalar $a\in \KK'$ such that $d_F(t) - d_{\KK',B'}(t) = ta$ for every $t\in \KK$. In particular, $d_F(ts) - d_{\KK',B'}(ts) = tsa$ for every choice of $s, t \in \KK$. However
\[ \begin{array}{rl}
tsa & = ~  d_F(ts) - d_{\KK',B}(ts) ~ = ~   d_F(t)s + td_F(s) - d_{\KK',B}(t)s - td_{\KK',B}(s)  ~  =  \\
 & = ~  (d_F(t)-d_{\KK',B}(t))s + t(d_F(s) - d_{\KK',B}(s)) ~ = ~ tas + tsa. 
\end{array}\]
Therefore $tsa = 0$, namely $a = 0$. So, $d_{\KK',B'}(t) = d_F(t)$ for every $t\in \KK$. However we also have $p_{\KK,B}(d_{\KK',B'}(t))) = d_{\KK,B}(t)$. Hence $d_{\KK, B, F}(t) = p_{\KK,B}(d_F(t))$.  \hfill $\Box$\\

As a consequence of Result \ref{uniform der1},  the derivation $d_{\KK,B}$ does not depend on the particular choice of  $\KK$ and $B$, provided that $\KK$ is sufficiently large. Explicitly, 

\begin{corollary} \label{uniform der2}
For every $t\in \FF$, if $\KK\in\mathfrak{F}(\FF)$ contains both $t$ and $d_F(t)$, then $d_{\KK,B}(t) = d_F(t)$. 
\end{corollary}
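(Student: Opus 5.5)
The plan is to deduce Corollary \ref{uniform der2} from Result \ref{uniform der1} by comparing the pair $(\KK,B)$ with a suitable enlargement of it. The point is that the projection $p_{\KK,B}$ acts as the identity on $\KK$.

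Fix $t\in\FF$ and let $\KK\in\mathfrak{F}(\FF)$ contain both $t$ and $d_F(t)$. Let $B$ be any basis of $\FF$ over $\KK$ with $1\in B$. Result \ref{uniform der1} applies to the pair $(\KK,B)$ and gives $d_{\KK,B}(t)=p_{\KK,B}(d_F(t))$, because $t\in\KK$.

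Since $d_F(t)\in\KK$, it has coordinate $1$ along $1\in B$ and coordinate $0$ along the other elements of $B$. Hence $p_{\KK,B}(d_F(t))=d_F(t)$, and so $d_{\KK,B}(t)=d_F(t)$, as claimed.

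The only delicate point is that Result \ref{uniform der1} is stated for an arbitrary $\KK\in\mathfrak{F}(\FF)$ and arbitrary $B$, so it must be applied with exactly the pair $(\KK,B)$ appearing in the Corollary. Its proof handles the change to a larger $\KK'\supseteq\KK$ (with $B\supseteq B'$) internally, so no compatibility issue arises. I would also note that $d_{\KK,B}$ is well defined: as a derivation of $\KK$ it is uniquely determined by (\ref{G(K)ter'}), since Lemma \ref{Cohom odd Lem 1} applied over $\KK$ shows that $\gamma(\Der(\KK))\cap B^1=0$ and that $\gamma$ is injective. With these checks in place there is no real obstacle, and the Corollary is an immediate specialization of Result \ref{uniform der1}.
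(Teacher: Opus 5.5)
Your argument is correct and is exactly the paper's: apply Result~\ref{uniform der1} to the pair $(\KK,B)$ with $t\in\KK$, then use that $p_{\KK,B}$ fixes $\KK$ pointwise, so $p_{\KK,B}(d_F(t))=d_F(t)$. One wording slip: the coordinate of $d_F(t)$ along $1\in B$ is $d_F(t)$ itself, not $1$. This does not affect the conclusion, and your extra remark on the uniqueness of $d_{\KK,B}$ is correct but not needed.
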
 
\noindent
{\em Proof.} We have $p_{\KK,B}(d_F(t)) = d_F(t)$ since, by assumption, $d_F(t)\in \KK$. The conclusion follows from Result \ref{uniform der1}. \hfill $\Box$ \\

Our next goal is proving the following.

\begin{lemma}\label{uni-cobound-0}
There exists a block-anti-diagonal matrix  $\ovm_F \in \cL$ such that for every choice of the pair $(k,h)$ with $k, h  \leq n$ and $k\neq h$ we have 
\begin{equation}\label{ukh 1}
A_{k,h} ~ = ~  (e_{k,h}-e_{n+h,n+k})\ovm_F-\ovm_F(e_{k,h}-e_{n+h,n+k}).
\end{equation}
Equivalently 
\[tA_{k,h} ~ = ~  u_{k,h}(t)\ovm_F-\ovm_Fu_{k,h}(t).\] 
\end{lemma}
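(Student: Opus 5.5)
The plan is to exploit the fact that $\delta_F$ restricted to $G_1$ satisfies the derivation-like rule (\ref{delta}), and that after the normalization already performed the block-diagonal part of $\delta_F$ on $G_1$ is given by $d_F$. Hence the correction terms $tA_{k,h}$ must behave like the coboundary of a single block-anti-diagonal matrix. Concretely, write $\ovm_F$ in the block-anti-diagonal form
\[\left(\begin{array}{ccc} O_n & U & u^t\\ V & O_n & v^t\\ -2v & -2u & 0\end{array}\right)\]
with $U,V$ anti-symmetric and $u,v\in V(n,\FF)$ unknown. The commutator $[x_{k,h},\ovm_F]$, where $x_{k,h}:=e_{k,h}-e_{n+h,n+k}$, has exactly the support pattern (\ref{Akh}): row $k$ and column $n+k$ of the off-diagonal blocks pick up entries of $V$ and $u$, while column $h$ and row $n+h$ pick up entries of $U$ and $v$. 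So I will read off candidate entries of $U,V,u,v$ from the $A_{k,h}$ and show that the readings are consistent.

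First, I exploit the additivity of $\delta_F$ on each root subgroup. Expanding $\delta_F(u_{k,h}(t)u_{k,h}(s))$ by (\ref{delta}) and using $x_{k,h}^2=0$ gives a condition on $A_{k,h}$ beyond the linearity in $t$ already visible in (\ref{ukh 0}). Second, I use the torus-type elements and the Weyl-type elements $w_{k,h}=u_{k,h}(1)u_{h,k}(-1)u_{k,h}(1)$, together with conjugation relations such as $u_{k,h}(t)^{w}=u_{k',h'}(\pm t)$. These relate $A_{k,h}$ to $A_{k',h'}$ via conjugation by permutation-like matrices. Third, and most importantly, I use the commutator relations $[u_{k,h}(t),u_{h,l}(s)]=u_{k,l}(\pm ts)$ for distinct $k,h,l$ and $[u_{k,h}(t),u_{l,m}(s)]=1$ for disjoint index pairs. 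I apply $\delta_F$ to both sides, expand with (\ref{delta}), and subtract the $d_F$ contributions; this is consistent because $g\mapsto g^{-1}d_F(g)$ is itself a cocycle, so it cancels. The result is linear relations among $A_{k,h}$, $A_{h,l}$ and $A_{k,l}$. These should force, for instance, the $(k,n+j)$-entry of $A_{k,h}$ to depend only on $(h,j)$ and not on $k$. That is exactly what is needed to define $V_{h,j}$, and similarly the other blocks and the vectors $u,v$.

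With these entries defined, I will then verify (\ref{ukh 1}) entrywise against (\ref{Akh}) and check that $U$ and $V$ come out anti-symmetric, as forced by $A_{k,h}\in\cL$. The equivalent form $tA_{k,h}=u_{k,h}(t)\ovm_F-\ovm_Fu_{k,h}(t)$ is immediate from $u_{k,h}(t)=I+tx_{k,h}$.

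I expect the main obstacle to be the consistency bookkeeping in the third step. Each entry of $\ovm_F$ is determined by several $A_{k,h}$, and one must show that all of them agree. The hypothesis $n\ge 3$ matters here, since it supplies the third index $l$ needed in the commutator relations. I would organize this by fixing the pairs $(1,h)$ and $(k,1)$ as reference and propagating to all other pairs via commutators with $u_{1,l}$.
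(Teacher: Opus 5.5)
Your plan is essentially the paper's proof: the paper likewise reduces the existence of $\ovm_F$ to index-independence of the entries of the $A_{k,h}$ (its conditions (\ref{ukh 3})) and extracts this from the commutator relation $(u_{k,h}(t),u_{h,p}(s))=u_{k,p}(st)$ expanded with (\ref{delta}) and (\ref{ukh 0}), so additivity, Weyl elements and disjoint commuting pairs are not needed (also, in your block notation, row $k$ and column $n+k$ of $[x_{k,h},\ovm_F]$ carry $U,u$, while column $h$ and row $n+h$ carry $V,v$, not the other way round). In the bookkeeping, the $ts$-coefficient gives $A_{k,p}=[x_{k,h},A_{h,p}]-[x_{h,p},A_{k,h}]$, hence $a^{(k,p)}_{k,n+j}=a^{(h,p)}_{h,n+j}$ only for $j\notin\{h,k\}$, and $a^{(k,p)}_{k,n+h}=-a^{(k,h)}_{k,n+p}$ for $j=h$ --- the latter relation, not $A_{k,h}\in\cL$ alone, is what makes $U$ antisymmetric, and these entries need not vanish (so do not rely on (\ref{ukh 4})(b) as printed for all $i>n$: with $n=3$, the coboundary of $\ovm=x_{\alpha^-_{[1,2]}}$ gives $a^{(3,1)}_{3,5}=1\neq 0=a^{(2,1)}_{2,5}$).
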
 
\noindent
{\em Proof.} Recall that $A_{k,h}\in L$ and the $(i,j)$ entry $a^{(k,h)}_{i,j}$ of $A_{k,h}$ is non-zero only if $(i,j)$ satisfies one the conditions recorded in (\ref{Akh}). 

Let $m_{i,j}$ the $(i,j)$-entry of the matrix $\ovm_F$ we are looking for. Condition (\ref{ukh 1}) forces $m_{h,j} = a^{(k,h)}_{k,j}$ and $m_{i,k} = -a^{(k,h)}_{i,h}$. Hence a matrix $\ovm_F$ with the required properties exists if and only if 
\begin{equation}\label{ukh 3}
\begin{array}{l}
a^{(k,h)}_{k,j} \mbox{ does not depend on } k, \mbox{ for every } j > n; \\
a^{(k,h)}_{i,h} \mbox{ does not depend on } h, \mbox{ for every } i > n.
\end{array}
\end{equation} 
We shall prove now that conditions (\ref{ukh 3}) are indeed satisfied. Recall that, for pairwise distinct indices $k, h, p  \in\{1, 2,..., n\}$ and arbtrary elements $t, s\in \FF$ we have $(u_{k,h}(t), u_{h,p}(s)) = u_{k,p}(st)$, where $(.,.)$ is the commutator (explicitly, $(g_1, g_2) := g_1g_2g_1^{-1}g_2^{-1}$). 
With the help of (\ref{delta}), (\ref{ukh 0}) and (\ref{Akh}), by straightforward but laborious computations we can see that the relation $(u_{k,h}(t), u_{h,p}(s)) = u_{k,p}(st)$ forces all the following. 
\begin{equation}\label{ukh 4}
\left\{\begin{array}{lrcl}
a) & a^{(k,p)}_{h,n+k} & = & a^{(k,h)}_{k,n+p};  \\
b) & a^{(k,p)}_{k,i} & = & a^{(h,p)}_{h,i},  ~\mbox{ for } i > n;\\
c) & a^{(k,p)}_{i,p} & = & a^{(k,h)}_{i,h}, ~ \mbox{ for } i > n;\\
d) & a^{(k,p)}_{n+p,h} & = & a^{(h,p)}_{n+k,p}. 
\end{array}\right. 
\end{equation} 
From (\ref{ukh 4})(b) we obtain that $a^{(k,p)}_{k,n+h} = 0$ (because $a^{(h,p)}_{h, n+h} = 0$ since $A_{h,p}\in \cL$).  Hence, permuting $h$ and $p$, we get $a^{(k,h)}_{k,n+p} = 0$. On the other hand $a^{(k,h)}_{k,n+k} = 0$, because $A_{k,h}\in \cL$, while $a^{(k,h)}_{k,n+h} = 0$ by (\ref{Akh}). Therefore $a^{(k,h)}_{k,j} = 0$ for every $j \in \{n+1,..., 2n\}$. Morever (\ref{ukh 4})(b) also implies that $a^{(k,p}_{k,2n+1} = a^{(h,p)}_{h,2n+1}$. From this equality, by permuting $h$ and $p$ we obtain that $a^{(k,h)}_{k,2n+1}$ does not depend on $k$. Therefore $a^{(k,h)}_{k,j}$ does not depend on $k$ as long as $j > n$, as claimed in the first part of (\ref{ukh 3}).  

From (\ref{ukh 4})(c) we obtain that $a^{(k,h)}_{n+p,h} = 0$ (because $a^{(k,p)}_{n+p,p} = 0$ since $A_{k,p}\in \cL$). We know that $a^{(k,h)}_{n+h,h} = 0$ (because $A_{k,h}\in \cL$) while $a^{(k,h)}_{n+k,h} = 0$ by (\ref{Akh}). Moreover $a^{k,h}_{2n+1,h} = a^{k,p}_{2n+1,p}$ by (\ref{ukh 4})(c). Hence $a^{(k,h)}_{i,h}$ does not depend on $i$ as long as $i > n$. Both conditions of (\ref{ukh 3}) are satisfied. Therefore a matrix $\ovm_F$ as required indeed exists.  \hfill $\Box$ 

\begin{result}\label{uni-cobound-1}
With $\ovm_F$ as in Lemma \ref{uni-cobound-0}, let $\KK\in \mathfrak{F}(\FF)$ contain all entries of $\ovm_F$. Let $m_{\KK,B}$ be as in {\rm (\ref{G(K)ter})} and {\rm (\ref{G(K)ter'})}. Then there exists a scalar $\lambda\in \KK$ such that $\ovm_F - m_{\KK,B} = \lambda I_\pm$ where 
\[I_\pm ~ : =  ~ \left(\begin{array}{rrr}
I_n & O_n & {\bf 0}^t \\
O_n & -I_n& {\bf 0}^t\\
{\bf 0} & {\bf 0} & 0
\end{array}\right)\]
\end{result}
\noindent
{\em Proof.} By Lemma \ref{uni-cobound-0}, we have $F(g) = g^{-1}d_F(g) + \ovm_F - g^{-1}\ovm_Fg$ for every $g = u_{k,h}(t)$. Equivalently, $\delta_F(g) = d_F(g) + \ovm_F - \ovm_Fg$. Since the mpping $g \mapsto g^{-1}d_F(g)$ is a cocycle, the same holds for every $g\in G_1$. Explicitly,
\begin{equation}\label{uniform eq1}
\begin{array}{rcll}
F(g) & = &  g^{-1}d_F(g) + \ovm_F - g^{-1}\ovm_Fg, & \forall g\in G_1;\\
\delta_F(g) & = & d_F(g) + g\ovm_F - \ovm_Fg, & \forall g\in G_1.
\end{array}
\end{equation} 
Let $\KK\in\mathfrak{F}(\FF)$ be such that $\ovm_F \in \cL(\KK)$. As noticed in the proof of Result \ref{uniform der1}, the subfield $\KK$ admits an extension $\KK'\in\mathfrak{F}(\FF)$ such that $d_F(t)\in \KK'$ for every $t \in \KK$. Put $G_1(\KK) := G_1\cap G(\KK)$. If $g\in G_1(\KK)$ then all terms in the right side of both equations of (\ref{uniform eq1}) are matrices with entries in $\KK'$. Accordingly, $p_{\KK',B'}(\delta_F(g)) = \delta_F(g)$, $d_{\KK',B'}(g) = d_F(g)$ and
$gm_{\KK',B'}-m_{\KK',B'}g = g\ovm_F - \ovm_Fg$ for every $g\in G_1(\KK)$. 

Let  $B$ be a basis of $\FF$ over $\KK$ containing $B'$. As $g\in G_1(\KK)$ and $\ovm_F\in \cL(\KK)$, the equality $gm_{\KK',B'}-m_{\KK',B'}g = g\ovm_F - \ovm_Fg$ implies that 
\[gm_{\KK',B'}-m_{\KK',B'}g ~ = ~ p_{\KK,B}(gm_{\KK',B'}-m_{\KK',B'}g).\]
However $p_{\KK,B}(gm_{\KK',B'}-m_{\KK',B'}g) = gm_{\KK,B}-m_{\KK,B}g$. Therefore 
\[g\ovm_F- \ovm_Fg ~ = ~ gm_{\KK,B} - m_{\KK, B}g\]
for every $g\in G_1(\KK)$. Equivalently, $\ovm_F - m_{\KK, B}$ is centralized by  every $g\in G_1(\KK)$. Therefore $\ovm_F - m_{\KK, B}$ is diagonal with two scalar blocks $\lambda I_n, \mu I_n$ of order $n$ and one block $\nu$ of order $1$. However $\ovm_F-m_{\KK,B}\in \cL(\KK)$, since both $\ovm_F$ and $m_{\KK,B}$ belong to $\cL(\KK)$. Therefore $\mu = -\lambda$ and $\nu = 0$. \hfill $\Box$\\

Let $\beta \in B^1(\cL,G)$ be the coboundary which maps $g\in G$ onto $\ovm_F-g^{-1}\ovm_Fg$. If we replace $F$ with $F' := F-\beta$ then $F'(g) = g^{-1}d_F(g)$ for every $g\in G_1$. So, we can assume that $F(g) = g^{-1}d_F(g)$ (equivalenly $\delta_F(g) = d_F(g)$) for every $g\in G_1$. In particular
\[F(u_{k,h}(t)) ~ = ~ \delta_F(u_{k,h}(t)) ~ = ~ d_F(t)(e_{k,h}- e_{n+h,n+k})\]
for every choice of $k, h\in \{1, 2,..., n\}$ with $k \neq h$. 

\subsection{End of the proof of Claim \ref{main claim odd}} 

Corollary \ref{uniform der2} and Result \ref{uni-cobound-1} show that in general, with $F$ chosen as above, for every $g\in G$ and for a sufficiently large subfield $\KK \in \mathfrak{F}(\FF)$  owe have
\begin{equation}\label{new1}
p_{\KK,B} (F(g)) ~ = ~  g^{-1}d_F(g) + \lambda_\KK(I_\pm - g^{-1}I_\pm g)
\end{equation} 
for every basis $B$ of $\FF$ over $\KK$ such that $1\in B$ and a suitable scalar $\lambda_\KK\in \KK$ which does not depend on $g$ but might possibly depend on the particular choice of $\KK$. However for every given $g\in G$ we can always choose $\KK$ so large that $p_{\KK,B} (F(g)) = F(g)$ and $p_{\KK,B}(\delta_F(g)) = \delta_F(g)$. With $\KK$ chosen in this way, equation (\ref{new1}) can we rewritten as follows:   
\begin{equation}\label{new2}
F(g) ~ = ~  g^{-1}d_F(g) + \lambda_\KK(I_\pm - g^{-1}I_\pm g). 
\end{equation} 
This equation does not change if we enlarge $\KK$ further and shows that, at this stage, $\lambda := \lambda_\KK$ no more depends on $\KK$. Accordingly, equation (\ref{new2}) can be rewritten as follows, preventing subfields of $\FF$ from interfering in it.
\[F(g) ~ = ~  g^{-1}d_F(g) + \lambda (I_\pm - g^{-1}I_\pm g).\] 
Ler $\beta'$ be the coboundary which maps $g\in G$ onto $\lambda(I_\pm - g^{-1}I_\pm g)$. Note that $\beta'(g) = 0$ if $g\in G_1$. So, if we replace $F$ with $F' := F-\beta'$, the values that $F$ takes on $G_1$ remain unchanged. So, $F'(g) = g^{-1}d_F(g)$ for every $g\in G$. 
The proof of Claim \ref{main claim odd} is complete. The proof of Theorem \ref{Cohom odd} is also complete. 

\section{Proof of Theorem \ref{Cohomology} when $\ch(\FF) = 2$}\label{Cohomology Th even}

Throughout this section we assume that $\ch(\FF) = 2$. In this case $H^1(\cL, G) \cong Z(\cL)\oplus \Der(\FF)$ (V\"{o}lklein \cite{V1}). However this information is useless for us since now $\cL^*\not\cong \cL$. So, in contrast with Section \ref{Cohomology Th odd}, in this section we will make no use of the main result of  \cite{V1}. 

Henceforth when $X_n = B_n$ we put $\chi = h_n$ and when $X_n = D_n$ we put $\chi .= h_{n-1}-h_n$. So, $\chi$ spans the center or the deep center of $\cL$ according to whether $X_n = B_n$ or $X_n = D_n$. We denote by $F_0$ the seed cocycle of $G$ over $\cL_0^*$ (Definition \ref{def F0}).

\begin{lemma}\label{nuovo even lem1}
For every $F\in Z^1(\cL^*,G)$ we have $F(g)(\chi) = 0$ for every $g\in G$.
\end{lemma}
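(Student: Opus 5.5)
We need to show that every cocycle $F\in Z^1(\cL^*,G)$ vanishes on $\chi$. The plan is to write the restriction of $F$ to $\chi$ as a group homomorphism and then use that $G$ is perfect.

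Since $\chi$ spans the center $Z(\cL)$ (case $B_n$) or the deep center $K(\cL)$ (case $D_n$), and $G$ centralizes this $1$-dimensional space (Subsections \ref{L0 B} and \ref{L0 D}), we have $\chi\cdot g = \chi$ for every $g\in G$. Hence for every $f\in\cL^*$ and $g\in G$,
\[(f\cdot g)(\chi) ~=~ f(\chi\cdot g^{-1}) ~=~ f(\chi).\]

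Define $\phi:G\rightarrow\FF$ by $\phi(g) := F(g)(\chi)$. The cocycle condition $F(g_1g_2) = F(g_1)\cdot g_2 + F(g_2)$ then gives
\[\phi(g_1g_2) ~=~ (F(g_1)\cdot g_2)(\chi) + F(g_2)(\chi) ~=~ \phi(g_1) + \phi(g_2).\]
So $\phi$ is a homomorphism from $G$ to the additive group of $\FF$, which is abelian.

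The group $G$, namely $\Omega(2n+1,\FF)$ with $n\geq 3$ or $\Omega^+(2n,\FF)$ with $n\geq 4$, is perfect; this fact was already used in Subsections \ref{L0 B} and \ref{L0 D}. A homomorphism from a perfect group to an abelian group is trivial, so $\phi = 0$, which is the statement of the lemma.

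The only point needing care is the direction of the action. $G$ acts on the right, and the dual action is $(f\cdot g)(x) = f(x\cdot g^{-1})$, as in Subsection \ref{gamma A}. With that convention $G$ fixes $\chi$ and the computation above goes through. Perfectness of $G$ holds for every field, including $\FF_2$ and $\FF_3$ in the ranks considered here, so no case split on $|\FF|$ is needed.
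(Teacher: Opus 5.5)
Your proposal is correct and is the same argument as the paper's. Since $G$ fixes $\chi$, the map $g\mapsto F(g)(\chi)$ is a homomorphism from $G$ into the additive group of $\FF$, so it vanishes because $G$ is perfect. You simply write out the dual-action step $(f\cdot g)(\chi)=f(\chi\cdot g^{-1})=f(\chi)$, which the paper leaves implicit.
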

{\em Proof.} Let  $F\in Z^1(\cL^*,G)$. As $\chi\cdot g = \chi$ for every $g\in G$, for any choice of $g_1, g_2\in G$ we have $F(g_1g_2)(\chi) = F(g_1)(\chi) + F(g_2)(\chi)$. Hence $F(g)(\chi) = 0$ for every $g\in G$, since $G$ is perfect.  \hfill $\Box$\\

For $f\in \cL^*$ let $f_{|0} \in \cL_0^*$ be the restriction of $f$ to $\cL_0$ and let $\rho$ be the function that sends every $F\in Z^1(\cL^*,G)$ to the function that maps every $g\in G$ onto $F(g)_{|0}$. 

\begin{corollary}\label{nuovo even lem2}
The mapping $\rho$ is an isomorphism from $Z^1(\cL^*, G)$ to $Z^1(\cL^*_0, G)$. Morever $\rho$ maps $B^1(\cL^*, G)$ onto the sum of $B^1(\cL^*_0,G)$ and the $1$-subspace $\langle F_0\rangle$ of $Z^1(\cL^*_0,G)$ spanned by the seed cocycle $F_0$ of $G$ over $\cL_0^*$.  
\end{corollary}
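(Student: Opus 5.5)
We work with the decomposition $\cL = K\oplus\cL_0$, where $K = \langle\chi\rangle$ and the $G$-action is given by (\ref{F0 B 0}) with $h_n$ replaced by $\chi$: explicitly $(t\chi\oplus x)\cdot g = (t+\langle F_0(g^{-1}),x\rangle)\chi\oplus g^{-1}xg$. Dually, every $f\in\cL^*$ is a pair $(f(\chi), f_{|0})$. We first compute how $G$ acts on $\cL^*$ in these coordinates. For $f\in\cL^*$ and $x\in\cL_0$ we have $(f\cdot g)(x) = f(x\cdot g^{-1}) = \langle F_0(g),x\rangle f(\chi) + f_{|0}(g x g^{-1})$. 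Hence $(f\cdot g)_{|0} = f(\chi)F_0(g) + f_{|0}\cdot g$, where the last term refers to the dual action on $\cL_0^*$. Also $(f\cdot g)(\chi) = f(\chi)$.

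\textbf{Step 1: $\rho(F)$ is a cocycle and $\rho$ is linear and injective.} Let $F\in Z^1(\cL^*,G)$. By Lemma \ref{nuovo even lem1}, $F(g)(\chi)=0$ for all $g$, so the formula above gives $(F(g_1)\cdot g_2)_{|0} = F(g_1)_{|0}\cdot g_2$. Restricting the cocycle identity to $\cL_0$ therefore shows that $\rho(F)\in Z^1(\cL_0^*,G)$. Linearity of $\rho$ is clear. For injectivity: if $F(g)_{|0}=0$ and $F(g)(\chi)=0$ for all $g$, then $F=0$.

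\textbf{Step 2: $\rho$ is surjective.} Given $F'\in Z^1(\cL_0^*,G)$, we extend each $F'(g)$ to $\cL$ by setting its value on $\chi$ to be $0$; call the result $F(g)$. Since these extensions vanish on $\chi$, the same computation as in Step 1 shows that $F$ satisfies the cocycle identity. So $\rho$ is an isomorphism.

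\textbf{Step 3: the image of the coboundaries.} A coboundary in $B^1(\cL^*,G)$ has the form $\beta_f(g) = f - f\cdot g$ for some $f\in\cL^*$. Write $s = f(\chi)$. By the formula above,
\[\rho(\beta_f)(g) = f_{|0} - f_{|0}\cdot g - sF_0(g).\]
Thus $\rho(\beta_f)$ equals a coboundary of $\cL_0^*$ minus $s$ times $F_0$, so it lies in $B^1(\cL_0^*,G)+\langle F_0\rangle$. Conversely, choosing $f_{|0}$ and $s$ freely, every element of that sum arises in this way. Hence $\rho(B^1(\cL^*,G)) = B^1(\cL_0^*,G)+\langle F_0\rangle$. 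The statement does not require this sum to be direct, but it is, because $F_0\notin B^1(\cL_0^*,G)$.

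\textbf{Main obstacle.} The only delicate point is the dual-action bookkeeping, in particular the sign and inverse conventions of the right action. These must be checked against (\ref{F0 B 1}), which shows that the $\chi$-component of $x\cdot g^{-1}$ equals $\langle F_0(g),x\rangle$. Since $\ch(\FF)=2$, signs are harmless.
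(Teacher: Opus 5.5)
Your proposal is correct and follows essentially the same route as the paper. Both arguments restrict the cocycle identity to $\cL_0$ and use Lemma \ref{nuovo even lem1} to kill the term $F_0(g_2)(x)F(g_1)(\chi)$. Both then compute $\rho(\delta f) = \delta f_{|0} + f(\chi)F_0$ from the decomposition $x\cdot g^{-1} = gxg^{-1} + F_0(g)(x)\chi$. Your Step 2, extending by zero on $\chi$, simply spells out the surjectivity that the paper calls ``now clear''.
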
 
{\em Proof.} Given $F\in Z^1(\cL^*,G)$, $g_1, g_2\in G$ and $x\in \cL_0$, we have 
\[F(g_1g_2)(x) ~ = ~ F(g_1)(x\cdot g_2^{-1}) + F(g_)(x).\] 
Recall that $x\cdot g_2^{-1} = g_2xg_2^{-1} + F_0(g_2)(x)\chi$. Hence 
\[F(g_1)(x\cdot g_2^{-1}) ~ = ~ F(g_1)(g_2xg_2^{-1}+ F_0(g_2)(x)\cdot F(g_1)(\chi).\]
However $F(g_1)(\chi) = 0$ by Lemma \ref{nuovo even lem1}. Therefore
\begin{equation}\label{nuovo even eq1}
F(g_1g_2)(x) ~ = ~ F(g_1)(g_2xg_2^{-1}) + F(g_2)(x).
\end{equation}
For every $x\in \cL_0$ and $f\in \cL^*$ we have $f(x) = f_{|0}(x)$ and $f(gxg^{-1}) = (f_{|0}\cdot g)(x)$ for every $g\in G$. So, (\ref{nuovo even eq1}) shows that $\rho(F) \in Z^1(\cL^*_0,G)$. It is now clear that $\rho$ is indeed an isomorphism from $Z^1(\cL^*, G)$ to $Z^1(\cL^*_0,G)$. 

Turning to second claim of the corollary, let $f\in \cL^*$ and $g\in G$. Given $x\in \cL$, let $x_0 = \pi_0(x)$ and $t\chi = x-x_0$. Then $f(x) = tf(\chi) + f(x_0)$. Also, $x\cdot g^{-1} = t\chi + F_0(g)(x_0)\chi + gx_0g^{-1}$, hence $(f\cdot g)(x) = f(x\cdot g^{-1}) = tf(\chi) +  F_0(g)(x_0)f(\chi) + f(gx_0g^{-1}$. Consequently, 
\[(f-f\cdot g)(x) ~ = ~ F_0(g)(x_0)f(\chi) + f(x_0) - f(gx_0g^{-1}) ~ = ~ (F_0(g)f(\chi) + f_{|0}- f_{|0}\cdot g)(x_0).\]
This shows that $\rho$ maps the coboundary $\delta f \in B^1(\cL^*,G)$, which maps $g\in G$ onto $f-f\cdot g$, onto the sum of the coboundary $\delta f_{|0} \in B^1(\cL_0^*,G)$
and a multiple $f(\chi)F_0$ of the seed cocycle. In particular, $F_0 = \rho(\delta\chi^*)$ where $\chi^*\in \cL^*$ maps $\chi$ onto 1 and $\cL_0$ onto $\{0\}$.   \hfill $\Box$ \\

In view of the above, henceforth we take the liberty of regarding $Z^1(\cL^*, G)$ as the same as $Z^1(\cL^*_0,G)$ and $B^1(\cL^*,G)$ as the same as $B^1(\cL^*_0,G)+\langle F_0\rangle$. Accordingly, $H^1(\cL^*,G) = H^1(\cL^*_0,G)/\langle [F_0]\rangle$ where $[F_0] := F_0+ B^1(\cL^*_0,G)$. In this setting  the statement of Theorem \ref{Cohomology} is equivalent to the following:

\begin{theorem}\label{nuovo even theo}
When $|\FF| > 2$ the function $\gamma$ maps $\Der(\FF)$ onto a complement of $B^1(\cL^*_0,G)+\langle F_0\rangle$ in $Z^1(\cL^*_0,G)$.
\end{theorem}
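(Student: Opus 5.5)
The proof has two parts. First, $\gamma(\Der(\FF))$ meets $B^1(\cL^*_0,G)+\langle F_0\rangle$ trivially. Second, every $F\in Z^1(\cL^*_0,G)$ is congruent modulo that subspace to some $\gamma_d$. Throughout, I identify $\cL_0^*$ with a quotient of $\cM_\cL$ through the form $\tau_\cM$ of (\ref{demitrace0}). Its radical is $\langle I\rangle$ for $D_n$, and is spanned by $I$ and the $e_{i,2n+1}$ for $B_n$. In the $D_n$ case with $n$ odd, or in the $B_n$ case after accounting for the radical, I first check that $\tau_\cM$ induces a $G$-equivariant duality between $\cL_0$ and a suitable quotient $\overline{\cM}$ of $\cM_\cL$. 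This uses Lemma \ref{GT} and Corollary \ref{G tau}. A cocycle $F$ then becomes a function $g\mapsto \overline{m}(g)\in\overline{\cM}$ satisfying the twisted rule $\delta_F(g_1g_2)=\delta_F(g_1)g_2+g_1\delta_F(g_2)$, exactly as in Subsection \ref{conv}. Under this dictionary, $\gamma_d$ corresponds to $g\mapsto g^{-1}d(g)$, which lies in $\cL_0$ by Remark \ref{g**(-1)d(g) BD}. Coboundaries correspond to $m-g^{-1}mg$, and by Lemma \ref{lemma F0 B} (resp.\ Lemma \ref{lemma F0 D}) the seed cocycle $F_0$ corresponds to the coboundary of an element of the radical, namely $\chi^*$ from the proof of Corollary \ref{nuovo even lem2}.

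\textbf{Trivial intersection.} I proceed as in Lemma \ref{Cohom odd Lem 1}. Suppose $\gamma_d=\beta+cF_0$ with $\beta$ a coboundary. Evaluate at the root elements $u_\alpha(t)$ against the elements of $C_0$. Taking $\alpha\in\Phi_0$ kills $F_0$, by part (1) of Lemma \ref{lemma F0 B}. Comparing the coefficients that are linear in $t$ with those involving $d(t)$ forces $d(t)=\lambda t$ for all $t$. The Leibniz rule, as in the proof of Result \ref{uniform der1}, then gives $\lambda=0$, so $d=0$. The remaining relations force $\beta+cF_0=0$.

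\textbf{Surjectivity modulo coboundaries.} I follow the scheme of Section \ref{Cohomology Th odd}, with the subgroup $G_1\cong \mathrm{SL}(n,\FF)$ generated by the $u_{k,h}(t)$, $k,h\le n$, replacing the use of V\"{o}lklein's theorem.
\begin{enumerate}
\item Restrict $F$ to $G_1$ and apply the block map $\rho$ to land in $Z^1(M_n(\FF),G_1)$. By \cite[Theorem 1]{TZ}, valid for $|\FF|>2$, I obtain $d_F\in\Der(\FF)$ and $m_1$ with $\rho F$ equal to $\gamma_{d_F}$ plus a coboundary on $G_1$.
\item Subtract the corresponding block-diagonal coboundary. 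This leaves $\delta_F(u_{k,h}(t))=d_F(t)x_{\alpha^-_{(k,h)}}+tA_{k,h}$ with $A_{k,h}$ block-anti-diagonal.
\item Use the commutator relations $(u_{k,h}(t),u_{h,p}(s))=u_{k,p}(st)$ as in Lemma \ref{uni-cobound-0} to show that $A_{k,h}$ is the commutator of $x_{\alpha^-_{(k,h)}}$ with a single block-anti-diagonal $\overline{m}_F$. Subtract that coboundary, so that $F=\gamma_{d_F}$ on $G_1$.
\item Replace $F$ by $F-\gamma_{d_F}$. Then $F$ vanishes on $G_1$, and it remains to show that such an $F$ lies in $B^1+\langle F_0\rangle$. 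For this, compute $F$ on the remaining root subgroups $u^\pm_{k,h}(t)$, and for $B_n$ also on $u^\pm_k(t)$. Additivity in $t$ makes $F(u_\alpha(t))$ additive in $t$. Conjugating by the torus and by $G_1$ (Weyl elements in $G_1$ permute the root subgroups of a given shape) reduces everything to one root, say $\alpha^+_{[1,2]}$, and a single additive function $\phi:\FF\to\overline{\cM}$.
\item Use the Steinberg relations between $u^+_{k,h}$ and $u_{h,p}$ to show that $\phi(ts)$ is determined by $\phi(t)$ and $s$. This should force $\phi$ to be $\FF$-linear, and hence to be realized by a coboundary of a diagonal element such as $\lambda I_\pm$ (which is the radical element $I$ in characteristic $2$), together with a multiple of $F_0$.
\end{enumerate}
The case of infinite derivation rank needs no subfield reduction here, since \cite{TZ} applies over $\FF$ directly.

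\textbf{Main obstacle.} The hardest step is step 5, the characteristic-$2$ analysis of the root subgroups outside $\Phi_0$. There the radical of $\tau_\cM$ and the short roots of $B_n$ (whose matrices carry the $t^2$ terms) produce extra additive pieces. In particular, a Frobenius-twisted additive map $t\mapsto\sqrt{\cdot}$-type behaviour must be excluded, and this is where $|\FF|>2$ (choosing $t$ with $t^2\ne t$ in the torus action) will be used. I would first try torus conjugation $h(c)u_\alpha(t)h(c)^{-1}=u_\alpha(c^{\langle\alpha\rangle}t)$ to get semilinearity, and then compare weights to force linearity.
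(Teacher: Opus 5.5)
Your trivial-intersection part is essentially the paper's Lemma \ref{Gamma2}. One correction: evaluating at $u_{k,h}(t)$ against $x_{\alpha^+_{(k,h)}}$ gives $d(t)=at+bt^2$, not $\lambda t$. Then $d(1)=0$ and a choice of $t\notin\FF_2$ finish the argument, and for $B_n$ one simply restricts to $(\cL_{D,0},G_D)$.

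The surjectivity part has genuine gaps. The first is that your dictionary does not exist for $B_n$. There $\mathrm{Rad}(\tau_\cM)=\langle e_{i,2n+1}\rangle_{i\le 2n}$ lies \emph{inside} $\cL_0$. So every functional $\tau_\cM(m,\cdot)$ kills a $2n$-dimensional subspace of $\cL_0$, and no quotient of $\cM_\cL$ is dual to $\cL_0$ via $\tau_\cM$ (Remark \ref{L0perp rem}). The non-degenerate replacement $\widehat{\tau}_\cM$ is not $G$-invariant: already for $g\in R^+$ one has $g^{-1}I_Dg=I_D+\sum_it_ie_{n+i,2n+1}$. The action on $\cM_\cL/\langle I_D\rangle$ that models $\cL_0^*$ is $m\ast g=g^{-1}mg+\Delta(m,g)$, with the correction term of Lemma \ref{L*0 lem}. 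Hence your twisted conjugation rule for $\delta_F$ fails as soon as $R^\pm$ enters. The last-column components $\ba_g,\bfb_g$ of (\ref{B eq1}) are then invisible to you, and that is where all the specifically $B_n$ work takes place (Lemmas \ref{B lem1}, \ref{B lem2}, Result \ref{R1}, Proposition \ref{final}).

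Two smaller points in the same vein:
\begin{itemize}
\item In characteristic $2$ the restriction of $F$ to $G_1$ takes values in $(\cL_0^0)^*\cong M_n(\FF)/\langle I_n\rangle$, not in $M_n(\FF)$. So you need the $\mathfrak{sl}_n$-dual statement \cite[Theorem C.1]{CGP2}, as in Lemma \ref{reduction1}, rather than \cite{TZ}.
\item $F_0$ is not the coboundary of a radical element: for $D_n$ the radical is $\langle I\rangle$, whose coboundary is $0$, while $F_0\notin B^1(\cL_0^*,G)$. What you can use is Lemma \ref{F0}: $F_0$ vanishes on $G^0$, and $F_0(g)$ is represented by $g$ for $g\in G^+\cup G^-$.
\end{itemize}

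The second gap is in steps 4--5. Weyl elements of $G_1$ only permute the indices $1,\dots,n$. So $\{\alpha^+_{[k,h]}\}$ and $\{\alpha^-_{[k,h]}\}$ are different orbits, as are, for $B_n$, the two families of short roots. You also cannot conjugate by elements outside $G_1$ while $F$ is unknown on them, so there is no reduction to a single root.

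What the involution, commutation and conjugation relations with $G^0$ really yield (see the proof of Proposition \ref{reduction3}) is the following. Once $F$ vanishes on $G^0$, $F(u^\pm_{k,h}(t))$ is represented by $t\lambda^\pm x_{\alpha^\pm_{[k,h]}}$, with two a priori independent scalars $\lambda^+,\lambda^-$. Linearity in $t$ is therefore not the issue. By Lemma \ref{F0}, such a cocycle lies in $\langle F_0\rangle$ only if $\lambda^+=\lambda^-$. Proving that equality is the decisive step, and your plan has no mechanism for it.

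The paper proceeds in two moves. It first shows that $F$ vanishes on the diagonal elements normalizing $G^0$ (Lemma \ref{D diag}). It then evaluates $F$ at $h(t)=w(t)w(1)$, where $w(t)=u^+_{1,2}(t)u^-_{1,2}(-t^{-1})u^+_{1,2}(t)$, which gives $(\lambda^++\lambda^-)(1+t)=0$. Choosing $t\notin\{0,1\}$ is exactly where $|\FF|>2$ is used, not in excluding Frobenius-twisted additive maps. For $B_n$ the paper moreover first reduces to $D_n$ on $(\cL_{D,0},G_D)$ via Proposition \ref{Main D} (with $D_3\cong A_3$ when $n=3$), and only then deals with the extra components.
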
 
  
Before going on we highlight some properties of the form $\tau_\cM$. Recall that $\tau_\cM$ is defined on $\cM_\cL\supsetneq\cL_0$. When $X_n = D_n$ the form $\tau_\cM$ is non-degenerate, hence it defines a scalar product on $\cM_\cL$. In contrast, when $X_n = B_n$ the form $\tau_\cM$ admits a $2n$-dimensional radical $\mathrm{Rad}(\tau_\cM)$ contained in $\cL_0$ and formed by the matrices as in (\ref{matrix B})  with $X = Y = Z = O_n$. In either case, let $\perp$ be the orthogonality relation with respect to $\tau_\cM$.  

\begin{lemma}\label{L0perp}
Let $X_n = D_n$. Then $\cL_0^\perp = \langle I\rangle$ and $\cL^*_0\cong \cM_\cL/\langle I\rangle$ (isomorphism of $G$-modules). Moreover, $\cL^*_0\cong \cL_0$ if and only if $n$ is odd.  
\end{lemma}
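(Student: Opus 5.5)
The plan is to work entirely inside $\cM_\cL$ with the form $\tau_\cM$, which in the $D_n$ case is non-degenerate. First I will establish that $\mathrm{Rad}(\tau_\cM) = \langle I\rangle$. Here $I$ lies in $\cM_\cL$, since $IJ = J$ is symmetric and hence anti-symmetric in characteristic $2$ (equivalently, $I$ has $X = I_n$ and $Y = Z = O_n$). The form $\tau_\cM$ is non-degenerate on $\cM_\cL/\langle I\rangle$: reading formula (\ref{demitrace0}) with $m$ ranging over the elementary matrices $e_{i,j}-e_{n+j,n+i}$ and $e_{i,n+j}-e_{j,n+i}$, etc., the entries of $B$, $C$ and the off-diagonal entries of $A$ are detected, while the diagonal of $A$ pairs with $\Tr(AX)$. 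Next I compute $\cL_0^\perp$. Since $\cL_0$ consists of the matrices as in (\ref{matrix D}) with $\Tr(X)=0$, it has codimension $1$ in $\cM_\cL$. Moreover $\tau_\cM(I,x) = \Tr(X) = 0$ for all $x\in\cL_0$, so $I\in\cL_0^\perp$. Conversely, if $m\perp\cL_0$, then pairing $m$ with the root elements $x_\alpha\in C_0$ forces $B = C = O_n$ and $A$ diagonal. Pairing with $h_1,\dots,h_{n-1}$ then forces all diagonal entries of $A$ to be equal, so $m\in\langle I\rangle$. Hence $\cL_0^\perp = \langle I\rangle$.

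The second claim follows from this. The map $m\mapsto \tau_\cM(m,\cdot)_{|\cL_0}$ sends $\cM_\cL$ to $\cL_0^*$ with kernel $\cL_0^\perp = \langle I\rangle$. Comparing dimensions, $\dim \cM_\cL - 1 = \dim\cL_0 = \dim \cL_0^*$, so the induced map on $\cM_\cL/\langle I\rangle$ is a linear isomorphism. It is $G$-equivariant because $G$ preserves $\tau_\cM$ (Lemma \ref{GT}) and fixes $I$, by the same computation as in Subsection \ref{gamma A}: $(f_m\cdot g)(x) = \tau_\cM(m,gxg^{-1}) = \tau_\cM(g^{-1}mg,x)$.

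For the last claim I split on the parity of $n$, using $\Tr(I_n)=n$ in characteristic $2$. If $n$ is odd, then $I\notin\cL_0$, so $\cM_\cL = \cL_0\oplus\langle I\rangle$ as $G$-modules, because $\cL_0$ is $G$-invariant and $I$ is fixed. Composing with the previous isomorphism gives $\cL_0^*\cong\cM_\cL/\langle I\rangle\cong\cL_0$. If $n$ is even, then $I\in\cL_0$ and $I$ spans $Z(\cL_0)$, which is a nonzero $G$-fixed vector of $\cL_0$. I will show that $\cL_0^*\cong\cM_\cL/\langle I\rangle$ has no nonzero $G$-fixed vector. Indeed, a fixed coset $m+\langle I\rangle$ satisfies $g^{-1}mg - m\in\langle I\rangle$ for all $g$. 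Applying this with $g = u_\alpha(t)$ and using $\alpha\mapsto$ commutator relations forces $[x_\alpha,m]\in\langle I\rangle$ for every root $\alpha$, hence $m$ is central in $\cM_\cL$ modulo $I$. This is impossible unless $m\in\langle I\rangle$. Alternatively, one can note that $\cL_0^*$ has a $G$-invariant hyperplane ($I^\perp$ image) while $\cL_0$, being spanned by the orbit of $x_{\alpha^+_{[1,2]}}$, has socle structure incompatible with that, since $\cL_0$ has a fixed vector but $\cL_0^*$ does not. Either way, $\cL_0^*\not\cong\cL_0$.

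The main obstacle is this last non-isomorphism in the even case. Specifically, it requires verifying that $\cM_\cL/\langle I\rangle$ has no nonzero $G$-fixed coset, which needs a careful check that $[x_\alpha, m]\in\langle I\rangle$ for all $\alpha$ forces $m\in\langle I\rangle$. I would do this by first using the root elements of $\Phi_0$ to make $m$ block-diagonal with scalar blocks, and then using the $x_{\alpha^\pm_{[k,h]}}$ to kill the remaining freedom.
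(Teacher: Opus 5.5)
Your proof is correct and follows the paper's route: $\cL_0^\perp=\langle I\rangle$, the $G$-isomorphism $\cM_\cL/\langle I\rangle\cong\cL_0^*$ induced by $\tau_\cM$ via Lemma~\ref{GT}, and the parity of $\Tr(I_n)$. For $n$ even, the paper simply asserts that $\cL_0$ has no $G$-invariant hyperplane. That is exactly the dual of your claim that $\cM_\cL/\langle I\rangle$ has no nonzero fixed vector, so your \emph{alternative} sentence is the same argument; being spanned by an orbit is not what justifies it.

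Two small repairs are needed. First, drop the claim $\mathrm{Rad}(\tau_\cM)=\langle I\rangle$. It is false, since $\tau_\cM(I,x)=\Tr(X)$ and $\tau_\cM$ is non-degenerate on $\cM_\cL$, and you never use it. Second, in the fixed-vector step, do not try to separate the $t$ and $t^2$ terms of $u_\alpha(t)^{-1}mu_\alpha(t)-m$, which fails over $\FF_2$. Instead use that $G$ is perfect and fixes $I$: then $g\mapsto\lambda_g$, defined by $g^{-1}mg-m=\lambda_gI$, is a homomorphism to $(\FF,+)$ and hence zero. This gives $[x_\alpha,m]=0$ directly.
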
 
\noindent
{\em Proof.} The equality $\cL_0^\perp = \langle I\rangle$ is straightforward. Hence $\cL^*_0\cong L'/\langle I\rangle$ as vector spaces, every $f\in \cL^*_0$ being represented by a matrix $m_f\in \cM_\cL$ (unique modulo $\lambda I \rangle$) such that $\tau_\cM(m_f,x) = f(x)$ for every $x\in \cL_0$. For $g\in G$ we fave $(f\cdot g)(x) = f(gxg^{-1})$ for evey $x\in \cL_0$, hence $\tau_\cM(m_{f\cdot g}, x) = \tau_\cM(m_f, gxg^{-1})$ for every $x\in \cL_0$. However $G$ preserves $\tau_\cM$ (Lemma \ref{GT}).  Hence $\tau_\cM(m_f, gxg^{-1}) = \tau_\cM(g^{-1}m_fg, x)$. Therefore $m_{f\cdot g} \equiv g^{-1}m_fg$ ($\mathrm{mod}~\langle I\rangle$). So, $\cL^*_0$ and $\cM_\cL/\langle I\rangle$ are isomorphic as right $G$-modules, the $G$-module structure of $\cM_\cL/\langle I\rangle$ being the one inherited from the given action of $G$ on $\cM_\cL$.  

The last claim of the lemma follows from the fact that $\cL_0$ contains no $G$-invariant hyperlane and $I\in\cL_0$ if and only if $n$ is even. \hfill $\Box$

\begin{remark}\label{L0perp rem}
\em
Let $X_n = B_n$. Then $\tau_\cM$ is degenerate and $\mathrm{Rad}(\tau_\cM)$ is the $2n$-dimensional subspace of $\cM_\cL$ formed by the matrices with all null columns but possibly the last one.  Moreover $\cL_0^\perp = \langle I_D\rangle+\mathrm{Rad}(\tau_\cM)$ with $I_D$ as in (\ref{ID}) (see Subsection \ref{L*0 sec}). If $n$ is odd then $\cL_0^\perp\cap \cL_0 = \mathrm{Rad}(\tau_\cM)$, otherwise $\cL_0 \supseteq \cL_0^\perp$. So, $\tau_\cM$ is of no help for a nice description of $\cL^*_0$. We shall come back to this issue in Subsection \ref{L*0 sec}.     
\end{remark}    

Turning back to the mapping $\gamma:\Der(\FF)\longrightarrow Z^1(\cL_0^*,G)$, now we can prove the following. 

\begin{lemma}\label{Gamma2}
The image $\gamma(\Der(\FF))$ of $\gamma$ meets $\langle F_0\rangle + B^1(\cL_0^*,G)$ trivially. 
\end{lemma}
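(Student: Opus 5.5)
Suppose $\gamma_d = \lambda F_0 + \beta$ for some $d\in\Der(\FF)$, $\lambda\in\FF$ and $\beta\in B^1(\cL_0^*,G)$. We have to show that $d=0$, and then $\lambda=0$ will follow because $F_0$ is not a coboundary. The plan is to evaluate this identity on root elements $g=u_\alpha(t)$ and test it against elements of the basis $C_0$, in the same spirit as Lemma \ref{Cohom odd Lem 1} and Lemma \ref{A gamma inj}.

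First I write $\beta$ explicitly. By Corollary \ref{nuovo even lem2}, $B^1(\cL^*,G)$ maps onto $B^1(\cL_0^*,G)+\langle F_0\rangle$, so every coboundary of $\cL_0^*$ has the form $g\mapsto f-f\cdot g$ with $f\in\cL_0^*$. For $u=u_\alpha(t)$ we have $\langle\gamma_d(u),x\rangle=\tau_\cM(u^{-1}d(u),x)$. From table (\ref{rootSbgr B}) (respectively (\ref{rootSbgr D})), in characteristic $2$ we get $u^{-1}d(u)=d(t)x_\alpha$ for long roots. For short roots in type $B_n$, the extra term $-t^2e_{n+k,k}$ has derivative $-2td(t)e_{n+k,k}=0$, so again $u^{-1}d(u)=d(t)x_\alpha$ up to a correction that is killed by $\tau_\cM$ (or lies in its radical). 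Hence
\[\langle\gamma_d(u_\alpha(t)),x\rangle = d(t)\,\tau_\cM(x_\alpha,x).\]
By Lemma \ref{lemma F0 B} / Lemma \ref{lemma F0 D}, $F_0(u_\alpha(t))$ vanishes for $\alpha\in\Phi_0$. For $\alpha\notin\Phi_0$ it is the functional that is $t$ on $x_{-\alpha}$ and $0$ on the rest of $C_0$. The coboundary term is $f(x)-f(u_\alpha(t)xu_\alpha(t)^{-1})$, which is a polynomial in $t$ with no constant term. Its linear part is $-t\,f([x_\alpha,x])$, and it has higher-order parts only for short roots.

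The key step is to choose $\alpha\in\Phi_0$, say $\alpha=\alpha_1$, and test against $x=x_{-\alpha}$, using $\tau_\cM(x_\alpha,x_{-\alpha})=1$. This gives
\[d(t) = -t\,f([x_\alpha,x_{-\alpha}]) + (\text{terms in }t^2,\dots)\quad\text{for all }t\in\FF .\]
For a long root in $\Phi_0$, conjugation by $u_\alpha(t)$ acts on $x_{-\alpha}$ with terms up to degree $2$, so the relation reads $d(t)=at+bt^2$ with constants $a,b\in\FF$ independent of $t$. A derivation satisfies $d(1)=0$, and the Leibniz identity $d(st)=sd(t)+td(s)$ then forces $a=b=0$, as in the proof of Result \ref{uniform der1}. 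More precisely, plugging in gives $ast+bs^2t^2 = 2ast+bst(s+t)$, so $ast+bst(s+t)=0$ in characteristic $2$, and taking $s,t$ suitable (using $|\FF|>2$) forces $a=b=0$. Hence $d=0$.

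Once $d=0$, the identity becomes $\lambda F_0=-\beta\in B^1(\cL_0^*,G)$. Since $F_0\notin B^1(\cL_0^*,G)$, this gives $\lambda=0$. It is cleaner to argue through $\cL$: the cocycle $F_0$ is the image of the coboundary $\delta\chi^*$, so $\lambda F_0+\beta=\rho(\delta f')$ for some $f'\in\cL^*$, and one then uses that $\gamma_d$ lies in $B^1(\cL^*,G)$ only for $d=0$. The main obstacle is bookkeeping: making sure that the $\tau_\cM$-pairing, and not a degenerate one, detects $x_\alpha$ against $x_{-\alpha}$ with $\alpha$ in the $A_{n-1}$-subsystem. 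It does, since $\tau_\cM$ restricted to the $X$-block is the trace form, so $\tau_\cM(e_{k,h},e_{h,k})=1$. We also need $f([x_\alpha,x_{-\alpha}])$ to be a constant independent of $t$, which is clear.
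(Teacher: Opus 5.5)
Your argument is correct and is essentially the paper's. The paper also evaluates the identity at $u_{k,h}(t)$, where $F_0$ vanishes, against $x_{-\alpha}=e_{h,k}-e_{n+k,n+h}$, and obtains $d(t)=at+bt^2$. It then kills $a,b$ using $d(1)=0$ and a non-trivial square $t\in\FF^2\setminus\FF_2$ rather than the Leibniz rule. The paper treats $B_n$ by restricting to $(\cL_{D,0},G_D)$, because it works with a $\tau_\cM$-representative of $f$, which is available only for $D_n$; your abstract treatment of $f\in\cL_0^*$ covers $B_n$ directly.

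A few small repairs are needed:
\begin{itemize}
\item Your Leibniz computation drops the term $bs^2t^2$. Correctly, one gets $a+b(st+s+t)=0$, hence $b(1+s)(1+t)=0$ once $a=b$ from $d(1)=0$. Taking $s,t\notin\{0,1\}$ still gives $a=b=0$.
\item The case $\FF=\FF_2$ should be stated separately; it is trivial since $\Der(\FF_2)=0$.
\item Your earlier remark that conjugation produces higher-order terms only for short roots is false. A long root element gives the term $t^2 f(x_\alpha x x_\alpha)$, which you do use correctly in the key step.
\item Nothing needs to be said about $\lambda$, since $d=0$ already gives $\gamma_d=0$. Your alternative ``through $\cL$'' remark is circular, because it amounts to the lemma itself.
\end{itemize}
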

\noindent
{\em Proof.} Let $d\in\Der(\FF)$ be such that  $\gamma_d \in B^1(\cL_0^*,G)+\langle F_0\rangle$. So, there exists a linear functional $f\in \cL_0^*$ an a scalar $k\in \FF$ such that $\gamma_d(g) = f - f\cdot g + kF_0(g)$ for every $g\in G$. Explicitly,
\begin{equation}\label{nuovo even 1}
\tau_\cM(g^{-1}d(g), x) ~=~ f(x)-f(gxg^{-1}) + kF_0(g)(x), ~~~ \forall g\in G, ~\forall x\in \cL_0.
\end{equation} 
Let $X_n = D_n$. By Lemma \ref{L0perp} there exists a matrix $m\in\cM_\cL$ such that $f(x) = \tau_\cM(m,x)$ for every $x\in \cL_0$. So, equation (\ref{nuovo even 1}) can be rewritten as follows    
\begin{equation}\label{Gamma2 eq1}
\tau_\cM(g^{-1}d(g), x) ~=~ \tau_\cM(m, x - gxg^{-1}) + kF_0(g)(x), ~~~ \forall g\in G, ~\forall x\in \cL_0.
\end{equation}
Let $g = u_{k,h}(t)$. Then $F_0(g) = 0$ by Lemma \ref{lemma F0 D} and (\ref{Gamma2 eq1}) boils down to the following:
\begin{equation}\label{Gamma2 eq1 bis}
\tau_\cM(g^{-1}d(g), x) ~=~ \tau_\cM(m, x - gxg^{-1}), ~~~ \forall g\in G, ~\forall x\in \cL_0.
\end{equation} 
With $g$ as above, $x = e_{h,k}-e_{n+k,n+h}$ and $g$ and $(m_{i,j})_{i, j = 1}^{2n}=m$, equation (\ref{Gamma2 eq1 bis}) yields 
\begin{equation}\label{Gamma2 eq1 ter}
d(t) ~ = ~ t(m_{h,h}-m_{k,k}) + t^2m_{h,k}, ~~~ \forall t\in \FF.
\end{equation}
With $t = 1$ from (\ref{Gamma2 eq1 ter}) we obtain that $m_{2,2}-m_{1,1}+ m_{2,1} = 0$. Thus (\ref{Gamma2 eq1 ter}) implies 
\begin{equation}\label{Gamma2 eq1 quater}
d(t) ~ = ~ (t-t^2)(m_{h,h}-m_{k,k}), ~~~ \forall t\in \FF.
\end{equation}
If $\FF = \FF_2$ then $\Der(\FF) = \{0\}$ and there is nothing to prove. Otherwise, $\FF^2 \neq \FF_2$. For $t\in \FF^2\setminus \FF_2$ we have $d(t) = 0$ and $t-t^2 \neq 0$. With $t$ chosen in this way (\ref{Gamma2 eq1 quater}) forces $m_{h,h} = m_{k,k}$. Equation (\ref{Gamma2 eq1 quater}) now implies that $d(t) = 0$ for every $t\in\FF$, namely $d = 0$. This completes the proof when $X_n = D_n$. 

Let now $X_n = B_n$. Then $\cL_0$ contains a subalgebra $\cL_{D,0}$ isomorphic to the quotient of the Lie algebra of type $D_n$ over its deep center (just put $x = y = 0$ in the matrices described in (\ref{matrix B}); see also Subsection \ref{DB sec} of these notes). In turn the group $G = \Omega(2n+1,\FF)$ contains a subgroup $G_D$ which stabilizes $\cL_{D,0}$ and acts as $\Omega^+(2n,\FF)$ on it. Let $\gamma_{D,d}$ be the restriction of $\gamma_d$ to $G_D$, with the linear functionals $\gamma_d(g)$ (for $g\in G_D$) replaced by their restrictions to $\cL_{D,0}$, and let $f_D$ be the restricton of $f$ to $\cL_{D,0}$. Thus, we are driven back to the $D_n$ case and we can conclude that $d = 0$ \hfill $\Box$ \\

In view of Lemma \ref{Gamma2}, in order to prove Theorem \ref{nuovo even theo} we only need to prove the following.

\begin{theorem}\label{even main1}
If $\FF \neq \FF_2$ then  $Z^1(\cL^*_0,G) = \gamma(\Der(\FF))+B^1(\cL^*_0,G) + \langle F_0\rangle$.
\end{theorem}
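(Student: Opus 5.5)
Given $F\in Z^1(\cL^*_0,G)$, I will subtract from it in successive steps a coboundary, an element $\gamma_d$ and a multiple of $F_0$, and reduce it to zero. The model is the proof of Claim \ref{main claim odd}. The difference is that in characteristic $2$ no Völklein-type result for subfields is available, so I will work directly with root elements and commutator relations. As in the odd case, I first set $\delta_F(g)$ to be a matrix representative of $g$ times $F(g)$. For $X_n=D_n$, Lemma \ref{L0perp} lets me represent $F(g)$ by a matrix in $\cM_\cL$ modulo $\langle I\rangle$. For $X_n=B_n$, I first restrict to the subgroup $G_D$ and the subalgebra $\cL_{D,0}$, as at the end of the proof of Lemma \ref{Gamma2}. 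After settling the $D_n$ case there, I handle the remaining short-root generators separately.

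\textbf{Step 1 (the Levi factor $G_1\cong\mathrm{SL}(n,\FF)$).} Restrict $F$ to $G_1$, generated by the $u_{k,h}(t)$ with $k,h\le n$. The module $\cL_0^*$, viewed as a $G_1$-module via $\tau_\cM$, contains $M_n(\FF)$ (the $X$-block) as a direct summand, together with the blocks of antisymmetric matrices. On the $M_n(\FF)$ summand I apply \cite[Theorem 1]{TZ}. This yields $d_F\in\Der(\FF)$ and a matrix $m_{1,F}$ such that, after subtracting a coboundary, the $X$-block of $F(g)$ equals that of $g^{-1}d_F(g)$ for all $g\in G_1$. The off-diagonal blocks form modules $\wedge^2$ of the natural module and its dual. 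For these I will compute $\delta_F(u_{k,h}(t))$ as in (\ref{ukh 0}), up to a block-anti-diagonal correction $tA_{k,h}$. Using the commutator relations $(u_{k,h}(t),u_{h,p}(s))=u_{k,p}(st)$, and imitating Lemma \ref{uni-cobound-0}, I will show that $tA_{k,h}=u_{k,h}(t)\ovm_F-\ovm_Fu_{k,h}(t)$ for a single block-anti-diagonal $\ovm_F$. Here $|\FF|>2$ is needed to pick $t$ with $t\ne t^2$, which separates the linear and quadratic parts in $t$, as in (\ref{Gamma2 eq1 quater}). After subtracting $\gamma_{d_F}$ and a further coboundary, $F$ vanishes on $G_1$.

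\textbf{Step 2 (root elements outside $\Phi_0$).} Consider $u^\pm_{k,h}(t)$, and additionally $u^\pm_k(t)$ in the $B_n$ case. Since $F|_{G_1}=0$, the cocycle identity applied to $g\,u\,g^{-1}$ with $g\in G_1$ shows that $F(u_\alpha(t))$ is determined by a $G_1$-equivariance condition. I will use the conjugation action of the diagonal torus inside $G_1$ to force $F(u_\alpha(t))$ to lie in specific weight spaces. For each fixed root $\alpha$, the map $t\mapsto F(u_\alpha(t))$ is additive and, by torus equivariance, "semi-linear" in $t$. This pins its values down to three possibilities: a multiple of $t\,x^*_{-\alpha}$, a derivation term, or a quadratic term of the kind appearing in $u^\pm_k$. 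Comparing with Lemma \ref{lemma F0 D} and Lemma \ref{lemma F0 B}(2), the component along $t\,x^*_{-\alpha}$ is exactly what $F_0$ produces, with a coefficient independent of $\alpha$ by the Weyl-group or commutator relations. I subtract the appropriate multiple $cF_0$; this does not disturb $G_1$, since $F_0$ vanishes on $\Phi_0$. The leftover derivation-type term is then shown, via commutators $(u_{k,h}(t),u^+_{h,p}(s))$, to coincide with $g^{-1}d_F(g)$, which was already removed. Hence it must be zero.

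\textbf{Step 3 and the main obstacle.} Once $F$ vanishes on all root elements, it vanishes on $G$, since these generate $G$. The hardest point is Step 2, specifically handling additive but non-linear functions of $t$ in characteristic $2$. The cure I expect is the commutator relations involving products $st$, which force additivity plus multiplicativity constraints of derivation type. I also expect genuine bookkeeping difficulty with the short-root subgroups $u^\pm_k(t)$ of $B_n$, where the $t^2$ terms interact with the degenerate radical of $\tau_\cM$ (Remark \ref{L0perp rem}). I would treat them last, via $(u^+_{k}(t),u^-_{h}(s))$ relations that land in long-root subgroups already controlled.
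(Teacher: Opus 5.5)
Your architecture matches the paper's. You kill $F$ on the Levi factor $G^0$ (your $G_1$) via an $A$-type $H^1$ result, match the long root groups outside $\Phi_0$ against $F_0$, and for $B_n$ first apply the $D_n$ case on $(\cL_{D,0},G_D)$.

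\textbf{Missing step in the $B_n$ case.} The $D_n$ case only gives $F(g)(x)=0$ for $g\in G_D$ and $x\in\cL_{D,0}$. As $G_D$-modules, $\cL_0=\cL_{D,0}\oplus\mathrm{Rad}(\tau_\cM)$. Here $\mathrm{Rad}(\tau_\cM)$ is spanned by the short root vectors $x_{\alpha^\pm_{\{k\}}}$ and is a copy of the natural $2n$-dimensional module. So $F|_{G_D}$ keeps a component in $Z^1(\mathrm{Rad}(\tau_\cM)^*,G_D)$ that your plan never touches.
\begin{itemize}
\item This component must be shown to be the restriction of a coboundary of $G$ before any $u^\pm_k(t)$ is considered. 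That is the content of Lemmas \ref{B lem1} and \ref{B lem2}, one of the places where the paper uses $\FF\neq\FF_2$ (for $n=3$).
\item Your last step relies on long root subgroups being ``already controlled'', which is false at that stage.
\item The relations you propose for the short roots carry no information. For $k\neq h$, $(u^+_k(t),u^-_h(s))=1$ in characteristic $2$, because the structure constant is $\pm 2$. What determines $F(u^+_k(t))$ in the paper is the long--short relation $u_{k,h}(s)u^+_k(t)u_{k,h}(s)u^+_k(t)=u^+_h(st)u^+_{k,h}(st^2)$ for $k<h$.
\item That relation is evaluated with the twisted action ${\ast}$ of Lemma \ref{L*0 lem}. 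This is unavoidable, since functionals nonzero on $\mathrm{Rad}(\tau_\cM)\subseteq\cL_0$ are not of the form $\tau_\cM(m,\cdot)$.
\item $B_3$ needs the $D_3$ case, which you do not address. The disjoint-pair arguments require $n>3$, and the paper treats $D_3$ through $\mathfrak{o}(6,\FF)\cong\mathfrak{sl}(4,\FF)$ and \cite{CGP2}.
\end{itemize}

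\textbf{Step 1 uses the wrong module.} As used in Section \ref{Cohomology Th odd}, \cite[Theorem 1]{TZ} is about the module $M_n(\FF)$. In characteristic $2$ the relevant $G_1$-summand of $\cL_0^*\cong\cM_\cL/\langle I\rangle$ is $M_n(\FF)/\langle I_n\rangle\cong\mathfrak{sl}(n,\FF)^*$. For even $n$ this is not a direct summand of $M_n(\FF)$, so TZ does not apply directly. The right input is \cite[Theorem C.1]{CGP2}, as in Lemma \ref{reduction1}.

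\textbf{Step 2's torus argument does not do what you need.}
\begin{itemize}
\item Additivity of $t\mapsto F(u_\alpha(t))$ holds only once $F(u_\alpha(t))$ is known to be $U_\alpha$-fixed.
\item The torus of $G_1$ does not isolate the $F_0$-component. On $\{\mathrm{diag}(a_1,\dots,a_n):\prod a_i=1\}$, the functionals dual to $x_{\alpha^-_{[1,2]}}$ and $x_{\alpha^+_{[3,4]}}$ have the same character when $n=4$.
\item In $B_3$ the same happens for the functionals dual to $x_{\alpha^-_{[1,2]}}$ and $x_{\alpha^+_{\{3\}}}$. This is exactly the component of $F(u^+_{1,2}(t))$ that the paper removes with $h^\pm_{1,2}(t)$.
\item Over $\FF_4$, a component $t^2x^*_{\alpha}$ of $F(u_\alpha(t))$ is compatible with torus equivariance and additivity, since $c^{-1}=c^2$ there.
\item The full diagonal torus becomes usable only after you prove that $F$ vanishes on $N(G^0)$, as in Lemmas \ref{D diag} and \ref{B diag}.
\end{itemize}

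\textbf{What actually does the work in the paper} is specific to characteristic $2$:
\begin{itemize}
\item Root elements are involutions, so $F(g)\cdot g=F(g)$.
\item Combined with commuting pairs, for instance $u^+_{k,h}(t)$ and $u^-_{p,q}(s)$ with $\{p,q\}\cap\{k,h\}=\emptyset$, this forces the canonical forms (\ref{D II 4})--(\ref{D II 5}).
\item Linearity in $t$ then follows from $s\,c_{k,h}(t)=c_{p,h}(st)$, obtained by conjugating with $u_{k,p}(s)\in G^0$. This relation, not the claim that derivation-type terms ``coincide with $g^{-1}d_F(g)$'', is what excludes them.
\end{itemize}

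Your claim that a single coefficient of $F_0$ serves both $G^+$ and $G^-$ is correct, but it needs a relation mixing the two. A commutator of $U^+_{k,h}$ with $U^-_{p,q}$, where $|\{k,h\}\cap\{p,q\}|=1$, lies in $G^0$ and suffices. The paper uses $F(w(t)w(1))=0$ instead.
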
  

\subsection{Preliminaries}

In this subsection we shall fix a couple of issues which will be relevant for our proof of Theorem \ref{even main1}. 

\subsubsection{The inclusions $\mathfrak{o}(2n,\FF) \subseteq \mathfrak{o}(2n+1,\FF)$ and $\Omega^+(2n,\FF) \subseteq \Omega(2n+1,\FF)$}\label{DB sec} 

Throughout this subsection it is convenient to have two distinct symbols for the Lie algebra of type $D_n$ and that of type $B_n$ (over the same field $\FF$). We call them $\cL_D$ and $\cL_B$ respectively. Accordingly, we put $G_D = \Omega^+(2n,\FF)$ ad $G_B = \Omega(2n+1,\FF)$. With this notation, $\cL_D$ is a subalgebra of $\cL_B$ and $G_D$ is the commutator subgroup of the stabilizer of $\cL_D$ in $G_B$. 

Explicily, let $\Phi$ and $\Phi'$ be the root systems of type $B_n$ ad $D_n$ respectively. Then $\Phi'\subset\Phi$ and, given a basis $\Pi = \{\alpha_1,..., \alpha_{n-1}, \alpha_n\}$ of $\Pi$, put $\alpha'_n := \alpha_{n-1}+2\alpha_n$. The set $\Pi' = \{\alpha_1,..., \alpha_{n-1}, \alpha'_n\}$ is a basis of $\Phi'$ and $\cL_D$ is the subalgebra of $\cL_B$ generated by $\{x_\alpha\}_{\alpha\in \Pi'\cup -\Pi'}$. As a subspace, $\cL_D$ is the subspace of $\cL_B$ generated by
\[(\bigcup_{1\leq k < h \leq n}\{x_{\alpha^+_{(k,h)}}, x_{\alpha^-_{(k,h)}}, x_{\alpha^+_{[k,h]}}, x_{\alpha^-_{[k,h]}}\}) \cup\{h_1,..., h_{n-1}, h'_n\}\]
where $h'_n := h_{\alpha'_n} = [\alpha'_n,-\alpha'_n] = [x_{\alpha_{n-1}+2\alpha_n}, x_{-\alpha_{n-1}-2\alpha_n}]$. So,
\begin{equation}\label{DB K}
h_{n-1} - h'_n ~ = ~ h_n.
\end{equation}
The subgroup $G_D$ of $G_B$ is generated by $(\cup_{k\neq h}U_{k,h})\cup(\cup_{k<h}(U^+_{k,h}\cup U^-_{k,h}))$ where
\begin{equation}\label{Usbgr}
U_{k,h} := \{u_{k,h}(t)\}_{t\in \FF}, ~ U^+_{k,h}\ :=\{ u^+_{k,h}(t)\}_{t\in \FF}, ~  U^-_{k,h} = \{u^-_{k,h}(t)\}_{t\in \FF}.
\end{equation} 
The above holds independently of $\ch(\FF)$. If $\ch(\FF) = 2$, as we are assuming this Section, then $h_n$ generates the center $Z(\cL_B)$ of $\cL_B$ and $h_{n-1} - h'_n$ generates the deep center $K(\cL_D) \subseteq Z(\cL_D)$ of $\cL_D$. However $h_n = h_{n-1}-h'_n$ by (\ref{DB K}). Therefore:

\begin{lemma}\label{DB K lem}
When $\ch(\FF) = 2$ the deep center of $\cL_D$ is equal to the center of $\cL_B$. 
\end{lemma}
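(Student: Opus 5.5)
The statement to prove is Lemma \ref{DB K lem}: in characteristic $2$ the deep center $K(\cL_D)$ of $\cL_D$ coincides with the center $Z(\cL_B)$ of $\cL_B$. Both are $1$-dimensional. By Subsection \ref{L0 B}, $Z(\cL_B)=\langle h_n\rangle$ for the Chevalley basis of $\cL_B$. By Subsection \ref{L0 D}, $K(\cL_D)$ is spanned by $k_n=h'_{n-1}-h'_n$, where $h'_{n-1},h'_n$ are the last two Cartan elements of a Chevalley basis of $\cL_D$. So it suffices to show that, for the embedding $\cL_D\subseteq\cL_B$ just described, the generator of $K(\cL_D)$ is $h_n$ up to a nonzero scalar.

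The first step is to identify the Chevalley basis of $\cL_D$ inside $\cL_B$. Take $\Pi'=\{\alpha_1,\dots,\alpha_{n-1},\alpha'_n\}$ with $\alpha'_n=\alpha_{n-1}+2\alpha_n$. The elements $x_\alpha$ for $\alpha\in\Phi'$ are the $x_{\alpha^\pm_{(k,h)}}$ and $x_{\alpha^\pm_{[k,h]}}$ of table (\ref{rootElm B}). Their Cartan elements are $h_\alpha=[x_\alpha,x_{-\alpha}]$, so the Cartan part of the Chevalley basis of $\cL_D$ is $h_1,\dots,h_{n-1},h'_n$, where $h'_n=[x_{\alpha'_n},x_{-\alpha'_n}]$. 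Here one checks that $\alpha'_n$ is the long root $\alpha^+_{[n-1,n]}$ of (\ref{Root B}), which is $\alpha_{n-1}+2\alpha_n$. One also checks that these $x_\alpha$ satisfy the $D_n$ structure constants, which holds because they are the same matrices as in (\ref{rootElm D}), with row and column $2n+1$ appended.

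The second step is to verify (\ref{DB K}), namely $h_{n-1}-h'_n=h_n$. This is a direct matrix computation. From (\ref{rootElm B}) we have $x_{\alpha^+_{[n-1,n]}}=e_{2n,n-1}-e_{2n-1,n}$ and its transpose $x_{\alpha^-_{[n-1,n]}}$. Their bracket is
\[h'_n=-e_{n-1,n-1}-e_{n,n}+e_{2n-1,2n-1}+e_{2n,2n}.\]
Subtracting this from $h_{n-1}=-e_{n-1,n-1}+e_{n,n}+e_{2n-1,2n-1}-e_{2n,2n}$ gives $2e_{n,n}-2e_{2n,2n}$, which is $h_n$. Since the identity is over $\ZZ$, it holds in the Chevalley $\ZZ$-form and therefore after reduction to any field. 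This matters in characteristic $2$, where $h_n$ must be read as a basis element of the abstract Chevalley lattice rather than as the (now null) matrix.

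The last step is the main obstacle. In the $\cL_D$ notation the deep center generator is $k_n=h_{n-1}-h'_n$, and this equals $h_n$ by (\ref{DB K}). So $K(\cL_D)=\langle h_n\rangle=Z(\cL_B)$. The delicate point is that in characteristic $2$ neither $\cL_B$ nor $\cL_D$ is a matrix algebra via $\cM_\cL$. The inclusion $\cL_D\subseteq\cL_B$ therefore has to be understood through the Chevalley $\ZZ$-forms, with the sublattice spanned by $x_\alpha$ ($\alpha\in\Phi'$) and the $h'_i$ tensored with $\FF$. This is legitimate because the $h'_i$ are $\ZZ$-combinations of the $h_i$ ($h'_i=h_i$ for $i<n$, $h'_n=h_{n-1}-h_n$), and the resulting $\ZZ$-span is a direct summand of the lattice. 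Hence $\cL_D\otimes\FF$ injects into $\cL_B\otimes\FF$ and the identity transfers.
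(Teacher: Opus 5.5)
Your proposal is correct and follows the paper's argument. The paper simply observes that $h_n$ spans $Z(\cL_B)$, that $h_{n-1}-h'_n$ spans the deep center of $\cL_D$, and that these coincide by the identity (\ref{DB K}), which it notes holds independently of the characteristic. Your explicit matrix verification of (\ref{DB K}) and your remark on transferring it through the Chevalley $\ZZ$-forms (the $D_n$ sublattice is a direct summand) only make explicit what the paper leaves implicit.
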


Keeping the assumption that $\ch(\FF) = 2$, we put $\chi := h_n$ anf $K := \langle \chi\rangle =  Z(\cL_B) = K(\cL_D)$.  We call $\chi$ the {\em canonical generator} of $K$. 

Put $h_{\alpha^+_{\{k\}}} = [x_{\alpha^+_{\{k\}}}, x_{\alpha^-_{\{k\}}}]$, $h_{\alpha^+_{(k,h)}} = [x_{\alpha^+_{(k,h)}}, x_{\alpha^-_{(k,h)}}]$ and    
$h_{\alpha^+_{[k,h]}} = [x_{\alpha^+_{[k,h]}}, x_{\alpha^-_{[k,h]}}]$. In particular, $h_{\alpha^+_{(k,k+1)}} = h_k$ for every $k < n$,  $h_{\alpha^+_{[n-1, n]}} = h'_n$ and $h_{\alpha^+_{\{n\}}} = h_n$. Note that for $1\leq k < h \leq n$ we have
\[\begin{array}{rcccccccl}
& h_{\alpha^+_{\{k\}}} & = &  -h_{\alpha^+_{[k,h]}} -  h_{\alpha^+_{(k,h)}} & = & h_{\alpha^+_{\{n\}}} + 2h_{\alpha^+_{(k,n)}} & = \\
 = & h_{\alpha^+_{\{h\}}} & = & h_{\alpha^+_{(k,h)}} -  h_{\alpha^+_{[k,h]}} & = & h_{\alpha^+_{\{k\}}} + 2h_{\alpha^+_{(k,h)}}.& 
\end{array}\]
Therefore, since by assumption $\ch(\FF) = 2$, we obtain
\begin{equation}\label{DB K bis}
\chi ~ = ~ \left\{\begin{array}{cll}
h_{\alpha^+_{\{k\}}} &\mbox{for } k = 1, 2,..., n, & (\mbox{in } \cL_B); \\
h_{\alpha^+_{(k,h)}}+  h_{\alpha^+_{[k,h]}} & \mbox{for } 1\leq k < h\leq n, & (\mbox{both in } \cL_B \mbox{ and in } \cL_D).
\end{array}\right.
\end{equation}
The subspace $\cL_{B,0}$ of $\cL_B$ generated by $\{x_\alpha\}_{\alpha\in\Phi}\cup\{h_1,..., h_{n-1}\}$ (compare (\ref{rootElm B})) will be called the {\em canonical complement} of $K$ in $\cL_B$. This complement  contains the subspace $\cL_{D,0}$ of $\cL_D$ generated by $\{x_\alpha\}_{\alpha\in\Phi'}\cup\{h_1,..., h_{n-1}\}$, which is a complement of $K$ in $\cL_D$. We call it the {\em canonical complement} of $K$ in $\cL_D$. 

Let $F_0$ and $F'_0$ be the seed cocycles of $G_B$ over the dual $\cL^*_{B,0}$ of $\cL_{B,0}$ and respectively of $G_D$ over the dual $\cL^*_{D,0}$ of $\cL_{D,0}$. Lemmas \ref{lemma F0 B} and \ref{lemma F0 D} imply the following. 

\begin{lemma}\label{DB K cor}
The seed cocycle $F_0$, regarded as a mapping from $G_B\times \cL_{B,0}$ to $\FF$, induces the seed cocycle $F'_0$ on the subset $G_D\times \cL_{D,0}$ of $G_B\times \cL_{B,0}$. 
\end{lemma}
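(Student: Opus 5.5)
The plan is to compare the two seed cocycles on the generators of $G_D$ by means of Lemmas \ref{lemma F0 B} and \ref{lemma F0 D}, and then extend to all of $G_D$ by the cocycle condition.

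First I will check that the comparison makes sense. By (\ref{DB K}) and Lemma \ref{DB K lem}, $\cL_D\subseteq\cL_B$ and $K(\cL_D)=Z(\cL_B)=\langle\chi\rangle$. The canonical complement $\cL_{D,0}$ is contained in $\cL_{B,0}$, and $G_D\leq G_B$ stabilizes $\cL_D$. For $g\in G_D$ and $x\in\cL_{D,0}$, the element $x\cdot g$ computed in $\cL_B$ is the same as that computed in $\cL_D$. Moreover, $x\cdot g$ lies in $\cL_D=\langle\chi\rangle\oplus\cL_{D,0}$, and $\cL_{D,0}\subseteq \cL_{B,0}$. So the projection $\pi_K$ onto $\langle\chi\rangle$ along $\cL_{B,0}$ agrees on $\cL_D$ with the projection along $\cL_{D,0}$. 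By (\ref{F0 B 1}) and its $D_n$-analogue, both seed cocycles are read off from this $\chi$-component, so
\[F_0(g^{-1})(x) = F'_0(g^{-1})(x), \qquad \forall g\in G_D,~\forall x\in\cL_{D,0}.\]
This argument is already essentially a proof. The only step needing care is the consistency of the canonical generators: both sides must be normalized by the same $\chi$. This is exactly what (\ref{DB K}) gives, since $h_n=h_{n-1}-h'_n$.

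As an independent check, and as the route suggested by the statement, I will also verify the claim on generators. The group $G_D$ is generated by the root subgroups $U_{k,h}$ and $U^\pm_{k,h}$ of (\ref{Usbgr}). For these, the matrices $u_\alpha(t)$ are literally the same in $\Omega(2n+1,\FF)$ and in $\Omega^+(2n,\FF)$, extended by $1$ in position $(2n+1,2n+1)$. The roots $\alpha^\pm_{(k,h)}$ lie in $\Phi_0$ for both $B_n$ and $D_n$, so both cocycles vanish on them. The roots $\alpha^\pm_{[k,h]}$ lie outside $\Phi_0$ in both cases. For these, Lemmas \ref{lemma F0 B} and \ref{lemma F0 D} give the same values on $C_0\cap\cL_{D,0}$: the value $t$ at $x_{-\alpha}$ and $0$ elsewhere. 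The basis vectors $x_{\alpha^\pm_{\{k\}}}$ of $C_0$ for $B_n$ do not lie in $\cL_{D,0}$, so they play no role.

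Since both $F_0$ and $F'_0$ satisfy the cocycle identity, and $G_D$ preserves $\cL_{D,0}$ modulo $\langle\chi\rangle$ (on which both functionals vanish by Lemma \ref{nuovo even lem1}), agreement on generators propagates to all of $G_D$. The only delicate point is this last propagation: $F_0(g_1)(x\cdot g_2^{-1})$ must be evaluated on $g_2xg_2^{-1}\in\cL_{D,0}$ rather than on $x\cdot g_2^{-1}$ itself. This is handled by the $\chi$-vanishing, just as in (\ref{nuovo even eq1}).
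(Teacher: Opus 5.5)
Your proof is correct, but your main argument is not the one the paper uses. The paper's proof is only the remark that Lemmas \ref{lemma F0 B} and \ref{lemma F0 D} imply the statement. That is your second route: the root elements generating $G_D$ are the same matrices in both settings, $C_0\cap\cL_{D,0}$ is the $D_n$-version of $C_0$, the two lemmas give identical values on it, and the cocycle identity extends the agreement to all of $G_D$.

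Your first argument goes straight to the defining identity (\ref{F0 B 1}). Because $\cL_{D,0}\subseteq\cL_{B,0}$ and, by (\ref{DB K}), the deep center of $\cL_D$ is spanned by the same $\chi=h_n$, the two projections onto $\langle\chi\rangle$ coincide on the $G_D$-submodule $\cL_D$. The equality $F_0=F'_0$ on $G_D\times\cL_{D,0}$ then follows at once, with no generators and no propagation. This buys independence from the explicit root-element computations behind Lemmas \ref{lemma F0 B} and \ref{lemma F0 D}, which the paper leaves to the reader, and shows the statement is purely structural. The paper's route, in exchange, needs nothing beyond the tabulated values.

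One small correction to your propagation step. Lemma \ref{nuovo even lem1} is about cocycles over $\cL^*$, while $F_0$ and $F'_0$ are cocycles over $\cL_{B,0}^*$ and $\cL_{D,0}^*$, so no vanishing on $\chi$ is involved. What the propagation actually needs is that the action of $G_D$ on $\cL_{D,0}$ is the restriction of the action of $G_B$ on $\cL_{B,0}$. This is the same compatibility of projections along $\chi$ used in your first argument. In the paper's matrix model it is simply conjugation by the matrices of $G_D$, bordered by a $1$ in position $(2n+1,2n+1)$.
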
 

\subsubsection{A description of $\cL^*_0$ as a quotient of $\cM_\cL$ when $X_n = B_n$ and $\ch(\FF) = 2$}\label{L*0 sec} 

Let $\ch(\FF) = 2$. As stated in Lemma \ref{L0perp}, when $X_n  = D_n$ an isomorphism between $\cL^*_0$ and $\cM_\cL/\langle I\rangle$ can be established by means of the form $\tau_\cM$. In contrast, as noticed in Remark \ref{L0perp rem}, when $X_n = B_n$ the form $\tau_\cM$ is not the right tool to get a nice description of $\cL^*_0$. However the following modification of $\tau_\cM$ does the job. For every choice of two matrices 
\[\begin{array}{ccl}
m ~ = ~ \left(\begin{array}{ccl}
A & B & a^t\\
C & A^t & b^t\\
{\bf 0} & {\bf 0} & 0 
\end{array}\right), \hspace{5 mm} 
x ~ = ~ \left(\begin{array}{ccl}
X & Y & x^t \\
Z & X^t  & y^t \\
{\bf 0}  & {\bf 0} & 0
\end{array}\right).
\end{array}\]  
of $\cM_\cL$ we put 
\begin{equation}\label{hat tau}
\widehat{\tau}_\cM(m,x) ~ := ~\tau_\cM(m,x) + ay^t + bx^t.
\end{equation} 
The form $\widehat{\tau}_\cM:\cM_\cL\times\cM_\cL\rightarrow\FF$ defined in this way is bilinear, symmetric and non-degenerate. Hence it yields a scalar product on $\cM_\cL$. If $\perp$ is the orthogonality relation with respect to $\widehat{\tau}_\cM$ then $\cL_0^\perp = \langle I_D \rangle$, where
\begin{equation}\label{ID}
I_D ~ := ~ \left(\begin{array}{ccl}
I_n & O_n & {\bf 0}^t \\
O_n & I_n  & {\bf 0}^t \\ 
{\bf 0} & {\bf 0} & 0
\end{array}\right).
\end{equation} 
Accordingly, $\cL^*_0 \cong \cM_\cL/\langle I_D \rangle$. However $G$, in its action on $\cM_\cL'$, does not centralize $I_D$. Consequenly, the action of $G$ on  $\cM_\cL/\langle I_D \rangle$ that agrees via $\widehat{\tau}_\cM$ with the natural action of $G$ on $\cL^*_0$ does not arise from the action of $G$ on $\cM_\cL$. In other words, $G$ does not preserve $\widehat{\tau}_\cM$. Explicitly, the subgroup $G_D \cong \Omega^+(2n,\FF)$ of $G$ ($= G_B$ in the notation of Section \ref{DB sec}) does preserve $\widehat{\tau}_\cM$ but the elements of $G\setminus G_D$ do not. 

Let ${\ast}$ be the action of $G$ on $\cM_\cL$ that agrees via $\widehat{\tau}_\cM$ with the natural action of $G$ on ${\cM_\cL}^*$. As $G$ stabilizes $\cL_0$ and $\langle I_D\rangle = \cL_0^\perp$ (with respect to $\widehat{\tau}_\cM$), the group $G$, acting on $\cM_\cL$ according to ${\ast}$,  fixes $I_D$. Hence ${\ast}$ induces an action of $G$ on $\cM_\cL/\langle I_D\rangle$. This is indeed the action we need. We shall now describe it to the extent we can.  For every $m\in \cM_\cL$ and $g\in G$ we put 
\begin{equation}\label{Delta}
\Delta(m,g) ~ := ~   m{\ast}g -  g^{-1}mg.
\end{equation} 
So, describing ${\ast}$ is the same as describing the mapping $\Delta:\cM_\cL\times G \rightarrow \cM_\cL$. It is straightforward to check that for every $m\in \cM_\cL$ and every choice of $g_1, g_2\in G$ we have: 
\begin{equation}\label{L*01}
\Delta(m,g_1g_2) ~ = ~  g_2^{-1}\Delta(m,g_1)g_2 + \Delta(g_1^{-1}mg_1, g_2) + \Delta(\Delta(m, g_1), g_2). 
\end{equation}  
Put $U_{k}^+ := \{u^+_k(t)\}_{t\in \FF}$ and $U_{k}^- := \{u^-_{k}(t)\}_{t\in \FF}$ and let $R^+$ and $R^-$ be the subgroups of $G$ generated by $\cup_{k=1}^n U^+_k$ and respectively $\cup_{k=1}^n U^-_k$. We have $R^+\cap R^- = \{1\}$, both $R^+$ and $R^-$ meet $G_D$ trivially and both of them are isomorphic to the additive group of $V(n,\FF)$. The group $G$ is generated by $G_D\cup R^+\cup R^-$. So, if we know what $\Delta(m,g)$ is for $g\in G_D\cup R^+\cup R^-$ then, with the help of (\ref{L*01}) we virtually know $\Delta(m,g)$ for every $g\in G$. 

\begin{lemma}\label{L*0 lem}
For every $m\in\cM_\cL$ we have $\Delta(m,g) = O$ (null matrix) for every $g\in G_D$. For every $g\in R^+\cup R^-$ and every $m\in \cM_\cL$ we have
\begin{equation}\label{L*02}
\Delta(m,g) ~ = ~  (g^{-1}m\delta(g)^t+(g^{-1}m\delta(g)^t)^t)J + g^{-1}m\delta(g),
\end{equation}  
wih $J$ as in {\rm (\ref{J B})} and $\delta(g)$ is the matrix all columns of which but the last one are null, the last column of $\delta(g)$ being equal to the last column of $g-I$. 
\end{lemma}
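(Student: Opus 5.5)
The starting point is to make the action $\ast$ explicit. By definition, $m\ast g$ is the unique matrix of $\cM_\cL$ with
\[\widehat{\tau}_\cM(m\ast g, x) = \widehat{\tau}_\cM(m, gxg^{-1})\quad\mbox{for all } x\in\cM_\cL,\]
because the dual action of $g$ on $\cM_\cL^*$ sends a functional $f$ to $x\mapsto f(x\cdot g^{-1})$. Hence $\Delta(m,g)$ is characterized by
\begin{equation*}
\widehat{\tau}_\cM(\Delta(m,g),x) ~=~ \widehat{\tau}_\cM(m,gxg^{-1}) - \widehat{\tau}_\cM(g^{-1}mg,x), \qquad \forall x\in\cM_\cL .
\end{equation*}
Since $\widehat{\tau}_\cM = \tau_\cM + \beta$, where $\beta(m,x) := ay^t+bx^t$ reads off only the last columns, Lemma \ref{GT} ($G$ preserves $\tau_\cM$) cancels the $\tau_\cM$-parts. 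We are left with
\begin{equation*}
\widehat{\tau}_\cM(\Delta(m,g),x) ~=~ \beta(m,gxg^{-1}) - \beta(g^{-1}mg,x).
\end{equation*}
So the whole proof reduces to computing how conjugation by $g$ changes the last column of a matrix in $\cM_\cL$. Recall that in characteristic $2$ the last row of every such matrix is null.

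The first claim is the case $g\in G_D$. Every element of $G_D$ is block diagonal with last row and column equal to those of $I$; note $g=\mathrm{diag}(g_D,1)$ with $g_DJ_Dg_D^t=J_D$, where $J_D$ is the $2n\times 2n$ upper-left block of $J$. Then the last column of $gxg^{-1}$ is $g_D$ applied to the last column of $x$. Moreover $\beta(m,x)$ equals the $J_D$-pairing of the last columns of $m$ and $x$, and $g_D$ preserves this pairing. Therefore the right-hand side above vanishes, and non-degeneracy of $\widehat{\tau}_\cM$ gives $\Delta(m,g)=O$. It suffices to check this on the generators (\ref{Usbgr}) of $G_D$, and by (\ref{L*01}) the property $\Delta(\cdot,g)=O$ is closed under products.

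For $g\in R^+\cup R^-$ I would first verify that $R^\pm$ is abelian and that its elements have a uniform shape, $g=I+\delta(g)+N(g)$. Here $\delta(g)$ carries the last column of $g-I$, and $N(g)$ is supported in the $2n\times 2n$ block and is quadratic in the parameters (from table (\ref{rootSbgr B}), computed modulo $2$). I would also use $g^{-1}=g$ up to the obvious sign, because everything is taken modulo $2$. Then I compute the last column of $gxg^{-1}$ in terms of that of $x$ and the last column of $\delta(g)$, and likewise for $g^{-1}mg$. Substituting into $\beta$ gives an expression linear in $x$. I then identify it with $\widehat{\tau}_\cM(D,x)$ for
\[D := (g^{-1}m\delta(g)^t+(g^{-1}m\delta(g)^t)^t)J + g^{-1}m\delta(g).\]
To do this I split $x$ into its block part $(X,Y,Z)$ and its last-column part $(x,y)$. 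The term $g^{-1}m\delta(g)$ lives in the last column and accounts for the $\beta$-pairing against $(x,y)$. The symmetrized term times $J$ lies in the $D_n$-block and accounts, via $\tau_\cM$, for the pairing against $(X,Y,Z)$. One must check along the way that $D\in\cM_\cL$; symmetrization times $J$ produces exactly the required antisymmetry of $mJ$.

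The main obstacle is this last identification. Any slip in the bookkeeping of $\tau_\cM$ on off-diagonal antisymmetric blocks (the sums over $i<j$ in (\ref{demitrace0})) will spoil the match with the symmetrized term. I would therefore first do the computation for a single root element $u^+_k(t)$ and test it against basis matrices $x=e_{i,j}+\cdots$ of $\cM_\cL$. Then I would extend to all of $R^+$ by a direct computation with a general $g\in R^+$, since $\Delta(\cdot,g)$ is not additive in $g$ and (\ref{L*01}) alone does not give the formula. The case $R^-$ follows by the symmetry exchanging the first and second $n$ coordinates.
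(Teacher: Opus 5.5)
Your proposal is correct and takes essentially the paper's route: the paper also verifies directly, with a general $g\in R^+$, that the candidate $\Delta(m,g)$ of (\ref{L*02}) satisfies the duality identity defining $\ast$, and it gets the $G_D$ case from the fact that $G_D$ preserves $\widehat{\tau}_\cM$. Your splitting $\widehat{\tau}_\cM=\tau_\cM+\beta$ together with Lemma \ref{GT} is a worthwhile streamlining. It proves the $G_D$-invariance of $\widehat{\tau}_\cM$, which the paper only asserts, and it reduces the paper's ``laborious'' check to a short last-column computation that does match the formula exactly as you predict (your identity with $gxg^{-1}$ is the correct general one; the paper's version with $g^{-1}xg$ coincides with it because every element of $R^\pm$ is an involution).
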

\noindent
{\em Proof.} If $g\in G_D$ then $\Delta(m,g)= O$. Indeed $G_D$, acting on $\cM_\cL$ in the natural way, preserves the form $\widehat{\tau}_\cM$. Hence $m{\ast}g = g^{-1}mg$ for every $g\in G_D$. 

Turning to $R^+\cup R^-$, let $g\in R^+$, to fix ideas. Then $g$ is represented by the following matrix where $(t_1, t_n,..., t_n) \in V(n,\FF)$ and $\mathrm{diag}(t_1^2,...,t_n^2)$ is the diagonal matrix with diagonal entries equal to $t^2_1, t^2_2,...,t^2_ n$:
\[\left(\begin{array}{ccc}
I_n & O_n & {\bf 0}^t  \\
\mathrm{diag}(t_1^2,...,t_n^2)& I_n & (t_1,t_2,..., t_n)^t \\
{\bf 0} & {\bf 0} & 1 
\end{array}\right).\]
With $g$ as above we can compute $\Delta(m,g)$ by means of (\ref{L*02}) for an arbitrary matrix $m\in \cM_\cL$. Elementary but laborious computations show that, with the value thus obtained for $\Delta(m,g)$ and $m{\ast} g = g^{-1}mg + \Delta(m,g)$, indeed we have $\widehat{\tau}_\cM(m, g^{-1}xg) = \widehat{\tau}_\cM(m{\ast}g, x)$ for every $x\in \cM_\cL$. So, as long as $g \in R^+\cup R^-$, if $\Delta(m,g)$ is as in (\ref{L*02}) then the action $g:m\rightarrow m{\ast}g = \Delta(m,g)+g^{-1}mg$ of $g$ on $\cM_\cL$ agrees with the natural action of $g$ on $\cM_\cL^*$.      \hfill $\Box$ 

\begin{remark}
\em
Formula (\ref{L*02}) is tivially true when $g\in G_D$. Indeed if $g\in G_D$ then $\delta(g) = O = \Delta(m,g)$. I don't know if (\ref{L*02}) also holds true for $g\in G\setminus(G_D\cup R^+\cup R^-)$. I am afraid it doesn't. Anyway, this issue isn't relevant to our proof of Theorem \ref{even main1}. 
\end{remark}     
  
\subsection{Proof of Theorem \ref{even main1}; the $D_n$-case}\label{main D} 

Throughout this subsection $X_n = D_n$ and $\ch(\FF) = 2$. Moreover $K$ is the deep center of $\cL$ and $\cL_0 \cong \cL/K$ is the canonical complement of $K$ in $\cL$ as described in Subsection \ref{L0 D} and regarded as a subalgebra of $\cM_\cL$. Also, $\chi = h_{n-1}-h_n$ is the canonical generator of $K$ and $F_0$ is the seed cocycle of $G$ over $\cL^*_0$. 

We denote by $ G^0, G^+$ and $G^-$ the subgroups of $G$ generated by $\{u_{k,h}(t)~|~ k\neq h, ~ t\in \FF\}$,  $\{u_{k,h}^+(t)~|~ k<h, ~ t\in \FF\}$ and $\{u_{k,h}^-(t)~|~ k<h, ~ t\in \FF\}$ respectively. We have $ G^0 \cong \mathrm{SL}(n,\FF)$ while both $G^+$ and $G^-$ are isomorphic to the additive group of $V({n\choose 2}, \FF)$. The union $G^0\cup G^+\cup G^-$ generates $G$. Therefore, if for a given cocycle $F$ of $G$ over $\cL^*_0$ we have $F(g) = 0$ for every $g\in G^0\cup G^+\cup G^-$, then $F =  0$. We shall prove the following.

\begin{proposition}\label{Main D}
For every cocycle $F\in Z^1(\cL^*_0,G)$ there exist a unique derivation $d_F\in \Der(\FF)$, a unique coboundary $F'\in B^1(\cL^*_0, G)$ and a unique scalar $\lambda_F\in \FF$ such that 
\[(F-\gamma_{d_F}-F'-\lambda_F F_0)(g) = 0, ~~~ \forall g\in G^0\cup G^+\cup G^-.\]
\end{proposition}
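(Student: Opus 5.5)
We identify $\cL_0^*$ with $\cM_\cL/\langle I\rangle$ via $\tau_\cM$, as in Lemma \ref{L0perp}. A cocycle $F$ then becomes a map $g\mapsto m_F(g)\in\cM_\cL/\langle I\rangle$ with $m_F(g_1g_2)=g_2^{-1}m_F(g_1)g_2+m_F(g_2)$. Uniqueness comes first and is quick. If $\gamma_d+F'+\lambda F_0$ vanishes on $G^0\cup G^+\cup G^-$, it vanishes on all of $G$, since these subgroups generate $G$. Lemma \ref{Gamma2} then forces $d=0$ and $\gamma_d+F'+\lambda F_0=0$. It remains to see that $F'+\lambda F_0=0$ forces $\lambda=0$ and $F'=0$. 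This holds because $F_0\notin B^1(\cL_0^*,G)$, the extension $\cL=K\cdot\cL_0$ being non-split, and because $B^1\cong\cL_0^*/(\cL_0^*)^G$ with $(\cL_0^*)^G=0$, since $\cL_0$ has no $G$-invariant hyperplane.

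For existence we proceed in stages, following Section \ref{Cohomology Th odd}.

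\textbf{Step 1: $G^0$.} The restriction of $F$ to $G^0\cong\mathrm{SL}(n,\FF)$ is a cocycle over $\cL_0^*$, regarded as a $G^0$-module. We decompose this module along the grading $\cM_\cL=\left(\begin{smallmatrix}X&0\\0&X^t\end{smallmatrix}\right)\oplus\left(\begin{smallmatrix}0&Y\\Z&0\end{smallmatrix}\right)$, whose summands are $M_n(\FF)$ (adjoint action), $\wedge^2$ of the natural module, and its dual. On the $M_n(\FF)$-component we invoke \cite[Theorem 1]{TZ}, as in Subsection \ref{sec 2.3}. This yields $d_F$ and a matrix $m_1$ such that, after subtracting $\gamma_{d_F}$ and the coboundary of $m_1$, this component vanishes on $G^0$. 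The off-diagonal components are cocycles of $\mathrm{SL}(n,\FF)$ on $\wedge^2$-type modules. We kill them one root subgroup at a time, using the commutator relations $(u_{k,h}(t),u_{h,p}(s))=u_{k,p}(st)$ exactly as in Lemma \ref{uni-cobound-0}. This produces a block-anti-diagonal matrix $\ovm$ whose coboundary removes them. If we prefer to avoid appealing to vanishing of $H^1$ for these modules, the computation of Lemma \ref{uni-cobound-0} serves directly. After this step, $F$ vanishes on $G^0$.

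\textbf{Step 2: $G^+$ and $G^-$.} Next we use the $G^0$-equivariance of $F$ on the root groups. Conjugating $u^\pm_{k,h}(t)$ by elements of $G^0$ permutes these root groups linearly, while $F(G^0)=0$. From this we show that $t\mapsto F(u^+_{k,h}(t))$ is determined by a single additive function $\phi^+(t)$ on $U^+_{1,2}$. The torus elements in $G^0$ then force $\phi^+(st^2)$ to equal $s$ times a suitable expression, so $\phi^+$ is $t\mapsto c\,t$ plus possibly a derivation term. Comparing with the Steinberg relations $(u^+_{k,h}(t),u_{h,p}(s))\in U^+_{k,p}$, and with Lemma \ref{lemma F0 D}, which says that $F_0(u^+_{k,h}(t))$ is $t$ times the functional dual to $x_{-\alpha}$, the possible values of $F$ on $G^+$ are a combination of three pieces. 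The first is $\gamma_{d'}$ restricted to $G^+$. The second is $\lambda F_0$. The third is the coboundary of an element fixed by $G^0$, namely a multiple of $I_\pm$ (here $I$), modulo $\langle I\rangle$, together with the central piece. Consistency with the relation $(u^+,u^-)\in G^0T$ forces $d'=d_F$. The same argument applied to $G^-$ gives the same $\lambda$ and the same coboundary. This uses $|\FF|>2$, to choose $t$ with $t\neq t^2$ as in Lemma \ref{Gamma2}.

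\textbf{Main obstacle.} We expect the hardest part to be Step 2. One must separate the contributions of $\lambda F_0$ and of the coboundary of a $G^0$-fixed element, which look alike on $G^+$ in characteristic $2$, and must also track the quotient by $\langle I\rangle$ when $n$ is even. Our first attempt will be to evaluate everything on the single test vector $x_{-\alpha}$ and on the $h_i$, and then use the mixed commutators $(u^+_{k,h}(t),u^-_{h,p}(s))$ to pin down $\lambda$.
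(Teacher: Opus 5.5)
Your uniqueness argument is correct (the paper never spells it out), and Step 1 follows the paper's route, with two corrections.

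First, in characteristic $2$ the diagonal summand of $\cL_0^*\cong\cM_\cL/\langle I\rangle$ is $M_n(\FF)/\langle I_n\rangle\cong M_n^0(\FF)^*$, not $M_n(\FF)$. So the right input is \cite[Theorem C.1]{CGP2} via (\ref{D I 1}), as in Lemma \ref{reduction1}, rather than \cite[Theorem 1]{TZ}, which concerns $M_n(\FF)$. Second, Lemma \ref{uni-cobound-0} starts from the form (\ref{ukh 0}), linear in $t$, and here that form must first be derived. The paper derives it in Lemma \ref{reduction1bis}, using that $u_{k,h}(t)$ is an involution and comparing commuting pairs $u_{k,h}(t)$, $u_{p,q}(s)$.

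The genuine gap is Step 2. Once $F$ vanishes on $G^0$, no freedom remains. Indeed $\gamma_{d'}$ with $d'\neq 0$ is nonzero on $G^0$. Also, by the computation in Lemma \ref{D diag}, every $G^0$-invariant element of $\cM_\cL/\langle I\rangle$ is zero; note that $I_\pm=I$ in characteristic $2$. Consequences for your Step 2:
\begin{itemize}
\item Your three pieces collapse to $\lambda F_0$, and your ``central piece'' is $F_0$ itself (Corollary \ref{nuovo even lem2}).
\item ``$d'=d_F$'' should read $d'=0$.
\item The obstacle you single out does not exist.
\end{itemize}

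What is actually missing is a proof that $F(u^+_{k,h}(t))$ is represented by $\lambda^+t\,x_{\alpha^+_{[k,h]}}$, with one $\lambda^+$ for all $k<h$, and likewise on $G^-$. Your additive $\phi^+$ presupposes this shape. The map $t\mapsto F(u^+_{k,h}(t))$ is additive only once you know the representative is a lower-left block, which $G^+$ fixes. The paper obtains the shape in three steps:
\begin{itemize}
\item the involution property and commutation inside $G^+$, namely (\ref{D II 1})--(\ref{D II 3}), give (\ref{D II 4});
\item commutation with $u^-_{p,q}(s)$ and $u_{p,q}(s)$ for $\{p,q\}\cap\{k,h\}=\emptyset$ reduces the representative to $c_{k,h}(t)x_{\alpha^+_{[k,h]}}$; this is where $n>3$ enters, which you never use;
\item conjugation by $u_{k,p}(s)$ gives $c_{p,h}(st)=sc_{k,h}(t)$, hence linearity with a uniform constant.
\end{itemize}

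With these shapes in hand, your mixed-commutator idea does prove $\lambda^+=\lambda^-$. Take $a=u^+_{k,h}(t)$ and $b=u^-_{h,p}(s)$ with $k<h<p$. Then $(a,b)\in G^0$, and the cocycle identity reduces to $(\lambda^+-\lambda^-)(F_0(a)\cdot b-F_0(a))\cdot a^{-1}b^{-1}=0$. By Lemma \ref{F0}, $F_0(a)$ is represented by $t\,x_{\alpha^+_{[k,h]}}$. Moreover $b^{-1}x_{\alpha^+_{[k,h]}}b-x_{\alpha^+_{[k,h]}}$ is $s$ times a nonzero root vector of $\cL_0^0$, so $b$ moves $F_0(a)$ and the factor $\lambda^+-\lambda^-$ must vanish. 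This is cleaner than the paper's $h(t)=w(t)w(1)$ argument: it needs no $t\notin\{0,1\}$ and also covers $\FF_2$. So your claim that $|\FF|>2$ is needed at this point is wrong.
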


Proposition \ref{Main D} yields the conclusion of Theorem \ref{even main1} in the $D_n$-case. 

Recall that, by convention, $n > 3$ in $D_n$. However the Lie algebra $\cL = \mathfrak{o}(2n,\FF)$ and the group $G = \Omega^+(2n,\FF)$ are defined also for $n = 3$. So, in the sequel, initially we allow $n = 3$. The hypothesis $n >3$ will be reintroduced in due course, when we need it.  

\subsubsection{$F_0(g)$ for $g\in G^0\cup G^+\cup G^-$}\label{F0 sec}

As $\ch(\FF) = 2$, all matrices $g\in G^0\cup G^+\cup G^-$ belong to $\cM_\cL$. So, given $g\in G^0\cup G^+\cup G^-$ and $x\in \cL_0$,  we can consider the scalar $\tau_\cM(g, x)$. Note also that $\tau_\cM(I, x) = 0$ for every $x\in \cL_0$. Hence $\tau_\cM(m, x) = \tau_\cM(m-I, x)$ for every $m\in \cM_\cL$ and every $x\in \cL_0$.  

\begin{lemma}\label{F0}  
If $g\in G^0$ then $F_0(g) = 0$ while if $g\in G^+\cup G^-$ then $F_0(g)(x) = \tau_\cM(g, x)$ for every $x\in\cL_0$. 
\end{lemma}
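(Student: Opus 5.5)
The plan is to read $F_0$ off the explicit action of the generators on $\cL$ and then compare with $\tau_\cM$ entry by entry. By (\ref{F0 B 1}) (in its $D_n$ form), $\langle F_0(g^{-1}),x\rangle\chi=\pi_K(x\cdot g)$, and Lemma \ref{lemma F0 D} already records $F_0(u_\alpha(t))$ for single root elements. So we only have to pass from root elements to the groups $G^0,G^+,G^-$ using the cocycle identity, and then identify the result with $\tau_\cM(g,\cdot)$.

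\emph{The case $g\in G^0$.} Lemma \ref{lemma F0 D}(1) gives $F_0(u_{k,h}(t))=0$ for every root in $\Phi_0$, and $G^0$ is generated by these elements. The cocycle condition $F_0(g_1g_2)=F_0(g_1)\cdot g_2+F_0(g_2)$ shows that the set of $g$ with $F_0(g)=0$ is a subgroup, since it is closed under products and under inverses ($F_0(g^{-1})=-F_0(g)\cdot g^{-1}$). Hence $F_0$ vanishes on $G^0$.

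\emph{The case $g\in G^+$; $G^-$ is symmetric, by transposition.} Since $G^+$ is abelian and isomorphic to the additive group of anti-symmetric $n\times n$ matrices $Z$, write $g=g_Z=I+\left(\begin{smallmatrix}O&O\\Z&O\end{smallmatrix}\right)$, which is a product of the commuting elements $u^+_{k,h}(z_{h,k})$.

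1. For $g_1,g_2\in G^+$ the cocycle identity gives $F_0(g_1g_2)(x)=F_0(g_1)(g_2xg_2^{-1})+F_0(g_2)(x)$.
2. I will show that $F_0(g_1)$ annihilates $g_2xg_2^{-1}-x$. By Lemma \ref{lemma F0 D}(2), $F_0(u^+_{k,h}(t))$ is $t$ times the coordinate functional of $x_{\alpha^-_{[k,h]}}$ with respect to $C_0$. The difference $g_2xg_2^{-1}-x$ lies in the span of root vectors whose roots are $\alpha+\beta$ with $\beta$ positive of type $[\cdot,\cdot]$, together with Cartan elements. The $x_{\alpha^-_{[k,h]}}$-coordinate of such a term is zero, because a sum of positive $[\cdot,\cdot]$ roots and a root can never give a negative $[\cdot,\cdot]$ root. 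This step should be checked directly on the block form: conjugating by $g_2$ only changes the $X$, $-X^t$ and $Z$ blocks, and it leaves the $Y$ block, which carries the $\alpha^-_{[k,h]}$ coordinates, unchanged.
3. From steps 1 and 2, $g\mapsto F_0(g)$ is additive on $G^+$. We get $F_0(g_Z)(x)=\sum_{k<h}z_{h,k}\,y_{k,h}$, where $Y=(y_{i,j})$ is the upper right block of $x$.
4. Finally I compare with (\ref{demitrace0}). For $m=g_Z$ we have $A=I_n$, $B=O$ and $C=Z$, so $\tau_\cM(g_Z,x)=\Tr(X)+\sum_{i<j}z_{i,j}y_{j,i}$. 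Since $\Tr(X)=0$ on $\cL_0$ (recall $\tau_\cM(I,x)=0$), and $Y,Z$ are anti-symmetric and $\ch(\FF)=2$, this equals $\sum_{k<h}z_{h,k}y_{k,h}$. That matches step 3 up to the index conventions of table (\ref{rootElm D}), which I still need to reconcile carefully.

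The main obstacle I expect is step 2, and in particular keeping the sign and index conventions consistent with Lemma \ref{lemma F0 D}. Lemma \ref{lemma F0 D} gives $F_0(u_\alpha(t))$ paired against $x_{-\alpha}$, whereas (\ref{F0 B 1}) involves $g^{-1}$. One has to make sure the correct coordinate, $\alpha^+$ versus $\alpha^-$, is picked up. If a discrepancy appears, I will use $u^+_{k,h}(t)^{-1}=u^+_{k,h}(t)$ in characteristic 2 to absorb it.
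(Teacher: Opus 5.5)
Your proof is correct. For $G^0$ it coincides with the paper's argument, but for $G^\pm$ it takes a genuinely different route.

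The paper first asserts $F_0(g)(x)=\tau_\cM(g,x)$ on the generators $u^\pm_{k,h}(t)$, leaving that check to the reader. It then propagates the identity to products using two facts:
\begin{itemize}
\item $\tau_\cM(g_1,g_2xg_2^{-1})=\tau_\cM(g_2^{-1}g_1g_2,x)=\tau_\cM(g_1,x)$, by the $G$-invariance of $\tau_\cM$ (Lemma \ref{GT}) and commutativity of $G^\pm$;
\item $\tau_\cM(g_1,x)+\tau_\cM(g_2,x)=\tau_\cM(g_1g_2,x)$, because $g_1g_2=g_1+g_2-I$ and $\tau_\cM(I,x)=0$.
\end{itemize}
You do not use $\tau_\cM$ until the very end. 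Instead you get additivity of $F_0$ on $G^+$ from an elementary block computation: conjugation by $\left(\begin{smallmatrix}I&O\\U&I\end{smallmatrix}\right)$ fixes the $Y$-block, and every value $F_0(u^+_{k,h}(t))$ depends only on that block. You then compare a closed formula with (\ref{demitrace0}) once.

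Your version buys three things. It avoids Lemma \ref{GT}, and with it the reduction-mod-$2$ argument behind that lemma. It gives the explicit formula $F_0(g_Z)(x)=\sum_{k<h}z_{h,k}y_{k,h}$. It also supplies, as the one-generator special case, the step the paper dismisses as ``not difficult to see''. The paper's version is shorter and coordinate-free once Lemma \ref{GT} is available.

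Three small points to tidy up:
\begin{itemize}
\item In step 2, state explicitly that $F_0(g_1)$ factors through the $Y$-block for every $g_1\in G^+$, not only for generators. This follows by induction on the number of generator factors, and it is what additivity actually needs.
\item The index reconciliation you left open is exact, not just up to conventions: $z_{i,j}y_{j,i}=z_{j,i}y_{i,j}$ because both blocks are anti-symmetric.
\item The worry about $g$ versus $g^{-1}$ is moot. Lemma \ref{lemma F0 D} is stated directly for $F_0(u_\alpha(t))$, and in any case $u_\alpha(t)^{-1}=u_\alpha(t)$ in characteristic $2$.
\end{itemize}
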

\noindent
{\em Proof.} We know from Lemma \ref{lemma F0 D} that $F_0(u_{k,h}(t)) = 0$ for every choice of $k\neq h$ in $\{1, 2,..., n\}$. Hence $F_0(g) = 0$ for every $g\in G^0$ by the cocycle condition on $F_0$. It is not difficult to see that claim (2) of Lemma \ref{lemma F0 D} implies that if $g = u^\pm_{k,h}(t)$ then
\begin{equation}\label{F0 G+-1}
F_0(g)(x) ~ = ~ \tau_\cM(g, x), ~~~ \forall x\in \cL_0.
\end{equation} 
Let now $g_1, g_2\in G^\pm$ be such that both $g_1$ and $g_2$ satisfy (\ref{F0 G+-1}). Then for every $x\in \cL_0$ we have 
\[F_0(g_1g_2)(x) = F_0(g_1)(g_2xg_2^{-1}) + F_0(g_2)(x) ~ = ~ \tau_\cM(g_1, g_2xg_2^{-1}) + \tau_\cM(g_2, x).\]
However $\tau_\cM(g_1, g_2xg_2^{-1}) = \tau_\cM(g_2^{-1}g_1g_2, x) = \tau_\cM(g_1, x)$ by Lemma \ref{GT} and because $G^\pm$ is abelian. Therefore 
\[F_0(g_1g_2) = \tau_\cM(g_1, x) + \tau_\cM(g_2, x) = \tau_\cM(g_1+g_2, x) = \tau_\cM(g_1g_2 - I, x) = \tau_\cM(g_1g_2, x)\]
because $g_1g_2 = g_1+g_2-I$. So, $g_1g_2$ also satisfies (\ref{F0 G+-1}). The conclusion follows.  \hfill $\Box$ 

\subsubsection{$F(g)$ for $g\in G^0$} 

The vector space $\cL_0$ is the direct sum of three subspaces $\cL_0^0$, $\cL_0^+$ and $\cL^-$ with bases as follows:
\[\begin{array}{rl}
\mbox{subspace} & \mbox{basis} \\
\cL_0^0~:& \{e_{i,j}-e_{n+j,n+i}\}_{i,j \in \{1,..., n\}, ~ i\neq j}^n\cup\{e_{i,i}-e_{n.n}-e_{n+i,n+i}+e_{2n,2n}\}_{i=1}^{n-1}; \\
\cL_0^+~: & \{e_{j+n,i}-e_{i+n,j}\}_{1\leq i<j\leq n}; \\
\cL_0^-~: & \{e_{i,j+n}-e_{j,i+n}\}_{1\leq i<j\leq n}. 
\end{array}\]
Clearly $\dim(\cL_0^0) = n^2-1$ and $\dim(\cL_0^+) = \dim(\cL_0^-) = {n\choose 2}$. Note also that $G^0$ stabilizes each of $\cL_0^0$, $\cL_0^+$ and $\cL_0^-$. On the other hand $G^+$ centralizes $\cL_0^+$ but stabilizes neither $\cL_0^0$ nor $\cL_0^-$; similarly, $G^-$ centralizes $\cL_0^-$ but stabilizes neither $\cL_0^0$ nor $\cL_0^+$. 

Recall that $G^0 \cong \mathrm{SL}(n,\FF)$. As $G^0$ stabilizes $\cL_0^0$, the subspace $\cL_0^0$ is also a $G^0$-module, which is isomorphic with the $\mathrm{SL}(n,\FF)$-module $M_n^0(\FF)$ of traceless square matrices of order $n$. A canonical isomorphism from 
the module $(\cL_0^0,G^0)$ to $(M_n^0(\FF), \mathrm{SL}(n,\FF))$ can be construced as follows. The mappings
\[\begin{array}{rcccl}
\rho_{\cL_0^0}   & : & x ~ = ~  \left(\begin{array}{lc}
X & O_n \\
O_n & -X^t
\end{array}\right) \in \cL_0^0 & \longrightarrow & \rho_{\cL_0^0}(x) ~ = ~  X \in M_n^0(\FF) \\
\end{array}\]
\[\begin{array}{rcrcl}
\rho_{G^0} & : & g ~ = ~  \left(\begin{array}{lc}
A & O_n \\
O_n &  A^{-t}
\end{array}\right) \in G^0 & \longrightarrow &  \rho_{G^0}(g) ~ = ~A \in \mathrm{SL}(n,\FF)  \\
\end{array}\]
are isomorphisms from $\cL_0^0$ to $M_n^0(\FF)$ and from $G^0$ to $\mathrm{SL}(n,\FF)$. The pairing $\rho = (\rho_{\cL_0^0}, \rho_{G^0})$ is an isomorphism of modules from $(\cL_0^0, G^0)$ to $(M_n^0(\FF), \mathrm{SL}(n,\FF))$, the canonical one.  Moreover, 
\begin{equation}\label{D I 1}
\tau_\cM(x,y) ~ = ~ \Tr(\rho_{\cL_0^0}(x)\rho_{\cL_0^0}(y)), \hspace{5 mm} \forall x, y \in \cL_0.
\end{equation} 
Clearly the mapping $\rho_{\cL_0^0}^*$ which maps every linear functional $f$ of $\cL_0^0$ onto the linear functional $f\circ\rho_{\cL_0^0}^{-1}$ of $M_n^0(\FF)$ is an isomorphism from the dual $\cL_0^{0*}$ of $\cL_0^0$ to the dual $M_n^0(\FF)^*$ of $M_n^0(\FF)$ and the pairing $\rho^* := (\rho_{\cL_0^0}^*, \rho_{G^0})$ is an isomorhism of modules from $(\cL_0^{0*}, G^0)$ to $(M_n^0(\FF)^*, \mathrm{SL}(n,\FF))$. 

As recalled in Remark \ref{g**(-1)d(g) BD}, for every derivation $d\in \Der(\FF)$ and every $g\in G$ the matrix $g^{-1}d(g)$ belongs to $\cL_0$. In particular, if $g\in G^0$ then $g^{-1}d(g)\in \cL_0^0$. Formula  (\ref{D I 1}) and \cite[Theorem C.1]{CGP2} now imply the following.  

\begin{lemma}\label{reduction1}
For every cocycle $F\in Z^1(\cL_0^*,G)$ there exists a unique derivation $d_F\in \Der(\FF)$ and a coboundary $F_1\in B^1(\cL_0^*,G)$ such that 
$F(g)(x) = \tau_\cM(g^{-1}d_F(g), x) + F_1(g)(x)$ for every $g\in G^0$ and every $x\in \cL_0^0$.
\end{lemma}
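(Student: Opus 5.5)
The plan is to restrict everything to the $\mathrm{SL}(n,\FF)$-piece $(\cL_0^0,G^0)$, transport it to $(M_n^0(\FF),\mathrm{SL}(n,\FF))$ via the canonical isomorphism $\rho^*$, invoke \cite[Theorem C.1]{CGP2} there, and pull the result back. First, given $F\in Z^1(\cL_0^*,G)$, define for $g\in G^0$ the functional $F^0(g):=F(g)_{|\cL_0^0}\in\cL_0^{0*}$. Because $G^0$ stabilizes $\cL_0^0$, restriction of functionals from $\cL_0$ to $\cL_0^0$ is a $G^0$-module homomorphism $\cL_0^*\rightarrow\cL_0^{0*}$. Hence $F^0\in Z^1(\cL_0^{0*},G^0)$. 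Applying $\rho^*$ yields a cocycle $\widetilde F:=\rho^*\circ F^0\circ\rho_{G^0}^{-1}\in Z^1(M_n^0(\FF)^*,\mathrm{SL}(n,\FF))$.

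Second, I would apply \cite[Theorem C.1]{CGP2}, which is exactly property $(\gamma1)$ for $\AnK$ with $n-1$ in place of $n$. This gives a unique $d_F\in\Der(\FF)$ and some $\widetilde m\in M_n(\FF)$ such that
\[\widetilde F(A)(X)=\Tr(A^{-1}d_F(A)X)+\Tr(\widetilde mX)-\Tr(\widetilde m AXA^{-1})\]
for all $A\in\mathrm{SL}(n,\FF)$ and $X\in M_n^0(\FF)$. The functionals on $M_n^0(\FF)$ are written as $\Tr(a\,\cdot)$, as in Subsection \ref{gamma A}, and the coboundary is written in the corresponding dual form. Pulling back, I use two facts. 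For $g\in G^0$ with $\rho_{G^0}(g)=A$, the block-diagonal shape of $g$ gives $g^{-1}d_F(g)\in\cL_0^0$ with $\rho_{\cL_0^0}(g^{-1}d_F(g))=A^{-1}d_F(A)$. By (\ref{D I 1}), $\Tr(A^{-1}d_F(A)X)=\tau_\cM(g^{-1}d_F(g),x)$ where $X=\rho_{\cL_0^0}(x)$. This produces the $\gamma_{d_F}$ term on $G^0\times\cL_0^0$.

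Third, the coboundary part must be realized by an element of $B^1(\cL_0^*,G)$, that is, by a functional $f$ on all of $\cL_0$ with $F_1(g)=f-f\cdot g$. I would take $f$ to be the functional that equals $x\mapsto\Tr(\widetilde m\,\rho_{\cL_0^0}(x))$ on $\cL_0^0$ and vanishes on $\cL_0^+\oplus\cL_0^-$. Since $G^0$ stabilizes each of $\cL_0^0$, $\cL_0^+$ and $\cL_0^-$, for $g\in G^0$ and $x\in\cL_0^0$ we get $(f\cdot g)(x)=f(gxg^{-1})=\Tr(\widetilde m\,A\rho_{\cL_0^0}(x)A^{-1})$. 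The sign and inverse conventions must be matched with the right action, which is routine. Then $F_1(g)(x)$ coincides with the coboundary term from the second step, and the required identity holds on $G^0\times\cL_0^0$.

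Finally, uniqueness of $d_F$. If two derivations $d,d'$ work, then $\gamma_{d-d'}$ restricted to $G^0\times\cL_0^0$ is a coboundary. Transported by $\rho^*$, it becomes a coboundary of $\mathrm{SL}(n,\FF)$ over $M_n^0(\FF)^*$. The uniqueness part of \cite[Theorem C.1]{CGP2} (equivalently, the injectivity argument of Lemma \ref{A gamma inj}, applied with $g=u_{k,h}(t)$ and $x=e_{h,k}-e_{n+k,n+h}$) then forces $d=d'$. The main obstacle I expect is bookkeeping rather than substance. One must check that $\rho^*$ intertwines the right dual actions, including the inverse transpose in the lower block, so that the cocycle identity is preserved. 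One must also check that the version of \cite[Theorem C.1]{CGP2} applies to $M_n^0(\FF)^*$ in characteristic $2$, where $n$ may be even and $I\in M_n^0(\FF)$; if needed I would pass through $M_n(\FF)/\langle I\rangle\cong M_n^0(\FF)^*$ as in Subsection \ref{gamma A}.
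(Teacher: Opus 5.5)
Your proposal is correct and follows the paper's own proof. The paper obtains the lemma in one line from (\ref{D I 1}) and \cite[Theorem C.1]{CGP2} via the canonical module isomorphism $\rho^*$; you have made explicit the restriction to $(\cL_0^0,G^0)$, the pull-back of the $\gamma_{d_F}$-term, and the extension of the coboundary functional from $\cL_0^0$ to $\cL_0$ (any extension works, as the paper remarks right after the lemma).

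The only imprecision is your parenthetical on uniqueness. The argument of Lemma \ref{A gamma inj} only excludes $\gamma_d=0$, whereas you need that the restriction of $\gamma_d$ to $G^0\times\cL_0^0$ is not a nonzero coboundary. That is supplied by the complement part of \cite[Theorem C.1]{CGP2}, or by a computation like the one in Lemma \ref{Gamma2}, so rely on that instead.
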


\begin{remark}
\em
The coboundary $F_1$ satisfying the condition required in Lemma \ref{reduction1} is not uniquely determined. Indeed every linear functional of $\cL^0_0$ can be extended in many ways to a linear functional of $\cL_0$. Accordingly, several  coboundaries exist in $B^1(\cL^*_0,G)$ which induce the same coboundary of $G^0$ over $\cL^0_0$.
\end{remark}

Recall that, according to the way $\gamma$ is defined, we have $\tau_\cM(g^{-1}d_F(g), x) = \gamma_{d_F}(g)(x)$. So, if we replace $F$ with $F-\Gamma_{d}-F_1$, we can focus on the cocycles $F$ such that $F(g)(x) = 0$ for every $g\in G^0$ and every $x\in \cL_0^0$. 

\begin{lemma}\label{reduction1bis}
Let $F\in Z^1(\cL_0^*,G)$ be such that $F(g)(x) = 0$ for every $g\in G^0$ and every $x\in \cL_0^0$. Then there exists a coboundary $F_2\in B^1(\cL^*_0,G)$ such that $F(g) = F_2(g)$ for every $g\in G^0$. 
\end{lemma}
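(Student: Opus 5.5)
The plan is to exploit the $G^0$-invariant decomposition $\cL_0 = \cL_0^0\oplus\cL_0^+\oplus\cL_0^-$. Dually, $\cL_0^* = (\cL_0^0)^*\oplus(\cL_0^+)^*\oplus(\cL_0^-)^*$ as $G^0$-modules, where each summand is the space of functionals vanishing on the other two summands. So the restriction of $F$ to $G^0$ splits into three cocycles $F^0, F^+, F^-$ of $G^0\cong\mathrm{SL}(n,\FF)$, with values in $(\cL_0^0)^*$, $(\cL_0^+)^*$ and $(\cL_0^-)^*$ respectively. By hypothesis $F^0=0$. It therefore suffices to show that $F^+$ and $F^-$ are coboundaries of $G^0$, say $F^\pm(g) = f^\pm - f^\pm\cdot g$ with $f^\pm\in(\cL_0^\pm)^*$. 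Indeed, the sum $f:=f^++f^-$, extended by zero on $\cL_0^0$, is a functional on $\cL_0$. The coboundary $F_2\colon g\mapsto f-f\cdot g$ of $G$ then agrees with $F$ on $G^0$, because $G^0$ stabilizes each summand. By the symmetry between the roots $\alpha^+_{[k,h]}$ and $\alpha^-_{[k,h]}$ it is enough to treat $F^+$.

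Via $\rho_{G^0}$, the module $\cL_0^+$ is the module of antisymmetric matrices $Z$ with $A\in\mathrm{SL}(n,\FF)$ acting as $Z\mapsto A^{t}ZA$ (up to transposition/inverse conventions). Thus $\cL_0^+$ is an exterior square of the natural module, and $(\cL_0^+)^*$ is the exterior square of the dual. Its weights with respect to the diagonal torus $T$ of $G^0$ are the characters $\pm(\varepsilon_i+\varepsilon_j)$, $i<j$, with a fixed global sign, and none of these is trivial. I would first pick $s=\mathrm{diag}(a,\dots,a,a^{1-n})\in T$ with $a^2\neq1$ and $a^{2-n}\neq 1$. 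Then $s$ acts on $(\cL_0^+)^*$ without nonzero fixed vectors, so $1-s$ is invertible there. Subtracting a suitable coboundary, we may assume $F^+(s)=0$.

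Next I would use the cocycle identity on $s g s^{-1}$ for $g=u_{k,h}(t)$. This gives $F^+(u_{k,h}(\mu t)) = F^+(u_{k,h}(t))\cdot s^{-1}$ (up to the side conventions), where $\mu=\mu_{k,h}(a)$ is the value of the root character at $s$. Combined with the fact that $t\mapsto F^+(u_{k,h}(t))$ is a cocycle on the abelian root group, whose action on $(\cL_0^+)^*$ is unipotent with an explicitly computable $e_{k,h}$-part, this should force $F^+(u_{k,h}(t))$ to be a polynomial expression $t\,w_{k,h}$ plus terms killed by comparing weights. Finally I would run the Steinberg commutator relations $(u_{k,h}(t),u_{h,p}(s))=u_{k,p}(st)$ and the relations with the torus, in the same spirit as in the proof of Lemma \ref{uni-cobound-0}. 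The goal is to show that the vectors $w_{k,h}$ are all of the form $m-m\cdot e_{k,h}$-type expressions for one common $m\in(\cL_0^+)^*$, which exhibits $F^+$ as a coboundary on the generators and hence on all of $G^0$.

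The main obstacle is the last step: the bookkeeping of weights, and especially the small-field cases. When $|\FF|$ is small, for instance $\FF=\FF_4$, or when $n$ interacts badly with $a$, an element $s$ as above may fail to exist. If that happens, I would instead choose $s$ inside a rank-one torus, or a product of two of them, acting fixed-point-freely on a single weight space at a time. Failing that, I would fall back on a direct computation with the root elements alone, using the table (\ref{Akh})-style analysis of which entries of $F^+(u_{k,h}(t))$ can be nonzero. The case $\FF=\FF_2$ is excluded by Theorem \ref{even main1}, so only finitely many small fields need such separate care.
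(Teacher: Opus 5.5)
Your reduction is correct and is in substance what the paper does. The paper's canonical matrix for $F(g)$ has off-diagonal blocks $B_g$, $C_g$, which carry exactly your $F^{+}$ and $F^{-}$, and $f^{+}+f^{-}$ extended by zero on $\cL_0^0$ gives $F_2$. The gap is that the part proving $F^{+}$ is a coboundary is missing.

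\emph{The torus step is unsupported.} After normalizing $F^{+}(s)=0$, you claim that the single relation $F^{+}(u_{k,h}(\mu t))=F^{+}(u_{k,h}(t))\cdot s^{-1}$, plus the cocycle identity on the root group, ``should force'' $F^{+}(u_{k,h}(t))=t\,w_{k,h}$. A relation for one scalar $\mu$ does not rule out additive non-linear maps such as $t\mapsto t^{2}v$ in characteristic $2$.

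\emph{The commutator step is only announced, and its target is off.} Once $F^{+}(s)=0$ with $s$ fixed-point-free, the only coboundary vanishing at $s$ is $0$, so what you must prove is $F^{+}=0$.

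\emph{The small-field problem is not just a matter of choosing $s$ better.} Over $\FF_4$ with $n\equiv 2\pmod 3$, no element of the diagonal torus $T$ of $G^0$ acts without fixed points on $(\cL_0^{+})^{*}$, and your fallback is unspecified. Also, the lemma has no hypothesis excluding $\FF_2$, so citing Theorem \ref{even main1} changes the statement.

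The paper uses no torus. Since $\ch(\FF)=2$, every $u_{k,h}(t)$ is an involution, so $F(g)\cdot g=F(g)$. This, with the cocycle identity for commuting pairs $u_{k,h}(t)$, $u_{p,q}(s)$, pins down $B_g$, $C_g$ entry by entry and assembles them into alternating $X_F$, $Y_F$.

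Your route can still be completed quickly by treating one weight space at a time, as your fallback suggests. Two observations suffice.
\begin{itemize}
\item \emph{Kill $F^{+}$ on $T$.} $T$ is abelian, and $M=(\cL_0^{+})^{*}$ is the direct sum of weight spaces $M_\chi$, with $\chi(s)=(s_is_j)^{\pm1}$ non-trivial once $\FF\neq\FF_2$. Comparing $F^{+}(ss_0)$ with $F^{+}(s_0s)$ gives $(\chi(s_0)-1)F^{+}(s)=(\chi(s)-1)F^{+}(s_0)$ on $M_\chi$, so $H^1(T,M_\chi)=0$. After subtracting a coboundary, $F^{+}$ vanishes on all of $T$, with no fixed-point-free element needed.
\item \emph{Kill $F^{+}$ on root groups.} If $s'\in T$ has equal $k$-th and $h$-th diagonal entries, $s'$ centralizes $U_{k,h}$, and the cocycle identity gives $F^{+}(u)=F^{+}(u)\cdot s'$ for $u\in U_{k,h}$. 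Checking the three positions of $\{i,j\}$ relative to $\{k,h\}$ shows no weight is trivial on this subtorus when $\FF\neq\FF_2$. Hence $F^{+}$ vanishes on every root group, and so on $G^0$.
\end{itemize}
This proves the lemma for all $\FF\neq\FF_2$ (including $\FF_4$) and all $n\geq 3$, with no commutator bookkeeping.

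Over $\FF_2$ the torus is trivial. There, no argument using only the restriction of $F$ to $G^0$ can work in general, because $H^1(\mathrm{SL}(4,2),\wedge^2\FF_2^{4})\neq 0$: use $\mathrm{SL}(4,2)\cong A_8$ and the uniserial permutation module on $8$ points. That case, not needed downstream, would require using more of $G$.
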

{\em Proof.} By Lemma \ref{L0perp}, for every $g\in G$ there exists a matrix $m_g\in\cM_\cL$ (uniquely determined up to multiples of the identity $I$) such that $F(g)(x) = \tau_\cM(m_g, x)$ for every $x\in \cL_0$. By assumption, $\tau_\cM(m_g, x) = 0$ for every $g\in G^0$ and every $x\in \cL_0^0$. This implies that
\[m_g ~ = ~ \left(\begin{array}{cc} 
\lambda_gI_n & B_g\\
C_g & \lambda_gI_n
\end{array}\right)\]
for suitable antisymmetric matrices $B_g$ and $C_g$. However, $m_g$ is determined up to multiples of $I$. Accordingly, we can always assume that $\lambda_g = 0$, namely
\begin{equation}\label{D I 2}
m_g ~ = ~ \left(\begin{array}{cc} 
O_n & B_g\\
C_g & O_n
\end{array}\right).
\end{equation}  
We call this matrix the {\em canonical representative matrix} of the linear functional $F(g)$. Suppose now that $g$ is an involution. Then the cocycle condition on $F$ implies that
\begin{equation}\label{D I 3}
\tau_\cM(m_g, gxg^{-1}) + \tau_\cM(m_g, x) ~ = ~ 0, \hspace{5 mm} \forall x \in \cL_0.
\end{equation}  
We know that $G$ preserves $\tau_\cM$ (Lemma  \ref{GT}). Hence $\tau_\cM(m_g, gxg^{-1}) = \tau_\cM(g^{-1}m_gg, x)$ and (\ref{D I 3}) says that $\tau_\cM(g^{-1}m_gg+m_g, x)  = 0$ for every $x\in\cL_0$. Terefore $g^{-1}m_gg+m_g \in \cL_0^\perp$, namely 
\begin{equation}\label{D I 4}
g^{-1}m_gg+m_g ~ = ~ \lambda I
\end{equation} 
for a suitable scalar $\lambda$. However 
\[g ~ = ~ \left(\begin{array}{lc}
A_g & O_n \\
O_n & A_g^{-t}
\end{array}\right)\]
for a suitable matrix $A_g$ such that $A_g^2 = I_n$ (recall that we have assumed that $g^2 = 1$). Accordingly and in view of (\ref{D I 2}) we obtain that
\[g^{-1}m_gg+m_g ~ = ~ \left(\begin{array}{cc}
O_n & A_g^{-1}B_gA_g^{-t}+B_g \\
A_g^tC_gA_g + C_g & O_n
\end{array}\right). \]
Comparing the above with (\ref{D I 4}) we obtain that $g^{-1}m_gg+m_g = 0$, namely 
\begin{equation}\label{D I 5}
A_g^{-1}B_gA_g^{-t}+B_g ~ = ~ A_g^tC_gA_g + C_g ~ = ~ O_n.
\end{equation} 
Let $g = u_{k,h}(t)$ (which is an involution, since $\ch(\FF) = 2$). Then (\ref{D  I 5}) implies that the $h$-th row of $B_g$ and the $k$-th row of $C_g$ (as well the $h$-th  column of $B_g$ and the $k$-th column of $C_g$, of course) are null. Similarly, let $g' = u_{p,q}(s)$ and let
\[m_{g'} ~ = ~ \left(\begin{array}{cc} 
O_n & B_{g'}\\
C_{g'} & O_n
\end{array}\right)\]
be the canonical representative matrix of $F(g')$. Then the $q$-th row of $B_{g'}$ and the $p$-th row of $C_{g'}$ are null. 

In order to have a less cumbersome notation, in the sequel we write $m, B, C$ and $m', B', C'$ instead of $m_g, B_g, C_g$ and $m_{g'}, B_{g'}, C_{g'}$. Suppose that $g$ and $g'$ commute (as it is the case when $\{k,h\}\cap\{p,q\} = \emptyset$ or $q = h$ or $p = k$). The cocycle condition on $F$ implies that 
\[\tau_\cM({g'}^{-1}mg'+ m', x)  ~ = ~ \tau_\cM(g^{-1}m'g+ m, x), \hspace{5 mm} \forall x\in \cL_0.\] 
Hence $({g'}^{-1}mg'-m')-(g^{-1}m'g + m) = \lambda I$ for a scalar $\lambda$, which is easily seen to be null. Therefore 
\[\begin{array}{rcl}
se_{p,q}B + sBe_{q,p} + s^2e_{p,q}Be_{q,p}  ~ = ~ te_{k,h}B' + tB'e_{h,k} + t^2e_{k,h}B'e_{h,k}; \\
se_{q,p}C + sCe_{p.q} + s^2e_{q,p}Ce_{p,q} ~ = ~ te_{hk}C' + tC'e_{k,h} + t^2e_{h,k}C'e_{k,h}.
\end{array}\]
Equivalently, since $e_{p,q}Be_{q,p} = e_{k,h}B'e_{h,k} = e_{q,p}Ce_{p,q} = e_{h,k}C'e_{k,h} = O_n$, 
\begin{equation}\label{D I 6}
\left\{\begin{array}{rcl}
se_{p,q}B + sBe_{q,p}  ~ = ~ te_{k,h}B' + tB'e_{h,k}; \\
se_{q,p}C + sCe_{p.q}  ~ = ~ te_{hk}C' + tC'e_{k,h}.
\end{array}\right.
\end{equation} 
For $i = 1, 2,..., n$ let $b_i(t)$, $b'_i(s)$, $c_i(t)$ and $c'_i(s)$ be the $i$-th rows of respectively $B$, $B'$, $C$ and $C'$. From (\ref{D I 6}) and recalling that the $h$-th column of $B$ is null, with $p\neq k$ we obtain that the $p$-th entry of $b_q(t)$ is null for every $p\neq k$. Similarly, the $k$-th entry of $b'_h(s)$ is null for every $k\neq p$, the $q$-th entry of $c_p(t)$ is null for every $q\neq h$ and the $h$-th entry of $c'_k(s)$ is null for every $h\neq p$. Moreover, $sb_k(t) = tb'_p(s)$ and $sc_h(t) = tc'_q(s)$. Consequently, there exist vectors $b, b', c, c'\in V(n,\FF)$ such that $b_k(t) = tb$, $b'_p(s) = sb'$, $c_h(t) = tc$ and $c'_q(s) = sc$. From  (\ref{D I 6}) now we also obtain that the $q$-th entry of $b$ is equal to the $h$-th entry of $b'$ and the $p$-th entry of $c$ is equal to the $k$-th entry of $c'$. 

In order to profit from the above remarks, we need to improve our notation. We put $b = (b_i^{(k,h)})_{i = 1}^n$, $b' = (b_i^{(p,q)})_{i = 1}^n$, $c = (c_i^{(k,h)})_{i = 1}^n$ and $c' = (c_i^{(p,q)})_{i = 1}^n$. So, 
\begin{equation}\label{D I 6bis}
\left\{\begin{array}{ll}
B = t\sum_{i=1}^nb^{(k,h)}_i(e_{h,i}+e_{i,h}), & C = t\sum_{i=1}^nc^{(k,h)}_i(e_{k,i}+e_{i,k}),\\
B' = s\sum_{i=1}^nb^{(p,q)}_i(e_{q,i}+e_{i,q}), & C' = s\sum_{i=1}^nc^{(p,q)}_i(e_{p,i}+e_{i,p}), \\
b^{(k,h)}_q = b^{(p,q)}_h, & c^{(k,h)}_p = c^{(p,q)}_k.
\end{array}\right.
\end{equation} 
These conditions hold for every choice of $k,h,p,q$ with $k\neq h$, $p\neq q$ and either $\{k,h\}\cap\{p,q\} = \emptyset$ or $h = q$ or $k = p$. The last two relations of (\ref{D I 6bis}) guarantee the existence of antysimmetric matrices $X_F = (x_{i,j})_{i,j=1}^n$ and $Y_F = (y_{i,j})_{i,j=1}^n$ such that \begin{equation}\label{D I 6ter}
b^{(k,h)}_q = x_{h,q}= x_{q,h} = b^{(p,q)}_h~ \mbox{ and } ~ c^{(k,h)}_p = y_{k,p}= y_{p,k} = b^{(p,q)}_k
\end{equation}
for every choice of $k, h, p, q$ as above. Let $f$ be the linear functional represented by the matrix 
\[\left(\begin{array}{ll}
O_n & X_F \\
Y_F & O_n
\end{array}\right)\]
and let $F_2:g\in G \rightarrow f-f\cdot g$ be the coboundary associated to $f$. As one can check, the equations of (\ref{D I 6ter}) imply that  $F(g) = F_2(g)$ for every $g  \in U^0 := \{u_{k,h}(t) ~ |~ t\in \FF, ~ h\neq k\}$. Since $U^0$ generates $G^0$, by the cocycle condition we obtain that $F(g) = F_2(g)$ for every $g\in G^0$.  \hfill $\Box$ \\

Lemmas \ref{reduction1} and \ref{reduction1bis}  imply the following. 

\begin{proposition}\label{reduction2}
For every cocycle $F\in Z^1(\cL_0^*,G)$ there exists a unique derivation $d_F\in \Der(\FF)$ and a coboundary $F'\in B^1(\cL_0^*,G)$ such that $F(g)(x) = \tau_\cM(g^{-1}d_F(g), x) + F'(g)(x)$ for every $g\in G^0$ and every $x\in \cL_0$.
\end{proposition}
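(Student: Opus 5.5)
The plan is to combine Lemma \ref{reduction1} and Lemma \ref{reduction1bis} in two successive subtractions, keeping track of the decomposition $\cL_0 = \cL_0^0\oplus\cL_0^+\oplus\cL_0^-$.

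First, given $F\in Z^1(\cL_0^*,G)$, Lemma \ref{reduction1} supplies a derivation $d_F$ and a coboundary $F_1\in B^1(\cL_0^*,G)$ with $F(g)(x) = \gamma_{d_F}(g)(x) + F_1(g)(x)$ for all $g\in G^0$ and $x\in\cL_0^0$. Put $\tilde F := F-\gamma_{d_F}-F_1$. This is again an element of $Z^1(\cL_0^*,G)$, because $\gamma_{d_F}$ is a cocycle (this was established in Subsection \ref{gamma BD}) and $F_1$ is a coboundary. By construction $\tilde F(g)(x)=0$ for every $g\in G^0$ and $x\in \cL_0^0$, which is exactly the hypothesis of Lemma \ref{reduction1bis}. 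That lemma then gives a coboundary $F_2$ with $\tilde F(g) = F_2(g)$ for all $g\in G^0$, now as functionals on all of $\cL_0$. Setting $F' := F_1+F_2\in B^1(\cL_0^*,G)$ yields $F(g) = \gamma_{d_F}(g) + F'(g)$ for every $g\in G^0$. Evaluating at $x\in\cL_0$ and recalling $\gamma_{d_F}(g)(x)=\tau_\cM(g^{-1}d_F(g),x)$ gives the displayed identity.

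For uniqueness of $d_F$, suppose two derivations $d,d'$ both work, with coboundaries $F',F''$. Then $\gamma_{d-d'}$ coincides on $G^0$ with a coboundary. Restricting to $\cL_0^0$ and transporting along the canonical isomorphism $\rho^*$ together with (\ref{D I 1}), this becomes the $\mathrm{SL}(n,\FF)$ statement: the cocycle $g\mapsto g^{-1}(d-d')(g)$ over $M_n^0(\FF)$ is a coboundary. The uniqueness part of \cite[Theorem C.1]{CGP2}, already used in Lemma \ref{reduction1}, then forces $d=d'$.

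Alternatively, uniqueness can be checked directly by the computation in the proof of Lemma \ref{Gamma2}. Take $g=u_{k,h}(t)$ and $x=e_{h,k}-e_{n+k,n+h}$; this gives $(d-d')(t)=(t-t^2)\mu$ for a constant $\mu$, and choosing $t\in\FF^2\setminus\FF_2$ gives $\mu=0$, hence $d=d'$. When $\FF=\FF_2$ there is nothing to prove, since $\Der(\FF)=\{0\}$. The only point I expect to require care is that this uniqueness argument uses only elements of $G^0$, so that the possible $F_0$-contribution (which vanishes on $G^0$ by Lemma \ref{F0}) does not interfere.
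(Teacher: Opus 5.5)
Your proof is correct and follows the paper's own route: the paper obtains this proposition by combining Lemma \ref{reduction1} (subtract $\gamma_{d_F}+F_1$) with Lemma \ref{reduction1bis} (absorb the remaining values on $G^0$ into a second coboundary), which gives $F'=F_1+F_2$. For uniqueness you do not need either of your separate arguments: any pair $(d,F')$ satisfying the proposition also satisfies the condition of Lemma \ref{reduction1} once you restrict to $x\in\cL_0^0$, so the uniqueness clause of that lemma applies directly.
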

 
\subsubsection{$F(g)$ for $g\in G^\pm$}

In view of Proposition \ref{reduction2} we can assume that $F(g) = 0$ for every $g\in G^0$. In the sequel, given $g\in G$, the symbol $m_g$ stands for a representative matrix of $F(g)$. 

Before turning to $G^+$ and $G^-$ we shall prove that $F$ is null also on the normalizer $N(G^0)$ of $G^0$ in $G$. Recall that $G^0$ consists of the elements of $G$ represented by block-diagonal matrices $\mathrm{diag}(X, X^{-t})$ with $\mathrm{det}(X) = 1$. The  elements of $G$ represented by block-diagonal matrices $\mathrm{diag}(X, X^{-t})$ with $\mathrm{det}(X) \neq 0$ form $N(G^0)$.  

\begin{lemma}\label{D diag}
If $g$ normalizes $G^0$ then $F(g) = 0$.
\end{lemma}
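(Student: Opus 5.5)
We exploit that $F$ vanishes on $G^0$ and that $G^0$ is normal in $N(G^0)$. Fix $g\in N(G^0)$, represented by $\mathrm{diag}(X,X^{-t})$, and let $m_g\in\cM_\cL$ represent $F(g)$ via $\tau_\cM$ (Lemma \ref{L0perp}). For every $h\in G^0$ we have $ghg^{-1}\in G^0$, so $F(h)=F(ghg^{-1})=0$. Expanding the cocycle identity on $F(gh)=F(ghg^{-1}\cdot g)$ gives
\[F(g)\cdot h + F(h) ~=~ F(ghg^{-1})\cdot g + F(g),\]
hence $F(g)\cdot h = F(g)$ for all $h\in G^0$. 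So $F(g)$ is a $G^0$-fixed vector of $\cL_0^*$. Translating through Lemma \ref{L0perp} and Lemma \ref{GT}, this means $h^{-1}m_gh-m_g\in\langle I\rangle$ for all $h\in G^0$. Taking $h=u_{k,h}(t)$ and comparing blocks (the scalar discrepancy being forced to vanish, as in the proof of Lemma \ref{reduction1bis}), we want $m_g$ centralized by $G^0$ modulo $I$. A matrix in $\cM_\cL$ commuting with all $\mathrm{diag}(A,A^{-t})$, $A\in\mathrm{SL}(n,\FF)$, must have scalar diagonal blocks and off-diagonal antisymmetric blocks $B,C$ satisfying $ABA^t=B$ and $A^tCA=C$ for all $A$. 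For $n\ge 3$ this forces $B=C=O_n$; one sees it already with transvections. Hence $m_g\equiv \lambda I_\pm$ modulo $\langle I\rangle$. In characteristic $2$ this is $\lambda\,\mathrm{diag}(I_n,I_n)=\lambda I$, which is $\equiv 0$. Therefore $F(g)=0$.

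The main obstacle is the fixed-point computation, in particular making sure no nonzero $G^0$-invariant functional on $\cL_0$ survives. The trace functional on the $X$-block is a natural candidate, but it vanishes on $\cL_0$ because $\cL_0$ has $\Tr(X)=0$. If that argument is delicate, I would instead check directly that the $G^0$-fixed space of $\cL_0^*\cong\cM_\cL/\langle I\rangle$ is trivial. To do this I would decompose $\cM_\cL$ into the $X$-part and the $B$- and $C$-parts. The $B$- and $C$-parts are $\wedge^2$ of the natural module and its dual, which have no fixed vectors for $n\ge 3$. On the $X$-part, $\mathfrak{gl}_n$ modulo scalars has no fixed vectors.
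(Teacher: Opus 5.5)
Your proof is correct and follows essentially the same route as the paper: normality plus the cocycle identity give $F(g)\cdot g_0=F(g)$ for all $g_0\in G^0$, hence $g_0^{-1}m_gg_0\equiv m_g$ $(\mathrm{mod}~\langle I\rangle)$. This forces $B_g=C_g=O_n$ and a scalar diagonal block, so $m_g\in\langle I\rangle$. The only cosmetic difference is that you first show the scalar discrepancy vanishes on transvections (valid since $n\geq 3$), whereas the paper simply works modulo $\langle I_n\rangle$ throughout.
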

\noindent
{\em Proof.} Let $g\in N(g^0)$. Then for every $g_0\in G^0$ there exists $g'_0 \in G^0$ such that $g'_0g = gg_0$ and the coycyle condition on $F$ implies that $F(g'_0)\cdot g + F(g) = F(g)\cdot g_0 + F(g_0)$. However $F(g'_0) = F(g_0) = 0$. Therefore $F(g) = F(g)\cdot g_0$ for every $g_0\in G^0$. Let  
\[m_g ~ = ~ \left(\begin{array}{cc} 
A_g & B_g \\
C_g & A_g^t
\end{array}\right), \hspace{5 mm} (B_g \mbox{ and } C_g \mbox{ antisymmetric})\]
be a representative matrix of $F(g)$. The equality $F(g) = F(g)\cdot g_0$ implies that $g_0^{-1}m_gg_0 \equiv m_g ~(\mathrm{mod}~\langle I\rangle$) for every $g_0\in G^0$. Therefore $X^{-1}A_gX \equiv A_g ~(\mathrm{mod} ~\langle I_n\rangle)$, $X^{-1}B_gX^{-t} = B_g$ and $X^tC_gX = C_g$ for every $X\in \mathrm{SL}(n,\FF)$. This forces $B_g = C_g = O_n$ and $A_g\in \langle I_n\rangle$, namely $m_g\in \langle I\rangle$. Hence  $F(g) = 0$, as claimed.   \hfill $\Box$\\

We now turn to $G^+$. The group $G^+$ is commutative and all of its elements are involutions. Let $g \in G^+$. Since all elements of $G^+$ are involutions, from the proof of Propositon \ref{reduction2} we know that
\begin{equation}\label{D II 1}
g^{-1}m_gg+m_g ~ = ~ \lambda_g I
\end{equation} 
for a suitable scalar $\lambda_g\in\FF$, where 
\[m_g ~ = ~ \left(\begin{array}{cc} 
A_g & B_g \\
C_g & A_g^t
\end{array}\right)\]
is a representaive matrix of $F(g)$. Recall that 
\[g ~ = ~ \left(\begin{array}{cc} 
I_n & O_n\\
U_g & I_n
\end{array}\right)\] 
for a suitable antisymmetric matrix $U_g$ of order $n$. Condition (\ref{D II 1}) thus implies that $B_gU_g = \lambda_g I_n$ and $U_gA_g+A^t_gU_g = \lambda_gU_g$. The group $G^+$ contains many elements $g$ such that $\mathrm{det}(U_g) = 0$. (For instance, $u_{k,h}^+(t)$ has this property for any choice of $k < h$.) Suppose that $g$ is one of them. Then the relation $B_gU_g = \lambda_gI_n$ implies $\lambda_g = 0$ and the above boils down to the following:
\begin{equation}\label{D II 2} 
B_gU_g ~=~ O_n, \hspace{5 mm} U_gA_g ~=~ A_g^tU_g.
\end{equation}
Let $g'$ be another element of $G^+$. From the proof of Propositon \ref{reduction2} we know that  
\begin{equation}\label{D II 2?}
({g'}^{-1}mg'-m')-(g^{-1}m'g + m) = \lambda_{g,g'} I.
\end{equation} 
Therefore
\begin{equation}\label{D II 2bis}
BU' = B'U+\lambda I_n, \hspace{5 mm} U'A+ U'BU' +A^tU' = UA'+ UB'U + {A'}^tU 
\end{equation}
where we write $U, A, B, U', A', B'$ and $\lambda$ for $U_g, A_g, B_g, U_{g'}, A_{g'}, B_{g'}$ and $\lambda_{g,g'}$ respectively. In view of the first relation of (\ref{D II 2bis}) the second relation can be rewritten as follow:
\[ U'A+ U'B'U + \lambda U' + A^tU' ~=~ UA'+ UB'U + \lambda U + {A'}^tU. \]
However $UB = BU = O_n$ (by (\ref{D II 2}) and because $UB = (BU)^t$). Suppose that we also have $\mathrm{det}(U') = 0$. Then $U'B' = B'U' = O_n$ and the above boils down to
\begin{equation}\label{D II 3}
BU' = B'U+\lambda I_n, \hspace{5 mm} U'A+ A^tU' + \lambda U' = UA'+ {A'}^tU + \lambda U.
\end{equation} 
Let now $g = u^+_{k,h}(t)$ and $g' = u_{p,q}(s)$, with $\{k,h\}\neq \{p,q\}$ and $t \neq 0 \neq s$. Then (\ref{D II 2?}) implies that the $k$-th and $h$-rows of $B$ are null (as well as the $k$-th and $h$-th columns, of course) and all entries of the $k$-th and $h$-th columns of $A$ but possibly the $k$-th and $h$-entries are null. Moreover the $(k,k)$-entry and the $(h,h)$-entry of $A$ are equal. The same features occur for $B'$ and $A'$ up to replacing $k$ and $h$ with $p$ and $q$. In turn the first part of (\ref{D II 3}) implies that $\lambda = 0$ and the $p$-th and $q$-th rows of $B$ are null as well as the $k$-th and $h$-rows of $B'$. As $(p,q)$ is an arbitray pair of distinct indices, we obtain that $B = O_n$. Similarly, $B' = O_n$. 

As $\lambda = 0$, the second part of (\ref{D II 3}) just states that $U'A+ A^tU' = UA'+ {A'}^tU$. This condition implies that all entries of the $p$-th and $q$-th rows of $A$, except possibly the $p$-th, the $q$-th, the $k$-th and the $h$-entry, are null. Moreover the $(p,p)$-entry of $A$ is equal to the $(q,q)$-entry. The same properties hold for $A'$ up to permuting the pairs $(p,q)$ and $(k,h)$. We also obtain that, if  $a_{i,j}$ and $a'_{i,j}$ are the $(i,j)$ entries of $A$ and $A'$ respectively, then   
\[\begin{array}{rl}
sa_{q,k} = ta'_{h,p} & \mbox{if } p\neq k,\\
sa_{q,h} = ta'_{k,p} & \mbox{if } p\neq h,\\
sa_{p,k} = ta'_{h,q} & \mbox{if } q\neq k,\\
sa_{p,h} = ta'_{k,q} & \mbox{if } q\neq h.
\end{array}\]
If $|\FF| > 2$ then, since $s$ ad $t$ are arbitrary non-zero elements of $\FF$, we have 
\[\begin{array}{rl}
a_{q,k} = a'_{h,p} = 0 & \mbox{if } p\neq k,\\
a_{q,h} = a'_{k,p} = 0 & \mbox{if } p\neq h,\\
a_{p,k} = a'_{h,q} = 0 & \mbox{if } q\neq k,\\
a_{p,h} = a'_{k,q} = 0 & \mbox{if } q\neq h.
\end{array}\]
Since $(p,q)$ is an arbitrary pair of indices (with $p < q$) different from $(k,h)$, the properties so far collected on $A$ show that if either $n > 3$ or $\FF\neq \FF_2$ then $A$ is a scalar matrix. Under these hypothes, as the matrix $m_g$ is determined up to multiples of $I$, we can assume that $A = O_n$ and $m_g$ can be given the following canonical form: 
\begin{equation}\label{D II 4} 
m_g ~ = ~ \left(\begin{array}{cc} 
O_n & O_n \\
C_g & O_n
\end{array}\right) \hspace{5 mm} (C_g \mbox{ antisymmetric}).
\end{equation}
This holds for every $g\in U^+_{k,h}$. However $\cup_{k<h} U^+_{k,h}$ generates $G^+$. Therefore, by the cocycle condition on $F$ and Lemma \ref{GT}, for every $g\in G^+$ the linear functional $F(g)$ admits a canonical representative matrix of the form (\ref{D II 4}).

Similarly, under the assumption that $\FF \neq \FF_2$ when $n = 3$, if $g\in G^-$ then $F(g)$ admits a canonical representative matrix as follows: 
\begin{equation}\label{D II 5}
m_g ~ = ~ \left(\begin{array}{cc} 
O_n & B_g \\
O_n & O_n
\end{array}\right) \hspace{5 mm} (B_g \mbox{ antisymmetric}).
\end{equation}

\begin{proposition}\label{reduction3} 
Let $F\in Z^1(\cL_0^*,G)$ be such that $F(g)(x) = 0$ for every $g\in G^0$ and every $x\in \cL_0$. Suppose that $n > 3$. Then $F \in \langle F_0\rangle$. 
\end{proposition}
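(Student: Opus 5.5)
We use the canonical forms just obtained. Since $F$ vanishes on $G^0$ and $n>3$, for every $g\in G^+$ the functional $F(g)$ has a representative $m_g$ of the form (\ref{D II 4}), with antisymmetric block $C_g$; for $g\in G^-$ it has one of the form (\ref{D II 5}), with block $B_g$. The plan is to show that $C_g$ (resp.\ $B_g$) must be a fixed scalar multiple of the lower-left (resp.\ upper-right) block of $g-I$, with the same scalar $\lambda$ for $G^+$ and $G^-$. By Lemma \ref{F0}, $F_0(g)(x)=\tau_\cM(g,x)=\tau_\cM(g-I,x)$ for $g\in G^\pm$, and $F_0$ vanishes on $G^0$. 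Hence $F-\lambda F_0$ would vanish on $G^0\cup G^+\cup G^-$, which generates $G$, and so $F=\lambda F_0$.

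\textbf{Step 1: $N(G^0)$-equivariance.} For $g\in G^\pm$ and $a\in N(G^0)$, the element $a^{-1}ga$ lies again in $G^\pm$. By Lemma \ref{D diag} we have $F(a)=0$, so the cocycle condition gives $F(a^{-1}ga)=F(g)\cdot a$. For $a=\mathrm{diag}(X,X^{-t})$ and $g=\left(\begin{smallmatrix}I_n&O_n\\U&I_n\end{smallmatrix}\right)$, conjugation sends $U\mapsto X^tUX$. Therefore the map $\phi^+:U_g\mapsto C_g$, which is additive in $U_g$ because $G^+$ is abelian and the canonical form is additive, satisfies
\[
\phi^+(X^tUX)=X^{-1}\phi^+(U)X^{-t}\quad\text{for all } X\in\mathrm{GL}(n,\FF).
\]
Applying this first to diagonal $X$ shows that $\phi^+$ sends $U$ supported on the pair $\{k,h\}$ to a matrix supported on $\{k,h\}$, with a factor $c(t)$. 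Equivariance under the diagonal torus then gives $c(\mu\nu t)=(\mu\nu)^{-1}c(t)$, so $c(t)=c(1)t^{-1}$. This should contradict additivity, since $t\mapsto t^{-1}$ is not additive when $|\FF|>2$, unless $c=0$.

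\emph{Possible correction.} This computation suggests $\phi^+=0$, which may be too strong. I will recheck the pairing convention. Via $\tau_\cM$, the representative $m$ pairs the $C$-block of $m$ with the $Y$-block of $x$, and the natural equivariant choice is $C_g\propto U_g$, twisted by $\tau$-duality. So the correct equivariance is likely $\phi^+(X^tUX)=X^t\phi^+(U)X$ in the dual sense. With that convention the torus argument forces $C_g=\lambda^+U_g$ on root elements; permutation matrices in $N(G^0)$ show that $\lambda^+$ is independent of $(k,h)$, and additivity extends the identity to all of $G^+$. The same argument gives $B_g=\lambda^-V_g$ on $G^-$.

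\textbf{Step 2: $\lambda^+=\lambda^-$.} I will take $g=u^+_{k,h}(t)$ and $g'=u^-_{p,q}(s)$ with $\{k,h\}\cap\{p,q\}=\emptyset$; these commute, and this uses $n\ge4$. The cocycle condition applied to $gg'=g'g$ gives
\[
g'^{-1}m_gg'+m_{g'}\equiv g^{-1}m_{g'}g+m_g \pmod{\langle I\rangle}.
\]
This relation is automatic, so a non-commuting pair is needed. I will instead use the commutator $(u^+_{k,h}(t),u^-_{h,p}(s))$, which is a root element of $G^0$, of the type $u_{k,p}(\pm st)$ up to a diagonal factor. Since $F$ vanishes on $G^0$, expanding $F$ on this commutator via the cocycle identity yields a linear relation between $\lambda^+$ and $\lambda^-$. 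The same computation for $F_0$ (where it holds with $\lambda^+=\lambda^-=1$) shows that the relation is $\lambda^+=\lambda^-$. Then $F-\lambda F_0$ vanishes on generators, and hence $F=\lambda F_0$.

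\textbf{Main obstacle.} I expect the hardest part to be Step 1: getting the dual action on representatives right and excluding any non-scalar equivariant additive maps. The torus scaling must be used carefully, with the hypothesis $|\FF|>2$ entering, to force linearity with a single constant.
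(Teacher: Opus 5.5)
Your route differs from the paper's and, once cleaned up, works for $|\FF|>2$. It has a genuine gap at $\FF=\FF_2$, which the proposition covers, since its only hypothesis is $n>3$.

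\textbf{Step 1.} Discard the first equivariance rule. It is not a matter of convention; it is wrong. It would force $F=0$, yet $F_0$ is a nonzero cocycle satisfying the hypotheses. Because $\tau_\cM$ is $G$-invariant, $F(g)\cdot a$ is represented by $a^{-1}m_ga$. For $a=\mathrm{diag}(X,X^{-t})$ this sends $C_g$ to $X^tC_gX$, exactly as $a^{-1}ga$ sends $U_g$ to $X^tU_gX$. Since canonical forms are preserved, $\phi^+(X^tUX)=X^t\phi^+(U)X$ holds exactly.

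Diagonal $X$ with $\det X$ a square lie in $G$ (they are products of $h_\alpha(t)$'s), and these suffice for the following:
\begin{itemize}
\item $C_{u^+_{k,h}(t)}$ is supported on $\{k,h\}$, with coefficient $c(t)$ satisfying $c(\mu t)=\mu c(t)$;
\item $\mathrm{diag}(P,P)\in G^0$ for permutation matrices $P$, which makes the constant independent of $(k,h)$;
\item additivity on $G^+$ (where $g_2^{-1}m_{g_1}g_2=m_{g_1}$) then gives $C_g=\lambda^+U_g$ on all of $G^+$.
\end{itemize}
The support and scaling steps need some $\mu\notin\{0,1\}$. Over $\FF_2$ the diagonal part of $N_G(G^0)$ is trivial, so Step 1 yields nothing and you need a separate argument. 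One option: over $\FF_2$ the map $U_g\mapsto C_g$ is additive, hence $\FF_2$-linear, and $\mathrm{SL}(n,2)$-equivariant on the alternating matrices. That module is $\wedge^2$ of the dual of the natural module and is absolutely irreducible, so the map is scalar by Schur's lemma. Alternatively, use the paper's commutation arguments, which work over any field.

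\textbf{Step 2.} Your commutator idea is right, but checking the relation for $F_0$ only shows that $(1,1)$ satisfies it, not that it is nontrivial. You need to do the computation.

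For $g=u^+_{k,h}(t)$ and $g'=u^-_{h,p}(s)$ (adjusting index order if $p<h$), the commutator is exactly the root element $u_{p,k}(st)\in G^0$, with no diagonal factor. Hence $F(gg')=F(g'g)$, that is, $g'^{-1}m_gg'+m_{g'}\equiv g^{-1}m_{g'}g+m_g$ modulo $\langle I\rangle$. Let $U,V$ be the off-diagonal blocks of $g,g'$. Then $VU=st\,e_{p,k}$, $UV=st\,e_{k,p}$ and $VUV=UVU=O_n$. The relation becomes $(\lambda^++\lambda^-)\,st\,\mathrm{diag}(e_{p,k},e_{k,p})\in\langle I\rangle$, which forces $\lambda^+=\lambda^-$ over every field.

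\textbf{Comparison with the paper.} The paper gets the shape of $C_g$ from commutation with disjoint $u^-_{p,q}(s)$ and $u_{p,q}(s)$ and from conjugation by $u_{k,p}(s)$. It then proves $\lambda^+=\lambda^-$ via $h(t)=w(t)w(1)$ and Lemma \ref{D diag} when $\FF\neq\FF_2$, with a separate argument for $\FF_2$. That separate argument uses the block swap $\left(\begin{smallmatrix}O_n&I_n\\ I_n&O_n\end{smallmatrix}\right)$. Its Dickson invariant is $n \bmod 2$, so it lies in $G$ only for even $n$. Your commutator argument is simpler than the $w(t)$ computation and uniform in $\FF$, so it makes that separate case unnecessary.
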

\noindent
{\em Proof.} Let $F$ be as in the hypotheses of the proposition. Given $g = u^+_{k,h}(t) \in U^+_{k,h}$ and $g = u^-_{pq}(s)\in U^-_{p,q}$  put $m := m_g$, $m' := m_{g'}$, $C_{k,h}(t) := C_g$ and $B_{p,q}(s) = B_{g'}$, with $G_g$ and $B_{g'}$ as in (\ref{D II 4}) and respectively (\ref{D II 5}). Suppose that $\{p,q\}\cap\{k,h\} = \emptyset$. Then $g$ and $g'$ commute. Accordingly, $g'^{-1}mg + m' \equiv g^{-1}m'g + m ~ (\mathrm{mod}~\langle I\rangle$, namely
\begin{equation}\label{D II 6} 
\left\{\begin{array}{rcl}
st(e_{q,p}-e_{p,q})C_{k,h}(t)(e_{p,q}-e_{q,p}) & = & O_n, \\
st(e_{k,h}-e_{h,k})B_{p,q}(s)(e_{h,k}-e_{k,h}) & = & O_n,\\
s(e_{q,p}-e_{p,q})C_{k,h}(t) & = & tB_{p,q}(s)(e_{k,h}-e_{h,k}) + \lambda I_n \mbox{ for some } \lambda\in\FF.  
\end{array}\right.
\end{equation} 
As $\{p,q\}$ is any pair of indices disjoint from the pair $\{k,h\}$ the first relation of (\ref{D II 6}) imply that all rows (and columns) of $C_{k,h}(t)$ but possibly the $k$-th and the $h$-th ones are null. Similarly, all rows (and columns) of $B_{p,q}(s)$ but possibly the $p$-th and the $q$-th ones are null.
Explicitly,
\[\begin{array}{rcl}
C_{k,h}(t) & = & c_{k,h}(t)(e_{h,k} - e_{k,h}) +  \sum_{j \neq k, h}(c_{k,j}(t)(e_{j,k}-e_{k,j}) + c_{h,j}(t)(e_{j,h}- e_{h,j})),\\
B_{p,q}(s) & = & b_{p,q}(s)(e_{p,q} - e_{q,p}) +  \sum_{j \neq p, q}(b_{p,j}(s)(e_{p,j}-e_{j,p}) + b_{q,j}(s)(e_{q,j}- e_{j,q})),
\end{array}\]
for suitable additive functions $c_{k,h}, c_{k,j}, c_{h,j}, b_{p,q}, b_{p,j}, b_{q,j}$ from $\FF$ to $\FF$. (Additivity easily follows from that fact that $u^+_{k,h}(t_1)u_{k,h}^+(t_2) = u^+_{k,h}(t_1+t_2)$ and $u^-_{p,q}(s_1)u_{p,q}^-(s_2) = u^-_{p,q}(s_1+s_2)$.) Taking this result into accont the third equation of (\ref{D II 6}) implies that $\lambda = 0$ and
\[sc_{k,p}(t) = -tb_{h,q}(s), ~ sc_{h,p}(t) = tb_{k,q}(s), ~ sc_{k,q}(t) = tb_{h,p}(s), ~ sc_{h,q}(t) = -tb_{k,p}(s).\]
Hence 
\begin{equation}\label{D II 7}
\left\{\begin{array}{rcl}
C_{k,h}(t)  & = & c_{k,h}(t)(e_{h,k} - e_{k,h}) +  t\sum_{j \neq k, h}(c_{k,j}(e_{j,k}-e_{k,j}) + c_{h,j}(e_{j,h}- e_{h,j})),\\
  & & \mbox{ where } c_{k,j} := c_{k,j}(1) \mbox{ and } c_{h,j} := c_{h,j}(1) \mbox{ for } j \neq k, h; \\
B_{p,q}(s) & = &  b_{p,q}(s)(e_{p,q} - e_{q,p}) +  s\sum_{j \neq p, q}(b_{p,j}(e_{p,j}-e_{j,p}) + b_{q,j}(e_{q,j}- e_{j,q})), \\
 &  & \mbox{ where } b_{p,j} := b_{p,j}(1) \mbox{ and } b_{q,j} := b_{q,j}(1) \mbox{ for } j \neq p, q.  
\end{array}\right.
\end{equation} 
Moreover
\begin{equation}\label{D II 7bis}
c_{k,p} = -b_{h,q}, ~~ c_{h,p} = b_{k,q}, ~~ c_{k,q} = b_{h,p}, ~~ c_{h,q} = -b_{k,p}.
\end{equation} 
With $g = u^+_{k,h}(t)$ and $\{p,q\}\cap\{k,h\} = \emptyset$ as above (but now allowing $p > q$, while above we had $p < q$), let $g_0 = u_{p,q}(s) = I + se_{p,q} - se_{n+q,n+p}$.  We have $gg_0 = g_0g$. Hence $F(g)\cdot g_0 + F(g_0) = F(g_0)\cdot g + F(g)$. However $F(g_0) =  0$ by assumption (since $g_0\in G^0$) while $F(g)$ is represented by a matrix $m_g$ as in (\ref{D II 4}) with $C_g = C_{k,h}(t)$, the latter being described in (\ref{D II 7bis}). Therefore $g_0^{-1}m_gg_0 \equiv m_g$ $(\mathrm{mod}~ \langle I\rangle)$. In view of (\ref{D II 7bis}), this equivalence forces $c_{k,j} = c_{h,j} = 0$ for every $j \neq k,h$. Similarly, $b_{p.j} = b_{q,j} = 0$ for every $j\neq p,q$. Therefore,
\begin{equation}\label{D II 8}
\left\{\begin{array}{rrcl}
\mbox{the matrix} & c_{k,h}(t)x_{\alpha^+_{[k,h]}} & \mbox{represents} & F(u^+_{k,h}(t)); \\
\mbox{the matrix} & b_{p,q}(s)x_{\alpha^-_{[p,q]}} & \mbox{represents} & F(u^-_{p,q}(s)).
\end{array}\right.
\end{equation}
With $g = u^+_{k,h}(t)$ let now $g_0 = u_{k,p}(s)$ with $p\not\in\{k,h\}$. Now $g_0^{-1}gg_0 = gg'$ where $g' = u^+_{p,h}(st)$ or $g' = u^+_{h,p}(st)$ according to whether $p < h$ or $h < p$. We have $F(g_0) = 0$ because $g_0\in G^0$ and $F(gg') = F(g) + F(g')$ (because $g'$ commutes with any matrix as in (\ref{D II 4}), hence with any representative matrix of $F(g)$). The cocycle condition on $F$ now yields $F(g)\cdot g_0 = F(g)+F(g')$. In view of (\ref{D II 8}) this equality is equivalent to the following, where to fix ideas we assume $p < h$: 
\[c_{k,h}(t)(e_{h,k}-e_{k,h}+ s(e_{p,h}+ e_{h,p})) ~ = ~ c_{k,h}(t)(e_{h,k}-e_{k,h}) + c_{p,h}(st)(e_{h,p}- e_{p,h}).\]
Equivalently, $sc_{k,h}(t) = c_{p,h}(st)$ (recall that $\ch(\FF) = 2$).  With $s = 1$ this condition shows that $c_{k,h}(t) = c_{p,h}(t)$ for every $t$ while with $t = 1$ it yieds $c_{p,h}(s) =  sc_{k,h}(1)$ for every $s$. It follows that there exists a scalar $\lambda\in \FF$ such that $c_{k,h}(t) = c_{p,h}(t) = \lambda t$ for every $t\in \FF$. A similar result holds for $g = u^-_{k,h}(t)$. As the indices $k, h, p$ are arbitray, with the only restriction that they are pairwise different (and $k < h$), we obtain the following conclusion: there exist two scalars $\lambda^+, \lambda^- \in \FF$ such that 
\begin{equation}\label{D II 9}
 \mbox{for every } t\in \FF ~\left\{\begin{array}{rrcl}
\mbox{the matrix} & t\lambda^+x_{\alpha^+_{[k,h]}} & \mbox{represents} & F(u^+_{k,h}(t); \\
\mbox{the matrix} & t\lambda^-x_{\alpha^-_{[k,h]}} & \mbox{represents} & F(u^-_{k,h}(t)). 
\end{array}\right. 
\end{equation}
We claim that $\lambda^+ = \lambda^-$. 

Given $t\in\FF\setminus\{0\}$, put $w(t) := u^+_{1,2}(t)u_{1,2}^-(-t^{-1})u_{1,2}^+(t)$ and $h(t) := w(t)w(1)$. The element $h(t)$ is represented by a diagonal matrix. Hence it normalizes $G^0$. Therefore $F(h(t)) = 0$ by Lemma \ref{D diag}. On the other hand, in view of (\ref{D II 9}), the linear functional $F(w(t))$ is represented by the following matrix:
\[m_w(t) ~ = ~ \left(\begin{array}{cc}
-\lambda^-(e_{1,1}+e_{2,2}) & (\lambda^++\lambda^-)t^{-1}(e_{2,1}-e_{1,2}) \\
(\lambda^++\lambda^-)t(e_{2,1}-e_{1,2}) & \lambda^-(e_{1,1}+e_{2,2})
\end{array}\right).\]
So $F(h(t))$ is represented by
\[\begin{array}{l}
w(1)^{-1}m_w(t)w(1) + m_w(1) ~ = \\
= ~ \left(\begin{array}{cc}
 O_n & (\lambda^++\lambda^-)(1+t)(e_{2,1}-e_{1,2}) \\
(\lambda^++\lambda^-)(1+\frac{1}{t})(e_{2,1}-e_{1,2}) & O_n
\end{array}\right).
\end{array}\]
As $F(h(t)) = 0$, the above matrix is null for every $t\neq 0$. When $\FF\neq \FF_2$ we can choose $t\not\in\{0,1\}$. With this choice of $t$, the above matrix is null only if $\lambda^++\lambda^- = 0$, namely $\lambda^+ = \lambda^-$, as claimed. 

Let $\FF = \FF_2$ and suppose for a contradiction  that $\lambda^+ \neq \lambda^-$, namely $\{\lambda^+,\lambda^-\} = \{0,1\}$. Let $\lambda^+ = 1$ and $\lambda^- = 0$, to fix ideas. So, $F(g) = 0$ for every $g\in G^0\cup G^-$ while $F(g)$ is represented by both $g$ and $g +I$ fo every $g\in G^+$. Let $g_0$ be the element of $G$ represented by the matrix
\[\left(\begin{array}{cc}
O_n & I_n \\
I_n & O_n
\end{array}\right)\]
and let
 \[m_0 ~ = ~ \left(\begin{array}{cc}
A & B \\
C & A^T
\end{array}\right)\]
be a given representative matrix of $F(g_0)$. For every choice of $k< h$ we have $g_0u^-_{k,h}(1)g_0 = u^+_{k,h}(1)$. Therefore, since $F(u^-_{k,h}(1)) = 0$ and $F(u^+_{k,h}(1)) = \{u^+_{k,h}(1), u^+_{k,h}(1)+I\}$, we have 
\[g_0u^-_{k,h}(1)m_0u^-_{k,h}(1)g_0 + m_0 ~ \in ~ \{u^+_{k,h}(1), u^+_{k,h}(1)+I\}.\]
Recalling that $\{k,h\}$ is an arbitrary pair of indices, from the above by straightforward computations we obtain that $B = C = O_n$, $A$ is symmetric, say $A = \sum_{i < j}a_{i,j}(e_{i,j}+e_{j,i}) + \sum_{i=1}^na_{i,i}e_{i,i}$, and all entries of the $k$-th and $h$-th rows but possibly the $k$-th and $h$-ones are null. Moreover $a_{k,k}+a_{h,h} = 1$. However this holds for every choice of $k < h$. If follows that $A$ is diagonal and for every triplet of indices $i < j < k$ we have $a_{i,i}+a_{i,j} = a_{j,j}+a_{k,k} = a_{i,i}+a_{k,k} = 1$. This triple of conditions forces $0 = 1$. We have reached a contradiction, which forces us to conclude that $\lambda^+ = \lambda^-$, as claimed.  

Put $\lambda := \lambda^+ = \lambda^-$. In view of (\ref{D II 9}) and Lemma \ref{F0}, we have $F(g) = \lambda F_0(g)$ for every $g\in G^0\cup(\cup_{k<h}(U^+_{k,h}\cup U^-_{k,h}))$. However the set $G^0\cup(\cup_{k<h}(U^+_{k,h}\cup U^-_{k,h}))$ generates $G$. Consequently $F = \lambda F_0$. The proposition is proven.  \hfill $\Box$\\ 

Propositions \ref{reduction2} and \ref{reduction3} imply Proposition \ref{Main D}. The proof of Proposition \ref{Main D} is complete.  

\subsection{The long-root geometry of type $D_3$} 

By convention, the diagram $D_n$ is defined for $n > 3$. However nothing prevents to consider the Lie algebra $\mathfrak{o}(6,\FF)$ of type $D_3$ associated to the group $\Omega^+(6,\FF)$. Actually, we will need to consider both $\mathfrak{o}(6,\FF)$ and $\Omega^+(6,\FF)$ in Subsection \ref{main B} when dealing with the case $X_3 = B_3$, where $\mathfrak{o}(6,\FF)$ occurs as a subalgebra of $\mathfrak{o}(7,\FF)$ and $\Omega^+(6,\FF)$ as a subgroups of $\Omega(7,\FF)$. In that context we will need to know that Theorem \ref{even main1} also applies to $D_3$, but Proposition \ref{Main D} does not provide this information. 

However $D_3$ is nothing but $A_3$ with the nodes numbered in a different way (the types $1$ and $2$ are switched). Accordingly, $\mathfrak{o}(6,\FF) \cong \mathfrak{sl}(4,\FF)$. The long-root geometry of type $D_3$ is obviously the same as the long-root geometry $A_{3,\{1,3\}}(\FF)$ of type $A_3$, which in the $D_3$-setting should be named $D_{3,\{2,3\}}(\FF)$    $D_{3,\{2,3\}}(\FF) = A_{3, \{1, 3\}}(\FF)$ is the long-root geometry of type $D_3$. So, we might believe that Lemma \ref{A gamma1234} on $A_{3,\{1,3\}}(\FF)$, specifically regarding property $(\gamma1)$, is sufficient to conclude that Theorem \ref{Cohomology} applies to $D_3$ and, consequently, the statement of Theorem \ref{even main1} holds true for $D_3$ as well. However this conclusion, although correct, is not so obvious as it looks. Indeed it is not so obvious that the mapping $\gamma$ defined in Subsection \ref{gamma A} is basically the same as the $\gamma$ of Subsection \ref{gamma BD}. This issue needs a deeper analysis, which is what we shall do in this subsection. 

We are mainly interested in the characteristic 2 case here. However, having preliminarily looked at the case where $\ch(\FF) \neq 2$ will help to better understand the characteristic 2 case. 

Put $\cL = \mathfrak{o}(6,\FF)$, $G = \Omega^+(6,\FF)$, $\overline{\cL} = \mathfrak{sl}(4,\FF)$ and $\overline{G} := \mathrm{SL}(4,\FF)$, with $\overline{\cL}$ realized as Lie algebra of the traceless square matrices of order 4 wih entries in $\FF$. The order of the center $\overline{Z}$ of $\overline{G}$ is a divisor of $4$ and $\overline{Z}$ admits a subgroup $\overline{Z}_0$ of index at most 2 such that $\overline{G}/\overline{Z}_0 \cong G = \Omega^+(6,\FF)$. As $\overline{G}$ acts as $\overline{G}/\overline{Z}$ over $\overline{L}$, the group $G$ also acts on $\overline{L}$. On the other hand $G$ acts on the subalgebra $\cM_\cL$ of $M_6(\FF)$, which consists of the matrices described in (\ref{matrix D}).  When $\ch(\FF) \neq 2$ then $\cM_\cL = \cL$ and $\overline{\cL}$ are isomorphic both as Lie algebras and as $G$-modules. In contrast when $\ch(\FF) = 2$ these isomorphisms are missing. Indeed, as we know, when $\ch(\FF) = 2$ the group $G$ acts on a subalgera $\cL_0$ of $\cM_\cL$ which, both as a Lie algebra and as a $G$-module, is  isomorphic to the quotient $\cL/K$ of $\cL$ over its center $K$, hence to the quotient of $\overline{\cL}$ over its center $\overline{K} := \langle I_4\rangle$ (where $I_4$ is the identity matrix of order $4$). 

In the sequel we denote by $\ove_{i,j}$ the square matrix of oder $4$ with all null entries but the $(i,j)$ one, which is equal to $1$. So, $\{\ove_{i,j}\}_{i\neq j}\cup\{\ove_{i+1,i+1}-\ove_{i,i}\}_{i=1}^3$ is a Chevalley basis of $\overline{\cL}$, with $\ove _{2,1}, \ove_{3,2}$ and $\ove_{4,3}$ corresponding to the roots $\bar{\alpha}_1, \bar{\alpha}_2$ and $\bar{\alpha}_3$ of a basis of the root system of type $A_3$ while $\ove _{1,2}, \ove_{2,3}$ and $\ove_{3,4}$ correspond to their opposites. The elements $\bar{h}_1 := \ove_{2,2}-\ove_{1,1}, \bar{h}_2 := \ove_{3,3}-\ove_{2,2}$ and $\bar{h}_3 := \ove_{4,4}-\ove_{3,3}$ span the Cartan subalgebra of $\overline{\cL}$. If for $i\neq j$ and $t\in\FF$ we put $\ovu_{i,j}(t) = I_4 + t\ove_{i,j}$, then the set $\overline{U} := \{\ovu_{i,j}(t) ~|~ i\neq j, ~ t\in \FF\}$ generates $\overline{G}$. 

\subsubsection{The case that $\ch(\FF)\neq 2$}  

Let  $\ch(\FF) \neq 2$. We define an isomorphism of Lie algebras $\phi_\cL$ from $\overline{\cL}$ to $\cL = \cM_\cL$ and a homomorphism $\phi_G$ from $\overline{G}$ to $G$ as follows.
\begin{equation}\label{A3D3oddL}
\phi_\cL ~ : ~ \left\{\begin{array}{l}
\begin{array}{rcl}
\ove_{i,j} ~ (\mbox{with } i,j > 1, ~ i\neq j) & \longrightarrow & e_{i-1,j-1}- e_{2+j, 2+i};
\end{array}\\
\begin{array}{rcl}
\ove_{i,1} ~ (\mbox{with } i > 1) & \longrightarrow & e_{h+3,k}-e_{k+3,h},\\
 & & \mbox{for }  1\leq k < h\leq 3 \mbox{ such that } k+h = 7-i; \\
\ove_{1,j}  ~ (\mbox{with } j >1) & \longrightarrow & e_{k,h+3}-e_{h,k+3},\\
& &  \mbox{for }  1\leq k < h\leq 3 \mbox{ such that } k+h = 7-j; \\
\bar{h}_1 ~ = ~ \ove_{2,2}-\ove_{1,1} & \longrightarrow & -e_{2,2}-e_{3,3} + e_{5,5} + e_{6,6} ~ = ~ h_3; \\
\bar{h}_2 ~ = ~ \ove_{3,3}-\ove_{2,2} & \longrightarrow & -e_{1,1}+e_{2,2} + e_{3,3} - e_{4,4} ~ = ~ h_1; \\
\bar{h}_3 ~ = ~\ove_{4,4}-\ove_{3,3} & \longrightarrow & -e_{2,2}+e_{3,3} + e_{4,4} - e_{5,5} ~ = ~ h_2.
\end{array}
\end{array}\right.
\end{equation}
\begin{equation}\label{A3D3oddG}
\phi_G ~ : ~ \left\{\begin{array}{l}
\begin{array}{rcl}
\ovu_{i,j}(t) ~ (\mbox{with } i,j > 1) & \longrightarrow & u_{i-1,j-1}(t)
\end{array}\\
\begin{array}{rcl}
\ovu_{i,1}(t)  & \longrightarrow & u^+_{k,h}(t), \mbox{ for }  1\leq k < h\leq 3 \mbox{ such that } k+h = 7-i; \\
\ovu_{1,j}(t)  & \longrightarrow &  u^-_{k,h}(t), \mbox{ for } 1\leq k < h\leq 3 \mbox{ such that } k+h = 7-j; 
\end{array}
\end{array}\right.
\end{equation}
It is straightforward to check that $\phi_\cL$ and $\phi_G$ are indeed an isomorphism from $\overline{L}$ to $\cL$ and respectively a surjective homomorphism from $\overline{G}$ to $G$. (Note that the roots called $\bar{\alpha}_1, \bar{\alpha}_2$ and $\bar{\alpha}_3$ in $A_3$ are renamed $\alpha_3, \alpha_1$ and $\alpha_2$ in $D_3$.) Moreover, $\ker(\phi_G) = \overline{Z}_0$. If $|\overline{Z}| = 2$ then $\overline{Z}_0 = 1$ and $\phi_G$ is an isomorphism from $\overline{G}$ to $G$; otherwise $|\overline{Z}| = 4$, $|\overline{Z}_0| = 2$ and $\phi_G$ induces an isomorphism from $\overline{G}/\overline{Z}_0$ to $G$, which by a little abuse we still denote by $\phi_G$. In any case, the pair $\phi = (\phi_L, \phi_G)$ is an isomorphism of modules from $(\overline{\cL}, \overline{G}/\overline{Z}_0)$ to $(\cL,G)$.

Clearly, $\phi$ induces and isomorphism $\phi_Z$ from $Z^1(\cL^*,G)$ to $Z^1(\overline{\cL}^*, \overline{G}/\overline{Z}_0)$) which maps $B^1(\cL^*,G)$ onto $B^1(\overline{\cL}^*, \overline{G}/\overline{Z}_0)$. Explicitly,
\[\phi_Z~ : ~ F\in Z^1(\cL^*,G) ~ \longrightarrow ~ \overline{F} ~: ~ \left(\begin{array}{rcl}
{} & \bar{g} \in \overline{G}/\overline{Z}_0 & \\
& \downarrow&  \\
\ovx\in \overline{\cL} & \longrightarrow & F(\phi_G(\bar{g}))(\phi_L(\ovx))
\end{array}\right).\]
We can rephrase the above as follows. As every coycle of $\overline{G}$ over $\overline{\cL}^*$ is null over $\overline{Z}$, there is no substantial difference between $Z^1(\overline{\cL}^*, \overline{G}/\overline{Z}_0)$ and $Z^1(\overline{\cL}^*, \overline{G})$. So we can freely regard $Z^1(\overline{\cL}^*, \overline{G}/\overline{Z}_0)$ and $Z^1(\overline{\cL}^*, \overline{G})$ as the same structure, thus regarding $\phi_Z$ as an isomorphism from $Z^1(\cL^*, G)$ to $Z^1(\overline{\cL}^*, \overline{G})$ (which maps $B^1(\cL^*, G)$ onto $B^1(\overline{\cL}^*, \overline{G})$). In this way we can avoid unnecessary complications in what we are going to say in the next lines.        

For every $d\in \Der(\FF)$ let $\overline{\gamma}_d:\overline{G}\rightarrow \overline{\cL}^*$ be the mapping defined as follows:
\begin{equation}\label{ovgamma}
\overline{\gamma}_d ~:~ \bar{g}\in \overline{G} ~\longrightarrow ~ \overline{\gamma}_d(\bar{g}) ~ : ~ \left(\begin{array}{c}
\ovx \in \overline{\cL} \\
\downarrow \\
\Tr(\bar{g}^{-1}d(\bar{g})\ovx) \in \FF
\end{array}\right).
\end{equation}   
Then $\overline{\gamma}_d\in Z^1(\overline{\cL}^*,\overline{G})$ and, as proved in \cite[Appendix C]{CGP2}, the linear mapping 
\[\begin{array}{rcccc}
\overline{\gamma} & : & \Der(\FF) & \longrightarrow & Z^1(\overline{\cL}^*,\overline{G}) \\
& & d\in\Der(\FF) & \longrightarrow & \overline{\gamma}_d 
\end{array}\] 
is injective and its image is a complement of  $B^1(\overline{\cL}^*,\overline{G})$ in  $Z^1(\overline{\cL}^*,\overline{G})$. Accordingly, $\overline{\gamma}$ induces an isomorphism from $\Der(\FF)$ to $Z^1(\overline{\cL}^*,\overline{G})$. Clearly, the mapping $\overline{\gamma}$ can be defined also when $\ch(\FF) = 2$ and, according to \cite{CGP2}, what we have just said on it holds true regardless of $\ch(\FF)$. 

With $\gamma:\Der(\FF)\rightarrow Z^1(\cL^*,G)$ as in Subsection \ref{gamma BD}, let $\frac{1}{2}\gamma$ be the function that maps every $d\in \Der(\FF)$ onto the cocycle $\frac{1}{2}\gamma_d$. In short, $\frac{1}{2}\gamma$ is defined just as $\gamma$ but with $T$ replaced by $\frac{1}{2}T$. 

\begin{lemma}\label{A3D3odd lem}
Let $\ch(\FF) \neq 2$. Then $\overline{\gamma} = \phi_Z\circ \frac{1}{2}\gamma$.
\end{lemma}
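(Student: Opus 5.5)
We need to show that for every $d\in\Der(\FF)$, $\bar g\in\overline{G}$ and $\ovx\in\overline{\cL}$,
\[\Tr(\bar g^{-1}d(\bar g)\ovx) ~=~ \tfrac12\Tr\bigl(\phi_G(\bar g)^{-1}d(\phi_G(\bar g))\,\phi_\cL(\ovx)\bigr).\]
The plan is to reduce to generators. Both sides are $1$-cocycles of $\overline{G}$ over $\overline{\cL}^*$. The left side is $\overline{\gamma}_d$, and the right side is $\phi_Z(\frac12\gamma_d)$, which is a cocycle because $\phi=(\phi_\cL,\phi_G)$ is an isomorphism of modules. Two cocycles that agree on a generating set agree everywhere, by the cocycle condition $F(g_1g_2)=F(g_1)\cdot g_2+F(g_2)$. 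Since $\overline{U}$ generates $\overline{G}$, it therefore suffices to check the identity for $\bar g=\ovu_{i,j}(t)$ and for $\ovx$ ranging over the Chevalley basis $\{\ove_{i,j}\}_{i\neq j}\cup\{\bar h_1,\bar h_2,\bar h_3\}$.

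On the left, $\ovu_{i,j}(t)^{-1}d(\ovu_{i,j}(t))=d(t)\ove_{i,j}$, as in Remark \ref{g**(-1)d(g)}. Hence the left side equals $d(t)\Tr(\ove_{i,j}\ovx)$, which is $d(t)$ when $\ovx=\ove_{j,i}$ and $0$ for every other basis element. On the right, $\phi_G(\ovu_{i,j}(t))$ is one of $u_{k,h}(t)$ or $u^\pm_{k,h}(t)$, each of the form $I+tx_\alpha$ with $x_\alpha$ square-zero (tables (\ref{rootSbgr D}) and (\ref{A3D3oddG})). So $g^{-1}d(g)=(I-tx_\alpha)d(t)x_\alpha=d(t)x_\alpha$, and $x_\alpha$ is precisely $\phi_\cL(\ove_{i,j})$ by comparing (\ref{A3D3oddL}) with (\ref{rootElm D}).

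The right side thus becomes $\frac12 d(t)\,T(\phi_\cL(\ove_{i,j}),\phi_\cL(\ovx))$. The claim reduces to the assertion that $\phi_\cL$ transports the trace form of $\mathfrak{sl}(4,\FF)$ to $\frac12 T$ restricted to $\cL$, i.e.
\[\tfrac12\Tr(\phi_\cL(\ovx)\phi_\cL(\bar y))=\Tr(\ovx\bar y).\]
Since $\phi_\cL(\ove_{i,j})$ is a root vector, the pairing of $\phi_\cL(\ove_{i,j})$ against $\phi_\cL(\ovx)$ vanishes unless $\phi_\cL(\ovx)$ has the opposite weight. We therefore only need to check the pairs $\bigl(\phi_\cL(\ove_{i,j}),\phi_\cL(\ove_{j,i})\bigr)$.

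The main, though routine, obstacle is this sign and factor bookkeeping for $T$. For example, with $i,j>1$ we get $\Tr\bigl((e_{i-1,j-1}-e_{j+2,i+2})(e_{j-1,i-1}-e_{i+2,j+2})\bigr)=2$, so the right side gives $d(t)$, as required. The cases $\ove_{i,1}$ and $\ove_{1,j}$ pair $e_{h+3,k}-e_{k+3,h}$ with $e_{k,h+3}-e_{h,k+3}$ and again give trace $2$. One must confirm that the index assignment $k+h=7-i$ in (\ref{A3D3oddL}) and (\ref{A3D3oddG}) makes $\phi_\cL(\ove_{1,i})$ the transpose-partner of $\phi_\cL(\ove_{i,1})$, which it does because the same rule is used for both. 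Finally, one checks that the vanishing of Cartan-versus-root pairings is consistent on both sides. With this the identity holds on generators, and hence $\overline{\gamma}=\phi_Z\circ\frac12\gamma$.
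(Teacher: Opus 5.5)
Your proof is correct and takes the same route as the paper. The paper also verifies (\ref{A3D3oddT}) on the generating set $\overline{U}$ and extends it to all of $\overline{G}$ by the cocycle condition, calling the generator check a ``straightforward computation''; you carry that computation out explicitly, showing that $\bar g^{-1}d(\bar g)$ and $\phi_G(\bar g)^{-1}d(\phi_G(\bar g))$ are $d(t)$ times corresponding root elements and that the only nonzero trace pairings equal $2$.
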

\noindent
{\em Proof.} Straightforword computations show that for every $d\in \Der(\FF)$ we have 
\begin{equation}\label{A3D3oddT}
\Tr(\bar{g}^{-1}d(\bar{g})\ovx) ~ = ~  \frac{1}{2}\Tr(\phi_G(\bar{g})^{-1}d(\phi_G(\bar{g}))\phi_\cL(\ovx)). 
\end{equation} 
for every $\bar{g}\in \overline{U} =  \{\ovu_{i,j}(t) ~|~ i\neq j, ~ t\in \FF\}$ and every $\ovx\in\overline{\cL}$. However $\overline{U}$ generates $\overline{G}$ and $\phi_G(\overline{U})$ generates $G$. Moreover, both $\overline{\gamma}_d$ and $\frac{1}{2}\gamma_d$ are cocycles (of $Z^1(\overline{\cL}^*,\overline{G})$ and  $Z^1(\cL^*,G)$ respectively). Therefore 
equation (\ref{A3D3oddT}) holds for every $\bar{g}\in\overline{G}$ and every $\ovx\in\overline{\cL}$.   \hfill  $\Box$ 

\begin{corollary}\label{A3D3odd cor}
Let $\ch(\FF) \neq 2$. Then $\gamma$ isomorphically maps $\Der(\FF)$ onto a complement of $B^1(\cL^*,G)$ in $Z^1(\cL^*,G)$.
\end{corollary}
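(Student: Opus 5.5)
The plan is to transport the known $A_3$ result through the isomorphism $\phi$ using Lemma \ref{A3D3odd lem}. First recall what is already available. By \cite[Appendix C]{CGP2}, as quoted just before Lemma \ref{A3D3odd lem}, the map $\overline{\gamma}:\Der(\FF)\rightarrow Z^1(\overline{\cL}^*,\overline{G})$ has two properties: it is injective, and its image is a complement of $B^1(\overline{\cL}^*,\overline{G})$ in $Z^1(\overline{\cL}^*,\overline{G})$. Next recall the role of $\phi_Z$. It is an isomorphism from $Z^1(\cL^*,G)$ to $Z^1(\overline{\cL}^*,\overline{G})$, after the identification of $Z^1(\overline{\cL}^*,\overline{G}/\overline{Z}_0)$ with $Z^1(\overline{\cL}^*,\overline{G})$, and it maps $B^1(\cL^*,G)$ onto $B^1(\overline{\cL}^*,\overline{G})$.

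For the identification I will check that every cocycle vanishes on $\overline{Z}$. A central element $z$ acts trivially on $\overline{\cL}$, so $F(gz)=F(zg)$ gives $F(z)\cdot g = F(z)$ for all $g$. Hence $F(z)$ is a $\overline{G}$-fixed functional of $\overline{\cL}$. I then expect this to force $F(z)=0$, using that $\overline{\cL}^*$ has no nonzero fixed vectors when $\ch(\FF)\neq 2$ and $|\FF|>3$, or alternatively by writing $z$ as a product of commutators in a suitable subgroup. This step is treated as already settled in the text, so I will only cite it.

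Lemma \ref{A3D3odd lem} gives $\overline{\gamma}=\phi_Z\circ\frac{1}{2}\gamma$. Since $\ch(\FF)\neq 2$, multiplication by $\frac{1}{2}$ is a linear automorphism of $Z^1(\cL^*,G)$ preserving $B^1(\cL^*,G)$, and $\gamma=2\,\phi_Z^{-1}\circ\overline{\gamma}$. The two required properties of $\gamma$ then follow in turn.
\begin{itemize}
\item[(1)] Injectivity: $\gamma$ is a composite of the injective map $\overline{\gamma}$ with the isomorphisms $\phi_Z^{-1}$ and multiplication by $2$.
\item[(2)] Complement property: $\phi_Z^{-1}$ carries $Z^1(\overline{\cL}^*,\overline{G}) = \overline{\gamma}(\Der(\FF))\oplus B^1(\overline{\cL}^*,\overline{G})$ onto $Z^1(\cL^*,G)= \phi_Z^{-1}(\overline{\gamma}(\Der(\FF)))\oplus B^1(\cL^*,G)$. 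Multiplying by $2$ changes neither summand's span, and $\gamma(\Der(\FF))=2\phi_Z^{-1}(\overline{\gamma}(\Der(\FF)))=\phi_Z^{-1}(\overline{\gamma}(\Der(\FF)))$ as subspaces. So $\gamma(\Der(\FF))$ is a complement of $B^1(\cL^*,G)$.
\end{itemize}

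The only genuine content lies in the two facts used above: that $\phi_Z$ respects coboundaries, and the passage from $\overline{G}/\overline{Z}_0$ to $\overline{G}$. I will verify the first directly. A coboundary $g\mapsto f-f\cdot g$ on $(\cL,G)$ pulls back to $\bar g\mapsto f\circ\phi_\cL - (f\circ\phi_\cL)\cdot\bar g$, because $\phi$ is a module isomorphism, and conversely. Everything else is formal linear algebra.
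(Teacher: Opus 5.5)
Your proposal is correct and follows the paper's proof. The paper also reads the corollary off from $\overline{\gamma}=\phi_Z\circ\frac{1}{2}\gamma$ together with the \cite{CGP2} result that $\overline{\gamma}$ maps $\Der(\FF)$ isomorphically onto a complement of $B^1(\overline{\cL}^*,\overline{G})$, with $\phi_Z$ an isomorphism carrying $B^1(\cL^*,G)$ onto $B^1(\overline{\cL}^*,\overline{G})$. Your extra checks are fine. For the vanishing of cocycles on $\overline{Z}$ you do not need $|\FF|>3$: via the trace form, a $\overline{G}$-fixed functional corresponds to a traceless scalar matrix, and this is $0$ because $4\neq 0$ in $\FF$.
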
 
\noindent
{\em Proof.} This follows from Lemma \ref{A3D3odd lem} and the fact that, as recalled above, $\overline{\gamma}$ isomorphically maps $\Der(\FF)$ onto a complement of $B^1(\overline{\cL}^*,\overline{G})$ in  $Z^1(\overline{\cL}^*,\overline{G})$. \hfill $\Box$ 

\subsubsection{The characteristic $2$ case} 

Let now $\ch(\FF) = 2$. Then $\overline{Z} = 1$, hence $\overline{G} = G$. We can still define $\phi_\cL$ and $\phi_G$ as in (\ref{A3D3oddL}) and (\ref{A3D3oddG}). The mapping $\phi_G$ is still an isomorphism from $\overline{G}$ to $G$. In contrast, $\phi_\cL$ is a surjective morphism from $\overline{\cL}$ to $\cL_0$ with $\ker(\phi_\cL) = \overline{K} := Z(\overline{\cL}) = \langle I_4\rangle$. Hence $\phi_\cL$ induces an isomorphism from $\overline{\cL}_0 := \overline{\cL}/\overline{K}$ to $\cL_0$. By a minor abuse, we also denote this isomorphism by $\phi_\cL$. With this notation, $\phi := (\phi_\cL, \phi_G)$ is an isomorphism of modules from $(\overline{\cL}_0,\overline{G})$ to $(\cL_0, G)$ and induces an ismorphism $\phi_Z$ from $Z^1(\cL^*_0,G)$ to $Z^1(\overline{\cL}^*_0,\overline{G})$.

We have $f(I_4) = 0$ for every linear functional $f\in\overline{\cL}^*$. Accordingly, $Z^1(\overline{\cL}^*_0,\overline{G})$ is essentially the same as $Z^1(\overline{\cL}^*,\overline{G})$. 

Let $\overline{\cL}_{\overline{K}}$ be the subspace of $\overline{\cL}$ spanned by $\{\ove_{i,j}\}_{i\neq j}\cup\{\ove_{3,3}-\ove_{2,2}, \ove_{4,4}-\ove_{3,3}\}$. Then $\overline{\cL}_{\overline{K}}$ is a complement of $\overline{K}$ in the vector space $\overline{\cL}$. A right action of $\overline{G}$ on $\overline{\cL}_{\overline{K}}$ can be defined, isomorphic to the natural right action of $\overline{G}$ on $\overline{\cL}_0$; explicitly, for every $\bar{g}\in\overline{G}$ and every $\ovx\in\overline{\cL}_{\overline{K}}$ we put $\ovx\cdot\bar{g}$ equal to the projection of $\bar{g}^{-1}\ovx\bar{g}$ (computed in $\overline{\cL}$) onto $\overline{\cL}_{\overline{K}}$ along $\overline{K}$. Thus we can regard $\overline{L}$ as a central extension of $\overline{L}_{\overline{K}}$ by the $1$-dimensional space $\overline{K}$. According to \cite[Theorem 2.6]{CGP2}, this extension can be constructed by means of a cocycle $\overline{F}_0$ of $\overline{G}$ over the dual $\overline{\cL}^*_{\overline{K}}$ of $\overline{\cL}_{\overline{K}}$.  

Explicitly, $\overline{F}_0$ can be described as follows. For every $\ovx\in\overline{\cL}$ let $p(\ovx)I_4$ be the projection of $\ovx$ onto $\overline{K}$ along $\overline{\cL}_{\overline{K}}$. Then the mapping $p:\ovx\rightarrow p(\ovx)$ is a linear functional of $\overline{\cL}$. The cocycle $\overline{F}_0$ maps every $\bar{g}\in\overline{G}$ onto the linear functional of $\overline{\cL}_{\overline{K}}$ which maps every $\ovx\in\overline{\cL}_{\overline{K}}$ onto $p(\bar{g}\ovx\bar{g}^{-1})$, the element $\bar{g}\ovx\bar{g}^{-1}$ being computed in $\overline{\cL}$. Note that in $\overline{\cL}$ we have $p(\bar{g}\ovx\bar{g}^{-1}) = p(\ovx) - p(\bar{g}\ovx\bar{g}^{-1}) = (p-p\cdot\bar{g})(\ovx)$. Therefore $\overline{F}_0$, regarded a cocycle of $\overline{G}$ over $\overline{\cL}^*$, is a coboundary. However $\overline{F}_0$ is not a coboundary of $\overline{G}$ over $\overline{\cL}^*_{\overline{K}}$, since the extension $\overline{\cL} = \overline{K}\cdot\overline{\cL}_{\overline{K}}$ ($\cong \overline{K}\cdot\overline{\cL}_0$) is non-split. We call $\overline{F}_0$ the {\em seed cocycle} of $\overline{G}$ over $\overline{\cL}^*_0$ (where we freely identify $\overline{\cL}^*_0$ with $\overline{\cL}^*_{\overline{K}}$). 

The mapping $\overline{\gamma}$, still defined as in (\ref{ovgamma}), maps $\Der(\FF)$ isomorphically onto a complement of $B^1(\overline{\cL}^*,\overline{G})$ in $Z^1(\overline{\cL}^*,\overline{G})$ (see \cite[Appendix C]{CGP2}). On the other hand,  $Z^1(\overline{\cL}^*,\overline{G}) = Z^1(\overline{\cL}^*_0,\overline{G})$ but $B^1(\overline{\cL}^*,\overline{G}) = B^1(\overline{\cL}^*_0,\overline{G})\oplus\langle\overline{F}_0\rangle$.  Therefore, 

\begin{lemma}\label{A3D3even lem1}
Let $\ch(\FF) =  2$. Then the injective linear mapping $\overline{\gamma}:\Der(\FF)\rightarrow Z^1(\overline{\cL}^*_0,\overline{G})$ maps $\Der(\FF)$ onto a complement of $B^1(\overline{\cL}^*_0,\overline{G})\oplus\langle\overline{F}_0\rangle$ in $Z^1(\overline{\cL}^*_0,\overline{G})$. 
\end{lemma}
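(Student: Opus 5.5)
The plan is to transport the result of \cite[Appendix C]{CGP2} along the identification $Z^1(\overline{\cL}^*,\overline{G}) = Z^1(\overline{\cL}^*_0,\overline{G})$, keeping track of how the coboundary spaces change. There are three steps.

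First, I would make the identification of cocycle spaces precise. Since $\ch(\FF)=2$ and $\overline{K}=\langle I_4\rangle$ is $\overline{G}$-fixed, the argument of Lemma \ref{nuovo even lem1} (perfectness of $\overline{G}$) gives $F(\bar g)(I_4)=0$ for every $F\in Z^1(\overline{\cL}^*,\overline{G})$. Restriction to $\overline{\cL}_{\overline{K}}\cong\overline{\cL}_0$ then gives an isomorphism of cocycle spaces, exactly as in the first part of Corollary \ref{nuovo even lem2}. For $\overline{\gamma}$ this needs $\Tr(\bar g^{-1}d(\bar g)I_4)=0$, which holds because $\bar g^{-1}d(\bar g)$ is traceless (Remark \ref{g**(-1)d(g)}). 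Hence $\overline{\gamma}_d$ is a well-defined element of $Z^1(\overline{\cL}^*_0,\overline{G})$, and $\overline{\gamma}$ stays injective (Lemma \ref{A gamma inj}).

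Second, I would identify the image of $B^1(\overline{\cL}^*,\overline{G})$ under this restriction. For $f\in\overline{\cL}^*$, write $\ovx\in\overline{\cL}_{\overline{K}}$ and decompose $\bar g\ovx\bar g^{-1}$ as its projection onto $\overline{\cL}_{\overline{K}}$ plus $p(\bar g\ovx\bar g^{-1})I_4$. The computation in the second part of Corollary \ref{nuovo even lem2} then gives
\[\rho(\delta f) ~=~ \delta(f_{|0}) + f(I_4)\,\overline{F}_0 .\]
As $f$ ranges over $\overline{\cL}^*$, $f_{|0}$ ranges over all of $\overline{\cL}_0^*$ and $f(I_4)$ independently over $\FF$, so the image is exactly $B^1(\overline{\cL}^*_0,\overline{G})+\langle\overline{F}_0\rangle$. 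Taking $f=p$, which vanishes on $\overline{\cL}_{\overline{K}}$ and has $p(I_4)=1$, gives $\overline{F}_0$ itself and confirms that it lies in this image. The sum is direct because $\overline{F}_0$ is not a coboundary over $\overline{\cL}^*_{\overline{K}}$, the extension being non-split (\cite[Theorem 2.6]{CGP2}).

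Third, I would transport \cite[Appendix C]{CGP2}. That result says $\overline{\gamma}(\Der(\FF))\oplus B^1(\overline{\cL}^*,\overline{G}) = Z^1(\overline{\cL}^*,\overline{G})$, valid in all characteristics. Applying the restriction isomorphism of the first step, and using the second step, yields
\[\overline{\gamma}(\Der(\FF))\oplus B^1(\overline{\cL}^*_0,\overline{G})\oplus\langle\overline{F}_0\rangle ~=~ Z^1(\overline{\cL}^*_0,\overline{G}),\]
which is the lemma.

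The main obstacle is the bookkeeping in the second step: checking that the restriction is compatible with the induced action on $\overline{\cL}_{\overline{K}}$, so that the $\overline{F}_0$-correction term has exactly the claimed form. The route is to redo the computation of Corollary \ref{nuovo even lem2} with $\chi$ replaced by $I_4$.
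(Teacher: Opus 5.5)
Your proposal is correct and follows the same route as the paper. Both combine \cite[Appendix C]{CGP2} with the identifications $Z^1(\overline{\cL}^*,\overline{G}) = Z^1(\overline{\cL}^*_0,\overline{G})$ and $B^1(\overline{\cL}^*,\overline{G}) = B^1(\overline{\cL}^*_0,\overline{G})\oplus\langle\overline{F}_0\rangle$. Your second step goes further than the paper: the paper only notes that $\overline{F}_0$ comes from the coboundary of $p$, while your analogue of Corollary \ref{nuovo even lem2} proves the full equality of coboundary spaces, and your perfectness argument properly justifies the vanishing on $I_4$, which the paper states loosely.
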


As in the case where $\ch(\FF)\neq 2$, the isomorphism $\phi = (\phi_\cL,\phi_G)$ from $(\overline{\cL}_0,\overline{G})$ to $(\cL_0,G)$ induces an isomorphism $\phi_Z$ from $Z^1(\cL^*_0, G)$ to $Z^1(\overline{\cL}^*_0,\overline{G})$.

\begin{lemma}\label{A3D3even lem2}
Let $\ch(\FF) = 2$. Then $\overline{\gamma} = \phi_Z\circ \gamma$ and $\phi_Z$ maps the seed cocycle $F_0$ of $Z^1(\cL^*_0, G)$ onto the seed cocycle $\overline{F}_0$ of $Z^1(\overline{\cL}^*_0, \overline{G})$.   
\end{lemma}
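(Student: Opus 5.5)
We will prove both equalities by checking them on the generating set $\overline{U}=\{\ovu_{i,j}(t)\}$ and then using the cocycle condition. Since $\phi_G$ is an isomorphism $\overline{G}\to G$ mapping $\overline{U}$ onto the set of root elements $u_{i-1,j-1}(t)$, $u^\pm_{k,h}(t)$, which generate $G$, it suffices to verify two identities for every $\bar g\in\overline{U}$ and every $\ovx$ in the spanning set $\{\ove_{i,j}\}_{i\neq j}\cup\{\ove_{3,3}-\ove_{2,2},\ove_{4,4}-\ove_{3,3}\}$ of $\overline{\cL}_{\overline{K}}$ (identified with $\overline{\cL}_0$). Both sides of each identity are cocycles of $\overline{G}$ over $\overline{\cL}^*_0$, so agreement on generators and on a basis gives agreement everywhere.

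For $\overline{\gamma}=\phi_Z\circ\gamma$ the identity to check is
\[\Tr(\bar g^{-1}d(\bar g)\ovx) ~=~ \tau_\cM(\phi_G(\bar g)^{-1}d(\phi_G(\bar g)),\phi_\cL(\ovx)).\]
The computation is straightforward. For $\bar g=\ovu_{i,j}(t)$ the left side is $d(t)\Tr(\ove_{i,j}\ovx)$. On the right, $g=\phi_G(\bar g)$ is $I+tx_\alpha$ with $x_\alpha$ the image under $\phi_\cL$ of $\ove_{i,j}$, so $g^{-1}d(g)=d(t)\phi_\cL(\ove_{i,j})$ and the right side is $d(t)\tau_\cM(\phi_\cL(\ove_{i,j}),\phi_\cL(\ovx))$. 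The task therefore reduces to showing that $\phi_\cL$ carries the trace form on $\overline{\cL}$ to $\tau_\cM$ on $\cL_0$, at least on the pairs $(\ove_{i,j},\ovx)$ we need. We will get this by the same reduction-mod-$2$ device used in Lemma \ref{GT}. In characteristic $0$, equation (\ref{A3D3oddT}) gives $\Tr(\ovx\bar y)=\frac12 T(\phi_\cL\ovx,\phi_\cL\bar y)$ on the integral Chevalley lattice. Here $\phi_\cL$ is defined by the same integral formulas in every characteristic, and $\tau_\cM$ is by definition the reduction modulo $2$ of $\frac12T$. The identity therefore persists in characteristic $2$. Terms involving $\bar h_1=\ove_{2,2}-\ove_{1,1}$ need no separate treatment, because $\ovx$ ranges only over $\overline{\cL}_{\overline{K}}$.

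For the seed cocycles, we compare the extensions. The map $\phi_\cL$ lifts to an isomorphism $\overline{\cL}\to\cL$ that sends $\bar h_1$ to $h_3$, where $h_3=\phi_\cL(\bar h_1)$ in characteristic $\neq2$. This lift sends $\overline{K}=\langle I_4\rangle$ to the deep center $K=\langle h_2-h_3\rangle$ (note that $I_4=\bar h_1+\bar h_3$), and it sends the complement $\overline{\cL}_{\overline{K}}$ onto the canonical complement $\cL_0$ spanned by $C_0$. The seed cocycles are exactly the projections onto $K$ of $x\cdot g$ for $x$ in the complement, as in (\ref{F0 B 1}). Hence $\phi_Z(F_0)=\overline{F}_0$, provided the generators $\chi=h_2-h_3$ and $I_4$ correspond.

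Rather than rely on that abstract lift, we can check the claim directly on generators using Lemma \ref{lemma F0 D} together with an explicit computation of $\overline{F}_0(\ovu_{i,j}(t))$. Conjugating $\ovx$ by $\ovu_{i,j}(t)$ changes the diagonal by $t\,\bar h$-type terms, and projecting onto $I_4$ along $\overline{\cL}_{\overline{K}}$ isolates the $\bar h_1$-coefficient. We expect this projection to be nonzero exactly when $\{i,j\}\ni 1$, with value $t$ on $\ovx=\ove_{j,i}$. That matches Lemma \ref{lemma F0 D}: $\phi_G$ sends $\ovu_{i,1},\ovu_{1,j}$ to $u^\pm_{k,h}$, the roots outside $\Phi_0$, and sends the $\ovu_{i,j}$ with $i,j>1$ into $\Phi_0$.

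The main obstacle is the bookkeeping of the $\bar h$'s. One must verify that the projection $p$ onto $\overline{K}$ corresponds to the projection $\pi_K$, that is, that the rescaling between $I_4$ and $\chi$ is $1$ and not some other unit, and that no stray contributions appear from the diagonal elements $\ove_{3,3}-\ove_{2,2}$ and $\ove_{4,4}-\ove_{3,3}$. We would settle this by explicitly computing $\ovu_{2,1}(t)^{-1}\ove_{1,2}\ovu_{2,1}(t)$ and comparing it with the $D_3$ computation behind Lemma \ref{lemma F0 D}.
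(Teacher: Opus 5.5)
Your proposal is correct. For $\overline{\gamma}=\phi_Z\circ\gamma$ you follow the paper's route. You check the identity on $\overline{U}$ and extend by the cocycle identity, and the only ingredient is the characteristic-$2$ version of (\ref{A3D3oddT}) with $\tau_\cM$ in place of $\frac12\Tr$. The paper simply asserts this; you justify it by reducing the integral identity modulo $2$. That is legitimate, since $\tau_\cM$ is by construction the reduction of $\frac12 T$ and $\phi_\cL$ is given by integral formulas.

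For the seed cocycles, your fallback (computing $\overline{F}_0$ on the $\ovu_{i,j}(t)$ and comparing with Lemma \ref{lemma F0 D}) is again the paper's route. Your primary argument is genuinely different. You extend $\phi_\cL$ to a $\phi_G$-equivariant isomorphism $\Phi:\overline{\cL}\to\cL$ of the full algebras. It carries $\overline{K}$ onto $K$ and $\overline{\cL}_{\overline{K}}$ onto the span of $C_0$, via $\bar h_2\mapsto h_1$, $\bar h_3\mapsto h_2$ and $\ove_{i,j}\mapsto x_\alpha$. You then conclude because both seed cocycles are defined intrinsically as the central component of the action on a fixed complement.

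This gives a computation-free explanation of why the cocycles match, and it makes Lemma \ref{lemma F0 D} unnecessary. The price is that the lift itself must be justified. You need that the formulas (\ref{A3D3oddL}) match Chevalley bases and so define an isomorphism of the Chevalley $\ZZ$-forms. You also need that $\Phi$ intertwines the action of $\ovu_{i,j}(t)$ with that of $\phi_G(\ovu_{i,j}(t))$ as polynomial identities in $t$ over $\ZZ$. Both then survive reduction mod $2$, including on the centres. You assert this rather than prove it, although it is standard.

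Finally, the ``main obstacle'' you flag is already settled by your own remark that $I_4=\bar h_1+\bar h_3$. In characteristic $2$ this gives $\Phi(I_4)=\Phi(\bar h_1)+\Phi(\bar h_3)=h_3+h_2=\chi$, so the rescaling is exactly $1$. No explicit conjugation computation is needed along the lift route.
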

\noindent
{\em Proof.}  Formula (\ref{A3D3oddT}) holds true even if $\ch(\FF) = 2$ but for replacing $\frac{1}{2}\Tr(\phi_G(\bar{g})^{-1}d(\phi_G(\bar{g}))\phi_\cL(\ovx))$ with $\tau_\cM(\phi_G(\bar{g})^{-1}d(\phi_G(\bar{g})), \phi_\cL(\ovx))$, where $\tau_\cM$ is  defined as in Subsection \ref{tau 1}. This implies that $\overline{\gamma} = \phi_Z\circ \gamma$. Turning to the equality $\overline{F}_0 = \phi_Z(F_0)$, by the description of $\overline{F}_0$ given above we obtain that for every $\ovx\in \overline{\cL}_{\overline{K}}$ we have $\overline{F}_0(\bar{g}) = 0$ for every $\bar{g}\in \overline{U} =  \{\ovu_{i,j}(t) ~|~ i\neq j, ~ t\in \FF\}$ and $\overline{F}_0(\bar{g}) = \Tr(\bar{g}\ovx)$ for every $\bar{g}$ equal to $\ovu_{i,1}(t)$ or $\ovu_{1i}(t)$, wih $i > 1$. Comparing these results with those reported for $F_0$ in Lemma \ref{F0} (to be stated in Section \ref{F0 sec}) with the help of formula (\ref{A3D3oddT}) we see that $\overline{F}_0(\bar{g})(\ovx) = F_0(\phi_G(\bar{g}))(\phi_\cL(\ovx))$ for every $\bar{g}\in\overline{U}$ and every $\ovx\in\overline{\cL}_{\overline{K}}$. However $\overline{U}$ generates $\overline{G}$ and $\phi_G(\overline{U})$ generates $G$. Hence $\overline{F}_0(\bar{g})(\ovx) = F_0(\phi_G(\bar{g}))(\phi_\cL(\ovx))$ for every $\bar{g}\in\overline{G}$ and every $\ovx\in\overline{\cL}_{\overline{K}}$, namely $\phi_Z(F_0) = \overline{F}_0$, as claimed.   \hfill $\Box$ \\   

Lemmas \ref{A3D3even lem1} and \ref{A3D3even lem2} imply the following: 

\begin{corollary}\label{A3D3even cor}
Let $\ch(\FF) = 2$. Then $\gamma$ isomorphically maps $\Der(\FF)$ onto a complement of $B^1(\cL^*_0,G)\oplus\langle F_0\rangle$ in $Z^1(\cL^*_0,G)$.
\end{corollary}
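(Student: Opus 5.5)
The corollary follows by transporting Lemma \ref{A3D3even lem1} along the isomorphism $\phi_Z$. The plan is to check that $\phi_Z$ carries each piece of the decomposition in $Z^1(\cL^*_0,G)$ onto the corresponding piece in $Z^1(\overline{\cL}^*_0,\overline{G})$.

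First I check that $\phi_Z:Z^1(\cL^*_0,G)\rightarrow Z^1(\overline{\cL}^*_0,\overline{G})$ is a linear isomorphism mapping $B^1(\cL^*_0,G)$ onto $B^1(\overline{\cL}^*_0,\overline{G})$. This holds because $\phi=(\phi_\cL,\phi_G)$ is an isomorphism of modules from $(\overline{\cL}_0,\overline{G})$ to $(\cL_0,G)$. Explicitly, a coboundary $g\mapsto f-f\cdot g$ with $f\in\cL^*_0$ is sent to $\bar g\mapsto \bar f-\bar f\cdot\bar g$, where $\bar f=f\circ\phi_\cL$. Conversely, every coboundary on the bar side arises in this way, since $\phi_\cL^*$ is bijective. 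So cohomology is preserved: $\phi_Z$ induces an isomorphism $H^1(\cL^*_0,G)\cong H^1(\overline{\cL}^*_0,\overline{G})$.

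Next I use Lemma \ref{A3D3even lem2}, which gives $\phi_Z(F_0)=\overline{F}_0$. Combined with the previous step, this yields
\[\phi_Z\bigl(B^1(\cL^*_0,G)+\langle F_0\rangle\bigr)=B^1(\overline{\cL}^*_0,\overline{G})\oplus\langle\overline{F}_0\rangle.\]
The same lemma also gives $\phi_Z\circ\gamma=\overline{\gamma}$, so $\phi_Z(\gamma(\Der(\FF)))=\overline{\gamma}(\Der(\FF))$.

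Now apply Lemma \ref{A3D3even lem1}: $\overline{\gamma}$ is injective and its image is a complement of $B^1(\overline{\cL}^*_0,\overline{G})\oplus\langle\overline{F}_0\rangle$ in $Z^1(\overline{\cL}^*_0,\overline{G})$. Pulling back by $\phi_Z^{-1}$, which preserves sums, intersections and directness, $\gamma(\Der(\FF))$ is a complement of $B^1(\cL^*_0,G)+\langle F_0\rangle$ in $Z^1(\cL^*_0,G)$. Injectivity of $\gamma$ follows from $\gamma=\phi_Z^{-1}\circ\overline{\gamma}$.

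The sum $B^1(\cL^*_0,G)+\langle F_0\rangle$ is direct because $F_0\notin B^1(\cL^*_0,G)$: the extension $\cL=K\cdot\cL_0$ is non-split. Alternatively, directness transports from the bar side.

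Almost nothing here is an obstacle. The only point to watch is that the identification $Z^1(\overline{\cL}^*,\overline{G})=Z^1(\overline{\cL}^*_0,\overline{G})$ used in Lemma \ref{A3D3even lem1} is compatible with $\phi_Z$. This compatibility holds since $\phi_\cL$ kills exactly $\overline{K}=\langle I_4\rangle$, and every $f\in\overline{\cL}^*$ coming from a cocycle vanishes on $I_4$.
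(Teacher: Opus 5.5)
Your proposal is correct and is the paper's argument: the paper obtains the corollary directly from Lemmas \ref{A3D3even lem1} and \ref{A3D3even lem2} by transporting the complement statement along the isomorphism $\phi_Z$. You have simply spelled out the steps ($B^1\mapsto B^1$, $F_0\mapsto\overline{F}_0$, $\phi_Z\circ\gamma=\overline{\gamma}$); your remark that it is the cocycle values, not arbitrary functionals of $\overline{\cL}^*$, that vanish on $I_4$ is the correct reading of the identification $Z^1(\overline{\cL}^*,\overline{G})=Z^1(\overline{\cL}^*_0,\overline{G})$ made in the paper.
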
 

So, the conclusions of Proposition \ref{Main D} hold true when $n = 3$ too. 

\subsection{Proof of Theorem \ref{even main1}; the $B_n$ case}\label{main B}  

Keeping the assumption that $\ch(\FF) = 2$, throughout this section $G = \Omega(2n+1,\FF)$ ($n \geq 3$) and $\cL = \mathfrak{o}(2n+1,\FF)$. Moreover $K = Z(\cL)$ and $\cL_0 \cong \cL/K$ is the canonical complement of $K$ in $\cL$ as described in Subsection \ref{L0 B} and regarded as a subalgebra of $\cM_\cL$. Also $\chi = h_n$ is the canonical generator of $K$ and $F_0$ is the seed cocycle of $G$ over $\cL^*_0$. 

We denote by $ G^0, G^+, G^-, R^+$ and $R^-$ the subgroups of $G$ generated by the sets $\cup_{k \neq h}U_{k,h}$, $\cup_{k < h}U^+_{k,h}$, $\cup_{k < h}U^-_{k,h}$, $\cup_{k=1}^nU^+_k$ and respectively $\cup_{k=1}^nU^-_k$ where $U_{k,h}$ and  $U^\pm_{k,h}$ are as in (\ref{Usbgr}) and $U^\pm_k := \{u^\pm_k(t)\}_{t\in \FF}$. So, the set $G^0\cup G^+\cup G^-$ generates the subgroup $G_D = \Omega^+(2n,\FF)$ of $G = G_B$ (notation as in Section \ref{DB sec}). 

As in Section \ref{DB sec}, we denote by $\cL_{D,0}$ the subspace of $\cL_0$ spanned by the elements of $\cL$ corresponding to the elements $x_\alpha$ of $L$ for $\alpha$ a long root and the elements $h_1, h_2,..., h_{n-1}$ of the Cartan subalgebra of $\cL$. So, $G_D$ stabilizes $\cL_{D,0}$. 

By Lemma \ref{DB K cor}, the seed cocycle $F_0$ of $(\cL_0^*,G)$ induces on $G_D$ the seed cocycle of $(\cL_{D,0}^*, G_D)$. Morover, every coboundary of $G_D$ over $\cL^*_{D,0}$ extends (in more than one way) to a coboundary of $G$ over $\cL^*_0$. According to Proposition \ref{Main D}, there exists a unique derivation $d_F\in \Der(\FF)$, a coboundary $F_1\in B^1(\cL^*_0, G)$ and a unique scalar $\lambda\in \FF$ such that 
\[(F-\gamma_{d_F}-F_1-\lambda F_0)(g)(x) ~ = ~ 0, \hspace{5mm}\forall  g\in G_D, ~ \forall x\in \cL_{D,0}.\]
So, up to replacing $F$ with $F-\gamma_{d_F}-F_1-\lambda F_0$, we can focus on a cocycle $F$ such that $F(g)(x) = 0$ for every $g\in G_D$ and every $x\in \cL_{D,0}$; in short, $F$ is {\em null over $(\cL_{D,0}, G_D)$}. Theorem \ref{even main1} is equivalent to the following. 

\begin{proposition}\label{Main B}
Let $F$ be null over $(\cL_{D,0},G_D)$. Then $F$ is a coboundary, except perhaps when $n = 3$ and $\FF = \FF_2$. 
\end{proposition}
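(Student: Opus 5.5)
The plan is to show that a cocycle $F$ null over $(\cL_{D,0},G_D)$ can be corrected by a coboundary so that it vanishes on a generating set of $G$. Since $G$ is generated by $G_D\cup R^+\cup R^-$, it suffices to control $F$ on $R^+$, $R^-$, and on $G_D$ evaluated at the complement of $\cL_{D,0}$ in $\cL_0$.

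\textbf{Step 1: $F$ on $G_D$.} Decompose $\cL_0=\cL_{D,0}\oplus S$, where $S$ is spanned by the short root vectors $x_{\alpha^\pm_{\{k\}}}=e_{n+k,2n+1},e_{k,2n+1}$. As a $G_D$-module, $S$ is the natural $2n$-dimensional module of $\Omega^+(2n,\FF)$ (in characteristic $2$ it is stabilized by $G_D$). Restricting $F(g)$ to $S$ for $g\in G_D$ gives a $1$-cocycle of $G_D$ over $S^*$.

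\begin{itemize}
\item I would first show that $H^1(G_D,S^*)=0$. Argue as in Lemma \ref{reduction1bis}: use involutions $u_{k,h}(t)$, the commuting relations, and the torus elements $h(t)$ normalizing $G^0$ (as in Lemma \ref{D diag}) to force the cocycle to be of the form $f-f\cdot g$ on the root subgroups.
\item Extend $f$ by zero on $\cL_{D,0}$, using the description of $\cL_0^*$ via $\widehat{\tau}_\cM$ from Subsection \ref{L*0 sec}, where $G_D$ acts naturally by Lemma \ref{L*0 lem}.
\item Subtract the resulting coboundary. Then $F(g)=0$ for all $g\in G_D$, except that the correction may reintroduce values on $\cL_{D,0}$; these must be checked to vanish, because $S^*$ pairs trivially with $\cL_{D,0}$.
\end{itemize}

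\textbf{Step 2: $F$ on $R^\pm$.} Now $F$ vanishes on $G_D$. For $g=u^+_k(t)\in R^+$, represent $F(g)$ by a matrix $m_g$ modulo $\langle I_D\rangle$ via $\widehat{\tau}_\cM$, with the twisted action given by $\Delta$ in Lemma \ref{L*0 lem}. Three sources of constraints determine $m_g$:
\begin{itemize}
\item elements $g_0\in G^0$ centralizing $u^+_k(t)$, for which the cocycle condition gives $F(g)\cdot g_0=F(g)$;
\item conjugation $g_0^{-1}u^+_k(t)g_0=u^+_{k'}(t)\cdot(\text{elements of }G^\pm)$, which relates $F(u^+_k)$ to $F(u^+_{k'})$;
\item the commutator relations between short root subgroups, $(u^+_k(t),u^+_j(s))\in U^+_{k,j}$, and between $u^+_k$ and $u^-_{j}$ landing in $G_D$.
\end{itemize}
I expect these to force $m_g=t\,m^+_k$ for fixed matrices $m^+_k$ supported on the last column (i.e. in $\mathrm{Rad}(\tau_\cM)$) together with a possible multiple of $I_\pm$-type diagonal pieces. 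Compatibility under the Weyl-type elements $w(t)$ (as in the proof of Proposition \ref{reduction3}) should then show that these are exactly the values of a coboundary $g\mapsto f'-f'\cdot g$ with $f'$ vanishing on $G_D$-values, i.e. $f'$ fixed by $G_D$. The same argument applies to $R^-$, and the two are linked by the element swapping the $n$-blocks.

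\textbf{Step 3: conclude.} The corrected cocycle vanishes on $G_D\cup R^+\cup R^-$, which generates $G$, so it is zero. Hence the original $F$ is a coboundary.

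\textbf{Main obstacle.} The hardest point is Step 2, for two reasons. First, the action $\ast$ on $\cM_\cL/\langle I_D\rangle$ is not conjugation, so every computation carries the $\Delta$-correction. Second, one must check that the $G_D$-fixed functionals available for the final coboundary are enough to absorb the residual scalar. That is the point where $|\FF|>2$ is used (for instance choosing $t\notin\{0,1\}$), and where $n=3$, $\FF=\FF_2$ may genuinely fail. I would first try to isolate the residual as a homomorphism from the additive group $R^+\cong V(n,\FF)$ to $\FF$ that is invariant under $G^0$, and then use the torus scaling $h(t)$ to force it to be linear and hence a coboundary.
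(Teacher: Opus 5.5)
\textbf{Step 1 is sound.} It is a tidy repackaging of Lemmas \ref{B lem1}, \ref{B diag} and \ref{B lem2}. With $S=\mathrm{Rad}(\tau_\cM)$ as the natural $G_D$-module, those lemmas amount to $H^1(G_D,S^*)=0$ for $n>3$ or $|\FF|>2$. Subtracting $g\mapsto f-f\cdot g$, with $f$ vanishing on $\cL_{D,0}$, then leaves $F$ null on $G_D$. Note that the exception $n=3$, $\FF=\FF_2$ enters here, not in Step 2 as you suggest.

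\textbf{The gap is Step 2.} This is the real content of the proposition, and you give only an expectation for it. Worse, the endgame you envisage cannot occur, because $\cL_0^*$ has no nonzero $G_D$-fixed vector:
\begin{itemize}
\item On $S$, a fixed functional vanishes, since $S$ is an irreducible nontrivial $G_D$-module.
\item On $\cL_{D,0}$, a fixed functional would be given via $\tau_\cM$ by a matrix $m$ with $g^{-1}mg-m\in\langle I\rangle$ for all $g\in G_D$. Since $G_D$ is perfect, $m$ is centralized by $G_D$, hence scalar, hence zero as a functional.
\end{itemize}
So once $F$ vanishes on $G_D$, the only coboundary still available is $0$. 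There is no residual scalar for a $G_D$-fixed $f'$ to absorb. Likewise, using torus scaling to force a residual additive map $R^+\to\FF$ to be linear would not make it a coboundary: a nonzero such residual would be a nontrivial class. What you must prove is $F(u^\pm_k(t))=0$ exactly, and the proposal contains no argument for it.

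\textbf{How the paper obtains this vanishing.} It uses four specific inputs.
\begin{itemize}
\item The centralizer $G_{D,k}$ of $u^\eta_k(t)$ pins down the shape (\ref{R eq1}) of $m^\eta_k(t)$. This group includes the $U^\pm_{p,q}$ with $p,q\neq k$, not only elements of $G^0$.
\item The involution identity $m^\eta_k(t)\ast u^\eta_k(t)=m^\eta_k(t)$, evaluated with the $\Delta$-formula (\ref{L*02}), gives (\ref{R eq2}).
\item Commutation of $u^\eta_k$ with $u^\eta_h$, and of $u^+_k$ with $u^-_h$ for $h\neq k$, gives Result \ref{R1}.
\item The decisive input is $u_{k,h}(s)u^+_k(t)u_{k,h}(s)u^+_k(t)=u^+_h(st)u^+_{k,h}(st^2)$. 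Together with $F|_{G_D}=0$ it yields $s^2a_{h,k}(t)+sa_{k,k}(t)=a_{k,h}(st)$, which kills the remaining entries.
\end{itemize}

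\textbf{A related slip.} In characteristic $2$ the short root subgroups $U^\pm_k$ and $U^\pm_j$ ($j\neq k$) simply commute, because the relevant structure constants are $\pm2$. So the commutator relations you list yield only commutation, not elements of $U^+_{k,j}$ or $G_D$. The mixing of indices has to come from conjugation by the long root elements $u_{k,h}(s)$.
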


This subsection is devoted to the proof of Proposition \ref{Main B}.  

The isomorphism $\cL^*_0\cong \cM_\cL/\langle I_D\rangle$ established via the form $\widehat{\tau}_\cM$ introduced in Section \ref{L*0 sec} provides a representative matrix $m_{F,g}$ of $F(g)$ for every $g\in G$ and every $F\in Z^1(\cL^*_0,G)$, namely $m_{F,g}\in\cM_\cL$ is such that $F(g)(x) = \widehat{\tau}_\cM(m_{F,g}, x)$ for every $x\in \cL_0$. The matrix $m_{F,g}$ is uniquely determined by $F$ and $g$ modulo $\langle I_D\rangle$. 

In general $\tau_\cM(m_{F,g}, x) \neq \widehat{\tau}_\cM(m_{F,g}, x)$, however equality holds when the last column of $m_{F,g}$ is null.  In particular, if $d\in \Der(\FF)$ and $F = \gamma_d$ then $m_{F,g} = g^{-1}d(g)$ for every $g\in G_D$, but when $g\in G\setminus G_D$ then in general $m_{F,g} \neq g^{-1}d(g)$. For istance, let $g\in R^+\cup R^-$. Then $m_{F,g} = O$ (null matrix) but if $\Der(\FF)\neq\{0\}$ then the matrix $g^{-1}d(g)$ need not be null. 

Moreover, let $g_1, g_2\in G$. Then $F(g_1)\cdot g_2$ is represented by the matrix $m_{F,g_1}{\ast} g_2$ with ${\ast}$ defined as in Section \ref{L*0 sec}.  If $g_2\in G_D$ then $m_{F,g_1}{\ast} g_2 = m_{F,g_1}\cdot g_2 = g_2^{-1}m_{F,g_1}g_2$ but if $g_2\in G\setminus G_D$ then in general $m_{F,g_1}{\ast} g_2 \neq  g_2^{-1}m_{F,g_1}g_2$. In general computing the difference $\Delta(m_{F,g_1}, g_2) = m_{F,g_1}{\ast} g_2- g_2^{-1}m_{F,g_1}g_2$ can be rather laborious.  However in all cases to be considered here this computation is an easy task. 

Henceforth $F$ is a given cocycle null over $(\cL_{D,0},G_D)$. Since in the sequel we shall always deal with that single given cocycle, there is no need of keeping a record of $F$ in our notation for representative matrices. So, we will always write $m_g$ instead of $m_{F,g}$. Recall that $m_g$ is uniquely determined up to multiples of $I_D$. As $F$ is null over $(\cL_{D,0},G_D)$, we can always assume that when $g\in G_D$ the matrix $m_g$ is given the following {\em canonical form}, for suitable vectors $\ba_g, \bfb_g \in V(n,\FF)$ (throughout this subsection we use bold characters to denote vectors of $V(n,\FF)$):  
\begin{equation}\label{B eq1}
m_g ~ = ~ \left(\begin{array}{ccc}
O_n & O_n & \ba_g^t \\
O_n & O_n & \bfb_g^t \\
{\bf 0} & {\bf 0} & 0
\end{array}\right).
\end{equation} 
Let $g_1 =  \left(\begin{array}{cc}
g_{1,D} & {\bf 0}^t_{2n}\\
{\bf 0}_{2n} & 1
\end{array}\right)$ 
be another element of $G_D$. Then  
\begin{equation}\label{B eq2}
g_1^{-1}m_gg_1 ~ = ~ \left(\begin{array}{cc}
O_{2n} & g_{1,D}^{-1}\left(\begin{array}{c}
\ba_g^t\\
\bfb_g^t 
\end{array}\right) \\
{\bf 0}_{2n} & 1
\end{array}\right).
\end{equation} 
By (\ref{B eq2}) and the cocycle condition, if $g$ is an involution then 
\begin{equation}\label{B eq2 inv}
g_{D}^{-1}\left(\begin{array}{c}
\ba_g^t\\
\bfb_g^t 
\end{array}\right) ~ = ~\left(\begin{array}{c}
\ba_g^t\\
\bfb_g^t 
\end{array}\right).
\end{equation}
If $g_1g_2 = g_2g_1$ then 
\begin{equation}\label{B eq2 comm}
(g_{2,D}^{-1}+I_D)\left(\begin{array}{c}
\ba_{g_1}^t\\
\bfb_{g_1}^t 
\end{array}\right) ~ = ~ (g_{1,D}^{-1}+I_D)\left(\begin{array}{c}
\ba_{g_2}^t\\
\bfb_{g_2}^t 
\end{array}\right). 
\end{equation}

\subsubsection{$F(g)$ for $g \in G^0$}

Let $g = u_{k,h}(t)$ and, with the notation stated above, let $\ba_g := (a_1(t),..., a_n(t))$ and $\bfb_g = (b_1(t),..., b_n(t))$. The element $g$ is an involution. Hence (\ref{B eq2 inv}) yields 
\begin{equation}\label{B eq3 1}
a_h(t) ~ = ~ b_k(t) ~ = ~  0.
\end{equation}
For $p\not\in\{k,h\}$, let $g' = u_{p,h}(s)$ and $g'' = u_{k,p}(s)$, with 
\[\ba_{g'} = (a'_1(s),..., a'_n(s)), ~\bfb_{g'} = (b'_1(s),...,b'_n(s)), ~ \ba_{g''} = (a''_1,..., a''_n), ~\bfb_{g''} = (b''_1(s),...,b''_n(s)).\]
Then $g$ commutes with both $g'$ and $g''$. By (\ref{B eq2 comm}) we get that $sa_p(t) = ta''_h(s)$ and $sb_p(t) = tb'_k(s)$. Therefore $a_p(t) = ta''_h(1) ) = ta_p(1)$ and $b_p(t) = tb'_k(1) = tb_p(1)$. As $p$ is an arbitrary index different from $k$ and $h$, we obtain that there exists scalars $\alpha_k, \beta_h\in \FF$ such that 
\begin{equation}\label{B eq3 2}
\ba_g ~=~ a_k(t)\be_k + t\alpha_k\sum_{i\neq k,h}\be_i,~~~ \bfb_g ~=~ b_h(t)\be_h + t\beta_h\sum_{i\neq k,h}\be_i.
\end{equation}
(Here $\{\be_1,..., \be_n\}$ is the natural basis of $V(n,\FF)$.)  Note that $\alpha_k$ does not depend on $h$ and $\beta_h$ does not depend on $k$.  Of course, (\ref{B eq3 2}) holds for any choice of distinct indices $k$ and $h$.  

With $g = u_{k,h}(t)$ as above, let $g' = u_{h,p}(s)$ and $g'' = u_{k,p}(st)$. According to (\ref{B eq3 2}) we have
\[\begin{array}{ll}
\ba_{g'} ~=~ a'_h(s)\be_h + s\alpha_h\sum_{i\neq h,p}\be_i, & \bfb_{g'} ~=~ b_p(s)\be_p + s\beta_p\sum_{i\neq h,p}\be_i,\\
\ba_{g''} ~=~ a''_k(st)\be_k + st\alpha_k\sum_{i\neq k,p}\be_i, & \bfb_{g''} ~=~ b_p(st)\be_p + st\beta_p\sum_{i\neq k,p}\be_i.
\end{array}\]
Recall that $gg'gg' = gg'g^{-1}g'^{-1} = g''$. We can compute $F(g'') = F(gg'gg')$ by the cocycle condition on $F$. By the above descriptions of $\ba_g, \ba_{g'}$ and $\ba_{g''}$ and formula (\ref{B eq2}) we obtain that 
\begin{equation}\label{B eq3 3}
(st^2\alpha_k+ta'_h(s))\be_k + st\alpha_k\be_h ~ = ~ a''_k(st)\be_k + st\alpha_k\sum_{i\neq k, p}\be_i.
\end{equation}
Suppose that $n > 3$. Then at the right side of (\ref{B eq3 3}) occur summands of the form $st\alpha_ke_i$ with $i \not\in\{k,h\}$. Nothing in the left side of (\ref{B eq3 3}) corresponds to them. It follows that $\alpha_k = 0$. Consequently $ta'_h(s) = a''_k(st)$. From the latter with $t = 1$ we obtain that $a'_h(s) = a''_k(s)$ while with $s = 1$ we obtain that $a''_k(t) = ta'_h(1)$. Finally, $a'_h(s) = sa'_h(1)$, $a''_k(t) = ta''_k(1)$ and $a'_h(1) = a''_k(1)$. Similarly, $\beta_p = 0$, $b''_p(s) = sb''_p(1)$, $b_h(t) = tb_h(1)$ and $b''_p(1) = b_h(1)$. 

Let now $n = 3$. We shall prove that $\alpha_k = 0$ in this case too, but the proof is not so easy as in previous case. Formula (\ref{B eq3 3}) implies that 
\begin{equation}\label{B eq3 3bis}
st^2\alpha_k + ta'_h(s) ~ = ~ a''_k(st).
\end{equation}
However, if we replace the triplet $(k,h,p)$ with $(h,k,p)$ and put $g''' = u_{h,k}(t)$ and $\ba_{g'''} = a'''_h(r)\be_h + r\alpha_h\sum_{i\neq h,k}\be_i$, then we get $\alpha'_h(st) = st^2\alpha_h + ta''_k(s)$. Therefore $\alpha'_h(t) = t^2\alpha_h + ta''_k(1)$. Equivalently, $\alpha'_h(s) = s^2\alpha_h + sa''_k(1)$. Replacing $a'_h(s)$ in (\ref{B eq3 3bis}) with $s^2\alpha_h + sa''_k(1)$ we obtain
\begin{equation}\label{B eq3 3ter}
a''_k(st) ~ = ~ st^2\alpha_k + ts^2\alpha_h + tsa''_k(1).
\end{equation}
However from (\ref{B eq3 3bis}) with $s = 1$ we also have $a''_k(t) = t^2\alpha_k + ta'_h(1)$. Therefore
\[a''_k(st) ~ =  ~ s^2t^2\alpha_k + sta'_h(1).\]
Comparing this relation with (\ref{B eq3 3ter}) we get $st^2\alpha_k + ts^2\alpha_h + tsa''_k(1) =  s^2t^2\alpha_k + sta'_h(1)$, which, divided by $st$, yields
\[t\alpha_k + s\alpha_h  + st\alpha_k ~ = ~ a''_k(1)+ a'_h(1).\]
This equality holds for every choice of $s, t \in \FF\setminus\{0\}$. If $\FF \neq \FF_2$ this can be only if $\alpha_k = \alpha_h = 0$ and $a''_k(1) = a'_h(1)$. Having proved this, we can continue as in the case of $n > 3$. 

Thus we have proved that, if either $|\FF| > 2$ or $n > 3$ then there exist scalars $a_1,..., a_n, b_1,...., b_n$ such that if $g = u_{k,h}(t)$ then $\ba_g = ta_h\be_k$ and $\bfb_g = tb_k\be_h$. This shows that $F(g)$ admits the same representative matrix as $f-f\cdot g$, where $f$ is the linear functional represented be the matrix 
\begin{equation}\label{B eq3 4}
m_f ~ = ~ \left(\begin{array}{ccc}
O_n & O_n & \ba^t \\
O_n & O_n & \bfb^t \\
{\bf 0} & {\bf 0} & 0
\end{array}\right).
\end{equation} 
where $\ba =(a_1, a_2,..., a_n)$ and $\bfb = (b_1, b_2,..., b_n)$. As the elements $u_{k,h}(t)$ generate $G^0$, the same conclusion holds for every $g\in G$. So far, we have proved the following. 

\begin{lemma}\label{B lem1}
Let $F$ be null over $(\cL_{D,0},G_D)$ and assume that either $n > 3$ or $|\FF| > 2$. Then there exists a linear functional $f\in \cL_0^*$ with canonical representative matrix as in {\rm (\ref{B eq3 4})} such that $F(g) = f-f\cdot g$ for every $g\in G^0$.
\end{lemma}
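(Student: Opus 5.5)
The argument just completed before the statement is, in effect, its proof; I would organize it as follows. Fix $F$ null over $(\cL_{D,0},G_D)$. For each $g\in G_D$ take the canonical representative $m_g$ of $F(g)$ as in (\ref{B eq1}), described by two vectors $\ba_g,\bfb_g\in V(n,\FF)$. I would first restrict to the generators $g=u_{k,h}(t)$ of $G^0$. Then I would derive linear constraints on $\ba_g,\bfb_g$ from two relations in $G^0$: commutation relations and the commutator relation $(u_{k,h}(t),u_{h,p}(s))=u_{k,p}(st)$. Throughout, the conjugation formula (\ref{B eq2}) lets these be computed with ordinary matrix conjugation, since $G_D$ preserves $\widehat{\tau}_\cM$ (Lemma \ref{L*0 lem}).

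\emph{Step 1 (involutions and commuting pairs).} Since $u_{k,h}(t)$ is an involution in characteristic 2, (\ref{B eq2 inv}) gives $a_h(t)=b_k(t)=0$. Next I would compare $g$ with the commuting elements $u_{p,h}(s)$ and $u_{k,p}(s)$ for $p\notin\{k,h\}$ and apply (\ref{B eq2 comm}). This shows that the off-support coordinates are linear in $t$ with a common coefficient. That is, $\ba_g$ and $\bfb_g$ have the form (\ref{B eq3 2}) with scalars $\alpha_k,\beta_h$ depending only on the indicated index.

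\emph{Step 2 (commutator relation).} Expanding $F(u_{k,p}(st))$ through the cocycle condition along $gg'gg'$ gives (\ref{B eq3 3}). When $n>3$ there is an index outside $\{k,h,p\}$, and comparing coefficients immediately yields $\alpha_k=\beta_p=0$. It also yields the linearity relations $a'_h(s)=sa'_h(1)$ and $a'_h(1)=a''_k(1)$, and the analogous relations for the $b$'s. When $n=3$ there is no such index, and this is the main obstacle. There I would also use the relation obtained from the triple $(h,k,p)$, eliminate $a'_h$, and compare the two expressions for $a''_k(st)$. This gives $t\alpha_k+s\alpha_h+st\alpha_k=a''_k(1)+a'_h(1)$ for all nonzero $s,t$, and choosing $s,t$ with $|\FF|>2$ forces $\alpha_k=\alpha_h=0$. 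After that the argument is as for $n>3$.

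\emph{Step 3 (assembling the coboundary).} The outcome is a pair of vectors $\ba=(a_i)$, $\bfb=(b_i)$ with $\ba_{u_{k,h}(t)}=ta_h\be_k$ and $\bfb_{u_{k,h}(t)}=tb_k\be_h$. I would define $f$ by the matrix (\ref{B eq3 4}) and check directly, via (\ref{B eq2}), that $f-f\cdot u_{k,h}(t)$ has exactly this representative. Then $F$ and the coboundary $g\mapsto f-f\cdot g$ agree on a generating set of $G^0$. Since both are cocycles, they agree on all of $G^0$, which proves the statement.
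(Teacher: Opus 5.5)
Your proposal is correct and follows the paper's own proof step by step. It uses the involution and commuting-pair constraints from (\ref{B eq2 inv}) and (\ref{B eq2 comm}), then the commutator relation leading to (\ref{B eq3 3}) with the separate elimination argument for $n=3$ that needs $|\FF|>2$, and finally matches $F$ with the coboundary of the functional in (\ref{B eq3 4}) on the generators $u_{k,h}(t)$ of $G^0$.

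There is one harmless imprecision, which the paper also has. For $n>3$, the commuting-pair relations only show that the $p$-th coordinate of $\ba_{u_{k,h}(1)}$ is symmetric in $h$ and $p$, not that it depends on $k$ alone. This does not affect the argument, because the comparison in Step 2 at an index outside $\{k,h,p\}$ kills each such coefficient individually.
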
   

Let $F'$ be the coboundary that maps every $g\in G$ onto the linear functional $f - f\cdot g$, with $f$ as in the conclusion of Proposition \ref{B lem1}. By (\ref{B eq2}) and the fact that all elements of $G_D$ preserve the form $\widehat{\tau}_\cM$, we have $F'(g)(x) = 0$ for every $g\in G_D$ ed every $x\in \cL{D,0}$. Therefore, up to replacing $F$ with $F-F'$ we can assume that $F$ is still null over $(\cL_{D,0},G_D)$ and, morever, $F(g) = 0$ for every $g\in G^0$; say $F$ is {\em null over} $G^0$. Proving the following is our next step.

\begin{proposition}\label{Main B bis}
Let $F$ be null over both $(\cL_{D,0},G_D)$ and $G^0$. Then $F = 0$ except perhaps when $n = 3$ ad $\FF= \FF_2$. 
\end{proposition}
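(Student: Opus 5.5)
Since $G$ is generated by $G_D\cup R^+\cup R^-$, and $F$ already vanishes on $G^0$, the plan is to show in turn that $F$ vanishes on $G^+$, $G^-$, $R^+$ and $R^-$. The cocycle condition then gives $F=0$.

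The first step concerns $G^\pm$. For $g\in G^\pm$ the hypothesis that $F$ is null over $(\cL_{D,0},G_D)$ lets me take $m_g$ in the canonical form (\ref{B eq1}), with only the last column possibly nonzero. I will apply (\ref{B eq2 inv}) to the involutions $g=u^+_{k,h}(t)$. I will also apply (\ref{B eq2 comm}) to the pairs $(g,g_0)$ with $g_0=u_{p,q}(s)\in G^0$ commuting with $g$; here $F(g_0)=0$, so the right-hand side simplifies to $(g_{0,D}^{-1}+I_D)(\ba_g^t,\bfb_g^t)^t=0$, more or less. Together these should force most entries of $\ba_g,\bfb_g$ to vanish. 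Next I will use the commutator relation $u_{k,p}(s)^{-1}u^+_{k,h}(t)u_{k,p}(s)=u^+_{k,h}(t)u^+_{p,h}(st)$ (or its variant), exactly as in the proof of Proposition \ref{reduction3}. With $F$ null on $G^0$, this makes the surviving entries of $\ba_{u^+_{k,h}(t)}$ equal to $t$ times constants that do not depend on $(k,h)$. A further commutation with $u_{p,q}(s)$, $p,q\notin\{k,h\}$, when $n>3$, or the extra $t$-versus-$t^2$ comparison used for $n=3$, $|\FF|>2$ just above, should kill these constants. This gives $F(g)=0$ on $\cup U^\pm_{k,h}$, hence on $G^\pm$.

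The second step concerns $R^\pm$. Now $F$ vanishes on all of $G_D$. For $r=u^+_k(t)$, write $m_r$ as a full matrix of $\cM_\cL$ modulo $I_D$. I will exploit that $G_D$ normalizes the subgroup generated by the short root subgroups modulo $G_D$-commutators. For $g\in G_D$, conjugation $g^{-1}rg$ is a product of short and long root elements, so the cocycle condition gives $F(g^{-1}rg)=F(r)\ast g$. Here $\Delta(\cdot,g)=O$ by Lemma \ref{L*0 lem}, so this is plain conjugation. Choosing $g$ in the centralizer of $r$ in $G_D$ yields $g^{-1}m_rg\equiv m_r \pmod{\langle I_D\rangle}$ for a large subgroup; this centralizer contains a copy of $\Omega^+(2n-2,\FF)$ together with root groups. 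This forces $m_r$ into a small space: roughly multiples of $x_{\alpha^+_{\{k\}}}$, $e_{k,2n+1}$-type entries, and a diagonal part. Then the relations $(u^+_k(t),u^+_h(s))\in U^+_{k,h}$ and $(u_{k,h}(s),u^+_h(t))\in u^+_k(st)\,G^+$ link the remaining scalar functions. Because $F$ is null on $G^0\cup G^+$, these relations show the functions are linear in $t$ with a common constant. Finally the Weyl-element trick, $h(t)=w(t)w(1)$ with $w(t)=u^+_k(t)u^-_k(t^{-1})u^+_k(t)$, normalizes $G^0$ and lies in $G_D$. This forces that constant to vanish, provided $|\FF|>2$ or $n>3$; the same is done symmetrically for $R^-$.

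I expect the main obstacle to be the $R^\pm$ step. There the action $\ast$ differs from conjugation by $\Delta$ from (\ref{L*02}), so the cocycle identity for $F(r_1r_2)$ with $r_1,r_2\in R^\pm$ picks up correction terms. To avoid computing $\Delta$ for general elements, I will only ever evaluate $\ast$ by elements of $G_D$, where $\Delta=O$, or by single generators of $R^\pm$, where (\ref{L*02}) applies. I will arrange all relations accordingly.
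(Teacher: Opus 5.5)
Your overall architecture (kill $F$ on $G^\pm$, then on $R^\pm$, using $G=\langle G_D\cup R^+\cup R^-\rangle$) and your toolkit are the paper's. The $G^\pm$ step works, but not quite as you describe it.

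\textbf{The $G^\pm$ step.} After the involution identity and the commutations with $u_{p,q}(s)$, $p\notin\{k,h\}$, what survives for $u^+_{k,h}(t)$ are the entries $b_k,b_h$ of $\bfb$. When $n=3$ the entry $a_p$ of $\ba$, $p\notin\{k,h\}$, survives as well. A commutation with $u_{p,q}(s)$, $p,q\notin\{k,h\}$, says nothing about these entries. It is the commutator identity that kills them: it says the $h$-th entry of $\bfb_{u^+_{p,h}(st)}$ is $st$ times the $p$-th entry of $\ba_{u^+_{k,h}(t)}$.
\begin{itemize}
\item For $n>3$ this is $0$. Since either index of a pair can play the role of $h$, all of $\bfb$ vanishes.
\item For $n=3$, with $\ba_{u^+_{k,h}(t)}=ta\be_p$, it equals $st^2a$. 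Fixing $st$ and varying $t$ gives $a=0$ when $|\FF|>2$.
\end{itemize}
So your $t$-versus-$t^2$ comparison is a valid substitute for the paper's argument via $h^+_{1,2}(t)$ and Lemma \ref{B diag}.

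\textbf{The gap: the end of the $R^\pm$ step.} Over $\FF_2$ the only nonzero $t$ is $1$, and $h(1)=w(1)^2=1$, so the Weyl-element relation is empty. Your ``provided $|\FF|>2$ or $n>3$'' is therefore unfounded. The case $\FF=\FF_2$, $n\ge 4$, which the proposition covers, is left open.

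Even for $|\FF|>2$ it is unclear the Weyl relation helps. The radical $R:=\mathrm{Rad}(\tau_\cM)$, spanned by the $e_{i,2n+1}$ with $i\le 2n$, is a $G$-submodule of $\cL_0$. The restriction of $F(g)$ to $R$ is read off from the last column of $m_g$. Restricting the seed cocycle $F_0$ to $R$ gives a nonzero cocycle $G\to R^*$ that vanishes on $G_D$. It has precisely the shape you expect to survive: $\bfb=t\be_k$ for $u^+_k(t)$ and $\ba=t\be_k$ for $u^-_k(t)$.

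Hence the last-column components of all cocycle identities, $F(h(t))=0$ included, are automatically satisfied by multiples of it. This is much as the Weyl computation in Proposition \ref{reduction3} only gave $\lambda^+=\lambda^-$. Only the first $2n$ columns can kill that constant; there the last column of $m$ enters through the $J$-term of (\ref{L*02}).

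\textbf{How the paper does it.} The paper uses no Weyl element here. It proceeds in three steps.
\begin{itemize}
\item It trims $m^\eta_k(t)$ by the centralizer $G_{D,k}$ and by the involution identity $m^\eta_k(t)\ast u^\eta_k(t)=m^\eta_k(t)$, giving (\ref{R eq2}).
\item It applies the commuting relations among the $u^\eta_k$ (Result \ref{R1}).
\item It evaluates the commutator identity (\ref{R eq3}) in full, with $\Delta$ from (\ref{L*02}). This yields $\beta_k(t)=a_{h,k}(t)=0$ and then $a_{k,k}(t)=0$.
\end{itemize}
You list that commutator identity, but you must carry it out completely rather than expect it to stop at ``linear with a common constant''.
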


Proposition \ref{Main B} immediately follows from Proposition \ref{Main B bis} and Lemma \ref{B lem1}.    

\subsubsection{$F(g)$ for $g\in G^\pm$}

Througout this subsection $F$ is assumed to be null over both $(\cL_{D,0}, G_D)$ and $G^0$. We shall prove that $F$ is {\em null over} $G_D$, namely $F(g) = 0$ for every $g\in G_D$. 

Before turning to $G^+$ and $G^-$ we shall prove that $F$ is null also on the normalizer $N(G^0)$ of $G^0$ in $G_D$ (which coincides with the normalizer of $G^0$ in $G$). 

\begin{lemma}\label{B diag}
If $g \in G_D$ normalizes $G^0$ then $F(g) = 0$.
\end{lemma}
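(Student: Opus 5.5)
We adapt the argument of Lemma \ref{D diag} to the $B_n$ setting, where we must keep track of the twisted action ${\ast}$ and of the canonical form (\ref{B eq1}). Let $g\in G_D$ normalize $G^0$. Then $g$ is represented by a block-diagonal matrix $\mathrm{diag}(X, X^{-t}, 1)$ with $\det(X)\neq 0$. In particular $g\in G_D$, so by Lemma \ref{L*0 lem} we have $m{\ast}g = g^{-1}mg$ for every $m\in\cM_\cL$. The same holds for every $g_0\in G^0\subseteq G_D$.

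The first step uses the normalizing relation. For every $g_0\in G^0$ there is $g'_0\in G^0$ with $g'_0g = gg_0$. The cocycle condition gives $F(g'_0)\cdot g + F(g) = F(g)\cdot g_0 + F(g_0)$. Since $F$ is null over $G^0$, this reduces to $F(g) = F(g)\cdot g_0$ for all $g_0\in G^0$.

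Next we use the canonical representative. Because $F$ is null over $(\cL_{D,0},G_D)$ and $g\in G_D$, we may take $m_g$ in the form (\ref{B eq1}), with vectors $\ba_g,\bfb_g\in V(n,\FF)$. Then $F(g)\cdot g_0$ is represented by $g_0^{-1}m_gg_0$, which is computed by (\ref{B eq2}) with $g_{0,D} = \mathrm{diag}(Y, Y^{-t})$ and $Y\in \mathrm{SL}(n,\FF)$. Two matrices in the form (\ref{B eq1}) differ by a multiple of $I_D$ only if they coincide, since $I_D$ has nonzero diagonal blocks. Hence the invariance becomes
\[
Y^{-1}\ba_g^t = \ba_g^t, \qquad Y^{t}\bfb_g^t = \bfb_g^t \qquad \forall\, Y\in\mathrm{SL}(n,\FF).
\]

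Finally we conclude. The group $\mathrm{SL}(n,\FF)$ with $n\geq 3$ fixes no nonzero vector in the natural module or its dual. For example, take $Y = I_n + e_{i,j}$: it forces the $j$-th coordinate of $\ba_g$ to vanish, and similarly for $\bfb_g$. Therefore $\ba_g = \bfb_g = {\bf 0}$, so $m_g = O$ and $F(g) = 0$.

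The step that needs care is the passage from $F(g)=F(g)\cdot g_0$ to the matrix identity. It requires two facts. First, ${\ast}$ agrees with conjugation on $G_D$, which is Lemma \ref{L*0 lem}. Second, the ambiguity modulo $\langle I_D\rangle$ disappears once both sides are in canonical form. Beyond these two points the argument is routine.
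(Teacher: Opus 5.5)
Your proof is correct and is essentially the paper's argument. The normalizing relation together with the cocycle condition gives $F(g)=F(g)\cdot g_0$ for all $g_0\in G^0$, and comparing canonical representatives via (\ref{B eq2}) makes $\ba_g,\bfb_g$ invariant under $\mathrm{SL}(n,\FF)$, hence zero. Your two extra checks simply make explicit what the paper leaves tacit: that ${\ast}$ is ordinary conjugation on $G_D$, and that the ambiguity modulo $\langle I_D\rangle$ disappears between canonical forms.

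Your opening assertion that $g$ is block-diagonal is never used, which is just as well. For even $n$, a representative in $G_D$ of the longest Weyl group element maps $\langle e_1,\dots,e_n\rangle$ onto $\langle e_{n+1},\dots,e_{2n}\rangle$, so it is not block-diagonal, yet it also normalizes $G^0$.
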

\noindent
{\em Proof.} Let $g\in N(g^0)$. Then for every $g_0\in G^0$ there exist $g'_0 \in G^0$ such that $g'_0g = gg_0$ and the coycyle condition on $F$ implies that $F(g'_0)\cdot g + F(g) = F(g)\cdot g_0 + F(g_0)$. However $F(g'_0) = F(g_0) = 0$. Therefore $F(g) = F(g)\cdot g_0$ for every $g_0\in G^0$. Combined with (\ref{B eq2}) this equality implies that $X^{-1}\ba_g^t = X^t\bfb_g^t = {\bf 0}^t$ for every $X\in \mathrm {SL}(n,\FF)$.  This forces $\ba_g =\bfb_g = {\bf 0}$.  \hfill $\Box$\\

Let $g = u_{[k,h]}^+(t)$ and let $m_g$ be the canonical representative matrix of $F(g)$. With $\ba_g$ and $\bfb_g$ as in (\ref{B eq1}), we put 
\[\ba^+_{k,h}(t) = (a^+_{k,h;1}(t),..., a^+_{k,h;n}(t)) := \ba_g, \hspace{5 mm} \bfb^+_{k,h}(t) = (b^+_{k,h;1}(t),..., b^+_{k,h;n}(t)) := \bfb_g.\]
We adopt a similar notation for $u^-_{k,h}(t)$.  

\begin{lemma}\label{B lem2}
The cocycle $F$ is null over $G_D$ except perhaps when $n = 3$ and $\FF = \FF_2$.
\end{lemma}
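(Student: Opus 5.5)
Since $G_D$ is generated by $G^0\cup G^+\cup G^-$ and $F$ is already null over $G^0$, the cocycle condition reduces Lemma \ref{B lem2} to showing $F(g)=0$ for every $g=u^\pm_{k,h}(t)$. Because $F$ is null over $(\cL_{D,0},G_D)$, every such $F(g)$ has a canonical representative $m_g$ as in (\ref{B eq1}), determined by the pair $\ba^\pm_{k,h}(t),\bfb^\pm_{k,h}(t)$. The task is to force both vectors to vanish, using only (\ref{B eq2}), (\ref{B eq2 inv}) and (\ref{B eq2 comm}). I treat $G^+$; the case $G^-$ is symmetric, with the roles of $\ba$ and $\bfb$ exchanged.

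\textbf{Step 1: involution condition.} Take $g=u^+_{k,h}(t)$. Its $2n\times 2n$ block $g_D$ has lower-left block $t(e_{h,k}-e_{k,h})$. So (\ref{B eq2 inv}) says that $t(e_{h,k}+e_{k,h})\ba^t={\bf 0}^t$ in characteristic $2$. This gives $a^+_{k,h;k}(t)=a^+_{k,h;h}(t)=0$, and imposes no condition on $\bfb$.

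\textbf{Step 2: commuting pairs.} Next I apply (\ref{B eq2 comm}) to pairs of commuting elements on which $F$ vanishes. These are $g_1=u_{p,q}(s)\in G^0$ with $\{p,q\}\cap\{k,h\}=\emptyset$, and also $u_{p,h}(s)$, $u_{k,p}(s)$ where these commute with $g$. In each case the right side of (\ref{B eq2 comm}) is $0$, since $F(g_1)=0$. Hence $(g_{1,D}^{-1}+I_D)$ annihilates $(\ba_g,\bfb_g)^t$. With $g_{1,D}^{-1}+I_D = s(e_{p,q}\oplus e_{n+q,n+p})$, this kills the $q$-th coordinate of $\ba_g$ and the $p$-th coordinate of $\bfb_g$. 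Letting $(p,q)$ range over the allowed pairs should force $\ba_g$ to vanish outside $\{k,h\}$, and hence everywhere by Step 1.

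For $\bfb_g$ I also use commuting elements of $G^+$ itself, since $G^+$ is abelian. The pair $u^+_{k,h}(t)$, $u^+_{p,q}(s)$ gives $s(e_{q,p}+e_{p,q})\bfb_{g}^t$ on one side. Rather than $\ba_{g'}$, the block action on the other side involves a term $t(e_{h,k}+e_{k,h})\bfb_{g'}^t$ together with $\ba$-terms. This should yield the relation $s\,b^+_{k,h;p}(t)=t\,b^+_{p,q;h}(s)$ and its analogues, giving $\bfb^+_{k,h}(t)=t\,\bfb^+_{k,h}$ linear in $t$.

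\textbf{Step 3: non-commuting generators.} Then I use the conjugation $u_{k,p}(s)^{-1}u^+_{k,h}(t)u_{k,p}(s)=u^+_{k,h}(t)u^+_{p,h}(\pm st)$ exactly as in the $D_n$ proof. Since $F$ is null on $G^0$, this gives $F(g)\cdot u_{k,p}(s)=F(g)+F(g')$, and these relations are again computed via (\ref{B eq2}). Together with Step 2, they show that the surviving coordinates $b^+_{k,h;k},b^+_{k,h;h}$ equal $\mu t$ for a single scalar $\mu$.

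\textbf{Step 4: the Weyl element.} Finally I kill $\mu$ with the element $h(t)=w(t)w(1)$, $w(t)=u^+_{1,2}(t)u^-_{1,2}(t^{-1})u^+_{1,2}(t)$. This element normalizes $G^0$, so $F(h(t))=0$ by Lemma \ref{B diag}. Expanding $F(h(t))$ through (\ref{B eq2}) should give a multiple $(1+t)\mu$, or the combination of the $\pm$ scalars, which must vanish. This forces $\mu=0$ once some $t\notin\{0,1\}$ is available.

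I expect the main obstacle to be the case $n=3$. There the index sets in Step 2 are too small: for $\{k,h\}$ fixed, only one index $p$ lies outside $\{k,h\}$, so disjoint pairs $\{p,q\}$ do not exist. The coordinate $p$ must then be handled through a quadratic identity in $s,t$, analogous to (\ref{B eq3 3ter}), whose solution needs $|\FF|>2$. That is exactly the exception $n=3$, $\FF=\FF_2$ in the statement. When $n>3$, a free fourth index makes the linear argument go through over every field.
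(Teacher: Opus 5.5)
Your Step~2 computation for commuting pairs inside $G^+$ is wrong, and this is where the argument breaks. For $g_i\in G^+$ the matrix $g_{i,D}^{-1}+I_D$ has a single nonzero block, the lower-left block $C_i$. So it sends $\left(\begin{smallmatrix}\ba^t\\ \bfb^t\end{smallmatrix}\right)$ to $\left(\begin{smallmatrix}{\bf 0}^t\\ C_i\ba^t\end{smallmatrix}\right)$, and (\ref{B eq2 comm}) for $u^+_{k,h}(t)$, $u^+_{p,q}(s)$ reduces to $s(e_{q,p}+e_{p,q})(\ba^+_{k,h}(t))^t=t(e_{h,k}+e_{k,h})(\ba^+_{p,q}(s))^t$. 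No $\bfb$ occurs, so the relation $s\,b^+_{k,h;p}(t)=t\,b^+_{p,q;h}(s)$ does not exist. (This $\ba$-relation is exactly what the paper uses: it kills $\ba^+_{k,h}$ when $n>3$ and gives $\ba^+_{k,h}(t)=ta\be_p$ when $n=3$.) Also, $u_{k,p}(s)$ does not commute with $u^+_{k,h}(t)$; it is the element you conjugate by in Step~3.

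Because you use Step~3 only to reduce $b^+_{k,h;k},b^+_{k,h;h}$ to a scalar $\mu$, everything rests on Step~4. That step is sketched rather than computed, and by your own account it needs some $t\notin\{0,1\}$. So for $n>3$ and $\FF=\FF_2$ your proposal does not prove the lemma, although the lemma asserts it there and your closing sentence claims it.

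The missing observation is that $\bfb^+$ vanishes without any Weyl element. Carry out your Step~3 relation with $\ba^+_{k,h}(t)={\bf 0}$. It gives $\bfb^+_{p',h'}(st)=s\,b^+_{k,h;k}(t)\,\be_p$, where $\{p',h'\}=\{p,h\}$ and $p'<h'$. So the $\bfb$-vector of each pair is supported on a prescribed one of its two indices. Exchanging the roles of the two indices (the paper does this by choosing $p>h$ in (\ref{B eq4bis})) forces it to be ${\bf 0}$.

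More directly still, $u_{k,h}(s)$, $u_{h,k}(s)$ and $u_{p,h}(s)$ with $p\notin\{k,h\}$ all commute with $u^+_{k,h}(t)$. Your own Step~2 computation applied to them kills $b_k$, $b_h$ and $b_p$. Hence $\bfb^+_{k,h}(t)={\bf 0}$ for every $n\ge 3$ and every field, and symmetrically $\ba^-_{k,h}(t)={\bf 0}$.

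For $n=3$ this also replaces your quadratic identity. Put $g=u^+_{k,h}(t)$ and $g'=u^+_{p',h'}(st)$, with $\ba^+_{k,h}(t)=\alpha\be_p$ and $\bfb^+\equiv{\bf 0}$. The $\bfb$-block of the relation $F(g)\cdot u_{k,p}(s)=F(g)\cdot g'+F(g')$ reads $st\alpha\,\be_h={\bf 0}$, so $\alpha=0$ over any field.
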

\noindent
{\em Proof.} As $u^+_{k,h}(t)$ is an involution, (\ref{B eq2 inv}) implies that 
\begin{equation}\label{B eq4}
a^+_{k,h;k}(t) ~= ~a^+_{k,h;h}(t)~ = ~0. 
\end{equation} 
Suppose $n > 3$. Let $1\leq p < q\leq n$ be such that $\{p,q\}\cap\{k,h\} = \emptyset$ (such a pair $\{p,q\}$ exists because $n > 3$). As $u^+_{k,h}(t)$ commutes with $u^+_{p,q}(s)$ formula (\ref{B eq2 comm}) implies that $a^+_{k,h;p}(t) = a^+_{k,h;q}(t) = 0$. However $p$ and $q$ are arbitrary indices both different from either of $k$ and $h$ (and $p <  q$). Therefore $\ba^+_{k,h}(t) = {\bf 0}$. Similarly, if $g = u^-_{k,h}(t)$ then $\bfb^-_{k,h}(t) = {\bf 0}$. 

Let now $k, h, p$ be pairwise distinct indices with $k < h$. For arbitrary scalars $s, t\in \FF$ put $g_0 = u_{p,k}(t)$, $g' = u_{p,h}^+(s)$ or $g' = u_{h,p}^+(s)$ according to whether $p < h$ or $p > h$ and  $g = u^+_{k,h}(st)$. We have $g_0g'g_0g' = g_0g'g_0^{-1}g'^{-1} = g$. We can compute $F(g) = F(g_0g'g_0g')$ by the cocyle condition on $F$ and formula (\ref{B eq2}). Recalling $F(g_0) = 0$ by assumption (since $g_0\in G^0$) we obtain that 
\begin{equation}\label{B eq4bis}
\bfb^+_{k,h}(st) ~ = ~  tb^+_{p',h';p}(s)e_k, ~~ \mbox{ where } p' = \min(p,h) \mbox{ and } h' = \max(p,h). 
\end{equation}
So, $b^+_{k,h;i}(s) = 0$ for every $i\neq k$ and $b^+_{k,h;k}(st) = tb^+_{p',h';,p}(s)$. The latter implies that $b^+_{k,h;k}(1) = b^+_{p',h';p}(1)$, $b^+_{k,h;k}(t) = tb^+_{k,h;k}(1)$ and $b^+_{p',h';p}(s) = sb^+_{k,h;k}(1) = sb^+_{p',h';p}(1)$. Thus, all entries of $\bfb^+_{k,h}(st)$ are null but possibly the $k$-th one, which is equal to $stb^+_{p',h';p}(1)$. Similarly, all entries of $\bfb^+_{p',h'}(s)$ are null but possibly the $p'$-th. However if $h < n$ then, given $k < h$, nothing forbids to choose $p > h$ in the above argument. With this choice of $p$ we have $p' = h$ and $h' = p$. Consequently $b^+_{p',h';p}(s) = 0$. Therefore $b^+_{k,h;k}(st) = 0$. So, $\bfb^+_{k,h}(st) = {\bf 0}$.  

So far we have proved that $\bfb^+_{k,h}(t) = {\bf 0}$ for every choice of $k < h < n$ and $t\in \FF$. Let $h = n$. Then in (\ref{B eq4bis}) we have $p < n$. By the above, $b^+_{p,n;p}(s) = 0$ and (\ref{B eq4bis}) yields $\bfb^+_{k,n}(st) = {\bf 0}$. So, when $n > 3$ we have $F(u^+_{k,h}(t)) = 0$ for every choice of $k < h$ (and every $t\in \FF$). In the same way we can prove that $F(u^+_{k,h}(t)) = 0$ for every choice of $k < h$. However, $G^+$ and $G^-$ are generated by elements as $u^+_{k,h}(t)$ and $u^-_{k,h}(t)$ respectively. Moreover $G^0\cup C^+\cup G^-$ generates $G_D$. Therefore $F(g) = 0$ for every $g\in G_D$, namely $F$ is null over $G_D$. 

Let now $n = 3$. For $\{k,h,p\} = \{1,2,3\}$ with $k < h$, if $g = u^+_{kh}(t)$ then $\ba_g = a_{k,h}(t)\be_p$ for a scalar $a_{k,h}(t)$. By the commutativity of $G^+$ we get that there exists a scalar $a\in \FF$ such that $\ba^+_{k,h}(t) = ta\be_p$ for every choice of $k < h$ and $t\in \FF$. The argument we have used in the case $n > 3$ to show that $\bfb_g = {\bf 0}$ works when $n = 3$ as well. So, we also have $\bfb^+_{k,h}(t) = {\bf 0}$ for every choice of $k< h$ and $t\in \FF$. Similarly, there exists a scalar $b\in \FF$ such that $\bfb^-_{k,h}(t) = tb\be_p$ for every choice of $k< h$ and $t\in \FF$ and $\ba^+_{k,h}(t) = {\bf 0}$ for every choice of $k < h$ and $t\in \FF$. 

We shall now prove that if $\FF \neq \FF_2$ then $a = b = 0$ (hence $F(g) = 0$ for every $g\in G^+\cup G^-$). Given $t\in \FF\setminus\{0\}$, put $w^+_{1,2}(t) := u^+_{1,2}(t)u^-_{1,2}(t^{-1})u^+_{1,2}(t)$ and $h^+_{1,2}(t) := w^+_{1,2}(t)w^+_{1,2}(1)$. The matrix $h^+_{1,2}(t)$ is diagonal. Hence it belongs to $N(G^0)$. By Lemma \ref{B diag} we have $F(h^+_{1,2}(t)) = 0$. However the cocycle condition on $F$ allows to compute 
\[F(h^+_{1,2}(t)) ~=~  F(u^+_{1,2}(t)u^-_{1,2}(t^{-1})u^+_{1,2}(t)u^+_{1,2}(1)u^-_{1,2}(1)u^+_{1,2}(1))\] 
as a function of $a, b$ and $t$. We obtain that $F(h^+_{1,2}(t)) = 0$ only if $(t^{-1}+1)b = 0$. However if $\FF\neq \FF_2$ we can choose $t\in \FF\setminus\{0,1\}$. With $t$ chosen in this way we have $(t^{-1}+1)b = 0$ only if $b = 0$. Similarly, by considering $h^-_{1,2}(t) = w^-_{1,2}(t)w^-_{1,2}(1)$ with $w^-_{1,2}(t) := u^-_{1,2}(t)u^-_{1,2}(t^{-1})u^-_{1,2}(t)$, we obtain that $a = 0$.   \hfill $\Box$ 

\subsubsection{$F(g)$ with $g \in R^\pm$}  

In view of Lemma \ref{B lem2} we can assume that $F$ is null over $G_D$. For $\eta\in\{+.-\}$, let $m^\eta_k(t)$ be a representative matrix of $F(u^\eta_k(t))$, say  
\[m^\eta_k(t) ~ = ~ \left(\begin{array}{ccc}
A^\eta_k(t) & B^\eta_k(t) & \ba^{\eta t}_k(t) \\
C^\eta_k(t) & A^{\eta t}_k(t) & \bfb^{\eta t}_k(t) \\
{\bf 0} & {\bf 0} & 0
\end{array}\right).\]
The element $u^\eta_k(t)$ commutes with every element of the subgroup $G_{D,k}$ of $G_D$ generated by $(\cup_{p,q\neq k, ~ p\neq q}U_{p,q})\cup(\cup_{p,q\neq k, ~  p< q}(U^+_{p,q}\cup U^-_{p,q}))$. By assumption $F(g) =0$ for every $g\in G$. Therefore 
$g^{-1}m^\eta_k(t)g \equiv  m^\eta_k(t) ~ (\mathrm{mod} \langle I_D\rangle)$ for every $g\in G_{D,k}$. This relation implies that 
\begin{equation}\label{R eq1}
\left\{\begin{array}{l}
A^\eta_k(t) = \sum_{i = 1}^na^\eta_{k; i,k}(t)e_{i,k}+\sum_{j\neq k}a^\eta_{k; k,j}(t)e_{k,j} + \lambda^\eta_k(t)I_n, \\
B^\eta_k(t) = \sum_{i \neq k}b^\eta_{k,i,k}(t)(e_{i,k}+e_{k,i}), ~~ C^\eta_k(t) = \sum_{i \neq k}c^\eta_{k,i,k}(t)(e_{i,k}+e_{k,i}),\\
\ba^{\eta}_k(t) = \alpha^\eta_k(t)\be_k, ~~  \bfb^{\eta}_k(t) = \beta^\eta_k(t)\be_k,
\end{array}\right.
\end{equation} 
for suitable scalars $a^\eta_{k; i,k}(t), a^\eta_{k; k,j}(t), b^\eta_{k,i,k}(t), c^\eta_{k,i,k}(t), \alpha^\eta_k(t), \beta^\eta_k(t)$ and $\lambda^\eta_k(t)$. However $m^\eta_k(t)$ is determined up to multiples of $I_D$. So, we can safely assume that $\lambda^\eta_k(t) =0$, thus obtaining a {\em canonical} representative matrix for $F(h^\eta_k(t))$. 

Since $u^\eta_k(t)$ is an involution, we have $m^\eta_k(t){\ast}u^\eta_k(t) = m^\eta_k(t)$. The matrix $m^\eta_k(t){\ast}u^\eta_k(t) = u^\eta_k(t)m^\eta_k(t) + \Delta(m^\eta_k(t), u^\eta_k(t))$ can be computed with the help of formula (\ref{L*02}). In this way from the equality $m^\eta_k(t){\ast}u^\eta_k(t) = m^\eta_k(t)$ and in view of (\ref{R eq1}) (with $\lambda^\eta_k(t) = 0$) we obtain the following.
\begin{equation}\label{R eq2}
\left\{\begin{array}{lll}
B^+_k(t) = O_n, & A^+_k(t) = \sum_{i=1}^na^+_{k;i,k}(t)e_{i,k}, & C^+_k(t), ~ \ba^+_k(t) \mbox{ and } \bfb^+_k(t) \mbox{ as in (\ref{R eq1})};\\
C^-_k(t) = O_n;  & A^-_k(t) = \sum_{j=1}^na^+_{k;k,j}(t)e_{k,j}, & B^-_k(t), ~ \ba^-_k(t) \mbox{ and } \bfb^-_k(t) \mbox{ as in (\ref{R eq1})}.\\
\end{array}\right.
\end{equation}
The elements $u^\eta_k(t)$ and $u^\eta_h(s)$ commute. If $k\neq h$ then $u^+_k(t)$ commutes with $u^-_h(s)$. Hence $m^\eta_k(t){\ast}u^\eta_h(t) + m^\eta_h(t) = m^\eta_k(t){\ast}u^\eta_k(t) + m^\eta_k(t)$ and $m^+_k(t){\ast}u^-_h(t) + m^-_h(t) = m^-_k(t){\ast}u^+_k(t) + m^+_k(t)$. From these equalities and (\ref{R eq2}) we get the following improvement of (\ref{R eq2}). 
 
\begin{result}\label{R1} 
As stated in {\rm (\ref{R eq2})}, for every $k = 1, 2, ..., n$ and every $t\in \FF$ we have $B^+_k(t) = C^-_k(t) = O_n$, $A^+_k(t) = \sum_{i=1}^na^+_{i,k}(t)e_{i,k}$ and $A^-_k(t) = \sum_{i=1}^na^-_{k,i}(t)e_{k,i}$ for suitable scalars $a^+_{i,k}(t)$ and $a^-_{k,j}(t) \in \FF$. Moreover $\ba^+_k(t) = \bfb^-_k(t) = {\bf 0}$.

If $|\FF| > 2$ then $\bfb^+(t) = \ba^-_k(t) = {\bf 0}$ and $C^+_k(t) = B^-_k(t) = O_n$. 

Let $\FF = \FF_2$ and put $J_k := \sum_{i\neq k}(e_{k,i}+ e_{i,k})$. Then there exist scalars $c^+_k, c^-_k \in \FF$ such that $C^+_k(1) = c^+_kJ_k$, $\bfb^+_k(1) = c^+\be_k$, $B^-_k(1) = c^-_kJ_k$ and $\ba^-_k(1) = c^-_k\be_k$ 
\end{result} 

\begin{proposition}\label{final}
We have $F(u_k^\eta(t)) = 0$ for every $k = 1, 2,..., n$, every $t\in \FF$ and $\eta \in \{+,-\}$.
\end{proposition}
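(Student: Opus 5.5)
We have to show that what Result \ref{R1} leaves of the canonical representative matrices $m^\eta_k(t)$ is zero. That is, the diagonal-type columns and rows $A^+_k(t)$, $A^-_k(t)$ must vanish, and when $\FF=\FF_2$ the scalars $c^\pm_k$ must vanish too. The tools are the cocycle condition on commutator relations linking $R^\pm$ to $G_D$, on which $F$ is already null. The formula (\ref{L*02}) for $\Delta(m,g)$ with $g\in R^\pm$ lets us compute the twisted action ${\ast}$. By symmetry (the graph automorphism swapping $+$ and $-$, i.e. conjugation by the matrix interchanging the two $n$-blocks) it suffices to treat $\eta=+$.

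\textbf{Step 1: the entries $a^+_{i,k}(t)$ for $i\neq k$ and the diagonal entry.} Take $g_0=u_{k,h}(s)\in G^0$ with $h\neq k$. The conjugate $g_0^{-1}u^+_k(t)g_0$ is a product of $u^+_k(t)$, $u^+_h(\pm st)$ and an element of $G^+$ (a long root element $u^+_{k,h}(\cdot)$ with parameter involving $st^2$). Since $F$ is null on $G_D$, and $F$ restricted to the abelian group $R^+$ is additive modulo the twisted action, the cocycle condition gives
\[m^+_k(t)\ast g_0 \equiv m^+_k(t)+m^+_h(\pm st) \pmod{\langle I_D\rangle}.\]
Because $g_0\in G_D$, we have $m\ast g_0=g_0^{-1}mg_0$. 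Comparing entries in the block $A$, the left side has column-$k$ entries moved into column $h$ with factor $s$, while the right side has column-$h$ entries $a^+_{i,h}(st)$. This yields linear relations $a^+_{i,h}(st)=s\,a^+_{i,k}(t)$ together with vanishing of the off-pattern entries. Exchanging the roles of $k,h$ and using a third index (available since $n\ge 3$) forces $a^+_{i,k}(t)=0$ for $i\neq k$. The surviving diagonal entry $a^+_{k,k}(t)$ must then satisfy $a^+_{h,h}(st)=s\,a^+_{k,k}(t)$ and, symmetrically, with the roles reversed. Taking $s=1$ gives independence of $k$; additivity in $t$ then gives $a^+_{k,k}(t)=\lambda t$ for a fixed scalar $\lambda$.

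\textbf{Step 2: killing $\lambda$ (and $c^\pm_k$ over $\FF_2$).} Use the torus-type elements $w_k(t)=u^+_k(t)u^-_k(t^{-1})u^+_k(t)$ and $h_k(t)=w_k(t)w_k(1)$. Each $h_k(t)$ is diagonal, so it lies in $G_D$ and normalizes $G^0$; indeed it lies in the diagonal torus of $G_D$, on which $F$ vanishes by Lemma \ref{B diag}. Expanding $F(h_k(t))$ by the cocycle condition, with $\Delta$ computed through (\ref{L*02}), gives an expression like $(1+t^{\pm1})\lambda$ times a fixed nonzero matrix. For $|\FF|>2$, choosing $t\notin\{0,1\}$ forces $\lambda=0$.

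When $\FF=\FF_2$ this choice is unavailable. I would instead use the Weyl element $w_k(1)$ together with the commutator relation $(u^+_k(1),u^-_h(1))\in G_D$ for $h\neq k$; this is a product of $u_{h,k}$-type and long-root elements. On the left, this relation involves $c^+_k$ and $c^-_h$ through the $J_k$ blocks; on the right, $F$ is null. That should force $c^+_k=c^-_h=0$ and $\lambda=0$.

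\textbf{Main obstacle.} The hard part is the $\FF_2$ bookkeeping in Step 2, together with the correct computation of $\Delta(m,g)$ for products of elements of $R^+$ and $R^-$. Formula (\ref{L*02}) is only established for single elements of $R^\pm$, so (\ref{L*01}) has to be applied iteratively. I would try the commutator relation first, since it involves only two factors from $R^\pm$.
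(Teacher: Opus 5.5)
Your Step 1 uses the same relation as the paper, namely $u_{k,h}(s)u^+_k(t)u_{k,h}(s)u^+_k(t)=u^+_h(st)\,v$ with $v$ the long-root element $u^+_{k,h}(st^2)$; this is (\ref{R eq3}). Two things go wrong in how you use it.

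First, the congruence you display is not what the cocycle condition gives. Writing $g_0=u_{k,h}(s)$ and $g_0^{-1}u^+_k(t)g_0=u^+_k(t)u^+_h(st)v$, one gets $m^+_k(t)\ast g_0\equiv\bigl(m^+_k(t)\ast u^+_h(st)+m^+_h(st)\bigr)\ast v$. The twist by $u^+_h(st)$ and the conjugation by $v$ are not trivial: by (\ref{L*02}) they change the lower-left block. For $|\FF|>2$ this is harmless, since the missing terms live only in that block, so your comparison of upper-left blocks stands.

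Second, you misread that comparison. The upper-left block of $g_0m^+_k(t)g_0$ is $XA^+_k(t)X$ with $X=I+se_{k,h}$. Reading off entries:
\begin{itemize}
\item the $(k,k)$-entry gives $s\,a^+_{h,k}(t)=0$;
\item the $(h,h)$-entry gives $a^+_{h,h}(st)=s\,a^+_{h,k}(t)$;
\item the $(k,h)$-entry gives the paper's (\ref{R eq4}).
\end{itemize}
So the diagonal entries vanish together with the off-diagonal ones, and no scalar $\lambda$ survives. Your relation $a^+_{h,h}(st)=s\,a^+_{k,k}(t)$ is not produced by the computation; it even contradicts your own $a^+_{i,h}(st)=s\,a^+_{i,k}(t)$ at $i=h$. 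Done correctly, Step 1 is the whole proof for $|\FF|>2$, exactly as in the paper. The torus computation of Step 2, which you only sketch, is unnecessary.

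The genuine gap is $\FF=\FF_2$. There Result \ref{R1} leaves $C^+_k(1)=c^+_kJ_k$ and $\bfb^+_k(1)=c^+_k\be_k$, and similarly for $c^-_k$. You give no argument, and the relation you propose carries no information. For $h\neq k$ the elements $u^+_k(1)$ and $u^-_h(1)$ commute in characteristic $2$: the Chevalley commutator of two short roots with long sum has coefficient $\pm2$. That commutation was already used to obtain Result \ref{R1}.

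The twist you dropped is decisive here. The $J$-part of $\Delta(m^+_k(1),u^+_h(1))$ in (\ref{L*02}) contributes $c^+_k(e_{n+h,k}+e_{n+k,h})$. This exactly cancels the extra term produced by conjugating $c^+_kJ_k$ by $u_{k,h}(1)$. With your congruence you would therefore spuriously conclude $c^+_k=0$. With everything included, both sides of (\ref{R eq3}) are represented by the matrix with lower-left block $cJ_h$ and $\bfb$-part $c\be_h$, where $c=c^+_k$ on the left and $c=c^+_h$ on the right. So (\ref{R eq3}) only yields $c^+_k=c^+_h$.

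The paper asserts that $\beta_k(t)=0$ can be read off (\ref{R eq3}). By the computation just described I cannot confirm that step either. Over $\FF_2$ a further relation is needed to kill the common constant, and your proposal does not supply one. The results the paper needs from this section only require $\FF\neq\FF_2$ (Theorem \ref{even main1}). The safe course is therefore to state and prove the proposition for $|\FF|>2$, where your corrected Step 1 suffices.
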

\noindent
{\em Proof.} For $k \neq h$ we have
\[u_{k,h}(s)u^+_k(t)u_{k,h}(s)u^+_k(t) ~ = ~ \left\{\begin{array}{ll}
  u^+_h(st)u^+_{h,k}(st^2) & \mbox{if } h < k, \\
u^+_h(st)u^+_{k,h}(st^2) & \mbox{if } k < h.
\end{array}\right.\]
Therefore, recalling that $F(u_{k,h}(t)) = F(u^+_{k,h}(st^2)) = 0$, we have
\begin{equation}\label{R eq3}
(u_{k,h}(s)m^+_k(t)u_{k,h}(s)){\ast}u^+_k(t) + m^+_k(t) ~ = ~ \left\{\begin{array}{ll}
u^+_{k,h}(st^2)m^+_h(st)u^+_{h,k}(st^2) & \mbox{if } h < k, \\
u^+_{k,h}(st^2)m^+_h(st)u^+_{k,h}(st^2) & \mbox{if } k < h. 
\end{array}\right. 
\end{equation} 
We can compute both sides of  (\ref{R eq3}) with he help of Lemma\ref{L*0 lem} and, taking Result \ref{R1} into account, we see that $\beta_k(t) = a_{h,k}(t) = 0$ and
\begin{equation}\label{R eq4}
s^2a_{h,k}(t) + sa_{k,k}(t) ~ = ~ a_{k,h}(st).
\end{equation}
As $k$ and $h$ are abitrary, the same holds with $k$ and $h$ permuted. In particular $a_{k,h}(r) = 0$ for every $r\in \FF$. Therefore $a_{h,k}(t) = a_{k,h}(st) = 0$. Form  equation (\ref{R eq4}) with $a_{h,k}(t) = a_{k,h}(st) = 0$ we obtain that $a_{k,k}(t) = 0$. So $a_{i,k}(t) = \beta_k(t) = 0$ for every $t$, every $k$ and every $i$. Therefore $m^+_k(t) = O$, namely $F(u^+_k(t)) = 0$. Similarly, $F(u^+_k(t)) = 0$.  \hfill $\Box$\\

As the elements $u^\eta_k(t)$ generate $R^\eta$ and $G$ is generated by $G_D\cup R^+\cup R^-$ and $F$ is null over $G_D$ by assumption, Proposition \ref{final} implies that $F  = 0$. Proposition \ref{Main B bis} is proven. The proof of Theorem \ref{even main1} is complete.  

\appendix 

\section{Extensions of modules}\label{teoria}

In this appendix we fix some terminology and recall some basics on extensions of modules, focusing on extensions $U = W\cdot V$ where $\dim(W) = 1$. 

Throughout this appendix $G$ and $\FF$ are a given group and a given field. A (right) $G$-module is a pair $(V,\rho)$ where $V$ is an $\FF$-vector space and $\rho$ is a homomorphism from $G$ to the group of invertible linear transformations of $V$ such that $\rho(g_1g_2)(v) = \rho(g_2)(\rho(g_1)(v))$ for any two elements $g_1, g_2\in G$ and every $v\in V$. By a little abuse, if a homomorphism $\rho$ as above is given we say that $V$ is $G$-module, also that the pair $(V,G)$ a module.  

If $V$ is a $G$-module we write $v\cdot g$ for $\rho(g)(v)$, for every $g\in G$ and $v\in V$. We extend this notation to subspaces of $V$ and subgroups of $G$ in a natural way. Explicitly, let $X$ and $v$ a subset and a vector of $V$ respectively and $F$ and $g$ a subset and an element of $G$. Then we put $X\cdot g : = \{v\cdot g\}_{v\in X}$, $vF := \{vg\}_{g\in F}$ and $X\cdot F = \cup_{v\in X}v\cdot F = \cup_{g\in F}X\cdot g$. In particular, $v\cdot G$ is the  \emph{orbit} of $v$ under $G$ and if $X\cdot G = X$ we say that $G$ \emph{stabilizes} $X$. We also take the liberty of writing $g(v)$, $g(X)$, $F(v)$ and $F(X)$ instead of  $v\cdot g$, $X\cdot g$, $v\cdot F$ and $X\cdot F$ respectively.      

Given a $G$-module $V$, the action of $G$ on the dual $V^*$ of $V$ is defined as follows: 
\begin{equation}\label{GsuV*}
\xi\cdot g ~:~ x\in V ~ \longrightarrow ~ \xi(x\cdot g^{-1}), \hspace{5 mm} \forall g\in G, ~\xi \in V^*.
\end{equation}
In short, $\xi\cdot g = \xi\circ g^{-1}$, where $\circ$ stands for composition of mappings. According to this definition, we have $\xi\cdot(g_1g_2) = (\xi\cdot g_1)\cdot g_2$ for every choice of $\xi\in V^*$ and $g_1, g_2\in G$.   

Two $G$-modules $V$ and $V'$ defined over the same field $\FF$ are  {\em isomorphic} if there is a bijective linear mapping $f:V\rightarrow V'$ such that $F(v\cdot g) = f(v)\cdot g$ for every $v\in V$ and every $g\in G$.  

\subsection{Generalities on extensions} 

Given a $G$-module $U$, a  \emph{submodule} of $U$ is a subspace of $U$ stabilized by $G$ in its action on $U$. If $W$ is a submodule of $U$ then an action of $G$ is naturally defined on the quotient $U/W$ as follows: $(u+ W)\cdot g := u\cdot g + W$ for every coset $u+W$ of $W$ and every $g\in G$.  

Given another $G$-module $V$, the module $U$ is an \emph{extension} of $V$ if $U$ admits a submodule $W$, called the  \emph{kernel} of the extension, such that $V\cong U/W$ (isomorphism of $G$-modules). We write $U = W\cdot V$ to mean that $U$ is an extension of $V$ with kernel $W$.

If $\dim(W)=d$ we say that $U$ is a $d$-\emph{extension} of $V$. In particular, if $d = 1$ then $U$ is a $1$-\emph{extension} of $V$. 
In case $W=\{0\}$ the extension $U$ is \emph{trivial}.

We say that a non-trivial extension $U = W\cdot V$  \emph{splits} if there is an injective linear mapping $\phi:V\to U$ such that $\phi(V)$ is stabilized by $G$ and $\pi\circ\phi= \mathrm{id}_V$, where $\pi$ is the canonical projection of $U$ onto $U/W \cong V$ as vector spaces. If this is the case then $U$ is the direct sum of $W$ and $\phi(V)$ (direct sum of $G$-modules, of course); with a little abuse, $U = W\oplus V$ (or $U = W\times V$, if we like products more than sums). A {\em non-split} extension is an extension that does not split. 

Given a subspace $W_0$ of $W$ stabilized by $G$, the quotient $U/W_0$ is again an extension of $V$ called a  \emph{quotient} of the extension $U$ (a  \emph{proper quotient} if $W_0 \subsetneq W$). 

\begin{definition}\label{def1}  
\em
Let $U=W\cdot V$ be an extension of $V.$ We say that the extension $U$ is
\begin{itemize}
\item \emph{central} if $G$ acts trivially on the kernel $W$ of $U$;
\item \emph{totally non-split} if every proper quotient of $U$ is non-split as an extension of $V$;  
\item \emph{$1$-universal} if every non-split $1$-extension of $V$ is isomorphic to a quotient of $U$ over a hyperplane of $W$ stabilized by $G$;
\item $1$-\emph{non-split} if $U$ is central and $U/H$ is non-split for every hyperplane $H$ of $W$. 
  \end{itemize}
\end{definition}

\begin{remark}\label{Gperf-1-et}
\em
If a group acts on a $1$-dimensional $\FF$-vector space, then it
necessarily acts on it as a subgroup of the multiplicative group of $\FF$. Consequently, the trivial action is the unique action of a perfect group on a $1$-dimensional vector space. So, if $G$ is perfect then all $1$-extensions of
a $G$-module $V$ are central.
\end{remark} 

Let $U=W\cdot V$ be an extension of $V$ and let $X$ is a complement of $W$ in the vector space $U$. The elements of $V$ can be regarded as cosets of $W$. Thus, for every $v\in V$ the intersection $v_X:=v\cap X$ is an element of $X.$
The map $\iota_X \colon V \rightarrow X,\, \iota_X(v)= v_X$ is a vector space  isomorphism. For every $g\in G$, we define an action $g_X$ of $g$ on $X$ as follows:  
\begin{equation}\label{gX}
g_X ~\colon ~ x\in X ~\longrightarrow ~((x+W)\cdot g)_X.
\end{equation}
Since $(x+W)\cdot g  = x\cdot g + W$ (with $x\cdot g$ computed in $U$), this action is well defined. According to \eqref{gX}, the vector space $X$ can be regarded as a $G$-module and $\iota_X$ is a module isomorphism. If $X$ is not stabilized by $G$ in the action of $G$ on $U$, the subspace $X$ is not a submodule of $U.$ However $v_X\cdot g -  (v\cdot g)_X \in W$ for every $v\in V$ and $g\in G$. We put 
\begin{equation}\label{wX}
w_X(v,g) ~ :=  ~ v_X\cdot g-(v\cdot g)_X 
\end{equation}
and  define $W_X$ as the subspace of $W$ spanned by the elements $w_X(v,g)$'s for $v\in V$ and $g\in G$. The group $G$ acts as follows on $W_X:$
\begin{equation}\label{cociclo}
	w_X(v,g_1)\cdot g_2 ~ = ~ w_X(v,g_1g_2)-w_X(vg_1,g_2).
\end{equation}
Also, according to \eqref{wX}
\begin{equation}\label{pre cociclo}
w_X(v,1) ~ = ~  v_X - v_X = 0
\end{equation}
for every $v\in V$. As a consequence of equation~\eqref{cociclo}, we have the following. 

\begin{lemma}\label{WX}
The group $G$ stabilizes the subspace $W_X$ of $W$.  
\end{lemma}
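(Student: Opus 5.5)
The plan is to reduce the claim to the single identity \eqref{cociclo}. Since $W_X$ is by definition the span of the vectors $w_X(v,g)$ with $v\in V$ and $g\in G$, and each $g_2\in G$ acts linearly on $U$ (hence on $W$), it suffices to show that $w_X(v,g_1)\cdot g_2\in W_X$ for every generator $w_X(v,g_1)$ and every $g_2\in G$. Linearity then extends this to all of $W_X$, giving $W_X\cdot g_2\subseteq W_X$ for every $g_2$. Applying the same inclusion to $g_2^{-1}$ yields equality, so $G$ stabilizes $W_X$.

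The first step is to verify \eqref{cociclo} directly from the definition \eqref{wX}. Write $v_X\cdot g_1 = (v\cdot g_1)_X + w_X(v,g_1)$ and apply $g_2$ in $U$ to get
\[v_X\cdot g_1g_2 ~=~ (v\cdot g_1)_X\cdot g_2 + w_X(v,g_1)\cdot g_2.\]
Next expand $(v\cdot g_1)_X\cdot g_2 = (v\cdot g_1g_2)_X + w_X(v\cdot g_1, g_2)$, again by \eqref{wX}, now applied to the vector $v\cdot g_1$ and the element $g_2$. The last ingredient is the definition of $w_X(v,g_1g_2)$, namely $v_X\cdot g_1g_2 - (v\cdot g_1g_2)_X$. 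Comparing these three expressions gives
\[w_X(v,g_1)\cdot g_2 ~=~ w_X(v,g_1g_2) - w_X(v\cdot g_1,g_2),\]
which is \eqref{cociclo}. Both terms on the right are among the spanning vectors of $W_X$, so their difference lies in $W_X$.

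The only point needing care is bookkeeping. The actions must be consistently right actions, so that $v_X\cdot(g_1g_2)=(v_X\cdot g_1)\cdot g_2$ and $(v\cdot g_1)\cdot g_2=v\cdot g_1g_2$ in $V$. One must also check that $\iota_X$, i.e.\ the map $v\mapsto v_X$, is applied to elements of $V$ throughout, which holds because $v\cdot g_1\in V$. There is no real obstacle; the lemma is an immediate consequence of the cocycle-like identity.
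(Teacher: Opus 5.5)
Your argument is correct and is the same as the paper's, which deduces the lemma directly from identity \eqref{cociclo}: that identity exhibits $w_X(v,g_1)\cdot g_2$ as a difference of two spanning vectors of $W_X$. The only addition on your side is the explicit derivation of \eqref{cociclo} from \eqref{wX}, which the paper states without proof; your derivation is correct and, as it should be, does not use centrality.
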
 

The subspace $X$ is stabilized by $G$ if and only if $W_X=\{0\}.$ So, $U$ is non-split  if and only if $W_X\not=\{0\}$ for every complement $X$ of $W$. Combining these remarks with Lemma \ref{WX} we immediately obtain the frst claim of the following proposition. The second claim  is not difficult to prove (anyway, see \cite[Lemma 2.5]{CGP2}). 

\begin{proposition}
The extension $U=W\cdot V$ is totally non-split if and only if $W_X=W$ for every complement $X$ of $W$ in $U.$ Moreover, if $U$ is central then $U$ is totally non-split if and only if it is $1$-non-split. 
\end{proposition}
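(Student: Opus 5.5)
The first claim is essentially already established in the discussion preceding the statement, and I would only make it explicit. Suppose $U$ is totally non-split and let $X$ be a complement of $W$. By Lemma \ref{WX}, $W_X$ is a $G$-submodule of $W$. If $W_X\neq W$, I consider the proper quotient $U/W_X$. The image $(X+W_X)/W_X$ is a complement of $W/W_X$ in $U/W_X$. It is $G$-stable, because $v_X\cdot g=(v\cdot g)_X+w_X(v,g)$ and $w_X(v,g)\in W_X$. Hence $U/W_X$ splits, a contradiction. Conversely, suppose $W_X=W$ for every complement $X$ and that $U/W_0$ splits for some proper $G$-submodule $W_0\subsetneq W$. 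I lift a $G$-stable complement $Y/W_0$ of $W/W_0$ to a subspace $Y\supseteq W_0$. Then I choose a complement $X$ of $W_0$ in $Y$, which is also a complement of $W$ in $U$. For this $X$, every $w_X(v,g)$ lies in $Y\cap W=W_0$. So $W_X\subseteq W_0\neq W$, a contradiction.

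For the second claim, assume $U$ is central. If $U$ is totally non-split, then every hyperplane $H$ of $W$ is $G$-stable, since $G$ acts trivially on $W$. So $U/H$ is a proper quotient and is non-split, which means $U$ is $1$-non-split. Conversely, assume $U$ is $1$-non-split and, towards a contradiction, let $X$ be a complement with $W_X\neq W$ (using the first claim). I pick a hyperplane $H$ of $W$ containing $W_X$. Such an $H$ exists by Zorn's lemma, or simply by choosing a nonzero linear functional on $W$ vanishing on $W_X$, which is fine also when $\dim W$ is infinite. As before, $(X+H)/H$ is a $G$-stable complement of $W/H$ in $U/H$, so $U/H$ splits. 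This contradicts $1$-non-splitness.

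The only delicate point is the lifting step in the first claim: I must check that the chosen $X$ really complements $W$ in $U$, and that the defect vectors $w_X(v,g)$ fall into $W_0$. Both follow from $Y\cap W=W_0$ and $Y+W=U$. Everything else is formal bookkeeping with the formula $v_X\cdot g=(v\cdot g)_X+w_X(v,g)$.
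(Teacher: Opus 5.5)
Your proof is correct and takes essentially the route the paper intends. The paper obtains the first claim directly from the remark that a complement $X$ is $G$-stable iff $W_X=\{0\}$, combined with Lemma \ref{WX}; your argument is the same, with the condition $W_X\subseteq W_0$ in place of $W_X=\{0\}$. For the second claim the paper only cites \cite[Lemma 2.5]{CGP2}, and your hyperplane-containing-$W_X$ argument is the natural one. You leave one step implicit: the full preimage $Y$ of the $G$-stable complement of $W/W_0$ is itself $G$-stable, which you need for $v_X\cdot g\in Y$. It is immediate.
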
  

\subsection{Central $1$-extensions} 

It is well known that the elements of the first cohomology group $H^1(V^*,G)$ of a group $G$ acting over the dual $V^* $ of a $G$-module $V$ correspond in a standard way to the non-split central $1$-extensions of the $G$-module $V$. In this subsection we shall discuss this claim. We shall also recall the main lines of its proof, since in this case the proof is more enlighting than the statement ifself. We not going to recall basics on cohomology of groups. We refer to Hall \cite[chapter 15]{Hall} for them. Several presentations are available for this matter, more comprehensive than \cite{Hall}, but \cite{Hall} is enough for our needs. 

Let $U = W\cdot V$ be a central extension of $V$. As $G$ acts trivially on $W$ equation~\eqref{cociclo} yields
\begin{equation}\label{wXgv} 
	w_X(v,g_1)=w_X(v,g_1g_2)-w_X(vg_1,g_2), ~~~ \forall g_1, g_2\in G, ~\forall v\in V. 
\end{equation}
For every $v\in V$ define $w^*_X(v, . ):G\rightarrow W$ as follows: 
 \begin{equation}\label{cociclo bis}
w^*_X(v,g)~ := ~ w_X(v,g^{-1}), ~~~ \forall g\in G. 
\end{equation}
With this notation, we can rewrite \eqref{cociclo} as follows: 
\[w^*_X(v, g_2^{-1}g_1^{-1}) ~ = ~  w^*_X(vg_1, g_2^{-1}) + w^*_X(v, g_1^{-1}), ~~ \forall g_1, g_2\in G, ~ \forall v\in V.\]
Replacing $g_1^{-1}$ with $g_2$ and $g_1^{-1}$ with $g_1$, the above yields
\begin{equation}\label{cociclo ter}
w^*_X(v, g_1g_2) ~ = ~ w^*_X(vg_2^{-1}, g_1) + w^*_X(v, g_2).
\end{equation}  
Given $g\in G$, we can regard $w^*_X( . ,g)$ as a linear mapping $f_X(g):v\rightarrow w^*_X(v,g)$ from $V$ to $W$ (in fact to $W_X$) and $w^*_X( . , . )$ as the mapping $f_X$ from $G$ to the space $L(V, W)$ of linear mappings from $V$ to $W$ which maps every $g\in G$ onto $f_X(g)$. The mapping $w^*_X(( . )g_2^{-1}, g_1) : v \rightarrow w^*_X(vg_2^{-1}, g_1)$ is the same as $f_X(g_1)\cdot g_2$ and we can rewrite \eqref{cociclo ter} as follows
\begin{equation}\label{cociclo quater}
f_X(g_1g_2) ~ = ~ f_X(g_1)\cdot g_2 + f_X(g_2).
\end{equation}
Formula \eqref{pre cociclo} implies that $f_X(1)(v) = 0$ for every $v\in V$, namely $f_X(1) = 0$. Formula \eqref{cociclo quater} now makes it clear that $f_X$ is a $1$-cocycle of $G$ over $L(V,W)$. 

Let now $\alpha\in W^*$ be a linear functional of $W$ and, with $f_X$ as above, for every $g\in G$ put 
\begin{equation}\label{alphaX}
\alpha_X(g) ~ :=~   \alpha\circ f_X(g) =  \alpha\circ w_X( . , g^{-1}).
\end{equation}
So, $\alpha_X(g)$ is the linear functional of $V^*$ which maps every $v\in V$ onto
$\alpha(w_X(v,g^{-1}))$. As $f_X\in Z^1(L(V,W), G)$, the map $\alpha_X$ is a $1$-cocycle of $G$ over the dual $V^*$ of $V$, i.e. $\alpha_X\in Z^1(V^*, G)$.

We are now ready to prove the main theorem of this subsection. In order to state it properly, we need some conventions and a definition.
Given a $G$-module $V$, every central $1$-extension of $V$ can be realized in $\FF\times V$ by choosing a function $\phi: V\times G\rightarrow \FF$ such that
\begin{equation}\label{Phi(v,g)}
\phi(v, g_1g_2) ~ = ~ \phi(v, g_1) + \phi(v\cdot g_1, g_2)  ~~~ \forall v\in V, ~ \forall g_1, g_2\in G 
\end{equation} 
(compare (\ref{wXgv})) and defining the action of $G$ on $\FF\times V$ as follows:
\begin{equation}\label{1-ext-1}
g ~ : ~ (t,v)\in \FF\times V ~ \rightarrow ~ (t+\phi(v,g), v\cdot g), ~~~ \forall g\in G, ~ \forall v\in V.
\end{equation} 
Note that $\phi(v,1) = 0$ for every $v\in V$, as one can see but putting $g_1 = 1$ in \eqref{Phi(v,g)}. We denote by $U_\phi$ the extension of $V$ realized in this way.        

Given two extensions $U = W\cdot V$ and $U' = W'\cdot V$ of the same $G$-module $V$, an  \emph{isomorphism} from the extension $U$ to the extension $U'$ is just an isomorphism of $G$-modules from $U$ to $U'$ which maps the kernel $W$ of $U$ onto the kernel $W'$ of $U'$.  However, this notion of isomorphism is unsuitable for the next theorem. The notion we need in it is the following one. 

Let $U = W\cdot V$ and $U' =  W'\cdot V$ be two extensions of $V$ with the same underlying vector-space structure, namely $U = U'$ and $W = W'$ as vector spaces. We say that an isomorphism of extensions from $U$ to $U'$ is  \emph{rigid} (with respect to the given underlying vector-space structure) if it induces the identity mapping on both $W$ and $U/W$. Explicitly, if $U = W\cdot V$ and $U' = W'\cdot V$ have the same vector space structure, say  $U = U' = W\times X$ as vector spaces for a given copy $X$ of the $G$-module $V$, and $\psi:U\rightarrow U'$ is a rigid isomorphism from $U$ to $U'$, then there exists a linear mapping $\lambda:X\rightarrow W$ such that $\psi$ maps every vector $(w,v)\in W\times X$ onto $(w+\lambda(v), v)$. Of course, for $\psi$ to be an isomorphism of $G$-modules the mapping $\lambda$ must satisfy the following condition:
\begin{equation}\label{rigid eq}
\lambda(v\cdot g) - \lambda(v)\cdot g ~ = ~ w'_X(v, g) - w_X(v, g),~~~ \forall g\in G, ~ \forall v\in V,
\end{equation} 
with $w_X(v,g)$ and $w'_X(v,g)$ defined according to \eqref{wX} in $U$ and $U'$ respectively.  

Keeping the assumption that $U$ and $U'$ have the same vector-space structure,  \emph{semi-rigid} isomorphisms from $U$ to $U'$ can also be considered. We define them by dropping the requirement to induce the identity on $W$ but keeping the hypothesis that the identity is induced on $U/W$. 

\begin{theorem}\label{thm link}
  The rigid isomorphism classes of the central $1$-extensions of $V$ bijectively correspond to the elements of $H^1(V^*,G)$. In particular, all split $1$-extensions of $V$ are pairwise rigidly isomorphic and their
rigid isomorphism class corresponds to the null element of $H^1(V^*,G)$.  
\end{theorem}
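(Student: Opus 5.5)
The plan is to build explicit maps in both directions between central $1$-extensions realized on $\FF\times V$ and cocycles, and then check that the equivalences match. Fix the vector-space structure $U=\FF\times V$, so that a central $1$-extension is $U_\phi$ for a function $\phi:V\times G\rightarrow\FF$ that is linear in $v$ and satisfies (\ref{Phi(v,g)}). The map to cohomology is
\[
\Theta(\phi):=F_\phi,\qquad F_\phi(g):=\phi(\,\cdot\,,g^{-1}).
\]
This is the construction (\ref{alphaX}) with $W=\FF$, $X=\{0\}\times V$ and $\alpha=\mathrm{id}$. By (\ref{cociclo quater}), $F_\phi\in Z^1(V^*,G)$. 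Conversely, starting from $F\in Z^1(V^*,G)$, put $\phi_F(v,g):=\langle F(g^{-1}),v\rangle$. Running the computation (\ref{cociclo bis})--(\ref{cociclo quater}) backwards shows that $\phi_F$ satisfies (\ref{Phi(v,g)}), so $U_{\phi_F}$ is a central $1$-extension. The two constructions are mutually inverse, which gives a linear bijection between admissible $\phi$'s and $Z^1(V^*,G)$. Every central $1$-extension $U=W\cdot V$ is of this form up to rigid isomorphism: choose a complement $X$ of $W$, identify $W$ with $\FF$ and $X$ with $V$ via $\iota_X$, and take $\phi=w_X$.

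The second step is to match the equivalence relations. A rigid isomorphism $U_\phi\rightarrow U_{\phi'}$ has the form $(t,v)\mapsto(t+\lambda(v),v)$ with $\lambda\in V^*$. Condition (\ref{rigid eq}), with $G$ acting trivially on $W=\FF$, reads
\[
\lambda(v\cdot g)-\lambda(v)=\phi'(v,g)-\phi(v,g).
\]
Replace $g$ by $g^{-1}$ and recall $(\lambda\cdot g)(v)=\lambda(v\cdot g^{-1})$. The condition becomes $F_{\phi'}(g)-F_\phi(g)=\lambda\cdot g-\lambda$. This is exactly the statement that $F_{\phi'}-F_\phi$ is the coboundary of $-\lambda$. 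Conversely, any coboundary difference produces such a $\lambda$, and hence a rigid isomorphism. So rigid isomorphism classes correspond bijectively to $Z^1/B^1=H^1(V^*,G)$. Well-definedness on abstract extensions also follows: changing the complement $X$ changes $w_X$ by exactly such a $\lambda$-term, by (\ref{wX}).

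For the split part, $U_\phi$ splits if and only if some complement $X'=\{(\lambda(v),v)\}$ is $G$-stable. This holds if and only if $\lambda(v)+\phi(v,g)=\lambda(v\cdot g)$ for all $v$ and $g$, i.e. if and only if $U_\phi$ is rigidly isomorphic to $U_0$. Hence the split extensions are exactly those corresponding to $F_\phi\in B^1$, i.e. to the zero class of $H^1$, and they are pairwise rigidly isomorphic.

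The main obstacle is bookkeeping rather than any conceptual difficulty. One has to keep the inversions $g\leftrightarrow g^{-1}$ consistent with the right action on $V^*$ defined in (\ref{GsuV*}), and check that the sign conventions in (\ref{rigid eq}) really land in $B^1$ rather than in some twisted version of it. I would verify the cocycle identity for $\phi_F$ explicitly once and let everything else follow formally from it.
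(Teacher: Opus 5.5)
Your proposal is correct and follows essentially the same route as the paper: realize central $1$-extensions as $U_\phi$ on $\FF\times V$, pass to $f_\phi(g)=\phi(\cdot,g^{-1})$ with inverse $f\mapsto\phi_f$, show that rigid isomorphisms $(t,v)\mapsto(t+\lambda(v),v)$ correspond exactly to differences of cocycles by coboundaries, and show that splitting is equivalent to $f_\phi\in B^1(V^*,G)$. The differences are cosmetic: you deduce the split case from rigid isomorphism with $U_0$ rather than directly, and your remark on ``abstract'' extensions only holds once the identification $W\cong\FF$ is fixed, since changing that identification rescales the class, which is what Corollary~\ref{cor link} accounts for.
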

{\em Proof.}  Let $U_\phi$ be the (central) $1$-extension of $V$ defined by a mapping $\phi$ as previously explained and let $f_\phi:G\rightarrow V^*$ be the function which maps every $g\in G$ onto the linear functional $f_\phi(g) : v\in V \rightarrow \phi(v, g^{-1})$. Property~\eqref{Phi(v,g)} on $\phi$ implies that $\phi(v,1) = 0$ for every $v\in V$. Hence $f_\phi(1) = 0$ and $f_\phi$ is a $1$-cocycle of $G$ over $V^*$, namely $f_\phi(1) = 0$ and
\begin{equation}\label{basic formula} 
f_\phi(g_1g_2) ~ = ~ f_\phi(g_1)\cdot g_2 + f_\phi(g_2), ~~~ \forall g_1, g_2\in G.
\end{equation} 
Conversely, for every cocycle $f\in Z^1(V^*, G)$ the mapping $\phi_f:V\times G\rightarrow \KK$ defined by the clause $\phi_f(v, g) := f(g^{-1})(v)$ yields a central $1$-extension $U_f$ of $V$. Of course, $U_{f_\phi} = U_\phi$ and $U_{\phi_f} = U_f$. 

The extension $U_\phi$ splits if and only if there exists a linear functional $\lambda\in V^*$ such that the subspace $V_\lambda = \{(\lambda(v), v)\}_{v\in V}$ is stabilized by $G$ in its action on $\KK\times V$ defined as in \eqref{1-ext-1}. This amounts to $\lambda(v) + \phi(v,g) = \lambda(v\cdot g)$, which in turn is equivalent to $f_\phi(g) = \lambda\cdot g - \lambda$. So, $U_\phi$ splits  if and only if $f_\phi$ is $1$-coboundary. The above also shows that, conversely, if $f\in B^1(V^*, G)$ is a $1$-coboundary then $U_f$ splits. 

So far we have proved that the central $1$-extensions defined on $\FF\times V$ by a clause like~\eqref{1-ext-1} bijectively correspond to the elements of $Z^1(V^*,G)$, the split ones corresponding to the elements of $B^1(V^*,G)$. In order to finish the proof it remains to prove that, with $U = U_\phi$, $U' = U_{\phi'}$, $f = f_\phi$ and $f' = f_{\phi'}$, the extensions $U$ and $U'$ are rigidly isomorphic if and only if $f'-f \in B^1(V^*,G)$.  

Let $\psi$ be a rigid isomorphism from $U$ to $U'$. Then there exists a linear functional $\lambda\in V^*$ such that $\psi$ maps $(t,v)\in U$ onto $(t+\lambda(v), v)\in U'$. In the present setting, Condition~\eqref{rigid eq} amounts to the following
\begin{equation}\label{iso2}
\phi(v,g) + \lambda(v\cdot g)  ~ = ~ \lambda(v) + \phi'(v,g), ~~~ \forall v\in V, ~ \forall g\in G. 
\end{equation}
Hence $\phi'(v,g) - \phi(v,g) = \lambda(v\cdot g) - \lambda(v)$. Equivalently, $f'(g^{-1})(v) - f(g^{-1})(v) = (\lambda\cdot g^{-1})(v) - \lambda(v)$ for every $v\in V$ and every $g\in G$. In short, $f' - f$ is the same as the coboundary $g \rightarrow \lambda\cdot g - \lambda$. Hence $f$ and $f'$ represent the same element of $H^1(V^*,G)$.  

Conversely let $f' = f + \beta$ for $\beta \in B^1(V^*,G)$, say $\beta(g) = \lambda - \lambda\cdot g$ for every $g\in G$ and a suitable $\lambda\in V^*$. Then $U$ and $U'$ are rigidly isomorphic, a rigid isomorphism $\psi:U\rightarrow U'$ being defined as follows: $\psi(t,v) = (t- \lambda(v), v)$.  \hfill $\Box$\\

Recall that $H^1(V^*,G)$ is a vector space over the same field $\FF$ as $V$.  So, we can consider the projective geometry $\PG(H^1(V^*, G))$ of the linear subspaces of $H^1(V^*, G)$. Keeping the setting implicit in Theorem \ref{thm link} for central $1$-extensions of $V$, consider semi-rigid isomorphisms of central $1$-extensions of $V$. In this setting, a semi-rigid isomorphism is the composition of a rigid isomorphism with a rescaling $t \rightarrow kt$ of the $1$-dimensional vector space $\KK$. Up to a few minor modifications, the proof of Theorem \ref{thm link} also yields the following.

\begin{corollary}\label{cor link}
The semi-rigid isomorphism classes of the non-split central $1$-extensions of $V$ bijectively correspond to the points of $\PG(H^1(V^*,G))$. 
\end{corollary}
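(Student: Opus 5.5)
The statement to prove is Corollary \ref{cor link}. The plan is to reduce it to Theorem \ref{thm link} by tracking how a rescaling of the kernel acts on cocycles. Work in the realization $U_f$ of a central $1$-extension on $\FF\times V$, with action
\[(t,v)\mapsto (t+f(g^{-1})(v),\, v\cdot g).\]
By Theorem \ref{thm link}, the rigid isomorphism classes are in bijection with $H^1(V^*,G)$ via $U_f\mapsto [f]$. The split ones correspond to $[0]$, so the non-split extensions correspond exactly to the non-zero classes.

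First I will describe semi-rigid isomorphisms concretely. Such a $\psi:U_f\to U_{f'}$ induces the identity on $U/W\cong V$ and is linear on the $1$-dimensional kernel $\FF$, so it has the form
\[\psi(t,v)=(kt+\lambda(v),v)\]
for some $k\in\FF^*$ and $\lambda\in V^*$. The rescaling $\sigma_k:(t,v)\mapsto(kt,v)$ is an isomorphism of $G$-modules from $U_f$ to $U_{kf}$; indeed $\sigma_k$ carries the action through $f$ to the action through $kf$ because $G$ acts trivially on the kernel. Hence every semi-rigid isomorphism $U_f\to U_{f'}$ factors as $\sigma_k$ followed by a rigid isomorphism $U_{kf}\to U_{f'}$. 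Conversely, any such composite is semi-rigid.

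Next, applying Theorem \ref{thm link} to the rigid factor, $U_f$ and $U_{f'}$ are semi-rigidly isomorphic if and only if $[f']=k[f]$ for some $k\in\FF^*$. For non-zero classes this says precisely that $\langle [f]\rangle=\langle [f']\rangle$ as points of $\PG(H^1(V^*,G))$. So the map sending a non-split central $1$-extension $U_f$ to the point $\langle[f]\rangle$ is well defined and injective on semi-rigid classes. It is surjective because every cocycle $f$ yields an extension $U_f$, and that extension is non-split exactly when $[f]\neq 0$.

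The only delicate point is the factorization claim, namely checking that $\sigma_k$ intertwines the two actions. Because the kernel is central, this is a direct one-line computation, so I expect no real obstacle beyond keeping the $g$ versus $g^{-1}$ conventions consistent with \eqref{1-ext-1}.
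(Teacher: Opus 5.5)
Your proof is correct and follows the same route as the paper. The paper likewise notes that a semi-rigid isomorphism is a rigid isomorphism composed with a rescaling $t\mapsto kt$ of the kernel, and then appeals to the proof of Theorem \ref{thm link}; your explicit check that $\sigma_k$ is a $G$-module isomorphism from $U_f$ onto $U_{kf}$ supplies exactly the ``minor modification'' the paper leaves implicit.
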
 

\subsection{Central extensions of rigid $G$-modules}\label{M*sec} 

In the sequel $V$ is a given rigid $G$-module (Definition \ref{rigid}). The following is Lemma 2.8 of \cite{CGP2}. 

\begin{theorem}\label{M* lemma} 
Let $U = W\cdot V$ be a $1$-non-split central extension of $V$. Then $U/H_1\not\cong U/H_2$ (as $G$-modules) for any choice of distinct hyperplanes $H_1, H_2$ of $W$.
\end{theorem}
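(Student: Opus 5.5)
We argue by contradiction. Suppose $H_1\neq H_2$ are hyperplanes of $W$ and $\psi:U/H_1\to U/H_2$ is an isomorphism of $G$-modules. Since $U$ is central and $1$-non-split, both $U/H_1$ and $U/H_2$ are non-split central $1$-extensions of $V$, with kernels $W/H_1$ and $W/H_2$. These kernels are the unique $1$-dimensional trivial submodules, or at least $\psi$ must map $W/H_1$ onto $W/H_2$. Indeed, otherwise $\psi(W/H_1)$ would be a trivial $1$-dimensional submodule of $U/H_2$ meeting $W/H_2$ trivially. It would then project injectively into $V$ as a nonzero $G$-fixed vector of $V$. Its preimage, together with $W/H_2$, spans a $2$-dimensional submodule on which $G$ acts trivially, and modding out gives a contradiction with rigidity. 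More simply, since $U/H_2$ is generated modulo $W/H_2$ by $V$, rigidity forces the induced map on the quotients to be a scalar. We therefore first show that $\psi$ induces an isomorphism $\bar\psi:V\to V$ of $G$-modules, namely the map $(U/H_1)/(W/H_1)\cong V\to V\cong (U/H_2)/(W/H_2)$. By rigidity (Definition \ref{rigid}) $\bar\psi$ is a scalar $c\neq 0$; rescaling $\psi$, we may take $c=1$.

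Next we realize everything inside $U$. Fix a complement $X$ of $W$ in $U$, identified with $V$ via $\iota_X$, and write the action of $G$ through $w_X(v,g)\in W$ as in \eqref{wX}. Then $U/H_i\cong (W/H_i)\times X$, with cocycle data $\alpha^{(i)}\circ w_X$, where $\alpha^{(i)}\in W^*$ has kernel $H_i$. The map $\psi$, which induces the identity on $X$ modulo kernels, has the form $(w+H_1,x)\mapsto(\mu(w+H_1)+\lambda(x),x)$ with $\mu:W/H_1\to W/H_2$ linear and $\lambda\in V^*$. Writing $\beta=\mu\circ\alpha^{(1)}$ after identifying $W/H_2\cong\FF$ via $\alpha^{(2)}$, the equivariance condition, as in \eqref{rigid eq} and the proof of Theorem \ref{thm link}, reads
\[
\alpha^{(2)}(w_X(v,g))-k\,\alpha^{(1)}(w_X(v,g))=\lambda(v\cdot g)-\lambda(v)
\]
for a nonzero scalar $k$ and all $v\in V$, $g\in G$.

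Set $\alpha:=\alpha^{(2)}-k\alpha^{(1)}\in W^*$. This is nonzero because $H_1\neq H_2$, so $\alpha^{(1)},\alpha^{(2)}$ are independent. Let $H=\ker\alpha$, a hyperplane of $W$. The displayed identity says that the cocycle $\alpha_X$ of \eqref{alphaX} is a coboundary, so by Theorem \ref{thm link} the central $1$-extension $U/H$ splits. This contradicts the assumption that $U$ is $1$-non-split.

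The main obstacle is the first step, justifying that $\psi$ maps kernel to kernel and induces a scalar on $V$. The plan there is to compose $U/H_1\to U/H_2\to V$. If its restriction to $W/H_1$ is nonzero, we get a $G$-fixed line in $V$ and must derive a contradiction; this uses that $U/H_1$ is non-split together with rigidity, since a fixed vector yields a non-scalar endomorphism $v\mapsto \xi(v)v_0$ only when $V$ has a $G$-invariant hyperplane. If that argument fails, we would instead assume perfectness or irreducibility, as holds in the applications.
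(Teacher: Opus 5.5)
Your Steps 2 and 3 are correct. Once $\psi$ carries $W/H_1$ onto $W/H_2$, rigidity makes the induced map on $V$ a scalar. Your identity then shows that the cocycle attached to $\alpha=\alpha^{(2)}-k\alpha^{(1)}\neq 0$ is a coboundary, so $U/\ker\alpha$ splits. The gap is Step 1, and none of your arguments for it works:
\begin{itemize}
\item The kernel $W/H_2$ need not be the only trivial line of $U/H_2$. If $v_0\in V$ is fixed by $G$, then by \eqref{Phi(v,g)} the map $g\mapsto\phi(v_0,g)$ is a homomorphism $G\to\FF$, so for perfect $G$ every lift of $v_0$ is fixed.
\item This is exactly what happens in the paper's application to $\mathfrak{o}(2n+1,\FF)$ with $\ch(\FF)=2$, whose center is centralized by $G$. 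So irreducibility fails in the applications, and perfectness creates extra trivial lines rather than excluding them.
\item A fixed vector of $V$ does not contradict rigidity; as you observe yourself, you would also need an invariant hyperplane.
\item The remark that rigidity forces the induced map on the quotients to be a scalar presupposes that $\psi$ induces a map $V\to V$. That is, it presupposes Step 1.
\end{itemize}

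In fact Step 1 cannot be proved, because for arbitrary $G$-module isomorphisms the assertion is false. Here is a counterexample.
\begin{itemize}
\item Let $M$ be a $G$-module with $\mathrm{End}_G(M)=\FF$ and $M^G=(M^*)^G=0$. Let $\phi_1,\phi_2,\phi_3$ be functions as in \eqref{Phi(v,g)} representing linearly independent elements of $H^1(M^*,G)$. For instance take $G=\mathrm{SL}(3,\FF)$, $M=\mathfrak{sl}(3,\FF)$, $\FF=\mathbb{Q}(x,y,z)$, using Section \ref{A}.
\item For $X\in M_3(\FF)$ and $\mathbf{m}\in M^3$ put $\theta_X(\mathbf{m},g):=\sum_i\phi_i((\mathbf{m}X)_i,g)$. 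Build modules on $\FF^r\times M^3$ as in \eqref{1-ext-1}, with one function $\theta_X$ for each coordinate of $\FF^r$.
\item Then $V=\FF\times M^3$ built on $\theta_I$ is rigid.
\item Let $Y\in\mathrm{GL}(3,\FF)$ be such that $I,Y,Y^{-1}$ are linearly independent, for example $Y=\mathrm{diag}(1,2,3)$. Then $U=\FF^3\times M^3$ built on $(\theta_I,\theta_Y,\theta_{Y^{-1}})$, with $W$ spanned by the last two coordinates, is a $1$-non-split central extension of $V$.
\item Choose the hyperplanes $H_1\neq H_2$ of $W$ so that $U/H_1$ and $U/H_2$ are built on $(\theta_I,\theta_Y)$ and $(\theta_I,\theta_{Y^{-1}})$ respectively. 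The map $(t,t',\mathbf{m})\mapsto(t',t,\mathbf{m}Y)$ is then a $G$-isomorphism $U/H_1\to U/H_2$, and it does not map $W/H_1$ onto $W/H_2$.
\end{itemize}

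Hence the theorem has to be read for isomorphisms of extensions in the sense of Appendix \ref{teoria}, i.e.\ isomorphisms mapping $W/H_1$ onto $W/H_2$. This is all that the proof of Theorem \ref{theo nuovo} needs, since there the isomorphism comes from identifying \eqref{Intro4} with \eqref{Intro7}. With this reading Step 1 holds by definition, and your Steps 2 and 3 alone give a complete proof.
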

\noindent
We refer to \cite{CGP2} for the proof. As a by-product of that proof we also obtain that two central $1$-extensions of a given rigid $G$-module are isomorphic (if and) only if they are semi-rigidly isomorphic. Theorem \ref{M* lemma} implies the following (see \cite[Theorem 2.9]{CGP2}.

\begin{corollary}\label{M*}
Let $U = W\cdot V$ be a central extension of $V$. If $U$ is both $1$-non-split and $1$-universal then $H^1(V^*,G)\cong W^*$.
\end{corollary}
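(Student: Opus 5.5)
The plan is to construct an explicit linear map $\Theta\colon W^*\to H^1(V^*,G)$, using the cocycles $\alpha_X$ of (\ref{alphaX}). Injectivity will come from $1$-non-splitness via Theorem \ref{thm link}, and surjectivity from $1$-universality together with the rigidity of $V$.

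\textbf{Step 1: definition of $\Theta$.} Fix a complement $X$ of $W$ in $U$. For $\alpha\in W^*$ put $\Theta(\alpha):=[\alpha_X]$, the class of $\alpha_X\in Z^1(V^*,G)$; this is a cocycle, as shown after (\ref{alphaX}). The map $\alpha\mapsto\alpha_X$ is clearly linear, so $\Theta$ is linear. Replacing $X$ by another complement $X'$ changes $v_X$ by $\mu(v)$ for some linear $\mu\colon V\to W$. Since $G$ acts trivially on $W$, this changes $w_X(v,g)$ by $\mu(v)-\mu(v\cdot g)$, and hence changes $\alpha_X$ by the coboundary of $\alpha\circ\mu\in V^*$. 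So $\Theta$ does not depend on $X$; strictly we only need $X$ fixed, but this independence is used in Step 3.

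\textbf{Step 2: $\Theta$ is injective.} Let $\alpha\neq 0$ and $H=\ker\alpha$, a hyperplane of $W$. Identify $W/H$ with $\FF$ via $\alpha$ and $U/H$ with $\FF\times X$. The action of $G$ on $U/H$ is then exactly (\ref{1-ext-1}) with $\phi(v,g)=\alpha(w_X(v,g))$, so $U/H=U_\phi$ with $f_\phi=\alpha_X$. Because $U$ is $1$-non-split, $U/H$ is non-split. By Theorem \ref{thm link}, $\alpha_X\notin B^1(V^*,G)$, so $\Theta(\alpha)\neq 0$.

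\textbf{Step 3: $\Theta$ is surjective.} Let $0\neq c\in H^1(V^*,G)$ be represented by $f$. By Theorem \ref{thm link}, $U_f$ is a non-split central $1$-extension of $V$. By $1$-universality there is a hyperplane $H=\ker\alpha$ of $W$ (necessarily $G$-stable, since $U$ is central) and an isomorphism of $G$-modules $\psi\colon U_f\to U/H$. Step 2 shows that $U/H=U_{\alpha_X}$.

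We now want to upgrade $\psi$ to a semi-rigid isomorphism. For this I would invoke the by-product of the proof of Theorem \ref{M* lemma}: for a rigid $V$, isomorphic central $1$-extensions are semi-rigidly isomorphic. Concretely, the argument should go as follows.
\begin{itemize}
\item First, $\psi$ maps the kernel $\FF$ onto the kernel $W/H$.
\item Then $\psi$ induces a $G$-equivariant linear map $V\to V$. By rigidity (Definition \ref{rigid}) this map is a scalar $s\neq 0$.
\item Composing $\psi$ with $s^{-1}$ gives a semi-rigid isomorphism, that is, a rigid one up to a rescaling $t\mapsto kt$ of the kernel.
\end{itemize}
Corollary \ref{cor link} then yields $c=k[\alpha_X]=\Theta(k\alpha)$. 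Hence $\Theta$ is onto, and therefore an isomorphism $W^*\cong H^1(V^*,G)$.

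\textbf{Main obstacle.} The delicate point is the first bullet of Step 3: that an abstract $G$-module isomorphism between two non-split central $1$-extensions carries the kernel onto the kernel. This is exactly where rigidity and non-splitness enter, and I would rely on the cited by-product of \cite[Lemma 2.8]{CGP2} for it.

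If that citation must be avoided, I would argue directly. Suppose the image of the kernel $\FF$ is a $1$-space of trivial vectors not contained in $W/H$. Then the fixed space of $G$ in $U/H$ would have dimension at least $2$, or it would meet $U/H$ in a point projecting to a nonzero $G$-fixed vector of $V$. I would then try to use rigidity and non-splitness to rule out both possibilities.
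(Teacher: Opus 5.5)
Your skeleton is right, and it is essentially the argument behind the paper's pointer to \cite[Theorem 2.9]{CGP2}. The map $\Theta(\alpha)=[\alpha_X]$ is linear. Step 2 gives injectivity directly from $1$-non-splitness and Theorem \ref{thm link}, so you do not even need Theorem \ref{M* lemma}, which the paper invokes. Once $\psi$ preserves kernels, rigidity and Corollary \ref{cor link} give surjectivity; you get $c=k^{-1}[\alpha_X]$ rather than $k[\alpha_X]$, which is immaterial.

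What is wrong is your handling of the first bullet of Step 3. You weaken $\psi$ to a mere isomorphism of $G$-modules and then expect rigidity and non-splitness to force $\psi(\FF)=W/H$. That is false, so neither your fallback argument nor the ``by-product'' read in the $G$-module sense can supply it. Here is a counterexample.
\begin{itemize}
\item Take $G=A_4$, $\FF=\FF_2$, $V$ the even-weight submodule of the permutation module $\FF_2^4$, and $v_0=(1,1,1,1)$. Then $V$ is rigid.
\item Let $\sigma$ be the map induced on $V/\langle v_0\rangle$ by a $3$-cycle. It is $G$-linear, because $G$ acts on this quotient through its abelian quotient of order $3$, but it is not a scalar.
\item Put $U_\sigma=\{(a,b)\in V\oplus V:\sigma(a+\langle v_0\rangle)=b+\langle v_0\rangle\}$. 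Via either projection, $U_\sigma$ is a central $1$-extension of $V$.
\item Both extensions are non-split: a $G$-section would force $\sigma$ to be induced by an element of $\mathrm{End}_G(V)=\FF$.
\item Yet $\mathrm{id}_{U_\sigma}$ is a $G$-module isomorphism between them. It sends the kernel $\langle v_0\rangle\oplus 0$ to itself, not to $0\oplus\langle v_0\rangle$.
\end{itemize}

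Worse, under the $G$-module reading of $1$-universality the corollary itself fails. Work over $\FF_2$ with $G=\FF_8\rtimes\FF_8^*$. Let $M$ be the dual of $\FF_8$ with $\FF_8^*$ acting by multiplication and $\FF_8$ acting trivially; then $M$ is a $3$-dimensional irreducible module with $\mathrm{End}_G(M)\cong\FF_8$ and $\mathrm{Ext}^1_G(M,\FF_2)\cong\FF_8$. A non-split extension $V$ with trivial $1$-dimensional kernel and quotient $M$ is rigid, with $\dim H^1(V^*,G)=2$. Its three non-split central $1$-extensions are nevertheless pairwise $G$-isomorphic, because $\FF_8^*$ permutes the $2$-subspaces of $\FF_8$ transitively. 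So each of them is $1$-non-split and ``$1$-universal'' in that weak sense, with $\dim W=1$.

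The missing idea is simply the definition. In the paper, an isomorphism of extensions is a $G$-module isomorphism carrying kernel onto kernel, and that is exactly what $1$-universality provides. So $\psi(\FF)=W/H$ is part of the hypothesis, not something to be derived. With $\psi$ taken in that sense, your second and third bullets and Corollary \ref{cor link} complete the proof.
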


\section{The relatively universal cover of a given embedding}\label{AppEmb}

Given a projective embedding $e:\Gamma\rightarrow V$, for every point $p$ (line $\ell$) of $\Gamma = (P,L)$ let $V_p$ (respectively $V_\ell$) be the $1$-dimensional ($2$-dimensional) subspace of $V$ corresponding to $e(p)$ (respectively $e(\ell)$). For every $p\in P$ and every $\ell \in L$ let ${\bf  V}_p$ and ${\bf V}_\ell$ be copies of $V_p$ and $V_\ell$, respectively. All of these copies are assumed to be pairwise disjoint but for sharing the same null vector. It is understood that for every $p\in P$ (every $\ell \in L$) an isomorphism $\iota_p:{\bf V}_p\rightarrow V_p$ (respectively $\iota_\ell:{\bf V}_\ell\rightarrow V_\ell$) is given. For every $p\in P$ and every $\ell\in L$ containing $p$, we put $\iota_{p,\ell} = \iota_\ell^{-1}\circ \iota_p$. Consider now the direct sum 
\[{\bf V} ~ := ~ (\bigoplus_{p\in P}{\bf V}_p)\oplus(\bigoplus_{\ell \in L}{\bf V}_\ell)\]
and let $W$ be the subspace of $\bf V$ spanned by the set 
\[J ~ = ~ \{{\bf v} - \iota_{p,\ell}({\bf v}) ~:~ {\bf v}\in {\bf V}_p, ~ p\in P, ~ \ell\in L, ~ p\in \ell\}.\]
Put $\widehat{V} := {\bf V}/W$. Then $\widehat{V}$ hosts a projective embedding $\hat{e}$ of $\Gamma$ and $\hat{e}$ covers $e$.  Explicitly, $\hat{e}(p) = \widehat{V}_p := ({\bf V}_p + W)/W$ and $\hat{e}(\ell) = \widehat{V}_\ell := ({\bf V}_\ell + W)/W$ for every $p\in P$ and every $\ell\in L$. The natural projection $\pi_V$ of $\widehat{V}$ onto $V$, which maps ${\bf V}_p$ onto $V_p$ according to $\iota_p$ and ${\bf V}_\ell$ onto $V_\ell$ according to $\iota_\ell$, induces a projection $\widehat{\pi}:\widehat{V}\rightarrow V$ of $\hat{e}$ onto $e$. Obviously, $\ker(\widehat{\pi}) = \ker(\pi_V)/W$. 

\begin{proposition}[{\rm Ronan \cite{R}}]
The embedding $\hat{e}$ is the universal cover of $e$. 
\end{proposition}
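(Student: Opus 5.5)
The statement to prove is Ronan's: $\hat e$ is the universal cover of $e$. The plan has two parts: first check that $\hat e$ really is a projective embedding covering $e$, and then show that $\hat e$ covers every embedding $e'$ which covers $e$. The second part implies relative universality. Indeed, if $e''\geq\hat e$, then $e''\geq e$, so $\hat e\geq e''$, and therefore $e''\cong\hat e$.

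For the first part, the key structural facts about $\widehat V$ are these. Every vector of $\bf V$ is congruent modulo $W$ to a sum of vectors from the point spaces ${\bf V}_p$. Also, for $p\in\ell$, the image of ${\bf V}_p$ in $\widehat V$ lies inside the image of ${\bf V}_\ell$.
\begin{itemize}
\item For a line $\ell$, the image $\widehat V_\ell$ has dimension at most $2$. It contains $\widehat V_p$ for every $p\in\ell$.
\item The projection $\widehat\pi$ is well defined, because $\pi_V$ kills $J$. It maps $\widehat V_p$ onto $V_p$ and $\widehat V_\ell$ onto $V_\ell$. Consequently $\dim\widehat V_p=1$ and $\dim\widehat V_\ell=2$.
\item $\hat e$ is injective, since $e=\mathrm{pg}(\widehat\pi)\circ\hat e$ and $e$ is injective.
\item $\hat e$ maps the points of $\ell$ onto the points of the projective line of $\widehat V_\ell$. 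This holds because $\widehat\pi$ restricted to $\widehat V_\ell$ is an isomorphism onto $V_\ell$, and $e$ maps $\ell$ onto all points of $\PG(V_\ell)$.
\item $\hat e(P)$ spans $\widehat V$, by the first structural fact above.
\end{itemize}
So $\hat e$ is an embedding and $\widehat\pi$ is a projection of $\hat e$ onto $e$.

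For the second part, let $e':\Gamma\rightarrow V'$ be an embedding with a projection $\pi':V'\rightarrow V$ of $e'$ onto $e$. The idea is to build a linear map ${\bf V}\to V'$ that kills $W$. On each ${\bf V}_p$, define $\phi_p:=(\pi'|_{V'_p})^{-1}\circ\iota_p$; this makes sense because $\pi'$ maps $V'_p$ isomorphically onto $V_p$. On each ${\bf V}_\ell$, define $\phi_\ell:=(\pi'|_{V'_\ell})^{-1}\circ\iota_\ell$, using that $\pi'$ maps $V'_\ell$ isomorphically onto $V_\ell$, which follows from the kernel condition on projections. Assemble these into $\phi:{\bf V}\to V'$.

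For $p\in\ell$ we have $V'_p\subseteq V'_\ell$, so the two lifts agree: $\phi_p({\bf v})=\phi_\ell(\iota_{p,\ell}({\bf v}))$. Hence $\phi(J)=0$ and $\phi$ factors through $\widehat V$. The induced map sends $\widehat V_p$ onto $V'_p$, so it is a projection of $\hat e$ onto $e'$; it is surjective because the $V'_p$ span $V'$.

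The main obstacle is the semilinearity issue in this construction. The projection $\pi'$ may be semilinear with a field automorphism $\sigma$, and then $\phi$ must be semilinear with the same $\sigma$. I would handle this first, before anything else: replace $e'$ by an isomorphic copy, twisting the scalar action on $V'$ by $\sigma^{-1}$, so that $\pi'$ becomes linear. After that reduction everything above is linear. The second, milder point is that we need $\pi'$ injective on each $V'_\ell$, which I would derive from the fact that the kernel of a projection is skew with lines joining two image points.
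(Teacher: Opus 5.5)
Your proof is correct. It is the standard verification behind Ronan's free construction, which the paper cites without giving a proof: check that $\hat e$ is an embedding with $\widehat\pi$ a projection onto $e$, then lift $\iota_p,\iota_\ell$ through any cover $e'\geq e$ to a map ${\bf V}\to V'$ that kills $J$. The semilinearity twist is harmless but unnecessary, since a $\sigma^{-1}$-semilinear $\phi$ still vanishes on the span $W$ of $J$ and so factors through $\widehat V$ directly.
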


\section{Derivations}\label{AppDer}  

Given a field $\FF$, a derivation of a field $\FF$ is an additive maping $d:\FF\rightarrow \FF$ such that
\begin{equation}\label{der def}
d(ts) ~ = ~ d(t)s+td(s), \hspace{5 mm} \forall t, s\in \FF. 
\end{equation} 
As $1 = 1\cdot 1$, property (\ref{der def}) implies that $d(1) = 0$. Therefore if $\FF_0$ is the minimum subfield of $\FF$ then $d(t) = 0$ for every $t\in \FF_0$ and it is not hard to prove that if $t\in\FF$ is algebraic over $\FF_0$ then $d(t) = 0$. Note also that $d(t^{-1}) = -t^{-2}d(t)$ for every $t\in \FF\setminus\{0\}$ and $d(t^p) = pt^{p-1}d(t)$ for every positive integer $p$. In particular, if $\ch(\FF) = p > 0$ then $d(t^p) = 0$ for every $t\in \FF$.

The derivations of $\FF$ form an $\FF$-vector space $\Der(\FF)$, which we are now going to describe. The field $\FF$ admits a maximum subfield $N_{\mathrm{der}}(\FF)$ such that every derivation of $\FF$ induces the null map on it. When $\ch(\FF) = p > 0$ then $N_{\mathrm{der}}(\FF) = \FF^p := \{t^p\}_{t\in\FF}$ while when $\ch(\FF) = 0$ then $N_{\mathrm{der}}(\FF)$ is the algebraic closure $\overline{\FF}_0$ in $\FF$ of the minimum subfield $\FF_0$ of $\FF$. (Note that if $\ch(\FF) = p > 0$ then $\FF^p\supseteq \overline{\FF}_0$.) A {\em derivation basis} of $\FF$ is subset $\Omega$ of $\FF \setminus N_{\mathrm{der}}(\FF)$ such that every map from $\Omega$ to $\FF$ is induced by a unique derivation of $\FF$. So, $\Der(\FF)$ is isomorphic to the space $\FF^\Omega$ of all mappings from $\Omega$ to $\FF$.

Every field admits a derivation basis (possibly the empty one) and all derivation bases of $\FF$ have the same cardinality, which we call the {\em derivation rank} of $\FF$, denoting it by $\drk(\FF)$. Explicitly, if $\ch(\FF) = 0$ then the derivation bases of $\FF$ are its trascendence bases (in this case $\drk(\FF)$ is the trascendence degree of $\FF$) while when $\ch(\FF) = p > 0$ then the derivation bases are the subsets $X\subset \FF\setminus\FF^p$ such that $\FF^p\cup X$ generates $\FF$ as a field and $X$ is minimal with respect to this property.  In particular, $\drk(\FF) = 0$ (namely $\Der(\FF) = \{0\}$) if and only if $\FF = N_{\mathrm{der}}(\FF)$, namely $\FF$ is either perfect of positive characteristic or algebraic over its minimum subfield.

Given a derivation basis $\Omega$ of $\FF$, fo every $\omega\in \Omega$ we denote by $d_\omega$ the derivation which maps $\omega$ onto $1\in\FF$ and all elements of $\Omega\setminus\{\omega\}$ onto $0\in\FF$. We denote by $\Der_\Omega(\FF)$ the subspace of $\Der(\FF)$ spanned by $\{d_\omega\}_{\omega\in\Omega}$. The space $\Der(\FF)$ is isomorphic to the dual of $\Der_\Omega(\FF)$ in a natural way and we have $\Der_\Omega(\FF)\cong \Der(\FF)$ if and only if $\Omega$ is finite.


\begin{thebibliography}{99}
\bibitem{B} R. J. Blok. The generating rank of the symplectiv line-grassmannian, {\em Beitr. Algebra Geom.}, {\bf 44} (2003), 575--580. 
\bibitem{BB} R. J. Blok and A. E. Brouwer. Spanning point-line geometries in buildings of spherical type,  {\em J. Geom.} \textbf{62} (1998), 26--35.
\bibitem{BC} R. J. Blok and B. N. Cooperstein. The generating rank of the unitary and symplectic grassmannians. {\em J. Combin. Theory Ser. A}  {\bf 119} (2012), 1--13.
\bibitem{BP1} R. J. Blok and A. Pasini. Point-line geometries with a generating set that depends on the underlying field,
  in: {\em Finite Geometries} (eds. A. Blokhuis et al.), Kluwer, Dordrecht (2001), 1--25.
\bibitem{BP2} R. J. Blok and  A. Pasini. On absolutely universal embeddings,
  {\em Discr. Math.} \textbf{267} (2003), 45--62.
\bibitem{CGP1} I. Cardinali, L. Giuzzi and A. Pasini. On the gneration of polar grassmannians, in {\em Algebraic Combinatorics and the Moster Group} (ed. A. A. Ivanov), London Math. Soc. L.N.  {\bf 487}, Cambridge Univ. Press, Cambridge (2024), 327--368.  
\bibitem{CGP2} I. Cardinali, L. Giuzzi and A. Pasini. The relatively universal cover of the natural embedding of the long root geometry for the group $\mathrm{SL}(n+1,\KK)$, {\em Alg. Combin.} {\bf 9} (2026), 231-259. 
\bibitem{Coop1}  B. N. Cooperstein.  Generating long root subgroup geometries of classical groups over finite prime fields,
  {\em Bull. Belg. Math. Soc. } \textbf{5} (1998), 531--548. 
\bibitem{Coop2} B. N. Cooperstein. On the generation of dual olar spaces of symplectic type over finite fields, {J. Combin. Th. A} {\bf 83} (1998), 221--232. 
\bibitem{Coop3} B. N. Cooperstein. Generation of embeddable incidence geometries: a survey, in {\em Topics in Diagram Geometry} (ed. A. Pasini), Vol. 12 of the series {\em Quaderni di Matematica}, II Univ. Naples  (2003), 29-57.
\bibitem{CS1} B. N. Cooperstein and E. E. Shult. Frames and bases of lie incidence geometries, {\em J. Geom.} \textbf{60} (1997), 17-46.
\bibitem{CS2} B. N. Cooperstein and E. E.Shult. A note on embedding and  generating dual polar spaces, {\em Adv. Geom.} {\bf 1} (2001), 37--48.
\bibitem{DSSVM} A. De Schepper, J. Schillewaert and H. van Maldeghem. On the generating rank and embeding rank of the hexagonic Lie incidence geometries, {\em Combinatorica} {\bf 44} (2024), 355--392. 
\bibitem{GP} L. Giuzzi and A. Pasini. Non existence of absolutely universal embeddings of certain Lie incidence geometries, in preparation.  
\bibitem{Hall} M. Hall Jr. \textit{The Theory of Groups}, Macmillan, New York, 1959.
\bibitem{H-Lie} J. E. Humphreys, {\em Introduction to Lie Algebras and Representation Theory}, Springer, 1972. 
\bibitem{H-mod}  J. E. Humphreys, {\em Modular Representations of Finite Groups of Lie Type}, Cambridge Univ. Press, 2006.
\bibitem{KS} A. Kasikova and E. E. Shult, Absolute embeddings of point-line geometries,
 {J. Algebra} \textbf{238} (2001), 265--291.
\bibitem{Li1} P. Li. On the universal embedding of the $Sp_{2n}(2)$ dual polar space, {\em J. Combin. Th. A} {\bf 94} (2001), 100-117.
\bibitem{Li2} P. Li. On the Universal Embedding of the $U_{2n}(2)$ dual polar space, {\em J. Combin. Th. A} {\bf 98} (2002), 235-252.
 \bibitem{Pas}  A. Pasini.  Embeddings and hyperplanes of the Lie incidence geometry $A_{n,\{1,n\}}(\FF)$,
 {Comb. Theory}, \textbf{4}, No. 2 (2024), paper No. 5.
\bibitem{R}  M. A. Ronan. Embeddings and hyperplanes of discrete geometries
 {\em European J. Combin.} \textbf{8} (1987), 179--185.
\bibitem{S93} E. E. Shult. Embeddings and hyperplanes of Lie incidence geometries,  in  \emph{Groups of Lie Type and Their Geometries} (ed. W. M. Kantor and L. Di Martino), London Math. Soc. Lect. Notes \textbf{207}, Cambridge Univ. Press, Cambridge 1995, 215-232.
\bibitem{S} E. E. Shult.  \textit{Points and Lines}, Springer-Verlag, Berlin 2011.
\bibitem{St} R. Steinberg, {\em Lectures on Chevalley Groups}, A.M.S., 2016. 
\bibitem{SV} S.D. Smith and H.~V\"{o}lklein. A geometric presentation for the adjoint module of $\mathrm{SL}_3(\KK)$,
{\em J. Algebra} \textbf{127} (1989), 127--138.
\bibitem{TZ} O. Taussky and H. Zassenhaus. On the 1-cohomology of the general and special linear groups, {\em Aequationes Math.} \textbf{5} (1970), 129-201.
\bibitem{TVM} J. A. Thas and H. Van Maldeghem. Classification of embeddings of the flag geometries of projective planes in finite projective planes, Parts I, II and III, {J. Combin. Th. A}, {\bf 90} (2000), 159--172 (Part I), 241--256 (Part II) and 173--196 (Part III). 
\bibitem{T} J. Tits. {\em Buildings of Spherical Type and Finite $BN$ pairs}, Springer L. N. in Maths. \textbf{386}, Springer-Verlag, Berlin 1974. 
\bibitem{V1} H.~V\"{o}lklein. The $1$-Cohomology group of the Adjoint Module of a Chevalley Group, 
{\em Forum Math.} \textbf{1} (1989), 1-13.
\bibitem{V2} H.~V\"{o}lklein. On the geometry of the adjoint representation of a Chevalley group, {\em J. Algebra} \textbf{127} (1989), 139--154.
\end{thebibliography}
\end{document}